\appto\appendix{\addtocontents{toc}{\protect\setcounter{tocdepth}{2}}}
\appto\listoffigures{\addtocontents{lof}{\protect\setcounter{tocdepth}{1}}}
\appto\listoftables{\addtocontents{lot}{\protect\setcounter{tocdepth}{1}}}
\newlength{\saveparindent}
\numberwithin{equation}{section}
\newcommand{\vertspace}{\vspace{4pt}}
\newtheorem{theorem}{Theorem}[section]
\newtheorem{definition}[theorem]{Definition}
\newtheorem{example}[theorem]{Example}
\newtheorem{lemma}[theorem]{Lemma}
\newtheorem{proposition}[theorem]{Proposition}
\newtheorem{remark}[theorem]{Remark}
\newtheorem{paragraph extract}{Paragraph Extract}
\newtheorem{corollary}[theorem]{Corollary}
\newenvironment{lemma*}
{\expandafter\def\expandafter\thelemma\expandafter{\thelemma*}\lemma}
{\endlemma}
\newenvironment{proposition*}
{\expandafter\def\expandafter\theproposition\expandafter{\theproposition*}\proposition}
{\endproposition}
\newenvironment{theorem*}
{\expandafter\def\expandafter\thetheorem\expandafter{\thetheorem*}\theorem}
{\endtheorem}
\DeclareRobustCommand{\qed}{%
	\ifmmode 
	\else \leavevmode\unskip\penalty9999 \hbox{}\nobreak\hfill
	\fi
	\quad\hbox{\qedsymbol}}
\newcommand{\openbox}{\leavevmode
	\hbox to.77778em{%
		\hfil\vrule
		\vbox to.675em{\hrule width.6em\vfil\hrule}%
		\vrule\hfil}}
\newcommand{\qedsymbol}{\openbox}
\newenvironment{proof}[1][\proofname]{\par
	\normalfont
	\topsep6\p@\@plus6\p@ \trivlist
	\item[\hskip\labelsep\itshape
	#1.]\ignorespaces
}{%
	\qed\endtrivlist
}
\newcommand{\proofname}{Proof}
\newcommand{\eqnum}{\refstepcounter{equation}\textup{\tagform@{\theequation}}}
\newcommand{\mathcolorbox}[2]{\colorbox{#1}{$\displaystyle #2$}}
\newlength{\EqIndent}           
\newcommand{\EqLine}[2]{%
  \par\noindent
  \hspace*{\EqIndent}%
  $\displaystyle (#1)\quad #2$\par
}
\title{The Geometry of Rough Path Space}
\date{January 2026}
\author{   Martin Geller\thanks{Mathematical Institute, University of Oxford, Oxford, United Kingdom. \texttt{martin.geller@maths.ox.ac.uk}}   \and   Terry Lyons\thanks{Mathematical Institute, University of Oxford, Oxford, United Kingdom. \texttt{terry.lyons@maths.ox.ac.uk}} }
\begin{document}
	\setlength{\saveparindent}{\parindent}
	\maketitle
	
	\begin{abstract} 
    \noindent 
We describe $\mathscr H^p(V)$, a subset of $p$-rough path space $\Omega^p(V)$ which is a vector space under an addition operation $\boxplus$ and a scalar multiplication $\odot$. We show that the domain of $\boxplus$ can be extended to $\Omega^p(V) \times \mathscr{H}^p(V)$, allowing any $p$-rough path $X$ to be additively perturbed by an $H$ in $\mathscr H^p(V)$. We show that the extended $\boxplus$ operation is well-behaved, proving associativity $(X\boxplus H) \boxplus \widetilde H = X\boxplus (H \boxplus \widetilde H)$ and $X\boxplus H = X\iff H=\mathbb{1}$, where $\mathbb{1}$ is the additive zero in $(\mathscr H^p(V),\boxplus, \odot)$. Additionally, we show that expanding the space of perturbations $\mathscr H^p(V)$ to include almost rough paths  $\mathscr H^{\text{am},p}(V)$ does not enlarge the space of displacements of a given $X$, i.e. $\{X\boxplus H | H \in \mathscr H^p(V)\}$ $=\{X\boxplus H | H \in\mathscr H^{\text{am},p}(V)\}$.
    
    \vspace{\baselineskip}
	\end{abstract}
	
\tableofcontents

    \section*{Notation}
\addcontentsline{toc}{section}{Notation}

    \begin{itemize}
        \item $\triangle_J$: given an interval $J=[S,T]$, the set of pairs $(s,t) \in J$ s.t. $s\le t$
        \item $T((V))$: space of formal tensor series over the Banach space $V$
        \item $T(V)$: free tensor algebra over $V$
        \item $T^{(n)}(V)$: for $k\in\mathbb N$, the free tensor algebra over $V$, truncated at level $k$
        \item $\Omega^{(k,p)}(V)$: \scalebox{0.95}[1.0]{for $p\ge1$, space of multiplicative $X:\triangle_J\to T^{(k)}(V) $ of finite $p$-var.}
        \item $\Omega^p(V)$: space of $p$-rough paths $X:\triangle_J\to T^{(\lfloor p \rfloor )}(V) $.
        \item $WG\Omega^p(V)$: space of weakly geometric $p$-rough paths.
        \item $\Omega_{\omega,\theta}^{\text{am},p}(V)$: space of $\theta$-almost $p$-rough paths controlled by $\omega$
        \item $\stackrel{\text{def.}}{=}$ : "by definition"
    \end{itemize}

\newpage
\section{Introduction}
\label{sec:introduction}

\subsection{Background}
This introduction assumes familiarity with the material used later in the paper, and in particular with the first three sections of \cite{stflour}. Some points will be clearer when the introduction is revisited after reading the rest of the paper. Its purpose is to provide a conceptual roadmap and a brief literature review of the problem we address.

Nevertheless, to keep the discussion as self-contained as possible, we now fix notation and recall the main objects used below; precise definitions and further details are collected in Section~1. We fix an interval $J=[S,T]$ with $S\ge0$ and consider the simplex $\triangle_J:=\{(s,t)\in J^2|s\le t\}$. We define a control function $\omega:\triangle_J\to[0,\infty)$ to be a jointly continuous function such that it is super-additive (i.e. $\omega(s,u)+\omega(u,t)\le\omega(s,t)$ for all $s\le u\le t$ with $s,u,t\in J$) and $\omega(s,s) =0$ for all $s\in J$ (see Definition \ref{def:controls}. For a Banach space \(V\) we write \(T((V))\) for the space of formal tensor series,
\[
T((V)) := \{(a_0,a_1,\ldots) : a_n\in V^{\otimes n}\ \text{for all }n\ge 0\},
\]
endowed with the product $\otimes$ given degree-wise by $(a\otimes \tilde a)^k = \sum^k_{j=0} a^{j}\otimes \tilde a^{k-j} $; this is a unital (non-commutative) algebra with unit \(\mathbb 1=(1,0,0,\ldots)\). We assume norms on the tensor powers of $V$ are \textit{admissible} in the sense that $\|v_1\otimes\ldots\otimes v_k\|\le\| v_1\|\ldots\| v_k\|$ for any pure tensor  $v_1\otimes\ldots\otimes v_k\in V^{\otimes k}$, as well as a condition on the permutations of $v_1\otimes\ldots\otimes v_k$ (see Definition \ref{def: admissible}). For each integer \(n\ge 1\) we denote by \(T^{(n)}(V)\) the truncated tensor algebra of order \(n\), defined as the quotient \(T((V))/B_n\) by the two-sided ideal \(B_n:=\{a=(a_k)_k : a_0=\dots=a_n=0\}\); \(T^{(n)}(V)\) is again a unital algebra, canonically isomorphic to \(\bigoplus_{k=0}^n V^{\otimes k}\) with the induced tensor product (Definitions~\ref{def:tensor-series} and~\ref{def:trunc-tensor}). 

Let \(X=(X^0,\ldots,X^{\lfloor p \rfloor})\colon\triangle_J\to T^{(\lfloor p\rfloor)}(V)\) be a functional and define the multiplicative defect of $X$, which we call $\Delta(X)$, through
\[\Delta(X)_{s,u,t}:=X_{s,u}\otimes X_{u,t}- X_{s,t}, \qquad \forall s\le u \le t\]
We would then say $X$ is a multiplicative functional of degree \(\lfloor p\rfloor\) if it is continuous, if \(X^0_{s,t}=1\) and  if \(\Delta(X)_{s,u,t}=0, \qquad \forall s\le u \le t,\)(see Definition~\ref{def: multiplicative functional}). Given \(p\ge 1\), we say that \(X\) has \emph{finite \(p\)-variation controlled by \(\omega\)} if
\[
\|X^j_{s,t}\|\;\le\; \frac{\omega(s,t)^{j/p}}{\beta\cdot(j/p)!}\qquad j=1,\dots,\lfloor p \rfloor;\ (s,t)\in\triangle_J,
\]
for the constant \(\beta=\beta(p)\) of Definition~\ref{def:beta} (see Definition~\ref{def:finite p-var}). Given  $n\in\mathbb N$,  $X:\triangle_J\to T^{(n)}(V)$ is a functional, $\omega$ is a control, and $\theta>1$ is a constant, then we say that $X$ is $\theta$-almost multiplicative controlled by $\omega$ if $$\|\Delta(X)_{s,u,t}\|\leq \omega(s,t)^\theta \qquad \forall s\leq u \leq t;$$
in other words, $X$ is almost multiplicative if the multiplicative defect of $X$ is bounded supra-linearly in $\omega$. 

For \(p\ge 1\) and an integer \(k\ge 1\) we then write \(\Omega^{(k,p)}(V)\) for the space of \(k\)-step multiplicative functionals of finite \(p\)-variation with values in \(T^{(k)}(V)\), and set
\[
\Omega^p(V):=\Omega^{(\lfloor p\rfloor,p)}(V),
\]
whose elements are called \(p\)-rough paths (see Definition\ref{def:rough-path-space}). Its weakly geometric subspace \(WG\Omega^p(V)\subset \Omega^{p}(V)\) consists of those rough paths whose values lie in the truncated group of group-like elements \(G^{(\lfloor p\rfloor)}(V)\subset T^{(\lfloor p\rfloor)}(V)\) (Definition~\ref{def: weakly geometric RPs}). More generally, we call \(X:\triangle_J\to T^{(\lfloor p\rfloor)}(V)\) a \(\theta\)-almost \(p\)-rough path if it is \(\theta\)-almost multiplicative and has finite \(p\)-variation (Definitions~\ref{def: almost multiplicativity}–\ref{def: Omega^(am, (k,p))_omega,theta}). 

Crucially, we rely on Lyons' \textit{rough sewing lemma} (Theorem~\ref{thm:Rough-Sewing-Lemma}), which was first introduced in 1998 \cite{lyons1998differential}, and claims the following: for each almost multiplicative functional $X$, there exists a unique multiplicative functional $\bar X$ which is close to $X$ in the sense that
\begin{equation}
		\sup_{\substack{S\leqslant s < t \leqslant T \\
				i=0, \ldots, \lfloor p \rfloor}} 
		\frac{\left\| \bar{X}_{s,t}^i - X_{s,t}^i \right\|}{\omega(s,t)^\theta} < +\infty.
	\end{equation}
We define the \emph{sewing map} \(\mathscr S\) to be the map that assigns \(X\) to its unique rough path \(\mathscr S(X)\in\Omega^p(V)\) as per the rough sewing lemma (Definition~\ref{def: sewing map mathscr S}). 

The central new object of this paper is a distinguished subset \(\mathscr{H}^p(V)\subset\Omega^p(V)\), which we call \emph{\(\mathscr H\)-space} (of degree $p$): we say \(H\in \mathscr{H}^p(V)\) if \(H\in\Omega^p(V)\) and there exist both a control \(\omega\) and an exponent \(\phi\in(1-1/p,1]\) such that, for every level \(j\in\{1,\dots,\lfloor p\rfloor\}\) and all \((s,t)\in\triangle_J\),
\begin{equation}\label{ineq: H-space, intro}
    \|H^j_{s,t}\|\;\le\; K \omega(s,t)^{\phi},
\end{equation}
with $K\ge0$ being a constant. 


\subsection{Objectives and motivation of this paper}

The basic objective of this paper is to introduce a vector space of perturbations inside rough path space by identifying a class of genuine rough paths $\mathscr H^p(V)\subset\Omega^p(V)$ that acts as a space of additive perturbations. We define the corresponding perturbation map $(X,H)\mapsto X\boxplus H$ canonically via the rough sewing lemma and study its algebraic properties. We briefly recall the role of $\Omega^p(V)$ in Lyons’ theory and place our construction in context. Given a Banach space \(V\) and \(p\ge 1\), we consider \(\Omega^p(V)\), i.e. the space of \(p\)-rough paths over \(V\); this was introduced in \cite{lyons1998differential}, where it serves to develop a solution theory of controlled differential equations that extends classical It\^o calculus to a much broader class of stochastic integrals (see \cite{lyons1998differential,stflour}). The structure of the subset \(WG\Omega^p(\mathbb{R}^d)\subset \Omega^p(V)\) of weakly geometric $p$-rough paths has also been extensively studied: when \(V\) is finite dimensional, weakly geometric \(p\)-rough paths correspond to continuous paths of finite \(p\)-variation in the step-\(\lfloor p\rfloor\) free nilpotent group \(G^{\lfloor p\rfloor}(V)\), which is a Carnot group endowed with a Carnot--Carath\'eodory metric \cite{Friz_Victoir_2010}. From an analytic viewpoint, for a rough path $X:\triangle_J \to T^{(\lfloor p \rfloor)}(V)$, Lyons' extension theorem shows how there exists a unique extension as a multiplicative functional $X:\triangle_J\to T((V))$ of finite $p$-variation (see \cite{Friz_Victoir_2010,lyons1998differential}). From a geometric viewpoint, however, \(\Omega^p(V)\) and $WG\Omega^p(V)$ are not well-understood; in particular, there is no canonical notion of additive perturbation of rough paths beyond bounded variation perturbations, or of a tangent space at a given point \cite{Friz_Victoir_2010}.

A basic manifestation of this is that, for any $p\ge1$, the space of $p$-rough paths $\Omega^p(V)$ is not a vector space under the pointwise addition operation $+$ inherited from the ambient space $C_0(\triangle_J, T^{(\lfloor p \rfloor)}(V))\supset\Omega^p(V)$. Indeed, for any two $p$-rough paths $X=(1,X^1,...,X^{\lfloor p \rfloor})$ and $H=(1,H^1,...,H^{\lfloor p \rfloor})$, both controlled by a common control $\omega$, the pointwise sum $X+H:\triangle_J \to T^{(\lfloor p \rfloor)}(V)$ given by $(s,t)\mapsto X_{st}+H_{st}$ has $(X+H)^0_{st}=2(\neq 1)$, so it is not a rough path. We will now describe the strategy this paper employs to circumvent the fact that  pointwise addition fails, in general, to produce a rough path. 

To address this problem with $X+H$, we can then define the \textit{unit-preserving pointwise sum}, which is the map $X\oplus H:\triangle_J\to T^{(\lfloor p \rfloor)}(V)$ given by $(s,t)\mapsto (1,X^1+H^1,...,X^{\lfloor p \rfloor}+H^{\lfloor p \rfloor})$. However, $X\oplus H$ suffers from a similar problem: the multiplicativity condition is not satisfied. Indeed, for each $k=1,...,\lfloor p \rfloor$ and $\forall s,u,t\in J$ with $s\le u\le t$, we have that
\begin{small}
$$
\begin{aligned}
\left((X\oplus H)_{s, u} \otimes(X\oplus H)_{u, t}\right)^k
&=\sum_{j=0}^{k}  (X\oplus H)_{su}^j\otimes(X\oplus H)_{ut}^{k-j}\\
&=X_{s, t}^k+ H_{s, t}^k+\sum_{j=1}^{k-1} H_{s, u}^j \otimes X_{u, t}^{k-j}
 +\sum_{j=1}^{k-1} X_{s, u}^j \otimes H_{u, t}^{k-j}\\
&=(X \oplus H)_{s, t}^k+\sum_{j=1}^{k-1} H_{s, u}^j \otimes X_{u, t}^{k-j}
 +\sum_{j=1}^{k-1} X_{s, u}^j \otimes H_{u, t}^{k-j}.
\end{aligned}
$$
\end{small}
Hence, for each $k=1,\dots,\lfloor p\rfloor$,
\[
\Delta(X\oplus H)_{s,u,t}^k
=  \sum_{j=1}^{k-1} H_{s, u}^j \otimes X_{u, t}^{k-j}
 +\sum_{j=1}^{k-1} X_{s, u}^j \otimes H_{u, t}^{k-j},
\]which is in general not equal to zero. Thus, $X\oplus H$ is in general not multiplicative, so it is not a $p$-rough path. However, we can assume a stronger regularity condition on $H$ so that $\Delta(X\oplus H)$, the multiplicative defect of $X\oplus H$, is bounded supra-linearly in the control $\omega$, making $X\oplus H$ an almost $p$-rough path. This stronger regularity condition is precisely the one we introduced in Inequality \ref{ineq: H-space, intro}.

With this in hand, we can
\[
\begin{aligned}
&\|(X_{su}\otimes H_{ut}+H_{su}\otimes X_{ut})^k\| \\
& \leq \sum_{1\leq j \leq k-1} \|X_{su}^j\otimes H_{ut}^{k-j}+H_{su}^j\otimes X_{ut}^{k-j}\| \\
& \leq \sum_{1\leq j \leq k-1} \|X_{su}^j\|\| H_{ut}^{k-j}\|+\|H_{su}^j\| \|X_{ut}^{k-j}\|
\qquad (\text{admissible norms}) \\
& \leq \sum_{1\leq j \leq k-1}\Bigl[(1/\beta\cdot(j/p)!) \; \omega(s,t)^{\frac{j}{p}} \omega(s,t)^{\phi}
+\omega(s,t)^{\phi} \omega(s,t)^{\frac{k-j}{p}} (1/\beta((k-j)/p)!)\Bigr]\\
&\phantom{\leq}\qquad\qquad (\text{regularity of } H \text{ \& finite } p\text{-variation of } X) \\
&\leq K \omega(s,t)^{\frac{1}{p}+\phi}\sum_{1\leq j \leq k-1} \omega(s,t)^{\frac{j-1}{p}} + \omega(s,t)^{\frac{k-j-1}{p}} \\
& \leq K\left(\sum_{1\leq j \leq k-1} \omega(S,T )^{\frac{j-1}{p}} +\omega(S,T)^{\frac{k-j-1}{p}}\right) \omega(s,t)^{\frac{1}{p}+\phi}
\hspace{0.15cm} (\text{super additivity of } \omega)
\end{aligned}
\]

with $K=(\max_{j=1,..,\lfloor p \rfloor}\{(1/\beta\cdot(j/p)!)\})^2$. Hence, thanks to the fact that $H$ is bounded by $\omega^\phi$ with $\phi>1-1/p$, the multiplicative defect of $X\oplus H$ is bounded supra-linearly in $\omega$, and thus $X\oplus H$ is an almost rough path. Using the rough sewing lemma, we can then obtain a $p$-rough path, which we introduce notation for:
\[
X\boxplus H = \mathscr{S}(X\oplus H).
\]
The above observations motivate the viewpoint of this paper: although $\Omega^p(V)$ is not a vector space under pointwise addition, there is a natural class of perturbations $H\in\mathscr H^p(V)$ for which one can define a canonical operation via sewing, namely $(X,H)\mapsto X\boxplus H$ as defined above. The focus of this paper will be to analyse the properties of the operation $\boxplus$ and of the space $\mathscr H^p(V)$, which we will show to be a vector space.


\subsection{Review of the literature}

In the literature, several frameworks have been proposed to circumvent the absence of a linear structure on \(\Omega^p(V)\). For instance, if \(x\) is a smooth path in \(V\) and \(h\) is a path of bounded variation, one may consider the perturbed path \(x+h\) and its truncated signature\[S^{(\lfloor p \rfloor)}(x+h),\]
which can be expressed in terms of \(S^{(\lfloor p \rfloor)}(x)\), \(S^{(\lfloor p \rfloor)}(h)\) and explicit cross-iterated integrals (see, for instance, \cite{lyons1998differential,Friz_Victoir_2010}).\textbf{ In Lyons' 1998 paper on differential equations driven by rough signals, this principle is extended to genuinely rough drivers}: in Remark~B in \cite{lyons1998differential}, if \(X=(1,X^1,...,X^{\lfloor p \rfloor})\) and \(H=(1,H^1,...,H^{\lfloor p\rfloor})\) are $p$-rough paths mapping $\triangle_J$ to $T^{(\lfloor p \rfloor)}(V)$ which are controlled by $\omega$, and if $H$ satisfies
\[\|H^j_{st}\|\le K\omega(s,t)^{\phi} \qquad \forall(s,t)\in\triangle_J \text{ and } \forall j\in\{1,\ldots,\lfloor p \rfloor\},\]
for some constant $K\ge0$, then the map \(X\otimes H\) given by $(s,t)\mapsto X_{st}\otimes H_{st}$ defines an almost \(p\)-rough path. Then, Lyons' rough sewing lemma (originally in \cite{lyons1998differential}; also Theorem 4.3 in \cite{stflour}) canonically associates to it a unique genuine \(p\)-rough path \cite{lyons1998differential}, which in this paper we call $X\boxplus H$. As we later show (Theorem \ref{thm:X_otimes_H_is_ARP}), the map $X\oplus H$ given by $$(s,t) \mapsto (1, X^1_{st}+H^1_{st}, ..., X^{\lfloor p \rfloor}_{st}+H^{\lfloor p \rfloor}_{st})$$is also an almost \(p\)-rough path and its canonically associated rough path is $X\boxplus H$ as well. This framework defines a rigorous notion of additive perturbation of a rough path \(X\) by any \(H\) as defined above, and it does so by passing through the rough sewing lemma.

Several strands of work have developed constructions in a similar direction to \cite{lyons1998differential}, Remark B. \textbf{In Friz--Victoir’s monograph}, one first fixes a collection of vector fields
\[
f = (f_1,\dots,f_d), \qquad f_i : \mathbb{R}^e \to \mathbb{R}^e,
\]
of class \(\mathrm{Lip}^\gamma\) for some \(\gamma>p\).  The rough differential equation
\[
dY_t = f(Y_t)\,dX_t,
\qquad\text{i.e.}\qquad
dY_t = \sum_{i=1}^d f_i(Y_t)\,dX_t^i,
\quad Y_0 = y_0
\]
has a unique \emph{solution rough path} on \([S,T]\)
\[
Y = \mathbf{\Pi}_f(y_0;X) \in WG\Omega^p(\mathbb{R}^e),
\]
which depends continuously on \((y_0,X) \in \mathbb{R}^e \times WG\Omega^p(\mathbb{R}^d)\). The map
\[
\mathbf{\Pi}_f : \mathbb{R}^e \times WG\Omega^p(\mathbb{R}^d) \to WG\Omega^p(\mathbb{R}^e)
\]
is the \emph{Itô--Lyons map}. We write \(
\pi_f : \mathbb{R}^e \times WG\Omega^p(\mathbb{R}^d) \longrightarrow C^{p\text{\rm-var}}([S,T],\mathbb{R}^e)
\) for the associated \emph{solution trajectory}
\[
\pi_f(y_0;X) := \pi_1\big(\mathbf{\Pi}_f(y_0;X)\big)
   \in C^{p\text{\rm-var}}([S,T],\mathbb{R}^e),
\]
which is obtained by projecting the solution rough path onto its first level (in the notation of \cite{Friz_Victoir_2010} one writes \(\pi_f(0,y_0;X)\) to emphasize the starting time \(0\), which we suppress here). In this setting the map $\pi_f$ is shown to be locally Lipschitz, and, under the \(\mathrm{Lip}^\gamma\) regularity assumption on \(f\) with \(\gamma>p\), differentiable with respect to perturbations of the driving rough path along Young (in particular Cameron--Martin) directions, when \(\mathbb{R}^e\) is given its Euclidean topology and \(WG\Omega^p(\mathbb{R}^d)\) is equipped with the \(p\)-variation topology; see \cite[Ch.~11]{Friz_Victoir_2010}. 

More precisely, their approach leverages a translation operator $T_{\varepsilon h}$ on geometric rough paths, defined by first lifting the classical translation $(x,h)\mapsto x+h$ for paths $x,h$ of finite $q$-variation with $q\in[1,2)$ to a rough path on $G^{(\lfloor p\rfloor)}(\mathbb{R}^d\oplus\mathbb{R}^d)$ and then projecting back onto the first component; see \cite[Sec.~9.4.6]{Friz_Victoir_2010}. In this setting, for a fixed initial value $y_0$, given a geometric $p$-rough path $X$, and a perturbation $h$ of finite $q$-variation with $1/p+1/q>1$ (so that the Young pairing $(X,h)$ is well-defined), the curve
\[
\varepsilon \longmapsto \pi_f\bigl(y_0;T_{\varepsilon h}(X)\bigr)
\]
is shown to be differentiable as a map with values in the Banach space $C^{p\text{\rm-var}}([S,$ $T],\mathbb{R}^e)$, and its derivative at $\varepsilon=0$ is characterized as the unique solution of a linear rough differential equation driven by $(X,h)$; see \cite[Thm.~11.3]{Friz_Victoir_2010}. Thus the resulting “differential calculus along \(h\)” lives in the ambient Banach space
\(C^{p\text{\rm-var}}([S,T],\mathbb{R}^e)\) of solution paths; it does not equip the rough-path space
\(WG\Omega^p(\mathbb{R}^d)\) itself with a linear structure or an internal notion of displacement.

\textbf{Qian and Tudor construct a differential structure on the space \(W G\Omega^{p}\) \((V)\) of weakly geometric \(p\)-rough paths, for \(2<p<3\)}, by viewing a rough path \(X\) as an element of the Banach space \(C_0(\triangle_J;T^{(2)}(V))\) of continuous maps on the simplex with values in the truncated tensor algebra \(T^{(2)}(V))\), and by defining \(T_X W G\Omega^{p}(V)\) as the set of equivalence classes \([Z,\phi]\) of variational curves \(V_{(Z,\phi)}(\varepsilon)\) through \(X\), where two pairs \((Z,\phi)\) and \((\tilde Z,\tilde\phi)\) are equivalent if the associated variational curves have the same derivative at \(\varepsilon=0\) in the ambient space  \(C_0(\triangle_J;T^{(2)}(V))\) \cite{QianTudor2011}. Here \(Z\in W G\Omega^{p}(V\oplus V)\) is a Lyons-Victoir joint lift of the base rough path $X$ together with a candidate direction, and \(\phi\in\Omega^{p/2}(V\oplus V)\) is a second-level correction term that encodes the choice of cross-iterated integrals. Their construction yields a linear bundle of tangent spaces over \(W G\Omega^{p}(V)\), and it is defined via the ambient Banach space and non-canonical Lyons--Victoir extensions, but they explicitly note that it does not come with local trivializations, so it does not give rise to a genuine tangent vector bundle on rough path space.

\textbf{More recently, Bellingeri, Friz, Paycha and Prei{\ss} introduced the notions of smooth geometric rough paths} and, in a quasi‑geometric and Hopf‑algebraic framework, smooth rough paths and models, and endowed these smooth rough models with a canonical linear structure via a canonical sum and scalar multiplication \cite{bellingeri2021smooth}. Their construction is algebraic in nature using the Maurer–Cartan form on the truncated free nilpotent group $G^{(N)}(\mathbb{R}^d)$, they define a minimal extension of level-$N$ smooth geometric rough models by solving a linear differential equation in $G^{(N)}(\mathbb{R}^d)$, and show that this minimal extension coincides with the Lyons lift of a level-$N$ weakly geometric rough path \cite[Thm.~2.8, Prop.~2.14]{bellingeri2021smooth}. However, their framework is genuinely smooth: a smooth geometric rough model is, by definition, tensor-wise smooth (each coordinate $(s,t)\mapsto\langle X_{s,t},w\rangle$ is smooth for all words $w$), and its minimal extension is obtained as the Cartan development of a smooth path in the free nilpotent Lie algebra \cite[Defs.~2.1, 2.3, Rem.~2.9]{bellingeri2021smooth}. In Remark~2.17 they also observe, in the spirit of Lyons’ Remark~B \cite[Sec.~3.3.1~B]{lyons1998differential}, that an $N$-smooth geometric rough model $X$ can be canonically “added’’ to any $\gamma$-Hölder weakly geometric rough path $W$, for $\gamma\in(0,1)$, by requiring 
\[
(X\boxplus W)_{s,t} \;=\; X_{s,t}\otimes W_{s,t} + o(|t-s|)\quad\text{as }t\downarrow s,
\]
and appealing to the sewing lemma \cite[Rem.~2.17]{bellingeri2021smooth}. However, the main body of \cite{bellingeri2021smooth} constructs a vector space structure only on spaces of smooth geometric, quasi-geometric and Hopf–algebraic rough models; it does not provide a general perturbation theory that acts on an arbitrary weakly geometric, let alone a general (possibly non–weakly geometric), $p$-rough path by a smooth or Young-type path in the Lie algebra.


\subsection{Setup and results of this paper}

By contrast to the rest of the literature, the present work takes Lyons' observation in Remark~B as a starting point and builds a perturbation theory for arbitrary \(p\)-rough paths in \(\Omega^p(V)\) along lifts of degree-wise Young paths. \textbf{We now outline the framework; details appear in Sections 1--5.}

For each \(p\ge 1\) we define \emph{\(\mathscr H\)-space} (of degree $p$), i.e. the subset \(\mathscr H^p(V)\subset \Omega^p(V)\) of $p$-rough paths satisfying\textbf{ \eqref{ineq: H-space, intro}} that we discussed earlier. Elements of \(\mathscr H^p(V)\) are interpreted as \emph{rough displacements}: for any base rough path \(X\in\Omega^p(V)\) there is an extended operation
\[
\boxplus : \Omega^p(V)\times \mathscr H^p(V)\to \Omega^p(V),\qquad (X,H)\mapsto X\boxplus H,
\]
defined by considering the almost rough path \(X\oplus H\) and then canonically associating to it a unique genuine \(p\)-rough path via Lyons' Almost Rough Paths theorem (Theorem 4.3 in \cite{stflour}, which first appeared in \cite{lyons1998differential}). Thus \(\mathscr H^p(V)\) plays the role of a model linear space of increments, and \(\boxplus\) provides a canonical perturbation map  \(\Omega^p(V)\times \mathscr H^p(V)\to \Omega^p(V)\). Two of our main structural results are that this action has trivial kernel---if \(X\boxplus H=X\) for some \(X\in\Omega^p(V)\) and \(H\in\mathscr H^p(V)\), then \(H\) is the zero element of \(\mathscr H^p(V)\)---and that it is associative in the sense that
\[
(X\boxplus H)\boxplus \widetilde H \;=\; X\boxplus (H\boxplus \widetilde H)
\]
for all \(X\in\Omega^p(V)\) and \(H,\widetilde H\in\mathscr H^p(V)\) (see Theorem~\ref{thm:assoc-trivial-kernel}). With these results in mind, we may then say that the elements of \(\mathscr H^p(V)\) represent well-behaved perturbations of rough paths $X\in \Omega^p(V)$ by ``genuine'' rough paths $H\in \mathscr H^p(V)$, and by genuine we mean that the rough paths $H$ are not just extensions of paths of finite $q$-variation for $q\in[1,2)$; $H$ may be a non-canonical extension of a path of finite $q$-variation for $q\in[1,2)$.

\textbf{We now turn our discussion to a space, which we will call \(\mathfrak{I}^p(V)\)}, which is naturally a vector space and is bijective to $\mathscr H^p(V).$ Fix a control function \(\omega\) and an exponent \(\phi\in(1-1/p,1]\). Consider paths whose zero-th entry equals zero
\[
I_\cdot:J\to T^{(\lfloor p\rfloor)}(V),\qquad t\mapsto I_t=(0,I_t^1,\dots,I_t^{\lfloor p\rfloor}),
\]
and write \(I_{s,t}:=I_t-I_s\). We define \(\mathfrak{I}^p(V)\) to be the collection of the increments $I$ of such paths \(I_\cdot\) for which, for every \(j\in\{1,\dots,\lfloor p\rfloor\}\) and all \((s,t)\in\triangle_J\),
\begin{equation}\label{eq:Ip-bound-intro}
    \|I^j_{s,t}\|\;\le\;\omega(s,t)^{\phi}.
\end{equation}
In Section~2 we construct a canonical lift map
\[
\mathbb{1}^{(\cdot)}: \mathfrak{I}^p(V)\to \mathscr H^p(V) \subset \Omega^p(V),
\]
which associates to the increments \(I\) of an \(I_\cdot\) a rough displacement \(H=\mathbb{1}^{I}\), iteratively computed by applying Lyons' extension theorem to the levels of \(I=(0,I^1,\ldots,I^{\lfloor p \rfloor})\). Conversely, for every \(H\in\mathscr H^p(V)\) we define a map
\[
\operatorname{dev}:\mathscr H^p(V)\to \mathfrak{I}^p(V),
\]
which recovers the corresponding path \(I_\cdot\). We prove that \(\mathbb{1}^{(\cdot)}\) and \(\operatorname{dev}\) are inverse to each other, so that paths \(I\in\mathfrak{I}^p(V)\) are in bijection with the rough displacements in \(\mathscr H^p(V)\). Since the space \(\mathfrak{I}^p(V)\) carries a natural vector space structure by pointwise addition and scalar multiplication, the bijection \(\mathbb{1}^{(\cdot)}\) allows us to transport linear operations from \(\mathfrak{I}^p(V)\) to \(\mathscr H^p(V)\). We prove that the addition operation thus transported coincides with the previously-defined \(\boxplus\), and we call the induced scalar multiplication \(\odot\) (see Definition~\ref{def:scalar_multiplication_on_H}) on \(\mathscr H^p(V)\)).

\textbf{This is a crucial observation}: put in different words, the addition operation on \(\mathscr H^p(V)\) induced by \(\mathbb{1}^{(\cdot)}\)  given by $H\boxplus \widetilde H$ for every \(H\in\mathscr H^p(V)\) and \(\widetilde H\in\mathscr H^p(V)\) coincides with the rough path obtained by sewing the almost rough path $H\oplus\widetilde H=(1,H^1+\widetilde H^1,...,H^{\lfloor p\rfloor}+\widetilde H^{\lfloor p\rfloor})$. 

We now explain this crucial observation in greater detail. Given \(X\in\Omega^p(V)\) and \(H\in\mathscr H^p(V)\), we consider the map
\[
(X\oplus H)_{s,t} := (1,X^1_{s,t}+H^1_{s,t},\ldots,X^{\lfloor p \rfloor}_{s,t}+H^{\lfloor p \rfloor}_{s,t})\quad (s,t)\in\triangle_J,
\]
viewed as a map on the two-simplex with values in the truncated tensor algebra \(T^{(\lfloor p\rfloor)}(V)\). In Theorem \ref{thm:X_otimes_H_is_ARP}, we show that \(X\oplus H\) is a \(\theta\)-almost \(p\)-rough path with \(\theta=\phi+1/p\). Then, if we define
\[
X\boxplus H := \mathscr{S}(X\oplus H),
\]
we show that, when considering two $H$ and $\widetilde H$ in $\mathscr{H}^p(V)$, the following identity holds:
$$\mathscr{S}(H\oplus\widetilde H) \stackrel{\text{def}}{=} H\boxplus \widetilde H =\mathbb{1}^{\operatorname{dev}(H)}\boxplus\mathbb{1}^{\operatorname{dev}(\widetilde H)}= \mathbb{1}^{\operatorname{dev}(H)+\operatorname{dev}(\widetilde H)};$$
in other words, $\boxplus$ viewed as the operation induced onto $\mathscr H^p(V)$ by the bijection $\mathbb{1}^{(\cdot)}$ coincides with $\boxplus$ viewed as the sewing of the unit-preserving pointwise addition of two elements $H$ and $\widetilde H$ of $\mathscr{H}^p(V)$.

\textbf{The final part of the paper enlarges the class of admissible perturbations without changing the resulting space of displacements}. We introduce \emph{almost \(\mathscr H\)-space} \(\mathscr{H}^{\mathrm{am},p}(V)\), consisting of \(\theta\)-almost \(p\)-rough paths \(H=(1,H^1,...,H^{\lfloor p \rfloor})\) whose degrees \(H^j\) satisfy Inequality \eqref{ineq: H-space, intro}. For any \(H\in\mathscr{H}^{\mathrm{am},p}(V)\) and \(X\in\Omega^p(V)\), \(X\oplus H\) is again a \(\theta\)-almost \(p\)-rough path (with \(\theta=\phi+1/p\)), and the sewing theorem canonically associates to it a rough path \(X\boxplus H\). A priori this construction could produce strictly more displacements than those arising from genuine \(\mathscr{H}^p(V)\)-perturbations. Our main result in Section~5 shows that this is not the case: for every \(X\in\Omega^p(V)\),
\[
\{X\boxplus H : H\in\mathscr{H}^p(V)\}
=
\{X\boxplus H : H\in\mathscr{H}^{\mathrm{am},p}(V)\}.
\]
Thus the true space of displacements around a given rough path \(X\) is already exhausted by \(\mathscr{H}^p(V)\); enlarging the class of perturbations to almost rough paths does not enlarge the orbit.


\subsection{Comparison to other results in the literature}

Conceptually, this framework is complementary to the earlier approaches mentioned above. It develops Lyons' observation that, when \(H\) is of bounded variation or classical Young type, the object often denoted \(X\otimes H\) is almost multiplicative \cite{lyons1998differential}, into a more general perturbation theory in which the admissible increments form a vector space \(\mathscr{H}^p(V)\), are shown to be in bijection with degree-wise Young paths $\mathfrak I^p(V)$, and act on the whole rough path space \(\Omega^p(V)\) via the operation $\boxplus$ given by \(X\boxplus H = \mathscr{S}(X\oplus H)\). When \(H\) is weakly geometric, the bijection with Lie paths identifies elements of \(\mathscr{H}^p(V)\) with degree-wise Young paths in the truncated Lie algebra, so that our displacements admit both algebraic and pathwise descriptions. The construction is formulated directly at the level of \(\Omega^p(V)\) and does not require any weakly geometric or group-valued hypotheses, hence applies equally to non--weakly geometric rough paths.

Our results also clarify the relation with the translation operators of Friz--Victoir: classical translations in the direction of a rough path $H$ of finite $q$-variation for $q\in[1,2)$ (and in particular Cameron--Martin) paths are recovered as special cases of the \(\boxplus\)-operation when \(H\) is a canonical lift of a $q$-rough path\cite{Friz_Victoir_2010}. In contrast to their framework, we obtain a genuine vector space of displacements on (both weakly and non-weakly geometric) rough path space and show that perturbation by a degree-wise Young path in the Lie algebra induces a well-defined operation \(\boxplus\) on \(\Omega^p(V)\) itself, which can be interpreted as sewing the unit-preserving pointwise sum of two rough paths. Compared to the differential structure proposed by Qian and Tudor \cite{QianTudor2011}, our ``tangent model'' is internal to \(\Omega^p(V)\), globally trivialized via the bijection with Lie paths, and defined for all \(p\ge1\), not only for \(2<p<3\). Finally, while Bellingeri--Friz--Paycha--Prei{\ss} construct a canonical linear structure on spaces of smooth rough paths \cite{bellingeri2021smooth}, the \(\mathscr{H}\)-space approach allows us to perturb genuine (weakly and non-weakly geometric) rough paths by elements of \(\mathscr{H}\)-space; i.e. it provides an action of our displacement space (\(\mathscr{H}\)-space) on $p$-rough path space ($\Omega^p(V)$), and it dispenses of smoothness requirements.


\subsection{Structure of this paper}

The paper is organized as follows. \textbf{Section~1} collects preliminaries on tensor series, rough paths and almost rough paths, and fixes the notation used throughout. We introduce and define \(\mathscr H^p(V)\) and the operation $\boxplus$. \textbf{In Section~2 }we explore the properties of \(\mathscr H^p(V)\), define \(\mathfrak{I}^p(V)\) (and show it can be characterised as the space of degree-wise Young paths), and construct the lift map \(\mathbb{1}^{(\cdot)}\).  We then prove that the lift map \(\mathbb{1}^{(\cdot)}\) is a bijection between \(\mathfrak{I}^p(V)\) and \(\mathscr H^p(V)\). \textbf{Section~3} gathers the analytic lemmas needed in Section 4. \textbf{Section~4} contains the main structural results: we show firstly that for $I,\widetilde I\in \mathfrak I ^p(V),$ 
\begin{equation}\label{eqn: structure of paper, intro}
    \mathbb{1}^I  \boxplus \mathbb{1}^{\widetilde{I}} = \mathbb{1}^{I+\widetilde{I}} ,
\end{equation}and secondly that our perturbations are associative in the following sense
$$
(X \boxplus H) \boxplus \widetilde{H} = X \boxplus (H \boxplus \widetilde{H}).$$
\textbf{In Section~5}, we first show that displacements have trivial kernel, in the sense that, if $H\in \mathscr H^p(V)$ and $X\in\Omega^p(V)$, then $X\boxplus H = X\iff H=\mathbb{1}$. We then discuss how the bijection $\mathbb 1^{(\cdot)}$ allows us to endow $\mathscr H^p(V)$ with a vector space structure with addition $\boxplus$ and scalar multiplication $\odot$, where $\boxplus$ is justified by Equation \eqref{eqn: structure of paper, intro}, and $\odot$ is simply defined to be the induced operation carried over by the bijection $\mathbb 1^{(\cdot)}$. Finally, we introduce the almost \(\mathscr H\)-space \(\mathscr H^{\mathrm{am},p}(V)\), extend the perturbation map to almost rough paths, and prove that this enlargement does not change the set of attainable displacements from any given base rough path.

	\newpage
\section{Preliminaries}

Throughout this piece, $V$, $W$ and $E$ will stand for generic Banach spaces and $J=[S,T]$ will be a fixed interval.

The following definition, taken from \cite{stflour}, will serve as a key tool that we shall repeatedly use to obtain bounds on elements of the tensor powers of a Banach space.

\subsection{Tensor algebra toolkit}
We first fix the tensor-algebra conventions (admissible norms, truncations, and unit-preserving linear operations) that will be used throughout.

\begin{definition}[Admissible Norm]\label{def: admissible}
We say that the tensor powers of the Banach space $V$ are endowed with admissible norms if the following conditions hold:
\begin{itemize}
    \item For each $n \geq 1$, the symmetric group $S_n$ acts by isometries on $V^{\otimes n}$
$$
\|\sigma v\|=\|v\| \quad \forall v \in V^{\otimes n}, \forall \sigma \in S_n .
$$
\item  The tensor product has norm 1 , i.e. for all $n, m \geq 1$,
$$
\|v \otimes w\| \leq\|v\|\|w\| \quad \forall v \in V^{\otimes n}, w \in V^{\otimes m} .
$$
\end{itemize}
\end{definition}

The next three statements (Definitions \ref{def:tensor-series}–\ref{def:trunc-tensor}) are adapted from Chapter 2 of \cite{stflour} and set up the key vector spaces in which the functionals studied in this paper take values.

\begin{definition}[Tensor algebra, Ch.~2 in \cite{stflour}]\label{def:tensor-series}
The space of formal tensor series over \(E\) is
\[
T((E)):=\{a=(a^0,a^1,\ldots)\ :\ a^n\in E^{\otimes n}\ \text{for all }n\ge0\},
\]
with addition \(a+b=(a^0+b^0,a^1+b^1,\ldots)\). We may also write $T^{(\infty)}(E)=T((E)).$ We write $\operatorname{pr}_j((a^0,a^1,\dots)):=a^j.$
\end{definition}
\begin{proposition} $T((E))$ is a unital (non-commutative) algebra with unit \(\mathbb 1=(1,0,0,\ldots)\) and product \(a\otimes b=(c^0,c^1,\ldots)\) given by \(c^n=\sum_{k=0}^n a^k\otimes b^{n-k}\). The product $a \otimes  b$ is also denoted by $ab$.
\end{proposition}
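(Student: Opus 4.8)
The plan is to verify the algebra axioms level by level, exploiting the canonical identifications $E^{\otimes 0}\cong\mathbb K$ (the ground field) and $E^{\otimes i}\otimes E^{\otimes j}\cong E^{\otimes(i+j)}$ that make $\bigoplus_n E^{\otimes n}$ a graded object; the only subtlety to keep track of is that all relevant sums are \emph{finite}, so no convergence question arises (in contrast to what one would need for a normed or Banach algebra structure). First I would check that $\otimes$ is well-defined: for $a,b\in T((E))$ and each $n\ge0$, the expression $c^n=\sum_{k=0}^n a^k\otimes b^{n-k}$ is a finite sum of elements of $E^{\otimes k}\otimes E^{\otimes(n-k)}\cong E^{\otimes n}$, hence $c^n\in E^{\otimes n}$ and $(c^0,c^1,\ldots)\in T((E))$.

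Next I would record bilinearity: since at each level $\otimes:E^{\otimes k}\times E^{\otimes(n-k)}\to E^{\otimes n}$ is bilinear and $T((E))$ has componentwise addition and scalar multiplication, the identities $(a+a')\otimes b=a\otimes b+a'\otimes b$, $a\otimes(b+b')=a\otimes b+a\otimes b'$ and $(\lambda a)\otimes b=\lambda(a\otimes b)=a\otimes(\lambda b)$ follow by expanding $c^n$ and using distributivity in each $E^{\otimes n}$. For the unit, I would compute, for $a\in T((E))$ and $n\ge0$,
\[
(\mathbb 1\otimes a)^n=\sum_{k=0}^n \mathbb 1^k\otimes a^{n-k}=\mathbb 1^0\otimes a^n=1\cdot a^n=a^n,
\]
using that $\mathbb 1^k=0$ for $k\ge1$ and that $E^{\otimes 0}\cong\mathbb K$ acts on $E^{\otimes n}$ by scalar multiplication; the computation $(a\otimes\mathbb 1)^n=a^n$ is symmetric, so $\mathbb 1$ is a two-sided identity.

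The heart of the verification is associativity. For $a,b,c\in T((E))$ I would expand both triple products at level $n$:
\[
\bigl((a\otimes b)\otimes c\bigr)^n=\sum_{\ell=0}^n\Bigl(\sum_{k=0}^{\ell}a^k\otimes b^{\ell-k}\Bigr)\otimes c^{n-\ell}
=\sum_{\substack{i+j+m=n}} a^i\otimes b^j\otimes c^m,
\]
and identically
\[
\bigl(a\otimes(b\otimes c)\bigr)^n=\sum_{\substack{i+j+m=n}} a^i\otimes b^j\otimes c^m,
\]
where in both cases the reindexing is legitimate because the sums are finite, and the equality of the two expressions uses only the associativity isomorphism $(E^{\otimes i}\otimes E^{\otimes j})\otimes E^{\otimes m}\cong E^{\otimes i}\otimes(E^{\otimes j}\otimes E^{\otimes m})\cong E^{\otimes(i+j+m)}$. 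Hence $(a\otimes b)\otimes c=a\otimes(b\otimes c)$. Finally, for non-commutativity I would simply observe that whenever $\dim E\ge 2$ there exist $v,w\in E$ (viewed as degree-one series) with $v\otimes w\neq w\otimes v$ in $E^{\otimes2}$, so $\otimes$ on $T((E))$ is not commutative in general.

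I do not expect any genuine obstacle here: the proof is entirely bookkeeping. The one place to be slightly careful is the bookkeeping in the associativity reindexing and the consistent use of the identifications $E^{\otimes0}\cong\mathbb K$ and the canonical associativity/unit isomorphisms of the tensor product, so that the graded pieces genuinely assemble into an algebra rather than merely matching formally.
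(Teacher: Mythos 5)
Your proof is correct: the level-wise verification of well-definedness, bilinearity, the two-sided unit, and associativity via the reindexing $\sum_{i+j+m=n} a^i\otimes b^j\otimes c^m$ is exactly the standard argument, and your remark that non-commutativity requires $\dim E\ge 2$ is an appropriate precision. The paper itself states this proposition without proof (it is quoted as standard material from the literature), so there is nothing further to compare against; your write-up fills that gap adequately.
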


\begin{proposition} \label{prop: Bn is an ideal} For $n\ge1$, define
$$ B_n(E):=\{a=(a^0,a^1,\ldots)\ :\ a^0=\cdots=a^n=0\}\subset T((E)).$$
 Then $B_n$ is a two-sided ideal in $T((E))$.
\end{proposition}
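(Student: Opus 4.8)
The statement to prove is that $B_n(E)$ is a two-sided ideal in $T((E))$.

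\textbf{Approach.} The plan is to verify directly the three defining properties of a two-sided ideal: that $B_n(E)$ is an additive subgroup of $T((E))$, that $a \otimes b \in B_n(E)$ whenever $b \in B_n(E)$ and $a \in T((E))$ (absorption on the left), and that $b \otimes a \in B_n(E)$ whenever $b \in B_n(E)$ and $a \in T((E))$ (absorption on the right). All of this is a routine unwinding of the degree-wise formulas for addition and for the tensor product $c^k = \sum_{j=0}^k a^j \otimes b^{k-j}$.

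\textbf{Key steps.} First I would check that $B_n(E)$ is closed under addition and contains $0$: if $a, b \in B_n(E)$, then $(a+b)^k = a^k + b^k = 0$ for $k = 0, \dots, n$, and $-a$ has $(-a)^k = -a^k = 0$ for the same range, so $B_n(E)$ is an additive subgroup. Second, for the left-absorption property, take $b \in B_n(E)$ and arbitrary $a \in T((E))$, and consider $(a \otimes b)^k = \sum_{j=0}^k a^j \otimes b^{k-j}$ for $k \le n$; in every term $0 \le k - j \le k \le n$, so $b^{k-j} = 0$, hence $a^j \otimes b^{k-j} = 0$ and the whole sum vanishes, giving $a \otimes b \in B_n(E)$. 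Third, the right-absorption property is symmetric: $(b \otimes a)^k = \sum_{j=0}^k b^j \otimes a^{k-j}$, and for $k \le n$ each index $j$ satisfies $0 \le j \le k \le n$, so $b^j = 0$ and again the sum vanishes, giving $b \otimes a \in B_n(E)$. Together these three observations establish that $B_n(E)$ is a two-sided ideal.

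\textbf{Main obstacle.} There is essentially no obstacle here; the only thing to be slightly careful about is the bookkeeping of indices in the convolution sum, namely that for $k \le n$ \emph{both} indices appearing in each summand $a^j \otimes b^{k-j}$ (respectively $b^j \otimes a^{k-j}$) lie in the range $\{0, 1, \dots, n\}$, which is exactly what forces the relevant factor to be zero. One could also remark that this is a special case of the general fact that in a graded (or filtered) algebra the set of elements whose components vanish in degrees $\le n$ forms a two-sided ideal, but the direct verification above is shortest.
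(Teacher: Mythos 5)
Your proof is correct: the paper states this proposition without proof, treating it as a routine fact, and your direct verification (additive subgroup structure plus left and right absorption read off from the degree-wise product formula $c^k=\sum_{j=0}^k a^j\otimes b^{k-j}$, noting that for $k\le n$ both indices in each summand lie in $\{0,\dots,n\}$) is exactly the intended elementary argument. No gaps; at most one could add the equally trivial closure under scalar multiplication if one wants $B_n(E)$ to be a linear subspace, but for the two-sided ideal property as stated your three checks suffice.
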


\begin{definition}[Truncated tensor algebra]\label{def:trunc-tensor}
For \(n\ge1\), we define the \textit{truncated tensor algebra of order} \(n\) to be the quotient
\[
T^{(n)}(E):=T((E))/B_n.
\]
We write \(\pi_n:T((E))\to T^{(n)}(E)\) to denote the canonical homomorphism and $\mathbb{1}(n)=\pi_n(\mathbb{1})$ for its unit. When a single truncation level is fixed (e.g. \(n=\lfloor p\rfloor\)), we may simply write \(\mathbb{1}\) for \(\mathbb{1}(n)\).
\end{definition}

\begin{proposition}[Canonical structure of \(T^{(n)}(E)\)]\label{prop:trunc-iso}
Equip \( \bigoplus_{k=0}^n E^{\otimes k}\) with the product $\widetilde{\otimes}$ given by, for $(a^0,\ldots,a^n)$ and $(b^0,...,b^n)$ in $\bigoplus_{k=0}^n E^{\otimes k}$
\[
(a^0,\ldots,a^n)\,\widetilde{\otimes}\,(b^0,\ldots,b^n)=(c^0,\ldots,c^n),\qquad
c^k=\sum_{i+j=k} a^i\otimes b^j.
\]
Then there exists an algebra isomorphism between \( \bigoplus_{k=0}^n E^{\otimes k}\)  and $T^{(n)}(E)$. For simplicity, we will denote the product $\widetilde{\otimes}$ by $\otimes$.
\end{proposition}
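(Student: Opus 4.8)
The plan is to exhibit an explicit map and check that it is a unital algebra isomorphism. I would define
\[
\Psi:T((E))\to\bigoplus_{k=0}^{n}E^{\otimes k},\qquad \Psi\big((a^0,a^1,\dots)\big):=(a^0,\dots,a^n),
\]
observe that $\Psi$ is linear and surjective with kernel exactly $B_n(E)$, and then let $\Phi:=\overline{\Psi}^{-1}$, where $\overline{\Psi}:T^{(n)}(E)\to\bigoplus_{k=0}^n E^{\otimes k}$ is the linear isomorphism to which $\Psi$ descends on the quotient $T^{(n)}(E)=T((E))/B_n(E)$. Concretely, $\Phi(a^0,\dots,a^n)=\pi_n\big((a^0,\dots,a^n,0,0,\dots)\big)$. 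Bijectivity is immediate: surjectivity because $\pi_n$ is onto and every class has a representative whose components of degree $>n$ vanish (subtract the tail, which lies in $B_n(E)$); injectivity because $\pi_n\big((a^0,\dots,a^n,0,\dots)\big)=0$ forces $(a^0,\dots,a^n,0,\dots)\in B_n(E)$, i.e. $a^0=\dots=a^n=0$.

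The substantive point is compatibility with the products. Since $B_n(E)$ is a two-sided ideal (Proposition~\ref{prop: Bn is an ideal}), the quotient product on $T^{(n)}(E)$ is well-defined and $\pi_n(a)\,\pi_n(b)=\pi_n(a\otimes b)$ for all $a,b\in T((E))$. Taking the canonical representatives $\tilde a=(a^0,\dots,a^n,0,\dots)$ and $\tilde b=(b^0,\dots,b^n,0,\dots)$, the degree-$k$ component of $\tilde a\otimes \tilde b$ in $T((E))$ is $\sum_{i+j=k}a^i\otimes b^j$; applying $\pi_n$ discards the components with $k>n$, and for $k\le n$ the index pairs automatically satisfy $0\le i,j\le k\le n$, so this coincides with the formula defining $\widetilde{\otimes}$ on $\bigoplus_{k=0}^n E^{\otimes k}$. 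Hence $\Phi\big((a^0,\dots,a^n)\,\widetilde{\otimes}\,(b^0,\dots,b^n)\big)=\Phi(a^0,\dots,a^n)\,\Phi(b^0,\dots,b^n)$. Finally $\Phi(1,0,\dots,0)=\pi_n(\mathbb 1)=\mathbb 1(n)$, so $\Phi$ preserves units. Combining these, $\Phi$ is a unital algebra isomorphism, which is the claim.

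I do not expect a genuine obstacle here: the statement is essentially bookkeeping about the quotient. The only two places that deserve a word of care are (i) verifying that the candidate map is well-defined on the quotient, which is handled by the observation that each class has a canonical ``polynomial'' representative with vanishing tail, and (ii) confirming that truncating the convolution product of two such representatives really reproduces $\widetilde{\otimes}$, where one uses the ideal property of $B_n(E)$ together with the fact that the degree-$k$ convolution sum for $k\le n$ involves only indices $\le n$. An alternative, equally short route is to invoke the universal property of the quotient algebra directly, but the hands-on argument above is self-contained given Proposition~\ref{prop: Bn is an ideal}.
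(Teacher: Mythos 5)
Your proof is correct, and since the paper states Proposition~\ref{prop:trunc-iso} without proof (as a standard fact adapted from Chapter~2 of the St.\ Flour notes), there is nothing to compare against. Your argument is exactly the canonical one: observe that truncation $\Psi:T((E))\to\bigoplus_{k=0}^n E^{\otimes k}$ is linear and surjective with kernel $B_n(E)$, descend to the quotient using Proposition~\ref{prop: Bn is an ideal}, and verify that the canonical tail-free representatives make the quotient product agree with $\widetilde{\otimes}$, which also settles that $(\bigoplus_{k=0}^n E^{\otimes k},\widetilde{\otimes})$ is indeed an associative unital algebra.
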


We also consider a distinguished subset, which can be turned into a vector space in a natural way:

\begin{definition}[Unit tensor series]
We define the \textit{unit tensor series} to be the set given by
\[
\widetilde T((E)) := \{a=(a^0,a^1,a^2,\dots)\in T((E)):\ a^0=1\}.
\]
For $n\in\mathbb N$, define its truncation
\[
\widetilde T^{(n)}(E) := \{a\in T^{(n)}(E):\ a^0=1\},
\]
where $\mathbb{1}(n):=(1,0,...,0)\in T^{(n)}(E)$.
\end{definition}

\begin{definition}[Unit-preserving addition and scalar multiplication]\hfill\break
 Let $n\in\mathbb N\cup\{\infty\}$, where $\widetilde T^{(\infty)}(E):=\widetilde T((E))$
and $\mathbb 1{(\infty)}:=\mathbb 1$. For $a,b\in \widetilde T^{(n)}(E)$ and $\lambda\in\mathbb R$, define
\[
a \oplus b \;:=\; a + b - \mathbb 1{(n)}, 
\qquad
\lambda\; \tilde{\cdot}\; a \;:=\; \mathbb 1{(n)} + \lambda\,(a-\mathbb 1{(n)}).
\]
\end{definition}

\begin{proposition}[Vector space structure on $\widetilde T^{(n)}(E)$]
For each $n\in\mathbb N\cup\{\infty\}$, the triple $\big(\widetilde T^{(n)}(E),\oplus,\;\tilde{\cdot}\;\big)$ is a real vector space. Its zero element is $\mathbb 1{(n)}$.
\end{proposition}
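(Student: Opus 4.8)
The plan is to prove this by \emph{transport of structure} along a translation bijection, rather than by verifying all eight vector-space axioms by hand. Fix $n\in\mathbb N\cup\{\infty\}$ and write $\mathbb 1:=\mathbb 1(n)$. Consider the subset
\[
U_n:=\{x\in T^{(n)}(E):x^0=0\},
\]
which is a genuine linear subspace of the ambient real vector space $T^{(n)}(E)$, since the condition $x^0=0$ is linear and homogeneous; in particular $(U_n,+,\cdot)$ is a real vector space with zero element $0=(0,0,\dots)$. I would then introduce the translation map
\[
\Phi:\widetilde T^{(n)}(E)\to U_n,\qquad \Phi(a):=a-\mathbb 1,
\]
and observe that it is a bijection: its inverse is $x\mapsto x+\mathbb 1$, which indeed lands in $\widetilde T^{(n)}(E)$ because $(x+\mathbb 1)^0=0+1=1$, the two maps being mutually inverse by associativity and commutativity of $+$ in $T^{(n)}(E)$.

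Next I would check that $\Phi$ intertwines the unit-preserving operations on $\widetilde T^{(n)}(E)$ with the ordinary operations on $U_n$. For $a,b\in\widetilde T^{(n)}(E)$ one computes $\Phi(a\oplus b)=(a+b-\mathbb 1)-\mathbb 1=(a-\mathbb 1)+(b-\mathbb 1)=\Phi(a)+\Phi(b)$, and for $\lambda\in\mathbb R$ one gets $\Phi(\lambda\,\tilde{\cdot}\,a)=(\mathbb 1+\lambda(a-\mathbb 1))-\mathbb 1=\lambda(a-\mathbb 1)=\lambda\,\Phi(a)$. For $n=\infty$ these are the same computations, read inside $T((E))$ with $\mathbb 1=(1,0,0,\dots)$ and the degree-$0$ projection being linear; so the argument is uniform in $n$.

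Finally I would conclude: since $\Phi$ is a bijection satisfying $\Phi(a\oplus b)=\Phi(a)+\Phi(b)$ and $\Phi(\lambda\,\tilde{\cdot}\,a)=\lambda\Phi(a)$, each vector-space axiom for $(\widetilde T^{(n)}(E),\oplus,\tilde{\cdot})$ is obtained by applying $\Phi^{-1}$ to the corresponding axiom for $(U_n,+,\cdot)$, which holds. Concretely, commutativity and associativity of $\oplus$ and the two distributive laws transfer verbatim; the additive identity is $\Phi^{-1}(0)=0+\mathbb 1=\mathbb 1$, so that $a\oplus\mathbb 1=a$ and the zero element is $\mathbb 1(n)$ as claimed; the additive inverse of $a$ is $\Phi^{-1}(-\Phi(a))=2\mathbb 1-a$; and $1\,\tilde{\cdot}\,a=a$ together with $(\lambda\mu)\,\tilde{\cdot}\,a=\lambda\,\tilde{\cdot}\,(\mu\,\tilde{\cdot}\,a)$ follow identically.

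I do not expect any real obstacle here: the only points requiring care are (i) confirming that $\Phi$ and its candidate inverse genuinely map into $U_n$ and $\widetilde T^{(n)}(E)$ respectively (i.e. that the zeroth coordinate behaves as stated), and (ii) treating $n\in\mathbb N$ and $n=\infty$ in one stroke. A direct verification of the eight axioms is equally routine but more tedious; the translation argument is preferable because it makes transparent that $\widetilde T^{(n)}(E)$ is simply the affine subspace of $T^{(n)}(E)$ through the point $\mathbb 1(n)$, re-based so that $\mathbb 1(n)$ becomes the origin.
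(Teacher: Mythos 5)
Your proposal is correct. Note that the paper states this proposition without proof (it is treated as routine), so there is no argument of the paper's to compare against; your transport-of-structure argument is a complete and clean way to supply one. The bijection $\Phi(a)=a-\mathbb 1(n)$ onto the linear subspace $U_n=\{x\in T^{(n)}(E):x^0=0\}$ does intertwine $\oplus$ with $+$ and $\tilde\cdot$ with ordinary scalar multiplication, and since the operations on $\widetilde T^{(n)}(E)$ land back in $\widetilde T^{(n)}(E)$ (the zeroth coordinate of $a+b-\mathbb 1(n)$ and of $\mathbb 1(n)+\lambda(a-\mathbb 1(n))$ is $1$), all eight axioms pull back from $(U_n,+,\cdot)$, the zero element is $\Phi^{-1}(0)=\mathbb 1(n)$, and the $\oplus$-inverse of $a$ is $2\,\mathbb 1(n)-a=(-1)\,\tilde\cdot\,a$, consistently with the scalar action. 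The case $n=\infty$ is indeed uniform, since $T((E))$ carries componentwise addition and scalar multiplication and the degree-$0$ projection is linear. Compared with the implicit alternative of verifying the axioms one by one, your route is slightly more economical and has the conceptual benefit of exhibiting $\widetilde T^{(n)}(E)$ as the affine hyperplane $\{a^0=1\}$ re-based at $\mathbb 1(n)$, which is exactly the picture the paper later exploits when forming unit-preserving sums of functionals.
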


\begin{proposition}[Group structure on $\widetilde T^{(n)}(E)$]
For each $n\in\mathbb N\cup\{\infty\}$, the pair $\big(\widetilde T^{(n)}(E),\otimes)$ is a group. Its identity element is $\mathbb 1{(n)}$ and the inverse of an element $\mathbf a=(a^i)_{i=0}^n\in \widetilde T^{(n)}(E)$ is given by
$$\mathbf a^{-1}=\sum_{k=0}^n (\mathbb 1-\mathbf a)^{\otimes k}
.
$$
\end{proposition}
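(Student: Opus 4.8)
The plan is to verify the group axioms directly, taking as given that $\otimes$ is the (associative, unital) product of the algebra $T^{(n)}(E)$ from Proposition~\ref{prop:trunc-iso}, and that $\mathbb{1}(n)$ is its two-sided unit. Associativity is therefore inherited for free from the ambient algebra, and $\mathbb{1}(n)\in\widetilde T^{(n)}(E)$ since its zeroth entry is $1$. So the only two things requiring work are: (i) closure, i.e. that $a\otimes b\in\widetilde T^{(n)}(E)$ whenever $a,b\in\widetilde T^{(n)}(E)$; and (ii) existence of inverses inside $\widetilde T^{(n)}(E)$, for which the proposition already supplies an explicit candidate formula.

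For closure, I would simply compute the zeroth level of the product: using $c^0=\sum_{i+j=0}a^i\otimes b^j=a^0\otimes b^0=1\cdot 1=1$, so $a\otimes b$ again has zeroth entry $1$. For inverses, write $a=\mathbb{1}(n)+m$ where $m:=a-\mathbb{1}(n)$ has zeroth entry $0$, i.e. $m\in B_n(E)/B_n$ in the truncated sense — concretely $m$ is nilpotent because $m^{\otimes(n+1)}$ lives in levels $\ge n+1$, hence vanishes in $T^{(n)}(E)$. Therefore the finite sum $\sum_{k=0}^{n}(-1)^k m^{\otimes k}$ is well-defined, and the standard geometric-series telescoping identity gives $(\mathbb{1}(n)+m)\otimes\sum_{k=0}^{n}(-1)^k m^{\otimes k}=\mathbb{1}(n)+(-1)^n m^{\otimes(n+1)}=\mathbb{1}(n)$, and symmetrically on the other side; here I should note that $\sum_{k=0}^n(\mathbb{1}-\mathbf a)^{\otimes k}$ in the statement means $\sum_{k=0}^n(-1)^k m^{\otimes k}$ since $\mathbb{1}-\mathbf a=-m$, and one checks this lies in $\widetilde T^{(n)}(E)$ by reading off its zeroth level as the $k=0$ term, namely $1$. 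The same nilpotency argument handles the case $n=\infty$ provided one first observes that for $a\in\widetilde T((E))$ the series $\sum_{k\ge 0}(-1)^k m^{\otimes k}$ converges level-by-level, since only finitely many terms contribute to each fixed degree $d$ (only $k\le d$).

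I do not anticipate a genuine obstacle here — the result is essentially bookkeeping — but the one point to state carefully is why the "inverse" formula terminates/converges and lands back in $\widetilde T^{(n)}(E)$: this rests entirely on the nilpotency of $\mathbb{1}-\mathbf a$ in the truncated algebra (or its level-wise local nilpotency when $n=\infty$), which is the structural fact that makes unit tensor series a group rather than merely a monoid. I would present this as a short explicit verification of the two-sided inverse identity, with one sentence isolating the nilpotency observation.
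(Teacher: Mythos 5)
The paper states this proposition without proof, treating it as a standard fact about the (truncated) tensor algebra; so there is no paper proof to compare against, and the task is just to assess your argument on its own terms. Your argument is correct and is the standard one: associativity and the unit are inherited from the ambient algebra $T^{(n)}(E)$ of Proposition~\ref{prop:trunc-iso}; closure follows by inspecting the degree-$0$ component of the product ($a^0\otimes b^0=1$); and invertibility follows from nilpotency of $m:=\mathbf a-\mathbb 1(n)$ (namely $m^{\otimes(n+1)}=0$ in $T^{(n)}(E)$ because $m$ has vanishing degree-$0$ component), together with the telescoping identity $(\mathbb 1+m)\otimes\sum_{k=0}^n(-1)^k m^{\otimes k}=\mathbb 1+(-1)^n m^{\otimes(n+1)}=\mathbb 1$, and symmetrically on the other side. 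Your handling of $n=\infty$ by degree-by-degree convergence of the Neumann series (only the terms with $k\le d$ contribute to degree $d$) is also the right observation. One small notational slip: you write that $m$ lies in ``$B_n(E)/B_n$ in the truncated sense,'' but in the paper's notation $B_n(E)$ is the ideal of series vanishing through level $n$, and $m$ only vanishes at level $0$; what you mean, and what your subsequent reasoning actually uses, is that $m$ has zero constant term, so powers of $m$ shift degrees upward and $m^{\otimes(n+1)}$ dies in the degree-$n$ quotient. The correction of this wording would make the write-up precise; the mathematics is sound.
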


Below, we will introduce control functions and prove a lemma about them.

\subsection{Controls and rough paths}
We now introduce controls, $p$-variation bounds, and the resulting rough-path spaces, together with the extension machinery that will be used later.

\begin{definition} We set $\triangle_J:=\{(s,t)\in J^2 \:| \: s\le t \}$. \end{definition}	

\begin{definition}[Controls, Def. 1.9 in \cite{stflour}] \label{def:controls}
	A control function, or control, on $J$ is a continuous non-negative function $\omega$ on $\triangle_J$ which is super-additive in the sense that
$$
\omega(s, u)+\omega(u, t) \leqslant \omega(s, t) \quad \forall s \leqslant u \leqslant t \in J,
$$
and for which $\omega(t, t)=0$ for all $t \in J$.
\end{definition}

\begin{lemma}[Controls are convex cone] \label{lem: controls are a convex cone}
The space given by $$\{\omega\, | \,\omega \text{ is a control over } J\}$$ is a convex cone.
\end{lemma}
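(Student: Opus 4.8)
The statement to prove is that the set of control functions on $J$ forms a convex cone. The plan is to verify the two defining closure properties of a convex cone: closure under nonnegative scalar multiplication, and closure under addition (equivalently, under convex combinations once scaling is established).

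First I would recall that being a control means three things: continuity on $\triangle_J$, nonnegativity, super-additivity $\omega(s,u)+\omega(u,t)\le\omega(s,t)$ for $s\le u\le t$, and the diagonal vanishing $\omega(t,t)=0$. For the scaling part, fix a control $\omega$ and $\lambda\ge0$; then $\lambda\omega$ is plainly continuous and nonnegative, $(\lambda\omega)(t,t)=\lambda\cdot0=0$, and super-additivity is preserved because multiplying the inequality $\omega(s,u)+\omega(u,t)\le\omega(s,t)$ by $\lambda\ge0$ keeps the direction of the inequality. For the additivity part, given two controls $\omega_1,\omega_2$, the sum $\omega_1+\omega_2$ is continuous and nonnegative as a sum of such functions, vanishes on the diagonal since both summands do, and is super-additive by adding the two super-additivity inequalities $\omega_i(s,u)+\omega_i(u,t)\le\omega_i(s,t)$ for $i=1,2$.

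Combining these two observations shows that the set is closed under finite nonnegative linear combinations, which is exactly the definition of a convex cone (one should note the set is nonempty, e.g. $\omega\equiv0$ is a control, or $\omega(s,t)=t-s$). I would also remark that one may phrase it as: the set is a subset of the real vector space $C(\triangle_J,\mathbb{R})$, it is stable under $+$ and under multiplication by scalars in $[0,\infty)$, hence a convex cone.

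I do not anticipate any genuine obstacle here; this is a routine verification. The only mild point of care is making sure the scalar is restricted to $\lambda\ge0$ (negative scalars would break both nonnegativity and the super-additivity direction), and that "convex cone" is being used in the sense of a set closed under nonnegative linear combinations rather than requiring it to be a linear subspace.

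\begin{proof}
Write $\mathcal{C}$ for the set of controls on $J$. It is nonempty, since $\omega(s,t):=t-s$ is easily checked to be a control. We show $\mathcal{C}$ is closed under addition and under multiplication by nonnegative scalars; this is precisely the statement that $\mathcal{C}$ is a convex cone inside the real vector space $C(\triangle_J,\mathbb{R})$.

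\textit{Closure under nonnegative scaling.} Let $\omega\in\mathcal{C}$ and $\lambda\ge0$. Then $\lambda\omega$ is continuous and nonnegative, and $(\lambda\omega)(t,t)=\lambda\,\omega(t,t)=0$ for all $t\in J$. For super-additivity, let $s\le u\le t$ in $J$; multiplying $\omega(s,u)+\omega(u,t)\le\omega(s,t)$ by $\lambda\ge0$ preserves the inequality, giving $(\lambda\omega)(s,u)+(\lambda\omega)(u,t)\le(\lambda\omega)(s,t)$. Hence $\lambda\omega\in\mathcal{C}$.

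\textit{Closure under addition.} Let $\omega_1,\omega_2\in\mathcal{C}$. Then $\omega_1+\omega_2$ is continuous and nonnegative, and $(\omega_1+\omega_2)(t,t)=0+0=0$ for all $t\in J$. For $s\le u\le t$ in $J$, adding the two inequalities $\omega_i(s,u)+\omega_i(u,t)\le\omega_i(s,t)$, $i=1,2$, yields
\[
(\omega_1+\omega_2)(s,u)+(\omega_1+\omega_2)(u,t)\le(\omega_1+\omega_2)(s,t).
\]
Hence $\omega_1+\omega_2\in\mathcal{C}$.

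Combining the two closure properties, $\mathcal{C}$ is stable under all finite combinations $\sum_i \lambda_i\omega_i$ with $\lambda_i\ge0$ and $\omega_i\in\mathcal{C}$; together with $\mathcal{C}\ne\emptyset$ this shows $\mathcal{C}$ is a convex cone.
\end{proof}
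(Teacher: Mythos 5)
Your proof is correct and follows essentially the same route as the paper's own (brief) argument: a direct verification that a nonnegative combination $a\omega+b\tilde\omega$ inherits continuity, nonnegativity, the vanishing diagonal, and super-additivity. The extra details you supply (splitting into scaling and addition, noting nonemptiness) are fine and do not change the substance.
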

\begin{proof}[Proof sketch]
If $\omega,\tilde\omega$ are controls and $a,b\ge0$, then $a\omega+b\tilde\omega$ inherits continuity, non-negativity, and the null diagonal, and remains super-additive by a triangle-inequality argument (using that $\omega$ and $\tilde\omega$ are super-additive); hence,  $a\omega+b\tilde\omega$ is a control and the set of controls is a convex cone.
\end{proof}

\begin{definition}[Multiplicative functional, drawn from \cite{stflour}]
	\label{def: multiplicative functional}
	Let $n \geqslant 1$ be an integer. Let $X:\triangle_J \longrightarrow T^{(n)}(V)$ be a continuous map. For each $(s, t) \in \triangle_J$, denote by $X_{s, t}$ the image by $X$ of $(s, t)$ and write
	\vspace{-5pt}        
	$$
	X_{s, t}=\left(X_{s, t}^0, X_{s, t}^1, \ldots, X_{s, t}^n\right) \in \mathbb{R} \oplus V \oplus V^{\otimes 2} \oplus \ldots \oplus V^{\otimes n}.
	$$
	The function $X$ is called a multiplicative functional of degree $n$ in $V$ if $X_{s, t}^0=1$ for all $(s, t) \in \triangle_J$ and
	\vspace{-8pt}
	\begin{equation}
		X_{s, u} \otimes X_{u, t}=X_{s, t} \quad \forall s, u, t \in J, \quad s \leqslant u \leqslant t .
	\end{equation}
\end{definition}		

\begin{remark}We will refer to the space of continuous functionals from $\triangle_J$ into $T((V))$ by $ C_0\left(\triangle_J, T((V))\right) $, whereas we will call $ C_0^{\operatorname{mult}} \left(\triangle_J, T((V))\right) $ the subset of $ C_0\left(\triangle_J, T((V))\right) $ that contains all of the multiplicative functionals.
    
\end{remark}

The following theorem is due to \cite{Hara_2010} but an earlier form of it was first used in \cite{lyons1998differential} (with a less sharp $1/p^2$ factor instead of $1/p$) to prove the extension theorem (which we reproduce as Theorem \ref{thm:extension-theorem} in this work).
\begin{theorem}[Neo-classical Inequality]
For any $p \in[1, \infty), n \in \mathbb{N}$ and $s, t \geq 0$,

$$
\frac{1}{p} \sum_{i=0}^n \frac{s^{\frac{i}{p}} t^{\frac{n-i}{p}}}{\left(\frac{i}{p}\right)!\left(\frac{n-i}{p}\right)!} \leq \frac{(s+t)^{\frac{n}{p}}}{\left(\frac{n}{p}\right)!} .
$$
\end{theorem}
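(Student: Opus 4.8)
The plan is to prove the neo-classical inequality following the sharp argument of Hara and Hino \cite{Hara_2010}; I sketch its structure and indicate where the real difficulty lies. Write $\gamma:=1/p\in(0,1]$ and set $a!:=\Gamma(a+1)$ for real $a\ge0$. Both sides are homogeneous of degree $n\gamma$ under $(s,t)\mapsto(\lambda s,\lambda t)$, so one may normalise to $s+t=1$; the boundary cases $s=0$ or $t=0$ leave only one surviving term and reduce to $\gamma\le1$, while $\gamma=1$ is the binomial theorem with equality, so the content is the regime $0<\gamma<1$, $s,t>0$. The base case $n=0$ is $\tfrac1p\le1$, and $n=1$ is the elementary inequality $\gamma(s^{\gamma}+t^{\gamma})\le(s+t)^{\gamma}$, which can be checked by hand.

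The natural engine for an induction on $n$ is a fractional-integral recursion for
\[
T_n(s,t):=\sum_{i=0}^{n}\frac{s^{i\gamma}\,t^{(n-i)\gamma}}{\Gamma(i\gamma+1)\,\Gamma((n-i)\gamma+1)} ,
\]
the claim being exactly $T_n(s,t)\le (s+t)^{n\gamma}/\bigl(\gamma\,\Gamma(n\gamma+1)\bigr)$. Peeling one fractional power off $s$ in each term with $i\ge1$, via the Riemann--Liouville identity that writes $x^{\alpha+\beta}/\Gamma(\alpha+\beta+1)$ as the order-$\alpha$ fractional integral of $u^{\beta}/\Gamma(\beta+1)$, gives
\[
T_n(s,t)=\frac1{\Gamma(\gamma)}\int_0^{s}(s-u)^{\gamma-1}\,T_{n-1}(u,t)\,du\;+\;\frac{t^{n\gamma}}{\Gamma(n\gamma+1)} ,
\]
the last term being the $i=0$ summand. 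One cannot, however, simply substitute the inductive bound $T_{n-1}(u,t)\le (u+t)^{(n-1)\gamma}/\bigl(\gamma\Gamma((n-1)\gamma+1)\bigr)$ under the integral: this drags a factor $\gamma^{-1}>1$ through and the resulting estimate already overshoots the target at $n=1$ for a range of $s,t$ (e.g.\ $p=2$, $s=2$, $t=1$). The recursion must instead be fed a sharper input that remembers \emph{where} the slack in the degree-$(n-1)$ bound lives — for instance a two-parameter strengthening of the inductive hypothesis that is exact on the face $u=0$, where $T_{n-1}(0,t)=t^{(n-1)\gamma}/\Gamma((n-1)\gamma+1)$ carries no $\gamma^{-1}$ — or $T_n$ must be handled globally, e.g.\ by applying Hankel's formula $1/\Gamma(z)=\tfrac1{2\pi i}\int_{\mathcal H}e^{\zeta}\zeta^{-z}\,d\zeta$ to each Gamma factor, which collapses the sum over $i$ into a finite geometric sum inside a contour integral that can then be estimated directly. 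Either route reduces the problem, after a change of variables, to a one-dimensional inequality for an incomplete-Beta-type integral.

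The hard part is exactly that one-dimensional estimate, and more precisely extracting from it the \emph{exact} constant $\gamma^{-1}=p$: a direct induction with any crude slack fails to close (as the $n=1$ computation above already shows), and even Lyons' original argument only recovers the non-sharp $1/p^{2}$ factor \cite{lyons1998differential}; the sharp $1/p$ requires the essentially optimal treatment of the remainder integral carried out in \cite{Hara_2010}, phrased there as a fractional Taylor expansion with an optimally bounded remainder. Everything else — the homogeneity normalisation, the boundary, $\gamma=1$ and $n\le1$ cases, the derivation of the recursion, and the reduction to the one-dimensional inequality — is routine bookkeeping.
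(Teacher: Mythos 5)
The paper does not contain a proof of this theorem: it is stated as a known result, attributed to Hara and Hino \cite{Hara_2010}, with the remark that Lyons' original argument in \cite{lyons1998differential} yields only the weaker $1/p^2$ constant. There is therefore no internal proof to compare your attempt against; both the paper and you treat the sharp inequality as external input.

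Your sketch is an accurate description of the Hara--Hino strategy and its obstructions. The homogeneity normalisation, the boundary faces, the $\gamma=1$ binomial case, and the $n\le 1$ base cases are all handled correctly; the fractional-integral recursion
\[
T_n(s,t)=\frac1{\Gamma(\gamma)}\int_0^{s}(s-u)^{\gamma-1}\,T_{n-1}(u,t)\,du+\frac{t^{n\gamma}}{\Gamma(n\gamma+1)}
\]
follows correctly from the Riemann--Liouville identity, and your observation that the naive inductive substitution fails is right: for $n=1$ it reduces to $s^\gamma+\gamma t^\gamma\le(s+t)^\gamma$, which indeed fails at $\gamma=1/2$, $s=2$, $t=1$, where $\sqrt2+\tfrac12\approx 1.914>\sqrt3\approx 1.732$. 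But what you have is a roadmap, not a proof: the entire substance of the theorem---that the sharp constant $1/p$, and not Lyons' $1/p^2$, survives the recursion---is concentrated in the one-dimensional estimate you explicitly defer to \cite{Hara_2010}. Pointing at a strengthened inductive hypothesis or at the Hankel-contour collapse names where the work lives without doing it. For the purposes of this paper, which also invokes the inequality as a black box, that scope is appropriate, and you should be commended for being explicit about it; as a freestanding proof of the statement, the gap is exactly the step you label as the hard part.
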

\begin{definition}[$\beta(p)$] \label{def:beta} Given $p\ge1$, we set

$$
\begin{aligned}
&\beta(p)=p\left(1+\sum_{r=3}^{\infty}\left(\frac{2}{r-2}\right)^{\frac{\lfloor p\rfloor+1}{p}}\right).
\end{aligned}
$$
    
\end{definition}
Henceforth, we use the notation $x!:=\Gamma(x+1)$.

\begin{definition}[Finite $p$-variation, drawn from \cite{stflour}]\label{def:finite p-var}
	Let $p \geqslant 1$ be a real number and $n \geqslant 1$ be an integer. Let $\omega: \triangle_J \to [0,\infty)$ be a control and $K\ge0$ be a constant. Let $X: \triangle_J \to T^{(n)}(V)$ be a functional. We say that $X$ has finite $p$-variation on $\triangle_J$ controlled by $\omega$ with constant $K$ if
	\begin{equation}\label{eqn:finite-p-variation}
	    \left\|X_{s,t}^i\right\| \leqslant K\frac{\omega(s,t)^{\frac{i}{p}}}{\beta\cdot\left(\frac{i}{p}\right)!} \quad \forall i = 1, \ldots, n,\ \forall(s,t) \in \triangle_J.
	\end{equation}
where $\beta:=\beta(p)$.

In general, we say that $X$ has finite $p$-variation, without specifying the control $\omega$, if there exists some control $\omega$ such that Equation~\eqref{eqn:finite-p-variation} is satisfied. If we say that $X$ has finite $p$-variation on $\triangle_J$ controlled by $\omega$ without specifying any constant, then we mean that $X$ has finite $p$-variation on $\triangle_J$ controlled by $\omega$ with constant $1$.
\end{definition}


\begin{definition} \label{def:rough-path-space}
	Let $k \geqslant 1$ be an integer. Let $p \geqslant 1$  be a real number. Denote by $\Omega^{(k,p)}(V)$ the space of $k$-step multiplicative functionals of finite $p$-variation with values in $T^{(k)}(V)$. Set $\Omega^p(V) := \Omega^{(\lfloor p \rfloor, p)}$. We say $X$ is a $p$-rough path if $X \in \Omega^p(V)$. 
\end{definition}

We introduce below the set of group-like elements (further discussed in  \cite{stflour}  and \cite{reutenauer}).

\begin{definition}[Group-like elements, drawn from \cite{stflour}]
	An element $\mathbf{a} \in \widetilde  T((V))$ is said to be group-like if the evaluation mapping $\mathrm{ev}_{\mathbf{a}}: T\left(V^*\right) \longrightarrow \mathbb{R}$ defined by $\mathrm{ev}_{\mathbf{a}}\left(\mathbf{e}^*\right)=\mathbf{e}^*(\mathbf{a})$ is a morphism of algebras when $T\left(V^*\right)$ is endowed with the shuffle product, where $V^*$ denotes the dual space of $V$. The set of group-like elements is denoted by $G^{(*)}$.
\end{definition}

Below, we define the subspace $WG\Omega^p(V)\subset\Omega^p(V)$ of so-called weakly geometric $p$-rough paths.

\begin{definition}\label{def: weakly geometric RPs}
Let $p \ge1 $ be a real number and $k$ be a natural number. Let $G^{(k)}(V):=\pi_k(G^{(*)})\subset T^{(k)}(V)$. We say $X\in WG\Omega^{(k,p)}(V)$ if $X\in\Omega^{(k,p)}(V)$ with $\mathrm{Im}(X)\subset G^{(k)}(V)$.

\end{definition}

The following theorem is one of the fundamental results in \cite{stflour}.
\begin{theorem}[Extension theorem, drawn from \cite{stflour}]
	\label{thm:extension-theorem}
	Let $p \geqslant 1$ be a real number and $n \geqslant 1$ an integer. Let $X: \triangle_J \longrightarrow T^{(n)}(V \, )$ be a multiplicative functional with finite $p$-variation controlled by a control $\omega$. Assume that $n \geqslant \lfloor p \rfloor$. Then there exists a unique extension of $X$ to a multiplicative functional $\triangle_J \longrightarrow T((V))$ which possesses finite $p$-variation.\\
	More precisely, for every $m \geqslant \lfloor p \rfloor + 1$, there exists a unique continuous function $X^m: \triangle_J \longrightarrow V^{\, \otimes m}$ such that
	$$
	(s, t) \mapsto X_{s, t}=\left(\, 1, X_{s, t}^1, \ldots, X_{s, t}^{\lfloor p\rfloor}, \ldots, X_{s, t}^{ m}, \ldots\right) \in T((V))
	$$
	is a multiplicative functional with finite $p$-variation controlled by $\omega$. By this we mean that
	$$
	\left\|X_{s, t}^{ i}\right\| \leqslant \frac{\omega(s, t)^{\frac{i}{p}}}{\beta\cdot\left(\frac{i}{p}\right)!} \quad \forall i \geqslant 1, \quad \forall(s, t) \in \triangle_J \text{,}
	$$
where $\beta=\beta(p)$.
\end{theorem}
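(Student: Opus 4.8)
The plan is to prove existence and uniqueness by induction on the truncation level, reducing the passage from level $m-1$ to level $m$ to an additive (first-order) sewing problem whose defect is controlled by $\omega^{m/p}$ with $m/p>1$; the inequality $\lfloor p\rfloor+1>p$ is exactly what makes both the construction and the rigidity statement go through.

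\emph{Uniqueness.} Suppose $X$ and $\widetilde X$ are two multiplicative extensions of finite $p$-variation and let $m$ be the smallest level at which $X^m\neq\widetilde X^m$; since both restrict to the prescribed functional up to level $n$, necessarily $m\ge n+1\ge\lfloor p\rfloor+1$. Writing multiplicativity at level $m$ for each functional and subtracting, the lower-degree terms cancel (they coincide), so $W_{s,t}:=X^m_{s,t}-\widetilde X^m_{s,t}$ is additive: $W_{s,t}=W_{s,u}+W_{u,t}$ for all $s\le u\le t$. Both $X^m$ and $\widetilde X^m$ satisfy $\|\cdot\|_{s,t}\le \omega(s,t)^{m/p}/(\beta\,(m/p)!)$ with $m/p>1$, so for any partition $D=(s=r_0<\dots<r_N=t)$,
\[
\|W_{s,t}\|\le\sum_{i=1}^{N}\|W_{r_{i-1},r_i}\|\le \frac{2}{\beta\,(m/p)!}\Bigl(\max_{i}\omega(r_{i-1},r_i)\Bigr)^{m/p-1}\omega(s,t),
\]
which tends to $0$ as $\mathrm{mesh}(D)\to0$ by continuity of $\omega$; hence $W\equiv0$, contradicting the choice of $m$.

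\emph{Existence.} Assume inductively that $X$ has been extended to a multiplicative functional obeying the stated bound up to level $m-1\ge\lfloor p\rfloor$ (the base case $m-1=n$ is the hypothesis). Put $\mu_{s,u,t}:=\sum_{j=1}^{m-1}X^{j}_{s,u}\otimes X^{m-j}_{u,t}\in V^{\otimes m}$; by admissibility of the norms, super-additivity of $\omega$ and the neo-classical inequality, $\|\mu_{s,u,t}\|\le \beta^{-2}\sum_{j=1}^{m-1}\omega(s,t)^{m/p}/((j/p)!\,((m-j)/p)!)\le p\,\omega(s,t)^{m/p}/(\beta^{2}(m/p)!)$. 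For a partition $D$ of $[s,t]$ set $\widehat X_{a,b}:=(1,X^{1}_{a,b},\dots,X^{m-1}_{a,b},0)\in T^{(m)}(V)$ and
\[
X^{m}_{s,t}(D):=\operatorname{pr}_m\bigl(\widehat X_{r_0,r_1}\otimes\cdots\otimes \widehat X_{r_{N-1},r_N}\bigr).
\]
Since $\widehat X$ agrees with the multiplicative $X$ in all degrees $<m$, inserting one point $u$ into an interval $[r_{k-1},r_k]$ of $D$ changes this quantity by exactly $\mu_{r_{k-1},u,r_k}$ (all other contributions overshoot degree $m$), of size $\le p\,\omega(r_{k-1},r_k)^{m/p}/(\beta^{2}(m/p)!)$; hence, for a refinement $D'\supset D$,
\[
\|X^{m}_{s,t}(D)-X^{m}_{s,t}(D')\|\le \frac{p}{\beta^{2}(m/p)!}\Bigl(\max_i\omega(r_{i-1},r_i)\Bigr)^{m/p-1}\omega(s,t)\ \to\ 0 \quad\text{as }\mathrm{mesh}(D)\to0,
\]
so $\{X^{m}_{s,t}(D)\}_D$ is a Cauchy net (compare two partitions to their common refinement); define $X^{m}_{s,t}$ as its limit. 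Concatenating a partition of $[s,u]$ with one of $[u,t]$ and passing to the limit gives the exact relation $X^{m}_{s,t}=X^{m}_{s,u}+X^{m}_{u,t}+\mu_{s,u,t}$, which yields at once the continuity of $X^m$ and the single missing multiplicativity identity, so $(1,X^{1},\dots,X^{m})$ is multiplicative. Iterating over $m>n$ and taking the union of the constructed levels produces the required multiplicative functional $\triangle_J\to T((V))$.

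\emph{The level-$m$ bound, and the main obstacle.} It remains — and this constant bookkeeping is the genuinely delicate point — to carry the sharp estimate $\|X^{m}_{s,t}\|\le\omega(s,t)^{m/p}/(\beta\,(m/p)!)$ to the new level. One takes a partition $D$ and removes interior points one at a time: when $\nu$ sub-intervals remain there are $\nu-1$ interior points, and since $\sum_k\omega(r_{k-1},r_{k+1})\le 2\omega(s,t)$ at least one $r_k$ has $\omega(r_{k-1},r_{k+1})\le \tfrac{2}{\nu-1}\omega(s,t)$; removing it changes $X^m_{s,t}(D)$ by $\mu_{r_{k-1},r_k,r_{k+1}}$, which the neo-classical inequality bounds by $p\,\omega(r_{k-1},r_{k+1})^{m/p}/(\beta^2(m/p)!)$. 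Iterating down to the one point $u_\ast$, for which $X^m_{s,t}(\{s,u_\ast,t\})=\mu_{s,u_\ast,t}$, and summing gives
\[
\|X^{m}_{s,t}\|\le \frac{p}{\beta^{2}(m/p)!}\Bigl(1+\sum_{k\ge2}\bigl(\tfrac{2}{k}\bigr)^{m/p}\Bigr)\omega(s,t)^{m/p}.
\]
Since $m\ge\lfloor p\rfloor+1$ and $2/k\le1$ for $k\ge2$, the parenthesised factor is at most $1+\sum_{k\ge2}(2/k)^{(\lfloor p\rfloor+1)/p}\le\beta(p)/p$ by Definition~\ref{def:beta}, whence $\|X^{m}_{s,t}\|\le \omega(s,t)^{m/p}/(\beta\,(m/p)!)$ and the induction closes. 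I expect essentially all the work to sit in this last step — matching the indices of the neo-classical sum with the series defining $\beta(p)$ and handling the extremal sub-interval at each removal; everything else is super-additivity of $\omega$ and telescoping.
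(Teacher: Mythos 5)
The paper states this extension theorem as a result quoted from \cite{stflour} and gives no internal proof, so there is nothing in the paper to compare against; your argument is, in substance, the standard Lyons proof from that source (level-by-level induction, zero-padded partition products, removal of the point minimising $\omega(r_{k-1},r_{k+1})$ via super-additivity, the neo-classical inequality, and the $\beta(p)$ bookkeeping that closes the induction because $m\ge\lfloor p\rfloor+1$), and it is correct. Two cosmetic points only: for uniqueness you should allow the second extension to be controlled by a different control and constant (fixed by adding the controls and absorbing constants before your additivity argument), and the refinement estimate is cleanest if you group the inserted points by the intervals of $D$ and apply your point-removal bound inside each interval — the precise constant you wrote there is slightly too small, but any constant times $\bigl(\max_i\omega(r_{i-1},r_i)\bigr)^{m/p-1}\omega(s,t)$ suffices, so nothing in the proof is affected.
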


We would like to apply Theorem \ref{thm:extension-theorem}, thus we define here below the function $\operatorname{Ext}(\cdot)$, which achieves exactly so.

\begin{definition}
	Let $p \geqslant 1$ be a real number and $k \geqslant \lfloor p \rfloor $ be an integer. Define $\operatorname{Ext}_{(k,p)}: \Omega^{(k, p)}(V) \to \Omega^{(k+1, p)}(V)$ to be the map taking $X \in \Omega^{(k, p)}(V)$ to its unique extension $\operatorname{Ext}_{(k,p)}(X) \in \Omega^{(k+1, p)}(V)$ given by Theorem \ref{thm:extension-theorem}. 
\end{definition}

\subsection{Sewing and almost rough paths}
Next we recall the ‘almost’ framework and the rough sewing lemma, which upgrades almost multiplicative functionals to genuine multiplicative ones. We also record basic invariance properties of sewing that will be reused later.

\begin{definition}[Almost Multiplicative, drawn from \cite{stflour}]\label{def: almost multiplicativity}
Let $n\ge1$ be an integer, $\theta > 1 $ be a real number, and $X=(1,X^1,...,X^n) : \triangle_J \longrightarrow T^{(n)}(V)$ be a functional. Then we say $X$ is $\theta$-almost multiplicative of degree $n$ if there exist a constant  $ K\ge0 $ and a control $\omega$ such that, for all $s  \leqslant u \leqslant t $ with $s,u,t\in J$,$$
	\left\| \left( X_{su} \otimes X_{ut} - X_{st}  \right)^i \right\| \leqslant K\omega(s,t)^\theta \quad  \quad \forall i\in\{1, \ldots, n\} 
	\text{.}
	$$
If we wish to specify a control $\omega$ and a constant $K\ge0$ such that the bound is satisfied, we say that $X$ is $\theta$-almost multiplicative of degree $n$ controlled by $\omega$ and with constant $K$. If we say that $X$ is controlled by $\omega$ without specifying a constant, we mean that the constant is $1$.
\end{definition}    

\begin{definition}
Let $p\in\mathbb R_{\ge1}$, $\theta\in\mathbb R_{>1}$ $n\in\mathbb N$, $\omega$ be a control, and $K\in\mathbb R_{\ge0}$. Given a functional $X : \triangle_J \longrightarrow T^{(n)}(V)$, we say that $X$ is $\theta$-almost multiplicative of degree $n$ and of finite $p$-variation controlled by $\omega$ with constant $K$, if both (i) $X$ is $\theta$-almost multiplicative of degree $n$ controlled by $\omega$ with constant $K$, and (ii) $X$ has finite $p$-variation controlled by $\omega$ and with constant $K$. 
\end{definition}

\begin{definition}\label{def: almost rough path}
	Given $p \geqslant 1 \ $ and $   \theta > 1$, we say that $X : \triangle_J \longrightarrow T^{(\lfloor p \rfloor)}(V)$ is a $\theta$-almost $p$-rough path controlled by $\omega$ with constant $K$ if $X$ is $\theta$-almost multiplicative and has finite $p$-variation controlled by $\omega$ with constant $K$. 

\end{definition} 


\begin{definition}\label{def: Omega^(am, (k,p))_omega,theta}
	Let $p \in \mathbb{R}_{\geqslant 1}$ and $\theta \in \mathbb{R}_{> 1}$, as well as $k\in\mathbb N$. Let $\omega: \triangle_J \to [0,\infty)$ be a control. We then define:

    \begin{small}
    $$
	\begin{aligned}
		\Omega_{\theta, \omega}^{\text{am},(k,p)}(V) & := \left\{X: \triangle_J \longrightarrow T^{(k)}(V) \left|   \begin{array}{l}
			\operatorname{\:\exists \,K \geqslant 0 \ s.t.\:X \ is \ a \ k-step\ \theta - almost    multi-} \\
			\operatorname{\: plicative functional \,of \,
             finite }\,p\operatorname{-variation}   \\
			\operatorname{   \, controlled \, by} \omega\text{ with constant $K$}
		\end{array} 
		\right. \right\} \\[6pt]
		\Omega_\omega^{(k,p)}(V) & :=
		\left\{X: \triangle_J \longrightarrow T^{(k)}(V) \left|   \begin{array}{l}
			\operatorname{\:\exists \,K \geqslant 0 \ s.t. \ X \: is \,a \ k-step multiplicative }  \\
		  \operatorname{ \;functional \,of \,
             finite }p \text{-variation } \\
			\:\operatorname{controlled  by}  \omega \text{ with constant }K
		\end{array} 
		\right. \right\}\\            
	\end{aligned}
	$$
    \end{small}

Moreover, we may write $\Omega_\omega^{\text{am},(k,p)}$ if we wish to fix the control $\omega$ but not commit to a choice of exponent, or $\Omega_\theta^{\text{am},(k,p)}$ if we wish to fix the exponent $\theta$ without fixing the control.
\end{definition}
\begin{remark}
	Note that in the notation $\Omega^{\text{am}, (k, p)}_{ \theta, \omega}$, while the dependence on $(k, p)$ and $\theta$ is ``exact'', an element of $\Omega^{\text{am}, (k, p)}_{\theta, \omega}$ is only controlled by $\omega$ ``up to a multiplicative constant $K$''.  
\end{remark}

\begin{remark}[Partitions notation]\label{rmk:partitions}
Throughout, when we write $D\subset[s,t]$ we mean a finite partition
$D=\{s=t_0<t_1<\cdots<t_n=t\}$ of the interval $[s,t]$. For such $D$, if
$Y$ is either a $T((V))-$valued or a $T^{(k)}(V)-$valued functional for some $k\in \mathbb N$, we use notation $Y^{D}:=Y_{t_0,t_1}\otimes\cdots\otimes Y_{t_{n-1},t_n}$. For $Y^D\in T^{(k)}(V)$ we write $Y^{D,j }:= \operatorname{pr}_j(Y^D)\in V^{\otimes j}$.

\end{remark}

The rough sewing lemma, stated below, first appeared in \cite{lyons1998differential}. Part (i) of the following theorem is directly quoted from \cite{stflour}, where it is stated as ``Theorem 4.3''. On the other hand, Part (ii) has been adapted from the ``upgrading statement", which is a separate claim found in the proof of Theorem 4.3 in \cite{stflour}.

\begin{theorem}[Rough Sewing Lemma]\label{thm:Rough-Sewing-Lemma}
	Let $p \geqslant 1$ and $\theta>1$ be real numbers and let $k\in\mathbb N$. Let $\omega: \triangle_J \longrightarrow[0,+\infty)$ be a control. Let $X=(1,X^1,...,X^{k}): \triangle_J \longrightarrow T^{(k)}(V)$ be a $\theta$-almost multiplicative functional of degree $k$ with finite $p$-variation controlled by $\omega$; i.e., $X\in\Omega^{\text{am},(k,p)}_{\theta, \omega}(V)$. Then, 
    
    (i) there exists a unique multiplicative functional of degree $k$ with finite $p$-variation $\bar{X}: \triangle_J \longrightarrow T^{(k)}(V)$ that satisfies\begin{equation}\label{bound almost mult. functionals}
		\sup_{\substack{S\leqslant s < t \leqslant T \\
				\text{s.t. }\omega(s,t)>0;\\i=1, \ldots, k 
                }} 
		\frac{\left\| \bar{X}_{s,t}^i - X_{s,t}^i \right\|}{\omega(s,t)^\theta} < +\infty;
	\end{equation}
i.e., $\bar{X}$ is the unique functional in $\Omega^{(k,p)}(V)$ satisfying this bound. Moreover, there exists a constant $K$ which depends only on $p, \theta$ and $\omega(S, T)$ such that the supremum above is smaller than $K$ and the $p$-variation of $\bar{X}$ is controlled by $\omega$ with constant $K$ (i.e., $\bar{X} \in \Omega^{(k,p)}_\omega(V)$).    

    (ii) The construction of $\bar{X}$ is as follows: set $X^{(0)} = X$, and for $m\in\{0,..., k-1\}$, given $X^{(m)}$, set
        \begin{equation} \label{eqn: construction of associated rough path}
(X^{(m+1)})^j = 
\begin{cases}
(X^{(m)})^j, 
   & \hspace{-0.17cm}\text{if } j \neq m+1,\\[6pt]
\displaystyle \lim_{\substack{D \subset [s,t]\\|D|\to 0}}
   \Bigl(1,\; (X^{(m)})^1,\; \ldots,\; (X^{(m)})^m,\; X^{m+1}\Bigr)^{D,\,m+1},
   & \hspace{-0.17cm}\text{if } j = m+1,
\end{cases}
        \end{equation}
         
        \vspace{-8pt}
        where $X^{(m)}$ is multiplicative up to step $m$ and has finite $p$-variation. Then, we set $\bar{X}:= X^{(k)}\in \Omega^{(k,p)}_\omega(V)$, which is multiplicative up to degree $k$ and has finite $p$-variation. 
\end{theorem}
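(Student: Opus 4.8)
My plan is to establish parts (i) and (ii) simultaneously by an induction on the truncation degree that follows verbatim the ``one level at a time'' recipe in \eqref{eqn: construction of associated rough path}, and then to dispatch uniqueness by a short separate argument. Concretely, I will prove by induction on $m\in\{0,\dots,k\}$ the combined assertion: $X^{(m)}$ is well-defined and continuous; it is multiplicative up to degree $m$; it has finite $p$-variation controlled by $\omega$ with some constant $K_m=K_m(p,\theta,\omega(S,T))$; it is still $\theta$-almost multiplicative at degrees $m+1,\dots,k$ with constant $K_m$; and $\|(X^{(m)})^i_{s,t}-X^i_{s,t}\|\le K_m\,\omega(s,t)^\theta$ for every $i\le m$. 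The case $m=0$ is exactly the hypothesis on $X=X^{(0)}$, and the passage $m=0\to1$ is precisely the classical (additive) sewing lemma applied to $X^1$; the inductive step is its tensor analogue.

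\textbf{The sewing step.} Fix $m$ and the partition-product notation of Remark~\ref{rmk:partitions}, and for $D\subset[s,t]$ put $\mu^D:=\bigl(1,(X^{(m)})^1,\dots,(X^{(m)})^m,X^{m+1}\bigr)^{D,\,m+1}$. The key algebraic observation is that, because $X^{(m)}$ is multiplicative up to degree $m$, deleting one interior point $t_i$ from $D$ changes $\mu^D$ by exactly $\Delta(X^{(m)})^{m+1}_{t_{i-1},t_i,t_{i+1}}$: all components of the defect of degree $\le m$ vanish, so only its degree-$(m+1)$ part survives, and the neighbouring partition products contribute solely through their degree-$0$ (unit) components. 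Combining this with the superadditivity ``point-dropping'' estimate (a partition of $[s,t]$ with $n-1$ interior points has an interior $t_i$ with $\omega(t_{i-1},t_{i+1})\le\tfrac{2}{n-1}\,\omega(s,t)$) and the level-$(m+1)$ almost-multiplicativity bound from the inductive hypothesis, this yields $\|\mu^D-(X^{(m)})^{m+1}_{s,t}\|\le C_\theta K_m\,\omega(s,t)^\theta$ with $C_\theta:=2^\theta\sum_{j\ge1}j^{-\theta}<\infty$, where $\theta>1$ is used. Splitting a common refinement $D_1\cup D_2$ over the sub-intervals of $D_1$ and applying the same identity on each upgrades this to $\|\mu^{D_1\cup D_2}-\mu^{D_1}\|\le C_\theta K_m\,|D_1|_\omega^{\,\theta-1}\,\omega(s,t)$, so $(\mu^D)_{|D|\to0}$ is Cauchy; the limit, called $(X^{(m+1)})^{m+1}_{s,t}$, exists and obeys $\|(X^{(m+1)})^{m+1}_{s,t}-(X^{(m)})^{m+1}_{s,t}\|\le C_\theta K_m\,\omega(s,t)^\theta$.

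\textbf{Closing the induction and uniqueness.} Since $X^{(m+1)}$ agrees with $X^{(m)}$ off degree $m+1$, multiplicativity up to degree $m$ is inherited; multiplicativity at degree $m+1$ is obtained by evaluating $\mu^D$ along partitions through a fixed point $u$, factoring over $[s,u]$ and $[u,t]$, using that a functional multiplicative up to degree $m$ has its degree-$\le m$ partition products equal to its direct values, and passing to the limit. The $p$-variation bound at degrees $\ne m+1$ is inherited, and at degree $m+1$ follows from $\|(X^{(m+1)})^{m+1}_{s,t}\|\le\|(X^{(m)})^{m+1}_{s,t}\|+C_\theta K_m\omega(s,t)^\theta$ after absorbing $\omega(s,t)^\theta$ into $\omega(s,t)^{(m+1)/p}$ via $\omega(s,t)\le\omega(S,T)$ (harmless since $(m+1)/p<1<\theta$ for $m+1\le\lfloor p\rfloor$). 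Preservation of $\theta$-almost multiplicativity at degrees $\ge m+2$ comes from expanding the degree-$j$ defect, isolating the terms that see degree $m+1$, and estimating the perturbation $C_\theta K_m\omega^\theta$ against the finite-$p$-variation bounds of the remaining factors (again using $\omega(s,t)\le\omega(S,T)$); this only enlarges the constant to some $K_{m+1}(p,\theta,\omega(S,T))$. Continuity of the new level follows from the uniform estimate and the degree-$(m+1)$ multiplicativity relation. Setting $\bar X:=X^{(k)}$ and noting that later steps never alter degree $i$, so $\bar X^i=(X^{(i)})^i$, the telescoped estimates give the supremum bound \eqref{bound almost mult. functionals} and the $p$-variation control of $\bar X$ with constants depending only on $p,\theta,\omega(S,T)$, which is (ii) together with existence in (i). For uniqueness, if $\bar X,\widetilde X\in\Omega^{(k,p)}(V)$ both satisfy \eqref{bound almost mult. functionals} then $\|\bar X^i-\widetilde X^i\|\le C\omega^\theta$; inducting on the degree $\ell$, if they agree below $\ell$ the multiplicativity cross-terms coincide, so $Z^\ell:=\bar X^\ell-\widetilde X^\ell$ is additive with $\|Z^\ell_{s,t}\|\le C\omega(s,t)^\theta$, and an additive map bounded by $\omega^\theta$ with $\theta>1$ vanishes (sum over a partition and let the mesh tend to $0$). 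Hence $\bar X=\widetilde X$.

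\textbf{Main obstacle.} The genuine content is the convergence of the products $\mu^D$ together with the $\omega^\theta$-estimate of the sewing step: this is exactly where superadditivity of $\omega$ and the assumption $\theta>1$ are essential, and it is also the place where one must be careful that refining a partition perturbs only the single tensor degree $m+1$. Everything else---that resewing degree $m+1$ keeps the lower degrees multiplicative, the higher degrees almost multiplicative, and the $p$-variation bounds intact, with constants under control---is bookkeeping about truncated tensor products, kept manageable precisely because each stage perturbs one degree at a time.
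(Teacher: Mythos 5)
Your argument reproduces the classical level-by-level sewing proof from Lyons~\cite{lyons1998differential} and the St.\ Flour notes~\cite{stflour}, which is exactly what the paper relies on by citation rather than reproving. The core mechanism is correct: the point-dropping identity works because $X^{(m)}$ is multiplicative up to degree $m$, so in $A\otimes(P-Q)\otimes B$ the middle factor has only a degree-$(m+1)$ component and the neighbours contribute only through degree $0$; the maximal-dissection bound $\min_i\omega(t_{i-1},t_{i+1})\le\tfrac{2}{n-1}\,\omega(s,t)$ with $\theta>1$ yields the $C_\theta$-estimate and the Cauchy property; multiplicativity at the new level follows by splitting partitions through a fixed midpoint; and uniqueness reduces, level by level, to the vanishing of an additive $V^{\otimes\ell}$-valued map dominated by $\omega^\theta$ with $\theta>1$.

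There is, however, one genuine gap, and you flag it yourself with the parenthetical ``harmless since $(m+1)/p<1<\theta$ for $m+1\le\lfloor p\rfloor$''. The statement allows an arbitrary $k\in\mathbb N$, not only $k\le\lfloor p\rfloor$ (the paper applies it with $k=\lfloor p\rfloor+1$, e.g.\ in Lemma~\ref{lem:ext-is-sewing}, and with arbitrary $k$ in Definition~\ref{def:global-sewing}). Your $p$-variation bound at the freshly sewn level $m+1$ rests on absorbing $\omega^\theta$ into $\omega^{(m+1)/p}$, which requires $\theta\ge(m+1)/p$. When $m+1>p\theta$, the inequality $\|(X^{(m+1)})^{m+1}\|\le\|X^{m+1}\|+C_\theta K_m\,\omega^\theta$ only delivers an $\omega^\theta$ bound, which is strictly worse than the required $\omega^{(m+1)/p}$ bound, so the inductive invariant that $X^{(m+1)}$ has finite $p$-variation controlled by $\omega$ is not established at that level. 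To repair this, one needs a supplementary argument: once $X^{(m+1)}$ is multiplicative up to degree $m+1$ and its first $\lfloor p\rfloor$ levels have finite $p$-variation, the extension theorem (Theorem~\ref{thm:extension-theorem}) produces a multiplicative functional with finite $p$-variation agreeing with $X^{(m+1)}$ in degrees $\le m$; by Lemma~\ref{lem:mult. func.}(a) the discrepancy at degree $m+1$ is additive, and it is dominated by $\omega^{\min\{\theta,(m+1)/p\}}$ with exponent $>1$ (since $m+1>\lfloor p\rfloor$ forces $(m+1)/p>1$), so it vanishes, identifying $(X^{(m+1)})^{m+1}$ with the extension and restoring the $p$-variation control. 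With that patch your induction closes for all $k$; without it, the proof is complete only for $k\le\lfloor p\rfloor$ or, more generally, whenever $\theta\ge k/p$.
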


\begin{definition} \label{def: sewing map mathscr S}
Take real numbers $p\ge1$ and $\theta >1$, a natural number $k\in \mathbb N$ and a control $\omega$. We define the function $\mathscr{S}^{(k,p)}_{ \theta, \omega}: \Omega_{ \theta, \omega}^{\text{am}, (k,p)} (V)\to \Omega_{\omega}^{(k,p)}(V)$ taking an $X \in \Omega_{ \theta, \omega}^{\text{am}, (k,p)}$ and mapping it to its unique $\bar{X} \in \Omega^{(k,p)}_{ \omega}(V)$, i.e. $\mathscr{S}^{(k,p)}_{ \theta, \omega}(X) := \bar{X}$ as per Theorem \ref{thm:Rough-Sewing-Lemma}. 
\end{definition}

\begin{remark}We call \(\mathscr{S}\) the “sewing” map. A formal parallel to the sewing map of \cite{gubinelli04} can be established, but we will not discuss this.
\end{remark}

\begin{lemma}[Control- and $p$-independence of the sewing map]\label{lem:control-independence-sewing}
Fix $k\in \mathbb N$. Let $X:\triangle_ J\to T^{(k)}(V)$ be a functional such that
\[
X\in \Omega^{\text{am},(k,p)}_{ \theta, \omega}(V)
\quad\text{and}\quad
X\in \Omega^{\text{am},(k,\tilde p)}_{\tilde\theta, \tilde\omega}(V)
\]
for some controls $\omega,\tilde\omega$, some real numbers $p,\tilde p\ge1$ and some exponents $\theta,\tilde\theta>1$ (in the sense of Definitions \ref{def: almost multiplicativity}--\ref{def: Omega^(am, (k,p))_omega,theta}).
Set
\[
\omega^\ast := \omega+\tilde\omega,
\qquad
\theta^\ast := \min\{\theta,\tilde\theta\},
\qquad
p^\ast := \max\{p,\tilde p\}.
\]
Then
\[
\mathscr{S}^{(k,p)}_{ \theta, \omega}(X) \;=\; \mathscr{S}^{(k,\tilde p)}_{\tilde \theta,\tilde \omega}(X)
\quad\text{in } T^{(k)}(V)\text{-valued multiplicative functionals.}
\]
In particular, both sewings lie in $\Omega^{(k,p^\ast)}(V)$ (and are equal to $\mathscr{S}^{(k,p^\ast)}_{\theta^\ast,\omega^\ast}(X))$. The sewn multiplicative functional depends only on $X$ and $k$, not on the choice of $p$ nor on the control $\omega$ nor the exponent $\theta$ used to witness that $X$ is an almost rough path.
\end{lemma}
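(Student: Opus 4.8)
The strategy is to route both sewings through a single common witness triple and invoke the uniqueness clause of the Rough Sewing Lemma (Theorem~\ref{thm:Rough-Sewing-Lemma}(i)) three times. Set $\omega^\ast=\omega+\tilde\omega$, $\theta^\ast=\min\{\theta,\tilde\theta\}$, $p^\ast=\max\{p,\tilde p\}$; by Lemma~\ref{lem: controls are a convex cone}, $\omega^\ast$ is a control, and super-additivity gives $\omega(s,t)\le\omega(s,t)+\tilde\omega(s,t)=\omega^\ast(s,t)\le\omega^\ast(S,T)<\infty$ for all $(s,t)\in\triangle_J$ (and likewise for $\tilde\omega$). The only elementary device used repeatedly is: for $a\ge b\ge 0$ one has $\omega(s,t)^a=\omega(s,t)^b\,\omega(s,t)^{a-b}\le C\,\omega^\ast(s,t)^b$ with $C=C(a,b,\omega^\ast(S,T))=(1+\omega^\ast(S,T))^{a-b}$, and the same with $\tilde\omega$ in place of $\omega$. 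First I would apply this with $(a,b)=(\theta,\theta^\ast)$ to the almost-multiplicativity bound of $X$, and with $(a,b)=(i/p,i/p^\ast)$ for $i=1,\dots,k$ to its finite $p$-variation bound — absorbing also the fixed factors $\beta(p^\ast)(i/p^\ast)!/(\beta(p)(i/p)!)$ — to conclude that $X$ is $\theta^\ast$-almost multiplicative of finite $p^\ast$-variation controlled by $\omega^\ast$, i.e. $X\in\Omega^{\text{am},(k,p^\ast)}_{\theta^\ast,\omega^\ast}(V)$, so that $\bar X^\ast:=\mathscr S^{(k,p^\ast)}_{\theta^\ast,\omega^\ast}(X)$ is defined and lies in $\Omega^{(k,p^\ast)}(V)$.

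Next I would show $\mathscr S^{(k,p)}_{\theta,\omega}(X)=\bar X^\ast$. Write $\bar X:=\mathscr S^{(k,p)}_{\theta,\omega}(X)$: it is multiplicative of degree $k$, has finite $p$-variation controlled by $\omega$ (hence, by the device above, finite $p^\ast$-variation controlled by $\omega^\ast$, so $\bar X\in\Omega^{(k,p^\ast)}(V)$), and satisfies $\|\bar X^i_{s,t}-X^i_{s,t}\|\le M\,\omega(s,t)^\theta$ for some finite $M$ and all $(s,t)$ with $\omega(s,t)>0$. The point that needs a word of care is that the supremum in Theorem~\ref{thm:Rough-Sewing-Lemma}(i) only ranges over pairs with $\omega(s,t)>0$, whereas we want to re-express the error in terms of $\omega^\ast$, which may be positive where $\omega$ vanishes. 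But since both $\bar X$ and $X$ have finite $p$-variation controlled by $\omega$, one gets $\bar X^i_{s,t}=X^i_{s,t}=0$ whenever $\omega(s,t)=0$ and $i\ge 1$; hence $\|\bar X^i_{s,t}-X^i_{s,t}\|\le M\,\omega(s,t)^\theta$ in fact holds for every $(s,t)\in\triangle_J$, and applying the device with $(a,b)=(\theta,\theta^\ast)$ gives $\|\bar X^i_{s,t}-X^i_{s,t}\|\le C\,M\,\omega^\ast(s,t)^{\theta^\ast}$. Thus $\bar X$ is a degree-$k$ multiplicative functional of finite $p^\ast$-variation satisfying the $(\omega^\ast,\theta^\ast)$-estimate of Theorem~\ref{thm:Rough-Sewing-Lemma}(i), and the uniqueness clause of that theorem, applied with parameters $(k,p^\ast,\theta^\ast,\omega^\ast)$, forces $\bar X=\bar X^\ast$.

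Interchanging the roles of $(p,\theta,\omega)$ and $(\tilde p,\tilde\theta,\tilde\omega)$, the same argument yields $\mathscr S^{(k,\tilde p)}_{\tilde\theta,\tilde\omega}(X)=\bar X^\ast$; hence $\mathscr S^{(k,p)}_{\theta,\omega}(X)=\mathscr S^{(k,\tilde p)}_{\tilde\theta,\tilde\omega}(X)=\mathscr S^{(k,p^\ast)}_{\theta^\ast,\omega^\ast}(X)$, which also gives the ``in particular'' clause since $\bar X^\ast\in\Omega^{(k,p^\ast)}(V)$. The final sentence of the statement is then immediate: given any two witnesses that $X$ is an almost rough path, the above shows the corresponding sewings agree, so the sewn functional is a function of $X$ and $k$ alone. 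I do not expect any serious obstacle here; the one genuine subtlety is the null-set-of-$\omega$ bookkeeping just described (one must note the approximation error vanishes where $\omega=0$ before re-expressing the bound in terms of $\omega^\ast$), and the rest is the routine exponent- and constant-juggling permitted by super-additivity and the finiteness of $\omega^\ast(S,T)$.
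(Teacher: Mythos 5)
Your proposal is correct and follows essentially the same route as the paper: upgrade $X$ (and each sewn functional) to the common witness triple $(p^\ast,\theta^\ast,\omega^\ast)$ via super-additivity and the finiteness of $\omega^\ast(S,T)$, then invoke the uniqueness clause of Theorem~\ref{thm:Rough-Sewing-Lemma} with those parameters; the paper runs the same comparison after a without-loss-of-generality reduction to $p^\ast=p$ rather than by symmetrizing. Your explicit remark that the closeness bound extends to pairs with $\omega(s,t)=0$ (since both $X^i_{s,t}$ and $\bar X^i_{s,t}$ vanish there) is a small but legitimate refinement that the paper passes over silently.
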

\begin{proof}
By Lemma \ref{lem: controls are a convex cone}, $\omega^\ast=\omega+\tilde\omega$ is a control.

\noindent\textbf{\boldmath\underline{Step 1: $X\in \Omega^{\text{am},(k,p^\ast)}_{\theta^\ast,\omega^\ast}(V)$}}

Without loss of generality assume $p^\ast=p$ (otherwise swap the roles of $(p, \theta, \omega)$ and $(\tilde p,\tilde\theta, \tilde\omega)$; the argument is identical).
Since $X\in \Omega^{\text{am},(k,p)}_{ \theta, \omega}(V)$, there exists a constant $K\ge0$ such that for all $i\in\{1,\dots,k\}$ and all $(s,t)\in\triangle_J$,
\[
\|X^i_{st}\|
\le K\,\frac{\omega(s,t)^{\frac{i}{p}}}{\beta( p)\cdot(\frac{i}{p})!}
= K\,\frac{\omega(s,t)^{\frac{i}{p^\ast}}}{\beta( p)\cdot(\frac{i}{p^\ast})!}
\le K\,\frac{\omega^\ast(s,t)^{\frac{i}{p^\ast}}}{\beta( p)\cdot(\frac{i}{p^\ast})!}.
\]
Thus $X$ has finite $p^\ast$-variation controlled by $\omega^\ast$.

Next, let $\Delta(X)$ denote the multiplicative defect of $X$. Since $X\in \Omega^{\text{am},(k,p)}_{ \theta, \omega}(V)$, we have
\[
\|\Delta(X)^i_{s,u,t}\|\le K\,\omega(s,t)^{\theta},
\qquad s\le u\le t,\ \ i=1,\dots,k.
\]
Since $\theta\ge \theta^\ast$ and $\omega(s,t)\le \omega^\ast(S,T)$, we obtain
\[
\omega(s,t)^{\theta}
\le \omega^\ast(S,T)^{\theta-\theta^\ast}\,\omega(s,t)^{\theta^\ast}
\le \omega^\ast(S,T)^{\theta-\theta^\ast}\,\omega^\ast(s,t)^{\theta^\ast}.
\]
Hence $\|\Delta(X)^i_{s,u,t}\|\le K^\ast\,\omega^\ast(s,t)^{\theta^\ast}$ for $K^\ast:=K\,\omega^\ast(S,T)^{\theta-\theta^\ast}$, and therefore
\[
X\in \Omega^{\text{am},(k,p^\ast)}_{\theta^\ast,\omega^\ast}(V).
\]

\medskip
\noindent\textbf{\boldmath\underline{Step 2: Compare sewings via $(p^\ast,\omega^\ast,\theta^\ast)$}}

Define
\[
\bar X := \mathscr{S}^{(k,p)}_{ \theta, \omega}(X),
\qquad
\tilde X := \mathscr{S}^{(k,\tilde p)}_{\tilde\theta,\tilde\omega}(X),
\qquad
\hat X := \mathscr{S}^{(k,p^\ast)}_{\theta^\ast,\omega^\ast}(X),
\]
where $\hat X$ is well-defined by Step~1.

By Theorem \ref{thm:Rough-Sewing-Lemma}, $\bar X,\tilde X,\hat X$ are multiplicative and there exist
constants $C,\tilde C,\hat C\ge 0$ such that for all $(s,t)\in\triangle_J$ and $i=1,\dots,k$,
\[
\|\bar X^i_{s,t}-X^i_{s,t}\|\le C\,\omega(s,t)^{\theta},
\hspace{0.1cm}
\|\tilde X^i_{s,t}-X^i_{s,t}\|\le \tilde C\,\tilde\omega(s,t)^{\tilde\theta},
\hspace{0.1cm}
\|\hat X^i_{s,t}-X^i_{s,t}\|\le \hat C\,\omega^\ast(s,t)^{\theta^\ast}
\]
Using $\omega\le \omega^\ast$, $\tilde\omega\le \omega^\ast$, and $\theta,\tilde\theta\ge\theta^\ast$ (as in Step~1), we obtain bounds of the form
\[
\|\bar X^i_{s,t}-X^i_{s,t}\|\le C_1\,\omega^\ast(s,t)^{\theta^\ast},
\qquad
\|\tilde X^i_{s,t}-X^i_{s,t}\|\le C_2\,\omega^\ast(s,t)^{\theta^\ast},
\]
for suitable constants $C_1,C_2\ge0$.

We know that since we assumed $p^\ast=p$ without loss of generality, $\bar X$ is an element of $\Omega^{(k,p^\ast)}_{\omega^\ast}(V)$. Now, we would like to show that $\tilde  X$ has finite $p^\ast$-variation. Given that we assumed $p^\ast=p$ without loss of generality, either (a) $\tilde p=p$ (in which case the reasoning is immediate) or (b) $p>\tilde p$. In case (b), by the rough sewing lemma (\ref{thm:Rough-Sewing-Lemma}), since $X\in \Omega^{\text{am},(k,\tilde p)}_{\tilde\theta, \tilde\omega}(V)$, there exists a constant $\widetilde K\ge0$ such that $\tilde X= \mathscr{S}^{(k,\tilde p)}_{\tilde\theta,\tilde\omega}(X)$ has finite $\tilde p $-variation controlled by $\widetilde K\tilde\omega$. With this, we have that, for each $i=1,\dots,k$ and $(s,t)\in\triangle_J$,
\[
\|\tilde X^i_{s,t}\|
\le \widetilde K\,\frac{\omega^\ast(s,t)^{\frac{i}{\tilde p}}}{\beta(\tilde p)\cdot(\frac{i}{\tilde p})!} \le\widetilde K \omega^\ast (S,T)^{\frac{i}{\tilde p}- \frac{i}{ p}}\,\frac{\beta( p)\cdot(\frac{i}{ p})!}{\beta(\tilde p)\cdot(\frac{i}{\tilde p})!} \frac{\omega^\ast(s,t)^{\frac{i}{ p}}}{\beta(p)\cdot(\frac{i}{ p})!} \le C_0\frac{\omega^\ast(s,t)^{\frac{i}{ p}}}{\beta(p)\cdot(\frac{i}{ p})!}
    .
\]

where $C_0= \widetilde K\max_{1\le i\le k}\{  \omega^\ast (S,T)^{\frac{i}{\tilde p}- \frac{i}{ p}}\,\frac{\beta( p)\cdot(\frac{i}{ p})!}{\beta(\tilde p)\cdot(\frac{i}{\tilde p})!} \}$. Hence, in both cases (a) and (b), setting $C^*=\max\{\widetilde K,C_0\}$, we have that
$$\|\tilde  X^i_{s,t}\|\le C^*\frac{\omega^\ast(s,t)^{\frac{i}{ p^\ast}}}{\beta( p^\ast)\cdot(\frac{i}{p^\ast})!}.$$
Therefore, $\bar X$ and $\tilde X$ are both elements of $\Omega^{(k,p^\ast)}_{\omega^\ast}(V)$ satisfying the sewing-closeness condition to $X$ with respect to $(\omega^\ast,\theta^\ast)$. By the \emph{uniqueness} part of Theorem \ref{thm:Rough-Sewing-Lemma} applied with $(p^\ast,\omega^\ast,\theta^\ast)$, we conclude
\[
\bar X=\hat X=\tilde X,
\]
which is exactly $\mathscr{S}^{(k,p)}_{ \theta, \omega}(X)= \mathscr{S}^{(k,p^\ast)}_{\theta^\ast,\omega^\ast}(X)=\mathscr{S}^{(k,\tilde p)}_{\tilde \theta,\tilde \omega}(X)$.
\end{proof}

\begin{definition}[Global sewing map $\mathscr{S}^{k}$]\label{def:global-sewing}
Fix an integer $k\ge1$. Define
\[
\Omega^{\text{am},(k)}(V):=\bigcup_{\substack{p\ge 1\\\omega\ \text{control}\\ \theta>1}}\Omega^{\text{am},(k,p)}_{ \theta, \omega}(V) \quad \text{ and }\quad \Omega^{ (k)}(V):=\bigcup_{\substack{p\ge 1\\\omega\ \text{control}}}\Omega^{ (k,p)}_{\omega}(V) .
\]For $X\in\Omega^{\text{am},(k)}(V)$, choose any triple $(p, \theta, \omega)$ such that $X\in\Omega^{\text{am},(k,p)}_{ \theta, \omega}(V)$ and define $\mathscr{S}^{k}:\Omega^{\text{am},(k)}(V)\to \Omega^{ (k)}(V)$ given by
\[
\mathscr{S}^{k}(X):=\mathscr{S}^{(k,p)}_{ \theta, \omega}(X),
\]
where $\mathscr{S}^{(k,p)}_{ \theta, \omega}$ denotes the sewing map of Definition~\ref{def: sewing map mathscr S} (with the chosen parameters $\theta$ and $\omega$) applied to $X$.
\end{definition}

\begin{proposition}[Well-definedness of $\mathscr{S}^{k}$]\label{prop:global-sewing-well-defined}
For any integer $k\ge1$, map $\mathscr{S}^{k}:\Omega^{\text{am},(k)}(V)\to \Omega^{ (k)}(V)$ in Definition~\ref{def:global-sewing} is well-defined, i.e. it does not depend on the choice of admissible triple $(p,  \theta, \omega)$ used to represent $X$. 
\end{proposition}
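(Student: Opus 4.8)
The plan is to deduce the statement as an immediate corollary of Lemma~\ref{lem:control-independence-sewing}. The first step is to unwind the definitions. By Definition~\ref{def:global-sewing}, an element $X\in\Omega^{\text{am},(k)}(V)$ admits, by virtue of the defining union, at least one triple $(p,\theta,\omega)$ with $p\ge1$, $\theta>1$ and $\omega$ a control such that $X\in\Omega^{\text{am},(k,p)}_{\theta,\omega}(V)$; the value $\mathscr{S}^{k}(X)$ is then declared to be $\mathscr{S}^{(k,p)}_{\theta,\omega}(X)$ for such a chosen triple. To establish well-definedness I must show that whenever $(p,\theta,\omega)$ and $(\tilde p,\tilde\theta,\tilde\omega)$ are \emph{any} two triples with $X\in\Omega^{\text{am},(k,p)}_{\theta,\omega}(V)$ and $X\in\Omega^{\text{am},(k,\tilde p)}_{\tilde\theta,\tilde\omega}(V)$, one has $\mathscr{S}^{(k,p)}_{\theta,\omega}(X)=\mathscr{S}^{(k,\tilde p)}_{\tilde\theta,\tilde\omega}(X)$.

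The second step is simply to apply Lemma~\ref{lem:control-independence-sewing} verbatim to $X$ with these two witnessing triples: its hypothesis is precisely $X\in\Omega^{\text{am},(k,p)}_{\theta,\omega}(V)$ and $X\in\Omega^{\text{am},(k,\tilde p)}_{\tilde\theta,\tilde\omega}(V)$, and its conclusion is precisely the required equality $\mathscr{S}^{(k,p)}_{\theta,\omega}(X)=\mathscr{S}^{(k,\tilde p)}_{\tilde\theta,\tilde\omega}(X)$ as $T^{(k)}(V)$-valued multiplicative functionals. Since any two admissible representatives of $X$ thus yield the same output, $\mathscr{S}^{k}$ is a well-defined function on $\Omega^{\text{am},(k)}(V)$. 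For the codomain assertion I would additionally invoke the final sentence of that lemma, which records that this common value lies in $\Omega^{(k,\max\{p,\tilde p\})}_{\omega+\tilde\omega}(V)\subset\Omega^{(k)}(V)$, so that $\mathscr{S}^{k}$ indeed maps into $\Omega^{(k)}(V)$.

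Since the whole argument is a direct citation of the previously-established Lemma~\ref{lem:control-independence-sewing}, I do not anticipate any genuine obstacle. The only care required is bookkeeping: checking that the notion of ``admissible triple'' in Definition~\ref{def:global-sewing} matches exactly the hypotheses of Lemma~\ref{lem:control-independence-sewing} — namely that $X$ is a $k$-step $\theta$-almost multiplicative functional of finite $p$-variation, controlled by $\omega$ up to some constant $K\ge0$ — which holds by unwinding Definition~\ref{def: Omega^(am, (k,p))_omega,theta}, and noting that the existence of at least one such triple is guaranteed by membership in the union defining $\Omega^{\text{am},(k)}(V)$.
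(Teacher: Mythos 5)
Your proposal is correct and follows essentially the same route as the paper: both take two admissible triples $(p,\theta,\omega)$ and $(\tilde p,\tilde\theta,\tilde\omega)$ witnessing $X\in\Omega^{\text{am},(k)}(V)$ and invoke Lemma~\ref{lem:control-independence-sewing} to conclude the two sewings coincide. Your additional remark on the codomain is a harmless (and accurate) elaboration that the paper leaves implicit.
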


\begin{proof}
Let $X\in\Omega^{\text{am},(k)}(V)$ and suppose that $X\in\Omega^{\text{am},(k,p)}_{ \theta, \omega}(V)$ and also $X\in\Omega^{\text{am},(k,\tilde p)}_{\tilde\theta, \tilde\omega}(V)$ for some parameters $p,\tilde p\ge 1$, controls $\omega,\tilde\omega$ and exponents $\theta,\tilde\theta>1$. By Lemma~\ref{lem:control-independence-sewing} we have $\mathscr{S}^{(k,p)}_{\theta,\omega}(X)=\mathscr{S}^{(k,\tilde p)}_{\tilde\theta,\tilde\omega}(X)$. Therefore the definition of $\mathscr{S}^{k}$ is independent of the choice of admissible parameters.
\end{proof}

\begin{remark}\label{rmk: if X^j =Y^j for all j, then S(X)^j = S(Y)^j for all j}
Upon examining the construction detailed in $\bar X$ as presented in Theorem \ref{thm:Rough-Sewing-Lemma}, it becomes evident that, for any given $j\in\{1,..., k-1\}$, $\bar X^j$ depends upon $X^1$ to $X^j$ but is independent of $X^{j+1}$ to $X^{k}$. This observation is crucial in demonstrating that if two almost $p$-rough paths, denoted as $X$ and $Y$, coincide up to a certain level $j$ for some $j\in\{1,..., k-1\}$, then their corresponding multiplicative functionals $\mathscr{S}^k(Y)$ and $\mathscr{S}^k(X)$ will similarly coincide up to level $j$.
\end{remark}

\begin{definition}[Bracket truncations]\label{def:truncations}
Fix \(p\ge1\) and a functional \(Y=(Y^0,Y^1,\dots):\triangle_J\!\to T^{(\lfloor p\rfloor)}(V)\).
For each \(k\in\{0,\dots,\lfloor p\rfloor\}\) set
\[
  Y(k):=(Y^0,\dots,Y^k)\in T^{(k)}(V),
  \qquad
  Y[k]:=(Y^0,\dots,Y^k,0,\dots,0)\in T^{(\lfloor p\rfloor)}(V),
\]
where \(Y[k]\) has \(\lfloor p\rfloor-k\) trailing zeros. \(Y(k)\) is the  truncation to level \(k\); \(Y[k]\) is the zero-padded version that stays in the level-\(\lfloor p\rfloor\) algebra.
\end{definition}

\begin{corollary}\label{cor:X_Y_close_implies_SX_eq_SY}
Fix real numbers $p\ge 1$ and $\theta>1$ and an integer $k\in\mathbb N$, and let $\omega:\triangle_J\to[0,\infty)$ be a control. Suppose $X=(1,X^1,\dots,X^{k}):\triangle_J \to T^{(k)}(V)$ is a $\theta$-almost multiplicative functional of degree $k$ with finite $p$-variation controlled by $\omega$ with constant $C_X\ge0$. Let $Y=(1,Y^1,\dots,Y^{k}):\triangle_J \to T^{(k)}(V)$ be a functional such that
\begin{equation}\label{ineq: ineq in X and Y close => S(X)=S(Y) corollary}
\|Y_{st}^{j}-X_{st}^{j}\|
\;\le\;K\,\omega(s,t)^{\theta},
\qquad
\forall\,(s,t)\in\triangle_J,\;
j=1,\dots,k.
\end{equation}
Assume moreover that there exists a constant $K_{\mathrm{hi}}\ge0$ such that
\begin{equation}\label{eq:hi-level-pvar-assumption}
\|Y_{st}^{j}\|
\;\le\;
K_{\mathrm{hi}}\,
\frac{\omega(s,t)^{\frac{j}{p}}}{\beta(p)\left(\frac{j}{p}\right)!},
\qquad
\forall\,(s,t)\in\triangle_J,\;
\forall\,j\in\{1,\dots,k\}\ \text{with}\ \frac{j}{p}>\theta.
\end{equation}
(\emph{This condition is vacuous if there is no $j\in\{1,\dots,k\}$ with $\frac{j}{p}>\theta$}.)

Then $Y$ is itself a $\theta$-almost multiplicative functional of degree $k$ with finite $p$-variation controlled by $\omega$ with constant $C_Y$ for some constant $C_Y\ge0$, and its sewing coincides with that of $X$:
\[
\mathscr{S}^k(Y)=\mathscr{S}^k(X).
\]
\end{corollary}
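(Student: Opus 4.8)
The plan is to split the argument into two stages. In the first stage I would promote the hypotheses to the assertion $Y\in\Omega^{\text{am},(k,p)}_{\theta,\omega}(V)$, which is exactly what is needed for $\mathscr{S}^k(Y)$ to be defined at all; in the second stage I would identify $\mathscr{S}^k(Y)$ with $\mathscr{S}^k(X)$ by invoking the uniqueness clause of the Rough Sewing Lemma (Theorem~\ref{thm:Rough-Sewing-Lemma}(i)).

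For the first stage, write $R:=Y-X$, so that $R^0\equiv 0$ and $\|R^j_{st}\|\le K\,\omega(s,t)^\theta$ for all $j\in\{1,\dots,k\}$ and all $(s,t)\in\triangle_J$, by \eqref{ineq: ineq in X and Y close => S(X)=S(Y) corollary}. To obtain finite $p$-variation of $Y$ I would split the levels: if $j/p\le\theta$ then $\|Y^j_{st}\|\le\|X^j_{st}\|+\|R^j_{st}\|$, and since $\omega(s,t)^\theta=\omega(s,t)^{j/p}\,\omega(s,t)^{\theta-j/p}\le\omega(S,T)^{\theta-j/p}\,\omega(s,t)^{j/p}$, the right-hand side is at most a constant multiple of $\omega(s,t)^{j/p}/(\beta(p)(j/p)!)$; if instead $j/p>\theta$ the triangle inequality is too weak (the $\omega^\theta$-scaling dominates $\omega^{j/p}$ as $\omega\to0$), and this is precisely where hypothesis~\eqref{eq:hi-level-pvar-assumption} supplies the bound directly. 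For almost multiplicativity I would expand $\Delta(Y)_{s,u,t}=Y_{su}\otimes Y_{ut}-Y_{st}$ using $Y=X+R$, obtaining $\Delta(Y)_{s,u,t}=\Delta(X)_{s,u,t}+X_{su}\otimes R_{ut}+R_{su}\otimes X_{ut}+R_{su}\otimes R_{ut}-R_{st}$. At level $i$ the terms $\Delta(X)^i_{s,u,t}$ and $R^i_{st}$ are already bounded by a constant times $\omega(s,t)^\theta$; for a mixed term one writes $(X_{su}\otimes R_{ut})^i=\sum_{a+b=i,\,b\ge1}X^a_{su}\otimes R^b_{ut}$, uses admissibility of the norms together with the finite $p$-variation of $X$ (constant $C_X$, valid at every level) and the bound $\|R^b_{ut}\|\le K\omega(u,t)^\theta$ to bound each summand by a constant times $\omega(s,u)^{a/p}\omega(u,t)^\theta$, and then invokes $\omega(s,u),\omega(u,t)\le\omega(s,t)\le\omega(S,T)$ to conclude it is bounded by a constant times $\omega(s,t)^\theta$; the terms $R_{su}\otimes X_{ut}$ and $R_{su}\otimes R_{ut}$ are handled identically. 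Collecting the constants shows $Y$ is a $\theta$-almost multiplicative functional of degree $k$ with finite $p$-variation controlled by $\omega$ with some constant $C_Y$, so $Y\in\Omega^{\text{am},(k,p)}_{\theta,\omega}(V)$ and $\mathscr{S}^k(Y)$ is well-defined (and, by Proposition~\ref{prop:global-sewing-well-defined}, independent of the witnessing parameters).

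For the second stage let $\bar X:=\mathscr{S}^k(X)$. By Theorem~\ref{thm:Rough-Sewing-Lemma}, $\bar X$ is a multiplicative functional of degree $k$ with finite $p$-variation, and there is a constant $C$ with $\|\bar X^i_{st}-X^i_{st}\|\le C\,\omega(s,t)^\theta$ for all $i$ and all $(s,t)\in\triangle_J$. Combining this with $\|X^i_{st}-Y^i_{st}\|=\|R^i_{st}\|\le K\,\omega(s,t)^\theta$ gives $\|\bar X^i_{st}-Y^i_{st}\|\le(C+K)\,\omega(s,t)^\theta$, so $\bar X$ is a multiplicative functional of degree $k$ with finite $p$-variation satisfying the closeness bound~\eqref{bound almost mult. functionals} relative to $Y$. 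Since $Y\in\Omega^{\text{am},(k,p)}_{\theta,\omega}(V)$, the uniqueness part of Theorem~\ref{thm:Rough-Sewing-Lemma}(i) applied to $Y$ forces $\mathscr{S}^k(Y)=\bar X=\mathscr{S}^k(X)$, which is the claim.

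I do not expect a serious obstacle: all the estimates are of the same type already carried out in Lemma~\ref{lem:control-independence-sewing}. The one point worth flagging is that hypothesis~\eqref{eq:hi-level-pvar-assumption} is genuinely load-bearing, and is used \emph{only} to establish finite $p$-variation of $Y$ at the levels $j$ with $j/p>\theta$; the almost-multiplicativity step never appeals to it, since there one uses only the $p$-variation of $X$ (available at every level with constant $C_X$) and the closeness estimate on $R$. I would make this explicit so that the necessity of the extra hypothesis is transparent.
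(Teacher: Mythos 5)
Your proposal is correct and follows essentially the same route as the paper's proof: the same decomposition $Y=X+R$ with level-wise bounds for almost multiplicativity, the same split of levels at $j/p\lessgtr\theta$ for the $p$-variation (with the high-level hypothesis used exactly and only where you say), and the same appeal to the uniqueness clause of the Rough Sewing Lemma to identify $\mathscr{S}^k(Y)$ with $\mathscr{S}^k(X)$. No gaps.
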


\vspace{2cm}
\begin{proof}
\textbf{\underline{Step 1:  $Y$ is $\theta$-almost multiplicative}}

Fix $s\le u\le t$ in $J$ and $j\in\{1,\dots,k\}$, and set
\[
\varepsilon_{ab}:=Y_{ab}-X_{ab},\qquad a<b .
\]
Since $X$ is $\theta$-almost multiplicative, there exists $C_0\ge0$ such that
\[
\|(X_{su}\otimes X_{ut}-X_{st})^j\|\le C_0\,\omega(s,t)^{\theta}.
\]
Moreover,
\[
( Y_{su}\otimes Y_{ut}-Y_{st})^j
=\bigl(X_{su}\otimes X_{ut}-X_{st}\bigr)^j
+\bigl(X_{su}\otimes\varepsilon_{ut}
+\varepsilon_{su}\otimes X_{ut}
+\varepsilon_{su}\otimes\varepsilon_{ut}
-\varepsilon_{st}\bigr)^j.
\]
The norm of the first term is bounded by \(C_0\,\omega(s,t)^{\theta}\), as required. For a mixed tensor \(X_{su}^{(a)}\otimes\varepsilon_{ut}^{(b)}\)   with \(a+b\le k\), we have that (using admissibility of the tensor norm, finite $p$-variation of $X$, the bound $\|\varepsilon_{ab}^m\|\le K\omega(a,b)^\theta$, and $\omega(a,b)\le \omega(S,T)$)\[
      \|X_{su}^{(a)}\otimes\varepsilon_{ut}^{(b)}\|
      \le C_XK\,\omega(s,u)^{\frac{a}p}\omega(u,t)^{\theta}
      \le C_XK\,\omega(s,t)^{\frac{a}p+\theta}
      \le C_X\tilde{K}\,\omega(s,t)^{\theta}
  \]
since \(\frac{a}p\ge0\), and where $\tilde{K}:= K\max_{a=1,...,k}\omega(S,T)^{\frac{a}p}$. Each of the remaining terms \textendash$\varepsilon_{su}\otimes X_{ut}$, $\varepsilon_{su}\otimes\varepsilon_{ut}$, and $\varepsilon_{st}$ \textendash also satisfies a bound of the form $C_{\text{term}}\;\omega(s,t)^{\theta}$, with a (term-dependent) constant $C_{\text{term}}$. Hence there exists $C_{\mathrm{am}}\ge0$ such that
\[
\|(Y_{su}\otimes Y_{ut}-Y_{st})^j\|\le C_{\mathrm{am}}\,\omega(s,t)^{\theta},
\qquad j=1,\dots,k.
\]
\medskip        
\textbf{\boldmath\underline{Step 2:  Finite $p$-variation of $Y$}}

Fix $(s,t)\in\triangle_J$ and $j\in\{1,\dots,k\}$. If $\frac{j}{p}>\theta$, then \eqref{eq:hi-level-pvar-assumption} gives directly\[
\|Y_{st}^{j}\|
\le
K_{\mathrm{hi}}\,
\frac{\omega(s,t)^{\frac{j}{p}}}{\beta(p)\left(\frac{j}{p}\right)!}.
\]
If instead $\frac{j}{p}\le\theta$, then by the triangle inequality,
\[
\|Y_{st}^j\|\le \|X_{st}^j\|+\|Y_{st}^j-X_{st}^j\|
\le \frac{\omega(s,t)^{\frac{j}{p}}}{\beta(p)\left(\frac{j}{p}\right)!}
+K\,\omega(s,t)^\theta.
\]
Since $\theta-\frac{j}{p}\ge0$ in this case and $\omega(s,t)\le \omega(S,T)$,
\[
\omega(s,t)^\theta
=\omega(s,t)^{\frac{j}{p}}\omega(s,t)^{\theta-\frac{j}{p}}
\le \omega(S,T)^{\theta-\frac{j}{p}}\,\omega(s,t)^{\frac{j}{p}}.
\]
Therefore
\[
\|Y_{st}^j\|
\le
\Bigl(1+K\,\beta(p)\Bigl(\tfrac{j}{p}\Bigr)!\,\omega(S,T)^{\theta-\frac{j}{p}}\Bigr)
\frac{\omega(s,t)^{\frac{j}{p}}}{\beta(p)\left(\frac{j}{p}\right)!}.
\]
Taking
\[
C_{\mathrm{var}}
:=
\max\!\left\{
K_{\mathrm{hi}},\;
\max_{\substack{1\le j\le k\\ \frac{j}{p}\le \theta}}
\Bigl(1+K\,\beta(p)\Bigl(\tfrac{j}{p}\Bigr)!\,\omega(S,T)^{\theta-\frac{j}{p}}\Bigr)
\right\},
\]
we get for all $j=1,\dots,k$ and all $(s,t)\in\triangle_J$,
\[
\|Y_{st}^{j}\|
\le
C_{\mathrm{var}}\,
\frac{\omega(s,t)^{\frac{j}{p}}}{\beta(p)\left(\frac{j}{p}\right)!},
\]
so $Y$ has finite $p$-variation controlled by $C_{\mathrm{var}}\omega$.

\medskip
\textbf{\underline{Step 3:  Coincidence of sewings}}

Let $\bar X:=\mathscr{S}^k(X)$. By the rough sewing lemma applied to $X$, there exists $C_{\mathscr S}\ge0$ such that for all $(s,t)\in\triangle_J$ and $i=1,\dots,k$,
\[
\|\bar X_{st}^i-X_{st}^i\|\le C_{\mathscr S}\,\omega(s,t)^\theta.
\]
Using \eqref{ineq: ineq in X and Y close => S(X)=S(Y) corollary},
\[
\|\bar X_{st}^i-Y_{st}^i\|
\le \|\bar X_{st}^i-X_{st}^i\|+\|X_{st}^i-Y_{st}^i\|
\le (C_{\mathscr S}+K)\,\omega(s,t)^\theta.
\]
Now Step~1 and Step~2 ensure $Y$ lies in the domain of the sewing lemma, so $\bar Y:=\mathscr{S}^k(Y)$ is well-defined and is the unique multiplicative functional satisfying an $\omega^\theta$-closeness bound to $Y$. Since $\bar X$ is
multiplicative and satisfies such a bound, uniqueness gives $\bar Y=\bar X$, i.e.
\[
\mathscr{S}^k(Y)=\mathscr{S}^k(X).
\]
Finally set $C_Y:=\max\{C_{\mathrm{am}},C_{\mathrm{var}}\}$.
\end{proof}

\subsection{\textit{H}-space perturbations}
We now define unit-preserving pointwise combinations and isolate the class of perturbations ($H$-space) for which sewing yields a canonical perturbed rough path.

\begin{definition}{\normalfont\textbf{(Point-wise tensor product and unit-preserving pointwise sum)}}\label{def: X otimes H}
Let $n\in\mathbb N\cup\{\infty\}$. Let $X=(X^0,...,X^n):\triangle_J\to T^{(n)}(V)$ and $H=(H^0,...,H^n):\triangle_J\to T^{(n)}(V)$ be functionals. We define $X \otimes H: \triangle_J \to T^{(n)}(V)$ by $$
	(X \otimes H)_{s t}=X_{s t} \otimes H_{s t}, \qquad (s,t)\in\triangle_J.
	$$
Assume further that $X^0=1=H^0$. We then define $X\oplus H:\triangle_J\to \widetilde T^{(n)}(V)$ by
\[
(X\oplus H)_{s,t} := X_{s,t}\oplus H_{s,t}, \qquad (s,t)\in\triangle_J.
\]
\end{definition}
The ensuing space will be integral to our study. We present its definition in what follows:

  
\begin{definition}[$\mathscr{H}^{(k,p)}(V)$ and $\mathscr{H}_{\phi,\omega}^{(k,p)}(V)$] \label{def:h-space}
	Let $p \geqslant 1$ be a real number, $k \geqslant 0$ be an integer, $\omega$ be a control, and $\phi \in \bigl(1-\tfrac{1}{p}, 1\bigr]$. 
	If $k\in\mathbb N$, we set    
	\begin{multline*}
		\mathscr{H}^{(k,p)}(V) := \left\{ 
		\vphantom{\mathcolorbox{yellow}{\widetilde{\omega}(s,t)^{\widetilde{\phi}}}}
		H \in  \Omega^{(k,p)}(V) \,\left|\, \exists\, K_H\ge0,\exists \hspace{-1pt} \,\operatorname{ control} \, \omega_H \, \operatorname{and} \, \exists \,  \phi_H \in \bigl( 1-\tfrac{1}{p}, 1 \bigr]: \right. \right. \\
		\left. \left( \forall j \in \left\{ 1, \ldots, k \right\}, \forall(s,t) \in \triangle_J, \,\left\| H_{st}^j \right\| \leqslant K_H \omega_H(s,t)^{\phi_H} \right) \right\},
	\end{multline*}
\begin{multline*}
	\mathscr{H}_{\phi,\omega}^{(k,p)}(V) := \Bigl\{ 
	\vphantom{\mathcolorbox{yellow}{\omega_H(s,t)^{\widetilde{\phi}}}}
	H \in  \Omega^{(k,p)}_\omega(V)\, |\,  \exists\, K_H\ge0 : \\
    \left( \forall j \in \left\{ 1, \ldots, k \right\}, \forall(s,t) \in \triangle_J,  
   \left\| H_{st}^j \right\| \leqslant K_H\omega(s,t)^{\phi} 
    \vphantom{K_H\omega_H(s,t)^{\widetilde{\phi}}}
    \right) \Bigr \}.
\end{multline*}

If $k=0$, then $\mathscr{H}^{(k,p)}(V)= \mathscr{H}_{\phi,\omega}^{(k,p)}(V) := \{ (1) \}$. We set $\mathscr{H}^p(V):=\mathscr{H}^{(\lfloor p \rfloor, \,p)}$.

\end{definition} 

We introduce more notation.

\begin{definition} We set
\[
  \mathscr{H}^{(k,p)}_{\omega}(V)
    := \bigcup_{\varphi\in(1-1/p,\,1]} \mathscr{H}^{(k,p)}_{\varphi,\omega}(V),
  \qquad
  \mathscr{H}^{(k,p)}_{\varphi}(V)
    := \bigcup_{\omega\ \text{control}} \mathscr{H}^{(k,p)}_{\varphi,\omega}(V).
\]
\end{definition}

\begin{remark} Define $\mathscr{H}(V):=\cup_{p\ge1} \mathscr{H}^p(V)$. We refer to $\mathscr{H}(V)$ as ``$H$-space''. \end{remark}

\begin{example}[Pure area rough path] \label{ex:pure-area-rp}Let $H=(1,0,H^2):\triangle_J\to T^{(2)}(V)$ be a functional and assume that $H^2:\triangle_J\to V^{\otimes 2}$ satisfies 
$$\|H^2_{st}\|\le K \frac{\omega(s,t)^{\frac{2}{p}}}{\beta\cdot(\frac{2}{p})!}\qquad \text{for all $(s,t)\in\triangle_J$},$$
where $p\in[2,3)$, $\omega$ is a control, and $K\ge0$. Then, $H$ has finite $p$-variation. If we assume that $H^2$ is additive, i.e. $H_{s,u}+H_{u,t}=H_{s,t}$ for all  $s,u,t\in \triangle_J$ with $s\le u \le t$, then $H$ is also multiplicative.  In this case, $H$ is a $p$-rough path. Since $H$ is a functional of degree $2$  (i.e., it takes values in $T^{(2)}(V)$) and its first level $H^1$ is equal to zero, we say that $H$ is a pure area rough path. 

Notice that if we set $2/p =:\phi$, then we have that $1-1/p < 2/p \iff p<3$ and $2/p \le 1 \iff  2\le p$. Since we assumed that $p\in[2,3)$, we can conclude that $\phi\in( 1-1/p, 1] $. If we set $\widetilde K = K/(\beta\cdot (2/p)!)$, then it follows that $H\in  \mathscr{H}_{\phi,\omega}^{2}(V)$.  

This example shows that $\mathscr{H}^2(V)$ is strictly larger than the space of degree-$2$ multiplicative functionals of finite $q$-variation for $q\in[1,2)$, which is the regime that has been most extensively studied in the literature of rough path perturbations (see Section \ref{sec:introduction} for greater detail).
\end{example}

An earlier version of this theorem features originally in \cite{lyons1998differential} as Remark ``B''.
\begin{theorem} \label{thm:X_otimes_H_is_ARP}
Let $p\in \mathbb R_{\ge1}$, $k\in\mathbb N$. Take $\phi\in(1-\frac{1}{p},1]$  and let $\omega$ be a control. Consider an  $H\in \mathscr{H}^{{(k,p)}}_{\phi,\omega}(V)$. Consider any $X \in \Omega^{(k,p)}_\omega(V)$. Then the maps  $X\oplus H$ and $X \otimes H $ belong to $ \Omega_{\phi+\frac1p,\omega}^{\text{am},(k,p)}(V)$, i.e. they are $\phi + \frac{1}{p}$-almost multiplicative functionals of finite $p$-variation respectively controlled by $\omega$ with constant $K$ and $\omega$ with constant $\tilde K$ for some constants $K, \tilde K \geqslant 0$. We then define $$
	X \boxplus H :=  \mathscr{S}^k(X \oplus H)\in \Omega_\omega^{(k,p)}(V).
	$$
Furthermore, $\mathscr{S}^k(X \otimes H) =X\boxplus H $.
\end{theorem}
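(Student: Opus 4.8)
The plan is to handle $X\oplus H$ first and then obtain everything about $X\otimes H$ by comparison, so that Corollary~\ref{cor:X_Y_close_implies_SX_eq_SY} does the bookkeeping for the ``$\otimes$'' half and forces the two sewings to agree. Throughout write $\theta:=\phi+\tfrac1p$; since $\phi>1-\tfrac1p$ this gives $\theta>1$, which is exactly what makes the rough sewing lemma (Theorem~\ref{thm:Rough-Sewing-Lemma}) applicable. To show $X\oplus H\in\Omega^{\text{am},(k,p)}_{\theta,\omega}(V)$: finite $p$-variation controlled by $\omega$ is immediate, since $(X\oplus H)^i_{st}=X^i_{st}+H^i_{st}$ and both $X\in\Omega^{(k,p)}_\omega(V)$ and $H\in\mathscr H^{(k,p)}_{\phi,\omega}(V)\subset\Omega^{(k,p)}_\omega(V)$, so the triangle inequality bounds $\|(X\oplus H)^i_{st}\|$ by a constant times $\omega(s,t)^{i/p}/(\beta(p)(i/p)!)$. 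For almost-multiplicativity I reuse the computation in the introduction, which gives, for each level $i\in\{1,\dots,k\}$,
\[
\Delta(X\oplus H)^i_{s,u,t}=\sum_{j=1}^{i-1}\bigl(H^j_{su}\otimes X^{i-j}_{ut}+X^j_{su}\otimes H^{i-j}_{ut}\bigr),
\]
an empty (hence zero) sum when $i=1$. For $i\ge2$ I bound each summand using admissibility of the tensor norm, the finite $p$-variation of $X$, and the $\mathscr H$-space bound $\|H^j_{ab}\|\le K_H\,\omega(a,b)^{\phi}$: a typical term is controlled by a constant times $\omega(s,u)^{\phi}\omega(u,t)^{(i-j)/p}$, and super-additivity of $\omega$ together with $\omega\le\omega(S,T)$ turns this into a constant times $\omega(s,t)^{\phi+(i-j)/p}$, from which I factor out $\omega(s,t)^{\theta}$ since $i-j\ge1$. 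Summing over $j$ yields $\|\Delta(X\oplus H)^i_{s,u,t}\|\le K\,\omega(s,t)^{\theta}$; taking the larger of the two constants shows $X\oplus H\in\Omega^{\text{am},(k,p)}_{\theta,\omega}(V)$, so $X\boxplus H:=\mathscr S^k(X\oplus H)\in\Omega^{(k,p)}_\omega(V)$ is well-defined.

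Next I compare $X\otimes H$ with $X\oplus H$. Level by level, $(X\otimes H)^i_{st}-(X\oplus H)^i_{st}=\sum_{j=1}^{i-1}X^j_{st}\otimes H^{i-j}_{st}$, and the same three ingredients (admissibility, finite $p$-variation of $X$, the $\mathscr H$-bound on $H$) bound each summand by a constant times $\omega(s,t)^{j/p+\phi}\le\omega(S,T)^{(j-1)/p}\,\omega(s,t)^{\theta}$, using $j\ge1$; hence $\|(X\otimes H)^i_{st}-(X\oplus H)^i_{st}\|\le K'\,\omega(s,t)^{\theta}$ for all $i=1,\dots,k$. To invoke Corollary~\ref{cor:X_Y_close_implies_SX_eq_SY} with $X\oplus H$ as the base almost rough path and $X\otimes H$ in the role of ``$Y$'', the only remaining hypothesis is the high-level $p$-variation bound on $X\otimes H$: but $\|(X\otimes H)^j_{st}\|\le\sum_{i=0}^j\|X^i_{st}\|\,\|H^{j-i}_{st}\|$, and finite $p$-variation of both $X$ and $H$ bounds this by a finite constant times $\omega(s,t)^{j/p}$ at every level $j$ (in particular at those with $j/p>\theta$). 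Corollary~\ref{cor:X_Y_close_implies_SX_eq_SY} then delivers at once that $X\otimes H\in\Omega^{\text{am},(k,p)}_{\theta,\omega}(V)$ and that $\mathscr S^k(X\otimes H)=\mathscr S^k(X\oplus H)=X\boxplus H$, which is the last assertion of the statement.

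There is no conceptual obstacle here; the entire proof is exponent bookkeeping. The one point that has to be watched is that every cross-term appearing --- in $\Delta(X\oplus H)$, in $(X\otimes H)-(X\oplus H)$, and (if one prefers a direct argument via $\Delta(X\otimes H)=X_{su}\otimes(H_{su}\otimes X_{ut}-X_{ut}\otimes H_{su})\otimes H_{ut}$) in $\Delta(X\otimes H)$ --- carries a power of $\omega(s,t)$ at least $\theta=\phi+\tfrac1p$. This always comes down to the same mechanism: the unique ``$H$-factor'' supplies $\omega^{\phi}$, there is at least one ``$X$-factor'' of strictly positive level supplying at least $\omega^{1/p}$, super-additivity merges $\omega(s,u)^a\omega(u,t)^b$ into $\omega(s,t)^{a+b}$, and all surplus powers are absorbed into a constant depending only on $\omega(S,T)$ and on $p,k$. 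The hypothesis $\phi\in(1-\tfrac1p,1]$ enters precisely to force $\theta>1$: were $\phi=1-\tfrac1p$ allowed, the defect would only be linear in $\omega$ and the rough sewing lemma would no longer apply.
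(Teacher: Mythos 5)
Your proposal is correct and follows essentially the same route as the paper: establish that $X\oplus H$ is $\theta$-almost multiplicative with $\theta=\phi+\tfrac1p$ via the cross-term defect formula, then transfer everything to $X\otimes H$ through the degree-wise $\omega^{\theta}$-closeness bound and Corollary~\ref{cor:X_Y_close_implies_SX_eq_SY}. The only cosmetic difference is that the paper invokes the neo-classical inequality to package the $p$-variation bound for $X\otimes H$, whereas you absorb the finitely many cross-terms into a constant, which is equally valid at fixed truncation level $k$.
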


\begin{proof}
Set
\[
\theta := \phi+\frac1p>1,
\qquad
\beta:=\beta(p)
\ \text{ as in Definition~\ref{def:beta}.}
\]

\medskip

\noindent\textbf{\underline{Step 0: Fix constants}}

Since $X\in\Omega^{(k,p)}_{\omega}(V)$, by Definition~\ref{def:finite p-var} there exists $K_X\ge0$ such that
\begin{equation}\label{eq:KX-bound-new}
\|X^j_{s,t}\|
\le
K_X\,\frac{\omega(s,t)^{\frac{j}{p}}}{\beta\left(\frac{j}{p}\right)!},
\qquad
j=1,\dots,k,\ (s,t)\in\triangle_J.
\end{equation}
By the hypothesis on $H$, there exists $K_H\ge0$ such that
\begin{equation}\label{eq:KH-phi-bound-new}
\|H^j_{s,t}\|
\le
K_H\,\omega(s,t)^{\phi},
\qquad
j=1,\dots,k,\ (s,t)\in\triangle_J.
\end{equation}

\medskip

\noindent\textbf{\boldmath\underline{Step 1: $X\oplus H$ is $\theta$-almost multiplicative of degree $k$}}

Fix $m\in\{1,\dots,k\}$ and $s\le u\le t$. Write the level-$m$ multiplicative defect:
\[
\Delta^m_{s,u,t}(X\oplus H)
:=
\sum_{i=0}^m (X\oplus H)^i_{s,u}\otimes (X\oplus H)^{m-i}_{u,t}
-
(X\oplus H)^m_{s,t}
\]
Since $(X\oplus H)^0\equiv 1$ and $(X\oplus H)^j=X^j+H^j$ for $j\ge1$, we expand
\begin{align*}
&\sum_{i=0}^m (X\oplus H)^i_{s,u}\otimes (X\oplus H)^{m-i}_{u,t}\\
=\;&
(X^m_{s,u}+H^m_{s,u})+(X^m_{u,t}+H^m_{u,t})
+\sum_{i=1}^{m-1}(X^i_{s,u}+H^i_{s,u})\otimes(X^{m-i}_{u,t}+H^{m-i}_{u,t})
\\
=\;&
\sum_{i=0}^m X^i_{s,u}\otimes X^{m-i}_{u,t}
+\sum_{i=0}^m H^i_{s,u}\otimes H^{m-i}_{u,t} \\[4pt]
&\quad\qquad
+\sum_{i=1}^{m-1} X^i_{s,u}\otimes H^{m-i}_{u,t}
+\sum_{i=1}^{m-1} H^i_{s,u}\otimes X^{m-i}_{u,t}.
\end{align*}
Because $X$ and $H$ are multiplicative (Definition~\ref{def: multiplicative functional}),
\[
\sum_{i=0}^m X^i_{s,u}\otimes X^{m-i}_{u,t}=X^m_{s,t},
\qquad
\sum_{i=0}^m H^i_{s,u}\otimes H^{m-i}_{u,t}=H^m_{s,t}.
\]
Since $(X\oplus H)^m_{s,t}=X^m_{s,t}+H^m_{s,t}$, we obtain the cancellation identity
\begin{equation}\label{eq:defect-XoplusH-new}
\Delta^m_{s,u,t}(X\oplus H)
=
\sum_{i=1}^{m-1}\Bigl(
X^i_{s,u}\otimes H^{m-i}_{u,t}
+
H^i_{s,u}\otimes X^{m-i}_{u,t}
\Bigr).
\end{equation}
In particular, every term contains an $H$-factor of positive level.

We now bound \eqref{eq:defect-XoplusH-new}. By admissibility (Definition~\ref{def: admissible}),
\begin{align*}
\|X^i_{s,u}\otimes H^{m-i}_{u,t}\|
&\le \|X^i_{s,u}\|\,\|H^{m-i}_{u,t}\|,
&
\|H^i_{s,u}\otimes X^{m-i}_{u,t}\|
&\le \|H^i_{s,u}\|\,\|X^{m-i}_{u,t}\|.
\end{align*}
Using \eqref{eq:KX-bound-new}--\eqref{eq:KH-phi-bound-new} and $\omega(s,u)\le\omega(s,t)$, $\omega(u,t)\le\omega(s,t)$ (Definition~\ref{def:controls}), we get for $i\in\{1,\dots,m-1\}$:
\begin{align*}
\|X^i_{s,u}\otimes H^{m-i}_{u,t}\|
&\le
K_X K_H\,
\frac{\omega(s,u)^{\frac{i}{p}}}{\beta\left(\frac{i}{p}\right)!}\,
\omega(u,t)^{\phi}
\le
K_X K_H\,
\frac{\omega(s,t)^{\frac{i}{p}+\phi}}{\beta\left(\frac{i}{p}\right)!},
\\
\|H^i_{s,u}\otimes X^{m-i}_{u,t}\|
&\le
K_H K_X\,
\omega(s,u)^{\phi}\,
\frac{\omega(u,t)^{\frac{m-i}{p}}}{\beta\left(\frac{m-i}{p}\right)!}
\le
K_X K_H\,
\frac{\omega(s,t)^{\phi+\frac{m-i}{p}}}{\beta\left(\frac{m-i}{p}\right)!}.
\end{align*}
Now, we factor out $\omega(s,t)^\theta$ using the fact that the leftover exponents are nonnegative: since $i\ge 1$ we have $(i-1)/p\ge 0$, and since $i\le m-1$ we have $(m-i-1)/p\ge 0$, so
\begin{align*}
\omega(s,t)^{\frac{i}{p}+\phi}
&=
\omega(s,t)^\theta\,\omega(s,t)^{\frac{i-1}{p}}
\le
\omega(s,t)^\theta\,\omega(S,T)^{\frac{i-1}{p}},
\end{align*}
and
\begin{align*}
\omega(s,t)^{\phi+\frac{m-i}{p}}
&=
\omega(s,t)^\theta\,\omega(s,t)^{\frac{m-i-1}{p}}
\le
\omega(s,t)^\theta\,\omega(S,T)^{\frac{m-i-1}{p}}.
\end{align*}
Combining with \eqref{eq:defect-XoplusH-new} yields
\[
\|\Delta^m_{s,u,t}(X\oplus H)\|
\le
C_m\,\omega(s,t)^\theta,
\qquad
s\le u\le t,
\]
where 
\begin{equation}\label{eq:Cm-explicit-new}
C_m
:=
\frac{K_XK_H}{\beta}
\left(
\sum_{i=1}^{m-1}\frac{\omega(S,T)^{\frac{i-1}{p}}}{\left(\frac{i}{p}\right)!}
+\sum_{i=1}^{m-1}
\frac{\omega(S,T)^{\frac{m-i-1}{p}}}{\left(\frac{m-i}{p}\right)!}
\right)
\end{equation}
Setting $C:=\max_{1\le m\le k}C_m$, Definition~\ref{def: almost multiplicativity} gives that $X\oplus H$ is $\theta$-almost multiplicative of degree $k$, controlled by $\omega$ with constant $C$.

\medskip

\noindent\textbf{\boldmath\underline{Step 2: Degree-wise $\omega^\theta$-closeness of $X\oplus H$ and $X\otimes H$}}

Fix $m\in\{1,\dots,k\}$ and $(s,t)\in\triangle_J$. By the product in $T^{(k)}(V)$,
\[
(X\otimes H)^m_{s,t}=\sum_{i=0}^m X^i_{s,t}\otimes H^{m-i}_{s,t},
\qquad
(X\oplus H)^m_{s,t}=X^m_{s,t}+H^m_{s,t}.
\]
Hence
\[
\bigl((X\oplus H)-(X\otimes H)\bigr)^m_{s,t}
=
-\sum_{i=1}^{m-1}X^i_{s,t}\otimes H^{m-i}_{s,t}.
\]
Using admissibility and \eqref{eq:KX-bound-new}--\eqref{eq:KH-phi-bound-new},
\[
\bigl\|\bigl((X\oplus H)-(X\otimes H)\bigr)^m_{s,t}\bigr\|
\le
\sum_{i=1}^{m-1}
K_XK_H\,\frac{\omega(s,t)^{\frac{i}{p}+\phi}}{\beta\left(\frac{i}{p}\right)!}.
\]
As in Step~1, since $i\ge 1$ we may write
\[
\omega(s,t)^{\frac{i}{p}+\phi}
=\omega(s,t)^\theta\,\omega(s,t)^{\frac{i-1}{p}}
\le
\omega(s,t)^\theta\,\omega(S,T)^{\frac{i-1}{p}}.
\]
Therefore there exists a constant $K^{\mathrm{cl}}_m\ge0$ such that
\[
\bigl\|\bigl((X\oplus H)-(X\otimes H)\bigr)^m_{s,t}\bigr\|
\le
K^{\mathrm{cl}}_m\,\omega(s,t)^\theta,
\]
and explicitly one can take
\[
K^{\mathrm{cl}}_m
:=
\frac{K_XK_H}{\beta}
\sum_{i=1}^{m-1}\frac{\omega(S,T)^{\frac{i-1}{p}}}{\left(\frac{i}{p}\right)!}.
\]
With $K^{\mathrm{cl}}:=\max_{1\le m\le k}K^{\mathrm{cl}}_m$, we have the uniform closeness
\begin{equation}\label{eq:close-new}
\bigl\|(X\otimes H)^m_{s,t}-(X\oplus H)^m_{s,t}\bigr\|
\le
K^{\mathrm{cl}}\omega(s,t)^\theta,
\qquad
m=1,\dots,k.
\end{equation}

\medskip

\noindent\textbf{\underline{Step 3: Compare sewings}}

Recall that $H\in\mathscr{H}^{(k,p)}_\omega(V)$ implies $H\in\Omega^{(k,p)}_\omega(V)$ by Definition~\ref{def:h-space}. Then by Definition~\ref{def:finite p-var} there exists $K_{H,\mathrm{var}}\ge0$ such that
\begin{equation}\label{eq:KH-var-new}
\|H^j_{s,t}\|
\le
K_{H,\mathrm{var}}\,
\frac{\omega(s,t)^{\frac{j}{p}}}{\beta\left(\frac{j}{p}\right)!},
\qquad j=1,\dots,k.
\end{equation}

\smallskip

\noindent (a) \emph{Finite $p$-variation of $X\oplus H$.}
By $(X\oplus H)^j=X^j+H^j$ for $j\ge1$,
\[
\|(X\oplus H)^j_{s,t}\|
\le
\|X^j_{s,t}\|+\|H^j_{s,t}\|
\le
(K_X+K_{H,\mathrm{var}})\,
\frac{\omega(s,t)^{\frac{j}{p}}}{\beta\left(\frac{j}{p}\right)!}.
\]
So $X\oplus H$ has finite $p$-variation controlled by $\omega$ with constant $K_X+K_{H,\mathrm{var}}$.

\smallskip

\noindent (b) \emph{Finite $p$-variation of $X\otimes H$.}
Fix $m\in\{1,\dots,k\}$ and $(s,t)\in\triangle_J$. Since
\[
(X\otimes H)^m_{s,t}=\sum_{i=0}^m X^i_{s,t}\otimes H^{m-i}_{s,t},
\]
we have, by admissibility (Definition~\ref{def: admissible}) and \eqref{eq:KX-bound-new}, \eqref{eq:KH-var-new},
\begin{align*}
\|(X\otimes H)^m_{s,t}\|
&\le \|X^m_{s,t}\|+\|H^m_{s,t}\|+\sum_{i=1}^{m-1}\|X^i_{s,t}\|\,\|H^{m-i}_{s,t}\|\\
&\hspace{-1cm}\le (K_X+K_{H,\mathrm{var}})\frac{\omega(s,t)^{\frac{m}{p}}}{\beta\left(\frac{m}{p}\right)!}
+\frac{K_XK_{H,\mathrm{var}}}{\beta^2}\,\omega(s,t)^{\frac{m}{p}}
\sum_{i=1}^{m-1}\frac{1}{\left(\frac{i}{p}\right)!\left(\frac{m-i}{p}\right)!}.
\end{align*}
By the Neo-classical Inequality applied with $s=t=\omega(s,t)$ and $n=m$,
\[
\sum_{i=0}^m\frac{1}{\left(\frac{i}{p}\right)!\left(\frac{m-i}{p}\right)!}
\le \frac{p\,2^{\frac{m}{p}}}{\left(\frac{m}{p}\right)!},
\]
hence the sum over $i=1,\dots,m-1$ is bounded by the same right-hand side. Therefore,

\begin{align}
\|(X\otimes H)^m_{s,t}\|
&\le \left(K_X+K_{H,\mathrm{var}}+\frac{K_XK_{H,\mathrm{var}}\,p}{\beta}\,2^{\frac{m}{p}}\right)
\frac{\omega(s,t)^{\frac{m}{p}}}{\beta\left(\frac{m}{p}\right)!}
\nonumber\\
&\le K_{\otimes,\mathrm{var}}\frac{\omega(s,t)^{\frac{m}{p}}}{\beta\left(\frac{m}{p}\right)!}.
\label{eq:XotimesH-pvar-new}
\end{align}

where one may take
\[
K_{\otimes,\mathrm{var}}
:=K_X+K_{H,\mathrm{var}}+\frac{K_XK_{H,\mathrm{var}}\,p}{\beta}\,2^{\frac{k}{p}}.
\]
Since $m\in\{1,\dots,k\}$ was arbitrary, \eqref{eq:XotimesH-pvar-new} holds for all $m=1,\dots,k$.
In particular, the “high-level $p$-variation” hypothesis \eqref{eq:hi-level-pvar-assumption} in Corollary~\ref{cor:X_Y_close_implies_SX_eq_SY} holds for $Y=X\otimes H$ with $K_{\mathrm{hi}}=K_{\otimes,\mathrm{var}}$.

\smallskip

\noindent (c) \emph{Apply the closeness corollary to identify sewings.}
From Step~1 and (a), $X\oplus H$ is $\theta$-almost multiplicative and has finite $p$-variation controlled by $\omega$, hence $X\oplus H\in\Omega^{\mathrm{am},(k,p)}_{\theta,\omega}(V)$ (Definition~\ref{def: Omega^(am, (k,p))_omega,theta}).
From Step~2 we have the closeness bound \eqref{eq:close-new}, i.e. \eqref{ineq: ineq in X and Y close => S(X)=S(Y) corollary} with $X=(X\oplus H)$ and $Y=(X\otimes H)$, and (b) provides \eqref{eq:hi-level-pvar-assumption}.

Thus Corollary~\ref{cor:X_Y_close_implies_SX_eq_SY} applies and yields that $X\otimes H$ is $\theta$-almost multiplicative of degree $k$ with finite $p$-variation controlled by $\omega$ (with some constant) and
\[
\mathscr S^k(X\otimes H)=\mathscr S^k(X\oplus H).
\]
Finally, define
\[
X\boxplus H := \mathscr S^k(X\oplus H)\in\Omega^{(k,p)}_\omega(V),
\]
and the displayed equality gives $\mathscr S^k(X\otimes H)=X\boxplus H$ as claimed.
\end{proof}

\subsection{Extension and upgrading tools}
We conclude this section with technical lemmas connecting extension and sewing, and clarifying stepwise invariance of the upgrading procedure.

\begin{lemma}[Extension as sewing of the trivial embedding]\label{lem:ext-is-sewing}

Let $p\ge1$ and let $n\in\mathbb N$ satisfy $n\ge \lfloor p\rfloor$. Let $X:\triangle_J\to T^{(n)}(V)$ be a multiplicative functional of finite $p$-variation. Choose a control $\omega$ such that $X$ has finite $p$-variation controlled by $\omega$. Define the (level-$(n+1)$) \emph{trivial embedding}
\[
\iota_n(X)_{s,t}:=\bigl(1,X^1_{s,t},\dots,X^n_{s,t},0\bigr)\in T^{(n+1)}(V),
\qquad (s,t)\in\triangle_J,
\]
and set
\[
\theta:=\frac{n+1}{p}>1.
\]
Then $\iota_n(X)\in \Omega^{\mathrm{am},(n+1,p)}_{\theta,\omega}(V)$, so its sewing is well-defined. The following identity involving $\operatorname{Ext}_{(n,p)}:\Omega^{(n,p)}(V)\to\Omega^{(n
+1,p)}(V)$,
\[
\operatorname{Ext}_{(n,p)}(X)
\;=\;
\mathscr S^{(n+1,p)}_{\theta,\omega}\bigl(\iota_n(X)\bigr).
\]

In particular, by Lemma \ref{lem:control-independence-sewing} one may write succinctly
\[
\operatorname{Ext}_{(n,p)}\;=\; \mathscr S^{n+1}\circ \iota_n .
\]

\end{lemma}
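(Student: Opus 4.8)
The plan is to verify the two claims in turn: first that $\iota_n(X)$ is a $\theta$-almost multiplicative functional of finite $p$-variation controlled by $\omega$ (so that $\mathscr S^{(n+1,p)}_{\theta,\omega}$ applies), and then that sewing it reproduces $\operatorname{Ext}_{(n,p)}(X)$. For the first claim, observe that $\iota_n(X)$ agrees with $X$ on levels $0,\dots,n$ and has a zero top level, so it inherits finite $p$-variation controlled by $\omega$ on levels $1,\dots,n$ (and the top level trivially satisfies any bound). For almost multiplicativity I would compute the multiplicative defect $\Delta(\iota_n(X))_{s,u,t}$ degree by degree: on levels $1,\dots,n$ it vanishes because $X$ is genuinely multiplicative and the top coordinate of $X$ does not enter the product on those levels; on level $n+1$ it equals $\sum_{i=1}^{n}X^i_{s,u}\otimes X^{n-i+1}_{u,t}$ plus the missing $X^{n+1}_{s,t}$ term which is zero here, and this is bounded using admissibility of the norm, the $p$-variation bounds $\|X^i\|\le\omega^{i/p}/(\beta\cdot(i/p)!)$, and superadditivity of $\omega$ (exactly as in Step~1 of Theorem~\ref{thm:X_otimes_H_is_ARP}), yielding a bound of the form $C\,\omega(s,t)^{(n+1)/p}=C\,\omega(s,t)^\theta$. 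Since $n\ge\lfloor p\rfloor$ we have $\theta=(n+1)/p>1$, so indeed $\iota_n(X)\in\Omega^{\mathrm{am},(n+1,p)}_{\theta,\omega}(V)$.

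For the second claim, let $Y:=\operatorname{Ext}_{(n,p)}(X)\in\Omega^{(n+1,p)}(V)$ be the extension furnished by Theorem~\ref{thm:extension-theorem}. By construction $Y$ is multiplicative of degree $n+1$, agrees with $X$ (hence with $\iota_n(X)$) on levels $0,\dots,n$, and has finite $p$-variation controlled by $\omega$, in particular $\|Y^{n+1}_{s,t}\|\le\omega(s,t)^{(n+1)/p}/(\beta\cdot((n+1)/p)!)$. The uniqueness half of the rough sewing lemma (Theorem~\ref{thm:Rough-Sewing-Lemma}(i)) states that $\mathscr S^{(n+1,p)}_{\theta,\omega}(\iota_n(X))$ is the \emph{unique} multiplicative functional in $\Omega^{(n+1,p)}(V)$ satisfying the $\omega^\theta$-closeness bound to $\iota_n(X)$. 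So it suffices to check that $Y$ satisfies this closeness bound: on levels $1,\dots,n$ we have $Y^j_{s,t}-\iota_n(X)^j_{s,t}=0$, and on level $n+1$, $\|Y^{n+1}_{s,t}-0\|=\|Y^{n+1}_{s,t}\|\le\omega(s,t)^{(n+1)/p}/(\beta\cdot((n+1)/p)!)=\omega(s,t)^\theta/(\beta\cdot((n+1)/p)!)$, which is of the required form $K\,\omega(s,t)^\theta$. By uniqueness, $Y=\mathscr S^{(n+1,p)}_{\theta,\omega}(\iota_n(X))$, which is the asserted identity.

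Finally, the succinct form $\operatorname{Ext}_{(n,p)}=\mathscr S^{n+1}\circ\iota_n$ follows because Lemma~\ref{lem:control-independence-sewing} (and the well-definedness of the global sewing map $\mathscr S^{n+1}$, Proposition~\ref{prop:global-sewing-well-defined}) guarantees that $\mathscr S^{(n+1,p)}_{\theta,\omega}(\iota_n(X))$ does not depend on the choice of $\omega$ (nor on $\theta$), and $\operatorname{Ext}_{(n,p)}(X)$ does not depend on $\omega$ either, so the pointwise identity just proved for an arbitrary admissible $\omega$ is an identity of maps $\Omega^{(n,p)}(V)\to\Omega^{(n+1,p)}(V)$. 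I expect the only mild obstacle to be the bookkeeping in the level-$(n+1)$ defect estimate — making sure the exponents of $\omega$ collect correctly to $(n+1)/p$ after using superadditivity — but this is entirely parallel to the computation already carried out in Step~1 of Theorem~\ref{thm:X_otimes_H_is_ARP}, so no genuinely new difficulty arises.
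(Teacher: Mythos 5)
Your proposal is correct and follows essentially the same route as the paper: verify that $\iota_n(X)$ is $\theta$-almost multiplicative with $\theta=(n+1)/p>1$ (zero defect below level $n+1$, the level-$(n+1)$ defect bounded by $C\,\omega(s,t)^{(n+1)/p}$) and of finite $p$-variation, then identify $\operatorname{Ext}_{(n,p)}(X)$ with the sewing via the uniqueness clause of Theorem~\ref{thm:Rough-Sewing-Lemma}, using the extension theorem's level-$(n+1)$ bound as the $\omega^\theta$-closeness estimate, and finally invoke Lemma~\ref{lem:control-independence-sewing} for independence of $(\omega,\theta)$. The only cosmetic difference is that the paper bounds the top-level defect via the neo-classical inequality, whereas you use the cruder monotonicity estimate $\omega(s,u),\omega(u,t)\le\omega(s,t)$; both suffice since only some finite constant is required.
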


\begin{proof}

\textbf{\boldmath\underline{Step 1: $\iota_n(X)$ is $\theta$-almost multiplicative of degree $n+1$}}

For $s\le u\le t$, since $X$ is multiplicative up to level $n$, we have

\[
\bigl(\iota_n(X)_{s,u}\otimes \iota_n(X)_{u,t}-\iota_n(X)_{s,t}\bigr)^j=0,
\qquad j=1,\dots,n.
\]

At level $n+1$, using that the $(n+1)$-component of $\iota_n(X)$ is identically $0$,
\begin{align*}
\bigl(\iota_n(X)_{s,u}\otimes \iota_n(X)_{u,t}-\iota_n(X)_{s,t}\bigr)^{n+1}
&=
\sum_{k=1}^{n} X^k_{s,u}\otimes X^{n+1-k}_{u,t}.
\end{align*}

Using admissibility of tensor norms and the $p$-variation bounds of $X$,

\[
\Big\|\sum_{k=1}^{n} X^k_{s,u}\otimes X^{n+1-k}_{u,t}\Big\|
\le
\sum_{k=1}^n
\frac{\omega(s,u)^{k/p}}{\beta(p)\bigl(\frac{k}{p}\bigr)!}\,
\frac{\omega(u,t)^{(n+1-k)/p}}{\beta(p)\bigl(\frac{n+1-k}{p}\bigr)!}.
\]

Set $a:=\omega(s,u)$ and $b:=\omega(u,t)$. Then
\[
\sum_{k=1}^n
\frac{a^{k/p}b^{(n+1-k)/p}}{\bigl(\frac{k}{p}\bigr)!\bigl(\frac{n+1-k}{p}\bigr)!}
\le
\sum_{k=0}^{n+1}
\frac{a^{k/p}b^{(n+1-k)/p}}{\bigl(\frac{k}{p}\bigr)!\bigl(\frac{n+1-k}{p}\bigr)!}
\le
\frac{p\,(a+b)^{(n+1)/p}}{\bigl(\frac{n+1}{p}\bigr)!},
\]
where the last inequality is the neo-classical inequality applied with $n$ replaced by $n+1$.

Using also $a+b=\omega(s,u)+\omega(u,t)\le \omega(s,t)$ yields
\[
\Big\|\sum_{k=1}^{n} X^k_{s,u}\otimes X^{n+1-k}_{u,t}\Big\|
\le
\frac{p}{\beta(p)^2\bigl(\frac{n+1}{p}\bigr)!}\,\omega(s,t)^{(n+1)/p}
=
C_{p,n}\,\omega(s,t)^{\theta},
\]
with $C_{p,n}:=\frac{p}{\beta(p)^2\bigl(\frac{n+1}{p}\bigr)!}$.

Thus $\iota_n(X)$ is $\theta$-almost multiplicative of degree $n+1$.

\smallskip

\textbf{\boldmath\underline{Step 2: $\iota_n(X)$ has finite $p$-variation}}

The first $n$ levels of $\iota_n(X)$ coincide with those of $X$, and the $(n+1)$st level is $0$. In particular, the $p$-variation bound at level $i=n+1$ holds trivially. Hence $\iota_n(X)$ has finite $p$-variation controlled by $\omega$.

\smallskip

Set $K:=\max\{1,C_{p,n}\}$. Then $\iota_n(X)$ is $\theta$-almost multiplicative and has finite $p$-variation controlled by $\omega$ with constant $K$.

Therefore $\iota_n(X)\in \Omega^{\mathrm{am},(n+1,p)}_{\theta,\omega}(V)$ and $\mathscr S^{(n+1,p)}_{\theta,\omega}(\iota_n(X))$ is well-defined.

\medskip

\textbf{\boldmath\underline{Step 3: Identify the sewing with $\operatorname{Ext}_{(n,p)}(X)$ by uniqueness}}

Let
\[
\bar X := \mathscr S^{(n+1,p)}_{\theta,\omega}\bigl(\iota_n(X)\bigr)\in\Omega^{(n+1,p)}(V).
\]
By the rough sewing lemma (Theorem~\ref{thm:Rough-Sewing-Lemma}), $\bar X$ is the \emph{unique} multiplicative functional of degree $n+1$ with finite $p$-variation such that

\[
\sup_{\substack{(s,t)\in\triangle_J,\, s<t,\\				\text{s.t. }\omega(s,t)>0;\\ i=1,\dots,n+1}}
\frac{\|\bar X^i_{s,t}-\iota_n(X)^i_{s,t}\|}{\omega(s,t)^{\theta}}<\infty.
\]
On the other hand, $\operatorname{Ext}_{(n,p)}(X)\in \Omega^{(n+1,p)}(V)$ is multiplicative and extends $X$, so for $i=1,\dots,n$ we have $\operatorname{Ext}_{(n,p)}(X)^i=\iota_n(X)^i$, and thus
\[
\|\operatorname{Ext}_{(n,p)}(X)^i_{s,t}-\iota_n(X)^i_{s,t}\|=0,
\qquad i=1,\dots,n.
\]
At level $n+1$, the extension theorem gives
\[
\|\operatorname{Ext}_{(n,p)}(X)^{n+1}_{s,t}\|
\le
\frac{\omega(s,t)^{(n+1)/p}}{\beta(p)\bigl(\frac{n+1}{p}\bigr)!}
=
\frac{1}{\beta(p)\bigl(\frac{n+1}{p}\bigr)!}\,\omega(s,t)^{\theta},
\]
so $\operatorname{Ext}_{(n,p)}(X)$ satisfies the same sewing-closeness condition to $\iota_n(X)$, and in particular the supremum above is bounded by $\frac{1}{\beta(p)\bigl(\frac{n+1}{p}\bigr)!}$.

By uniqueness in the rough sewing lemma, it follows that
\[
\operatorname{Ext}_{(n,p)}(X)=\bar X
=
\mathscr S^{(n+1,p)}_{\theta,\omega}\bigl(\iota_n(X)\bigr),
\]
as claimed. 
\end{proof}

\begin{corollary}[One-step extension does not depend on the exponent]
\label{cor:Ext-q-independence}
Fix an integer $n\ge 1$ and let $Z:\triangle_J\to T^{(n)}(V)$ be a degree-$n$ multiplicative functional. Suppose  $Z\in\Omega^{(n,q)}(V)\cap\Omega^{(n,\tilde q)}(V)$ for $q,\tilde q\in[1,n+1)$, then
\[
\operatorname{Ext}_{(n,q)}(Z)=\operatorname{Ext}_{(n,\tilde q)}(Z)
\quad\text{as }T^{(n+1)}(V)\text{-valued multiplicative functionals.}
\]
\end{corollary}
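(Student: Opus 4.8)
The plan is to deduce the statement directly from two facts already in hand: Lemma~\ref{lem:ext-is-sewing}, which identifies one-step extension with the sewing of the trivial embedding, and the fact (Lemma~\ref{lem:control-independence-sewing}, Proposition~\ref{prop:global-sewing-well-defined}) that the global sewing map $\mathscr S^{n+1}$ depends only on the functional being sewn and on the truncation level $n+1$, not on the exponent, control or almost-multiplicativity exponent used to witness that the functional lies in the domain of sewing. The crucial elementary observation is that the object to be sewn, namely the trivial embedding $\iota_n(Z)_{s,t}=(1,Z^1_{s,t},\dots,Z^n_{s,t},0)\in T^{(n+1)}(V)$, is literally the same functional whether we regard $Z$ as an element of $\Omega^{(n,q)}(V)$ or of $\Omega^{(n,\tilde q)}(V)$; the exponent enters only in the \emph{certificate} that $\iota_n(Z)$ is an almost rough path.

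Concretely, first I would check the hypotheses of Lemma~\ref{lem:ext-is-sewing} for each exponent. Since $q\in[1,n+1)$ we have $\lfloor q\rfloor\le n$, so $n\ge\lfloor q\rfloor$ as the lemma requires, and similarly $n\ge\lfloor\tilde q\rfloor$; moreover $Z\in\Omega^{(n,q)}(V)$ means there is a control $\omega$ witnessing finite $q$-variation of $Z$ (and a control $\tilde\omega$ for $\tilde q$). The one genuinely load-bearing point is that $\theta_q:=(n+1)/q>1$, which holds precisely because $q<n+1$ (and likewise $\theta_{\tilde q}:=(n+1)/\tilde q>1$); this is exactly why the hypothesis $q,\tilde q\in[1,n+1)$ is strict. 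With these in place, Lemma~\ref{lem:ext-is-sewing} applies in both cases and yields
\[
\operatorname{Ext}_{(n,q)}(Z)=\mathscr S^{(n+1,q)}_{\theta_q,\omega}\bigl(\iota_n(Z)\bigr)=\mathscr S^{n+1}\bigl(\iota_n(Z)\bigr),
\qquad
\operatorname{Ext}_{(n,\tilde q)}(Z)=\mathscr S^{(n+1,\tilde q)}_{\theta_{\tilde q},\tilde\omega}\bigl(\iota_n(Z)\bigr)=\mathscr S^{n+1}\bigl(\iota_n(Z)\bigr),
\]
where Steps~1--2 of the proof of Lemma~\ref{lem:ext-is-sewing} guarantee $\iota_n(Z)\in\Omega^{\mathrm{am},(n+1,q)}_{\theta_q,\omega}(V)\cap\Omega^{\mathrm{am},(n+1,\tilde q)}_{\theta_{\tilde q},\tilde\omega}(V)$, so that both invocations of the global sewing map $\mathscr S^{n+1}$ are legitimate and agree by Proposition~\ref{prop:global-sewing-well-defined}. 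Comparing the two displayed chains of equalities gives $\operatorname{Ext}_{(n,q)}(Z)=\operatorname{Ext}_{(n,\tilde q)}(Z)$ as $T^{(n+1)}(V)$-valued multiplicative functionals, which is the claim.

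I do not expect a real obstacle: the statement is a clean corollary of the machinery developed in this subsection. The only thing requiring a moment's care is the boundary bookkeeping just described, i.e.\ confirming that $q,\tilde q<n+1$ both forces $\theta_q,\theta_{\tilde q}>1$ and gives $\lfloor q\rfloor,\lfloor\tilde q\rfloor\le n$, so that Lemma~\ref{lem:ext-is-sewing} is genuinely applicable for each exponent; once that is recorded, the proof is simply a matter of chaining together the sewing-independence results.
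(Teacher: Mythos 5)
Your proposal is correct and follows essentially the same route as the paper: apply Lemma~\ref{lem:ext-is-sewing} with $p=q$ and then with $p=\tilde q$, observe that both outputs are $\mathscr S^{\,n+1}\bigl(1,Z^1,\dots,Z^n,0\bigr)$, and invoke the well-definedness of the global sewing map (Proposition~\ref{prop:global-sewing-well-defined}). The extra bookkeeping you supply — that $q,\tilde q\in[1,n+1)$ forces $\lfloor q\rfloor,\lfloor\tilde q\rfloor\le n$ and $\theta_q,\theta_{\tilde q}>1$ so that Lemma~\ref{lem:ext-is-sewing} genuinely applies — is sound and slightly more explicit than the paper's own (very terse) proof, but it is the same argument.
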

\begin{proof}
Apply Lemma~\ref{lem:ext-is-sewing} with $p=q$ and $X=Z$ to get\[
\operatorname{Ext}_{(n,q)}(Z)=\mathscr S^{\,n+1}(1,Z^1,\dots,Z^n,0).
\]
Applying the same lemma with $p=\tilde q$ yields
\[
\operatorname{Ext}_{(n,\tilde q)}(Z)=\mathscr S^{\,n+1}(1,Z^1,\dots,Z^n,0).
\]
Hence $\operatorname{Ext}_{(n,q)}(Z)=\operatorname{Ext}_{(n,\tilde q)}(Z)$.
\end{proof}

The following proposition enlarges the domain of the extension map $\operatorname{Ext}$ using the fact that, at fixed level $k$, the one-step extension does not depend on the choice of exponent $q$ as soon as  $q<k+1$.

\begin{proposition}[Generalised extension map] \label{prop:generalised-extension-map}
Let $k\ge1$ be an integer. The map
\[
\operatorname{Ext}_k:\bigcup_{1\leq q< k+1}\Omega^{(k,q)}(V)\to\bigcup_{1\leq q< k+1}\Omega^{(k+1,q)}(V)
\]
given by, for each $q\in [1,k+1)$,
\[
\operatorname{Ext}_k|_{\Omega^{(k,q)}(V)}=\operatorname{Ext}_{(k,q)},
\]
is well-defined. For any real number $p\ge1$, we set $\operatorname{Ext}_p:=\operatorname{Ext}_{\lfloor p \rfloor}$.
\end{proposition}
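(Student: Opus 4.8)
The plan is to reduce the entire statement to Corollary~\ref{cor:Ext-q-independence}, which already carries all the analytic content; what remains is a routine consistency check for a piecewise-defined map, so I do not expect any genuine obstacle.

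First I would check that the piecewise prescription makes sense at all. For each $q\in[1,k+1)$ we have $\lfloor q\rfloor\le k$, so the hypothesis ``$k\ge\lfloor p\rfloor$'' in the definition of $\operatorname{Ext}_{(k,p)}$ is met with $p=q$, and hence $\operatorname{Ext}_{(k,q)}:\Omega^{(k,q)}(V)\to\Omega^{(k+1,q)}(V)$ is well-defined. Moreover its image sits inside $\Omega^{(k+1,q)}(V)\subset\bigcup_{1\le q'<k+1}\Omega^{(k+1,q')}(V)$, so the codomain named in the statement is correct. Thus the only point to verify is that the prescription $\operatorname{Ext}_k|_{\Omega^{(k,q)}(V)}=\operatorname{Ext}_{(k,q)}$ is consistent on the overlaps of the sets $\Omega^{(k,q)}(V)$.

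For this, I would take $Z$ lying in $\Omega^{(k,q)}(V)\cap\Omega^{(k,\tilde q)}(V)$ for two exponents $q,\tilde q\in[1,k+1)$. Then $Z$ is a degree-$k$ multiplicative functional and the hypotheses of Corollary~\ref{cor:Ext-q-independence} hold with $n=k$ (note that $q<k+1$ is precisely the range $q,\tilde q\in[1,n+1)$ required there), so the corollary yields
\[
\operatorname{Ext}_{(k,q)}(Z)=\operatorname{Ext}_{(k,\tilde q)}(Z)
\]
as $T^{(k+1)}(V)$-valued multiplicative functionals. Hence the value $\operatorname{Ext}_k(Z)$ is independent of the piece $\Omega^{(k,q)}(V)$ used to compute it, and $\operatorname{Ext}_k$ is a well-defined map on $\bigcup_{1\le q<k+1}\Omega^{(k,q)}(V)$. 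Finally, since $p\ge1$ forces $\lfloor p\rfloor\ge1$, the abbreviation $\operatorname{Ext}_p:=\operatorname{Ext}_{\lfloor p\rfloor}$ is merely an instance of the construction just established, so nothing further is needed.

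The main obstacle: there is none of substance once Corollary~\ref{cor:Ext-q-independence} is available, since the real work --- that the one-step extension does not depend on the witnessing $p$-variation exponent --- has already been discharged there through Lemma~\ref{lem:ext-is-sewing} and the control- and $p$-independence of the sewing map (Lemma~\ref{lem:control-independence-sewing}). The only thing one must be slightly careful about is the bookkeeping of exponent ranges: $q<k+1$ is simultaneously the condition that makes $\operatorname{Ext}_{(k,q)}$ well-defined and the hypothesis $q,\tilde q\in[1,n+1)$ (with $n=k$) demanded by Corollary~\ref{cor:Ext-q-independence}, so the two fit together with no gap.
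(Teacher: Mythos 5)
Your proposal is correct and follows essentially the same route as the paper: the paper's proof is exactly the overlap-consistency check via Corollary~\ref{cor:Ext-q-independence} with $n=k$, and your additional remarks on domain/codomain bookkeeping are harmless elaboration.
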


\begin{proof}
If $X\in\Omega^{(k,q)}(V)\cap\Omega^{(k,\tilde q)}(V)$ with $q,\tilde q<k+1$, then $\operatorname{Ext}_{(k,q)}(X)=\operatorname{Ext}_{(k,\tilde q)}(X)$ by Corollary~\ref{cor:Ext-q-independence} with $n=k$.
\end{proof}

\begin{theorem}{\normalfont\textbf{(Stepwise invariance of sewing along the upgrading sequence)}}
\label{thm:stepwise-invariance-sewing}
Fix $p\ge1$, $\theta>1$, $k\in\mathbb N$, and a control $\omega$.
Let $X\in\Omega^{\mathrm{am},(k,p)}_{\theta,\omega}(V)$ and let $\bar X:=\mathscr S^{(k,p)}_{\theta,\omega}(X)$ be its sewing given by Theorem~\ref{thm:Rough-Sewing-Lemma}. Construct $X^{(0)},X^{(1)},\dots,X^{(k)}$ as in Theorem~\ref{thm:Rough-Sewing-Lemma}(ii), so that $X^{(0)}=X$ and $X^{(k)}=\bar X$.
Then for every $m=1,\dots,k$,
\[
\mathscr S^{(k,p)}_{\theta,\omega}\bigl(X^{(m)}\bigr)
=
\mathscr S^{(k,p)}_{\theta,\omega}\bigl(X^{(m-1)}\bigr),
\]
and hence
\[
\mathscr S^{(k,p)}_{\theta,\omega}(X)
=\mathscr S^{(k,p)}_{\theta,\omega}\bigl(X^{(1)}\bigr)
=\cdots
=\mathscr S^{(k,p)}_{\theta,\omega}\bigl(X^{(k)}\bigr)
=\bar{X} .
\]
Moreover, each $X(m)$ belongs to $\Omega^{\mathrm{am},(k,p)}_{\theta,\omega}(V)$.

\end{theorem}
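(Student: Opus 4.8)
The plan is to prove the slightly stronger statement that $\mathscr S^{(k,p)}_{\theta,\omega}\bigl(X^{(m)}\bigr)=\bar X$ for \emph{every} $m\in\{0,1,\dots,k\}$; comparing the cases $m$ and $m-1$ then gives $\mathscr S^{(k,p)}_{\theta,\omega}(X^{(m)})=\mathscr S^{(k,p)}_{\theta,\omega}(X^{(m-1)})$, and since $X^{(k)}=\bar X$ (Theorem~\ref{thm:Rough-Sewing-Lemma}(ii)) the whole chain collapses to $\bar X$; the ``moreover'' clause is established along the way. The argument hinges on one bookkeeping observation about the recursion of Theorem~\ref{thm:Rough-Sewing-Lemma}(ii): passing from $X^{(\ell-1)}$ to $X^{(\ell)}$ alters \emph{only} the level-$\ell$ component, which is then never touched again; hence $\bar X^{\ell}=X^{(k),\ell}=X^{(\ell),\ell}$, and for a fixed $m$,
\[
X^{(m),\ell}=\bar X^{\ell}\ \ (1\le\ell\le m),
\qquad
X^{(m),\ell}=X^{\ell}\ \ (m<\ell\le k).
\]
The remaining inputs I would use are the conclusions of Theorem~\ref{thm:Rough-Sewing-Lemma}(i) --- $\bar X$ is a degree-$k$ multiplicative functional with finite $p$-variation controlled by $\omega$, and $\|\bar X^{j}_{s,t}-X^{j}_{s,t}\|\le C\,\omega(s,t)^{\theta}$ for all $j$ and $(s,t)$ --- together with the hypothesis $X\in\Omega^{\mathrm{am},(k,p)}_{\theta,\omega}(V)$.

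First I would verify $X^{(m)}\in\Omega^{\mathrm{am},(k,p)}_{\theta,\omega}(V)$. Finite $p$-variation controlled by $\omega$ is immediate from the displayed identities: the components of $X^{(m)}$ at levels $\le m$ are those of $\bar X$ and at levels $>m$ those of $X$, both of which have finite $p$-variation controlled by $\omega$, so one takes the larger constant. For $\theta$-almost multiplicativity I would split on $j$. For $j\le m$ the defect $\Delta(X^{(m)})^{j}_{s,u,t}$ involves only the components $0,\dots,j$ of $X^{(m)}$, which coincide with those of the multiplicative $\bar X$; hence $\Delta(X^{(m)})^{j}=\Delta(\bar X)^{j}=0$. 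For $j>m$ I would subtract the identically-zero $\Delta(\bar X)^{j}$, expand each term $X^{(m),i}_{s,u}\otimes X^{(m),j-i}_{u,t}-\bar X^{i}_{s,u}\otimes\bar X^{j-i}_{u,t}$ by $a\otimes b-a'\otimes b'=(a-a')\otimes b+a'\otimes(b-b')$, and note that every difference $X^{(m),\cdot}-\bar X^{\cdot}$ either vanishes (on levels $\le m$) or equals $X^{\cdot}-\bar X^{\cdot}$, bounded by $C\,\omega^{\theta}$, while the remaining factors obey the finite-$p$-variation bounds for $X$ or $\bar X$. Using admissibility of the tensor norms, $\omega(s,u),\omega(u,t)\le\omega(s,t)\le\omega(S,T)$ and nonnegativity of the leftover exponents $\tfrac{i}{p},\tfrac{j-i}{p}$, this is a routine estimate yielding $\|\Delta(X^{(m)})^{j}_{s,u,t}\|\le C'\,\omega(s,t)^{\theta}$.

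With membership in hand, $\mathscr S^{(k,p)}_{\theta,\omega}(X^{(m)})$ is defined, and I would identify it with $\bar X$ using the \emph{uniqueness} clause of Theorem~\ref{thm:Rough-Sewing-Lemma}(i): $\bar X$ is a degree-$k$ multiplicative functional with finite $p$-variation controlled by $\omega$, and $\|\bar X^{j}_{s,t}-X^{(m),j}_{s,t}\|$ is $0$ for $j\le m$ and equals $\|\bar X^{j}_{s,t}-X^{j}_{s,t}\|\le C\,\omega(s,t)^{\theta}$ for $j>m$, so $\bar X$ is the unique multiplicative functional satisfying the characterising $\omega^{\theta}$-closeness to $X^{(m)}$. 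Hence $\mathscr S^{(k,p)}_{\theta,\omega}(X^{(m)})=\bar X$ for all $m$, which gives the asserted stepwise invariance and, at $m=k$, consistency with $X^{(k)}=\bar X$; the membership statement proved above is the ``moreover''.

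The step I expect to need the most care is the opening bookkeeping observation --- pinning down that in the recursion of Theorem~\ref{thm:Rough-Sewing-Lemma}(ii) level $\ell$ is frozen after its own correction, so that $X^{(m),\ell}=\bar X^{\ell}$ exactly on $\ell\le m$ --- since this is what reduces everything else to uniqueness; the $j>m$ defect bound is then pure constant-chasing. As a fallback I would run the argument inductively via Corollary~\ref{cor:X_Y_close_implies_SX_eq_SY}, realising $X^{(m),m}$ as the level-$m$ component of the sewing of the degree-$m$ functional $\bigl(1,X^{(m-1),1},\dots,X^{(m-1),m-1},X^{m}\bigr)$, whose upgrading sequence is stationary below step $m$ since it is already multiplicative there, and then feeding the resulting $\omega^{\theta}$-closeness of $X^{(m)}$ to $X^{(m-1)}$ into the Corollary.
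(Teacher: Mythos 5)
Your proposal is correct, and the bookkeeping fact you isolate---that the recursion of Theorem~\ref{thm:Rough-Sewing-Lemma}(ii) freezes each level once it has been corrected, so that $X^{(m)}$ agrees with $\bar X$ on levels $\le m$ and with $X$ on levels $>m$---is exactly the structural input the paper's proof also rests on. Where you differ is in how the sewings are identified. The paper runs an induction on $m$, comparing the \emph{consecutive} functionals $X^{(m-1)}$ and $X^{(m)}$ (which differ only at level $m$, by at most a constant multiple of $\omega(s,t)^{\theta}$ thanks to the closeness bound for $\bar X$) and invoking Corollary~\ref{cor:X_Y_close_implies_SX_eq_SY}, which in one stroke delivers both the membership $X^{(m)}\in\Omega^{\mathrm{am},(k,p)}_{\theta,\omega}(V)$ and the equality of the two sewings. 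You instead compare each $X^{(m)}$ directly with $\bar X$: you check $\theta$-almost multiplicativity of $X^{(m)}$ by hand (zero defect at levels $\le m$ because those components are $\bar X$'s, and an $a\otimes b-a'\otimes b'$ expansion with the $\omega^{\theta}$-closeness and $p$-variation bounds at levels $>m$), and then conclude $\mathscr S^{(k,p)}_{\theta,\omega}(X^{(m)})=\bar X$ from the uniqueness clause of Theorem~\ref{thm:Rough-Sewing-Lemma}(i), since $\bar X$ is multiplicative, has finite $p$-variation controlled by $\omega$, and is $\omega^{\theta}$-close to $X^{(m)}$. Your route needs no induction and yields the slightly sharper intermediate statement $\mathscr S^{(k,p)}_{\theta,\omega}(X^{(m)})=\bar X$ for every $m$ simultaneously, at the cost of redoing the defect estimate that the paper outsources to its corollary; the paper's pairwise argument is shorter per step precisely because that corollary is already available. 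Your stated fallback (pairwise $\omega^{\theta}$-closeness of $X^{(m)}$ to $X^{(m-1)}$ fed into Corollary~\ref{cor:X_Y_close_implies_SX_eq_SY}) is essentially the paper's proof.
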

\begin{proof}
We prove the claim by induction on $m=1,\dots,k$, and in particular we simultaneously justify that each $\mathscr S^{(k,p)}_{\theta,\omega}(X^{(m)})$ is well-defined.

\medskip
\noindent\textbf{\underline{Preparatory bounds}}

Let $\bar X:=\mathscr S^{(k,p)}_{\theta,\omega}(X)$. By \eqref{bound almost mult. functionals}, there exists $K_{\mathscr S}\ge0$ such that for all $(s,t)\in\triangle_J$ and all $j=1,\dots,k$,
\[
\|\bar X^j_{s,t}-X^j_{s,t}\|\le K_{\mathscr S}\,\omega(s,t)^\theta.
\]
Also, since $X\in\Omega^{\mathrm{am},(k,p)}_{\theta,\omega}(V)$ and $\bar X\in\Omega^{(k,p)}_\omega(V)$, there exist constants $K_X,K_{\bar X}\ge0$ such that for all $(s,t)\in\triangle_J$ and $j=1,\dots,k$,
\[
\|X^j_{s,t}\|\le K_X\,\frac{\omega(s,t)^{j/p}}{\beta(p)\,(\frac{j}{p})!},
\qquad
\|\bar X^j_{s,t}\|\le K_{\bar X}\,\frac{\omega(s,t)^{j/p}}{\beta(p)\,(\frac{j}{p})!}.
\]
Consequently, for every $m$ and every level $j$, since $(X^{(m)})^j$ is equal to either $X^j$ or $\bar X^j$, we have the uniform bound
\begin{equation}\label{eq:Xm-uniform-pvar-bound}
\|(X^{(m)})^j_{s,t}\|\le K_{\mathrm{hi}}\,
\frac{\omega(s,t)^{j/p}}{\beta(p)\,(\frac{j}{p})!},
\qquad
K_{\mathrm{hi}}:=\max\{K_X,K_{\bar X}\}.
\end{equation}
In particular, \eqref{eq:Xm-uniform-pvar-bound} provides the “high-level $p$-variation” hypothesis \eqref{eq:hi-level-pvar-assumption} in Corollary~\ref{cor:X_Y_close_implies_SX_eq_SY} whenever it is needed (i.e. for those $j$ with $j/p>\theta$).

\medskip
\noindent\textbf{\underline{Induction statement}}

For each $m=1,\dots,k$ we prove:
\smallskip
\noindent (A$_m$) $X^{(m)}\in\Omega^{\mathrm{am},(k,p)}_{\theta,\omega}(V)$ (so $\mathscr S^{(k,p)}_{\theta,\omega}(X^{(m)})$ is well-defined), and

\smallskip
\noindent (B$_m$) $\mathscr S^{(k,p)}_{\theta,\omega}(X^{(m)})=\mathscr S^{(k,p)}_{\theta,\omega}(X^{(m-1)})$.

\vspace{2cm}

\noindent\textbf{\underline{Base case $m=1$.}}

We have $X^{(0)}=X\in\Omega^{\mathrm{am},(k,p)}_{\theta,\omega}(V)$ by assumption. By construction of the upgrading sequence, $X^{(1)}$ and $X^{(0)}$ coincide in every level $j\neq 1$, and $(X^{(1)})^1=\bar X^1$ while $(X^{(0)})^1=X^1$. Hence for all $(s,t)\in\triangle_J$,
\[
\bigl\|(X^{(1)})^j_{s,t}-(X^{(0)})^j_{s,t}\bigr\|
=
\begin{cases}
0, & j\neq 1,\\[3pt]
\|\bar X^1_{s,t}-X^1_{s,t}\|\le K_{\mathscr S}\,\omega(s,t)^\theta, & j=1.
\end{cases}
\]
Together with the high-level $p$-variation bound \eqref{eq:Xm-uniform-pvar-bound} for $m=1$, Corollary~\ref{cor:X_Y_close_implies_SX_eq_SY} applies with $X^{(0)}$ in the role of $X$ and $X^{(1)}$ in the role of $Y$. It yields
\[
\mathscr S^{(k,p)}_{\theta,\omega}(X^{(1)})=\mathscr S^{(k,p)}_{\theta,\omega}(X^{(0)}),
\]
which is (B$_1$), and it also asserts that $X^{(1)}$ is itself $\theta$-almost multiplicative with finite $p$-variation
(controlled by $\omega$ up to a constant), i.e. (A$_1$).

\medskip
\noindent\textbf{\underline{Inductive step}}

Fix $m\in\{2,\dots,k\}$ and assume (A$_{m-1}$), i.e. that $X^{(m-1)}\in\Omega^{\mathrm{am},(k,p)}_{\theta,\omega}(V)$. By construction, $X^{(m)}$ and $X^{(m-1)}$ coincide in every level $j\neq m$, while $(X^{(m)})^m=\bar X^m$ and $(X^{(m-1)})^m=X^m$. Therefore, for all $(s,t)\in\triangle_J$,
\[
\bigl\|(X^{(m)})^j_{s,t}-(X^{(m-1)})^j_{s,t}\bigr\|
=
\begin{cases}
0, & j\neq m,\\[3pt]
\|\bar X^m_{s,t}-X^m_{s,t}\|\le K_{\mathscr S}\,\omega(s,t)^\theta, & j=m.
\end{cases}
\]
Moreover, \eqref{eq:Xm-uniform-pvar-bound} gives the high-level $p$-variation hypothesis for $Y=X^{(m)}$. Thus Corollary~\ref{cor:X_Y_close_implies_SX_eq_SY} applies with $X^{(m-1)}$ as $X$ and $X^{(m)}$ as $Y$,
and yields
\[
\mathscr S^{(k,p)}_{\theta,\omega}(X^{(m)})=\mathscr S^{(k,p)}_{\theta,\omega}(X^{(m-1)}),
\]
i.e. (B$_m$). The same corollary also implies that $X^{(m)}\in\Omega^{\mathrm{am},(k,p)}_{\theta,\omega}(V)$, i.e. (A$_m$), closing the induction.

\medskip
\noindent\textbf{\underline{Closing the induction}}

Iterating (B$_m$) over $m=1,\dots,k$ gives
\[
\mathscr S^{(k,p)}_{\theta,\omega}(X)
=\mathscr S^{(k,p)}_{\theta,\omega}(X^{(1)})
=\cdots
=\mathscr S^{(k,p)}_{\theta,\omega}(X^{(k)}).
\]
Since $X^{(k)}=\bar X$ by construction, the final value is $\bar X$, completing the proof.

\end{proof}
	\newpage
\section{\textit{H}-space}
In this section, we devote ourselves to studying the space of rough paths we introduced in Definition \ref{def:h-space}. We begin by introducing the following equivalent characterisation of $H$-space, which will prove instrumental to prove our later results. Its proof can be seen as a generalisation of the idea underlying Example \ref{ex:pure-area-rp}.

\subsection{Equivalent \textit{H}-space criteria}
We begin with an equivalent, level-wise characterization of $H$-space that will be used repeatedly to track regularity across tensor levels.

\begin{theorem}[Equivalent characterisation of $\mathscr H_{\!\omega}^{(k,p)}(V)$]
\label{thm:equiv-H-space}\label{lem:311}
Fix $p\ge1$, an integer $1\le k\le\lfloor p\rfloor$ and a control $\omega$ on $J$. Let  $H=(1,H^1,...,H^k)\in\Omega^{(k,p)}_\omega(V)$. Let $k':=\min\{k,\lfloor p \rfloor -1 \}$. Then, the following conditions are equivalent:
\begin{enumerate}[label=(\roman*)]
\item\label{H1} \textbf{Uniform control.}  
$H\in\mathscr H_{\!\omega}^{(k,p)}(V)$.
\item\label{H2} \textbf{Level--wise exponents.}
When $p>1$, for each $j=1,\dots,k'$ there exists $q_j\in\bigl[j,\;j+\tfrac{j}{p-1}\bigr)$ such that
\[
H(j)=(1,H^1,\dots,H^j)\in \Omega^{(j,q_j)}_\omega(V).
\]
If $k=\lfloor p\rfloor$, we  additionally set $q_k:=p$; noting that $H(k)=H\in \Omega^{(k,p)}_\omega(V)$ already by hypothesis. When $p=1$ (hence $k=1$), set $q_1:=1$ and $(1,H^1)\in \Omega^{(1,1)}_\omega(V)$.
\end{enumerate}

If $H$ satisfies Condition 1 with $H\in\mathscr H_{\phi,\omega}^{(k,p)}(V)$ for some $\phi\in(1-1/p,1]$, then $H$ satisfies Condition 2 with $q_j=j/\phi$ for each $j=1,...,k'$.
\end{theorem}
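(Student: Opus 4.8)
The plan is to verify directly that the specific choice $q_j := j/\phi$ witnesses Condition~\ref{H2}; this direction of the theorem uses no extension theorem, only elementary bookkeeping. I would first dispose of the degenerate case $p=1$: then $\lfloor p\rfloor = 1$, so $k=1$ and $k' = \min\{1,0\} = 0$, the index set $\{1,\dots,k'\}$ is empty, and there is nothing to prove. So from here on I assume $p>1$, fix $j\in\{1,\dots,k'\}$, and set $q_j := j/\phi$.

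The first step is to check that $q_j$ lands in the half-open interval $\bigl[j,\,j+\tfrac{j}{p-1}\bigr)$ required by Condition~\ref{H2}: $\phi\le 1$ gives $q_j\ge j$, while $\phi > 1-\tfrac1p = \tfrac{p-1}{p}$ gives $\tfrac1\phi < \tfrac{p}{p-1}$, hence $q_j = \tfrac j\phi < \tfrac{jp}{p-1} = j+\tfrac{j}{p-1}$; in particular $q_j\ge j\ge 1$ is an admissible variation exponent. The second step is to note that $H(j) = (1,H^1,\dots,H^j)$ is still multiplicative of degree $j$: since $j\le k$, composing $H$ with the canonical truncation homomorphism $T^{(k)}(V)\to T^{(j)}(V)$ (Proposition~\ref{prop:trunc-iso}) preserves continuity, the unit in degree $0$, and the multiplicativity identity, so $H(j)$ is a multiplicative functional in the sense of Definition~\ref{def: multiplicative functional}.

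The substantive step is the third: showing $H(j)$ has finite $q_j$-variation controlled by $\omega$. Since $H\in\mathscr{H}_{\phi,\omega}^{(k,p)}(V)$, there is a constant $K_H\ge 0$ with $\|H^i_{s,t}\|\le K_H\,\omega(s,t)^\phi$ for every level $i\in\{1,\dots,k\}$, hence in particular for $i = 1,\dots,j$. The exponent governing level $i$ of a $q_j$-rough path is $i/q_j = i\phi/j$, which is $\le\phi$, with equality exactly at the top level $i=j$. For $i<j$ the gap $\phi - \tfrac{i}{q_j} = \phi\bigl(1-\tfrac ij\bigr)$ is nonnegative, so using super-additivity of $\omega$ (hence $0\le\omega(s,t)\le\omega(S,T)$, Definition~\ref{def:controls}) I trade the surplus power: $\omega(s,t)^\phi = \omega(s,t)^{i/q_j}\,\omega(s,t)^{\phi - i/q_j}\le \omega(S,T)^{\phi(1-i/j)}\,\omega(s,t)^{i/q_j}$. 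Multiplying and dividing by the fixed positive factors $\beta(q_j)$ and $(i/q_j)!$ and then taking the maximum over the finitely many $i\in\{1,\dots,j\}$ yields a single constant $K'\ge 0$ with $\|H^i_{s,t}\|\le K'\,\omega(s,t)^{i/q_j}\big/\bigl(\beta(q_j)(i/q_j)!\bigr)$ for all such $i$ and all $(s,t)\in\triangle_J$, which is precisely finite $q_j$-variation controlled by $\omega$ (Definition~\ref{def:finite p-var}). Combining Steps~2 and~3 gives $H(j)\in\Omega^{(j,q_j)}_\omega(V)$, and together with Step~1 this establishes Condition~\ref{H2} with $q_j = j/\phi$ for every $j=1,\dots,k'$.

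I do not anticipate any real obstacle here—this is the easy half of Theorem~\ref{thm:equiv-H-space}, and the only care needed is to see that $K'$ can be chosen uniformly over the finitely many levels $i\le j$ and that $\beta(q_j)$ together with the factorials are harmless fixed numbers once $q_j$ is pinned down. By contrast, the genuinely substantive implication \ref{H2}$\Rightarrow$\ref{H1} of the theorem is obtained by running Lyons' extension theorem level by level, but that is beyond what the present assertion requires.
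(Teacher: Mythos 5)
There is a genuine gap: you have only proved half of the statement. The theorem asserts the \emph{equivalence} of Conditions \ref{H1} and \ref{H2} (together with the explicit witness $q_j=j/\phi$ for the forward direction), but your proposal establishes only \ref{H1}$\Rightarrow$\ref{H2} and then explicitly declares the converse ``beyond what the present assertion requires.'' It is not: without \ref{H2}$\Rightarrow$\ref{H1} the claimed characterisation of $\mathscr H_{\omega}^{(k,p)}(V)$ is unproven. Moreover, your stated reason for setting it aside --- that the converse ``is obtained by running Lyons' extension theorem level by level'' --- misjudges the difficulty: the paper's proof of \ref{H2}$\Rightarrow$\ref{H1} uses no extension theorem at all. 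It is bookkeeping of the same flavour as your Step~3: from $H(j)\in\Omega^{(j,q_j)}_\omega(V)$ one gets $\|H^j_{s,t}\|\le K\,\omega(s,t)^{j/q_j}$ (after absorbing $\beta(q_j)$ and the factorials into a constant), one sets $\phi_j:=j/q_j$, checks from $q_j\in[j,\,j+\tfrac{j}{p-1})$ that $\phi_j\in(1-\tfrac1p,1]$ (and, when $k=\lfloor p\rfloor$, that $\phi_k=k/p$ also lies in this interval since $p<\lfloor p\rfloor+1$), takes $\phi:=\min_j\phi_j$, and finally trades the exponent surplus $\phi_j-\phi\ge0$ against $\omega(S,T)$, splitting into the cases $\omega(S,T)\le1$ and $\omega(S,T)>1$ to produce a single constant $K^*$ with $\|H^j_{s,t}\|\le K^*\omega(s,t)^\phi$ for all $j$.

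The portion you did write, the implication \ref{H1}$\Rightarrow$\ref{H2} with $q_j=j/\phi$, is correct and follows essentially the same route as the paper: the interval check $q_j\in[j,\,j+\tfrac{j}{p-1})$ from $\phi\in(1-\tfrac1p,1]$, multiplicativity of the truncation $H(j)$, and the exponent-trading estimate $\omega(s,t)^{\phi}\le\omega(S,T)^{\phi(1-i/j)}\,\omega(s,t)^{i/q_j}$ with a uniform constant over the finitely many levels. To complete the proof you must supply the converse direction sketched above (and, for completeness, note that the $p=1$ case is a two-line triviality in both directions rather than vacuous, since Condition \ref{H2} there still asserts $(1,H^1)\in\Omega^{(1,1)}_\omega(V)$, which holds by hypothesis, while Condition \ref{H1} follows with $\phi=1$).
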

\begin{proof}
If $p=1 = k$, then the equivalence is trivial. We will thus treat the case that $p>1$. Recall for both directions of this proof that, by super-additivity and positivity of the control $\omega$, $\omega(s,t) \le \omega(S,T)$ for all $(s,t)\in\triangle_J$.

\textbf{Proof of \ref{H1}$\!\Rightarrow$\ref{H2}.} 

Because $H\in\mathscr H_{\!\omega}^{(k,p)}(V)$, there exist $K>0$ and $\phi\in\bigl(1-\frac{1}{p},\,1\bigr]$ such that
\[
  \lVert H^m_{s,t}\rVert \;\le\; K\,\omega(s,t)^{\phi},
  \qquad m=1,\dots,k,\;\; (s,t)\in\triangle_J .
\]

\medskip\noindent\textbf{\underline{Step 1: Choose the level-wise exponents}}

For every $j\in\{1,\dots ,k'\}$ set
$$
   q_j\;:=\;\frac{j}{\phi}.
$$
\emph{Lower bound.} Because $\phi\le1$,
$$
   q_j=\frac{j}{\phi}\;\ge\;j .
$$
\emph{Upper bound.} Since $\phi>1-\tfrac1p$,
$$
   q_j \;=\;\frac{j}{\phi}
          \;<\;\frac{j}{\,1-\frac1p\,}
          \;=\;j\Bigl(1+\frac{1}{p-1}\Bigr)
          \;=\;j+\frac{j}{p-1}.
$$
 Together with \(q_j\ge j\), this yields
$$
   q_j\;\in\;\Bigl[j,\;j+\frac{j}{p-1}\Bigr) .
$$
This establishes the required inclusion for all $j=1,\dots ,k'$.

\medskip\noindent\textbf{\underline{Step 2: Estimate the $j$-th level}}
\begin{equation}\label{eq:levelj}
   \lVert H^j_{s,t}\rVert
   \;\le\; K\,\omega(s,t)^{\frac{j}{q_j}}
   \;=\; K\,\omega(s,t)^{\phi},
\end{equation}
where we used the identity \(\frac{j}{q_j}=\phi\).

\medskip\noindent\textbf{\boldmath\underline{Step 3: Estimate the lower levels \(\ell=1,\dots,j-1\)}}

Because \(q_\ell=\frac{\ell}{\phi}<q_j\), the exponent \(\frac{\ell}{q_\ell}-\frac{\ell}{q_j}\) is non-negative, so
\[
  \omega(s,t)^{\frac{\ell}{q_\ell}-\frac{\ell}{q_j}}
      \;\le\; \omega(S,T)^{\frac{\ell}{q_\ell}-\frac{\ell}{q_j}},
  \qquad(s,t)\in\triangle_J .
\]
Hence
\begin{equation}\label{eq:lowerlevel}
   \lVert H^\ell_{s,t}\rVert
   \;\le\;
   K\,\omega(s,t)^{\frac{\ell}{q_\ell}}
     \,\omega(S,T)^{\frac{\ell}{q_\ell}-\frac{\ell}{q_j}}
   =:
   \tilde K(\ell,j)\,\omega(s,t)^{\frac{\ell}{q_j}},
   \qquad \ell=1,\dots,j-1 .
\end{equation}

Define the uniform constant
\[
   K^{*}:=
   \max\Bigl\{\,K,\;
     \max_{1\le j\le k'}\;
     \max_{1\le \ell<j}
     \tilde K(\ell,j)
   \Bigr\}
   \;\ge\; K .
\]
Consequently, \(K^{*}\) dominates $K$ as well as the individual constants \(\tilde K(\ell,j)\) for all \(j=1,\dots,k'\) and \(\ell=1,\dots,j-1\).

\medskip\noindent\textbf{\underline{Step 4: Collect the bounds}}

Equations \eqref{eq:levelj} and \eqref{eq:lowerlevel} give, for every \(j=1,\dots,k'\) and every \(\ell=1,\dots,j\), as well as every $(s,t)\in\triangle_J$,
\[
   \lVert H^\ell_{s,t}\rVert
   \;\le\; K^{*}\,\omega(s,t)^{\frac{\ell}{q_j}} \le K^{*}\beta(q_j)\max_{1\le \ell\le j}\{(\tfrac{\ell}{q_j})!\}\frac{\omega(s,t)^{\frac{\ell}{q_j}} }{\beta(q_j)\cdot(\tfrac{\ell}{q_j})!}.
\]
Set $K^{**}:=K^{*}\beta(q_j)\max_{j=1,...,k'}\max_{1\le \ell\le j}\{(\tfrac{\ell}{q_j})!\}$. We thus have, for every \(j=1,\dots,k'\),

\begin{equation}\label{eq:qj-range}
       \lVert H^\ell_{s,t}\rVert
  \le K^{**} \frac{\omega(s,t)^{\frac{\ell}{q_j}} }{\beta(q_j)\cdot(\tfrac{\ell}{q_j})!} \quad \forall(s,t)\in\triangle_J, \forall\ell=1,..., j.
\end{equation}

Therefore, the truncated functional \(H(j):=(1,H^1,\dots,H^j)\) has finite \(q_j\)- variation controlled by \(\omega\). Using \eqref{eq:qj-range}, we deduce $H(j)$ has finite $q_j$-variation. Using the multiplicativity of \(H\) (since $H$ is a $p$-rough path), we trivially deduce the multiplicativity of $H(j)$. Hence, we conclude \(H(j)\in\Omega^{q_j}_{\!\omega}(V)\), completing the proof of \(\textbf{H1}\Rightarrow\textbf{H2}\).

 
\textbf{{Proof of\ref{H2}$\!\Rightarrow$\ref{H1}}.}

Assume \ref{H2}. Set $C := \max\{ \max_{j \in \{1, \dots, k'\}} \{ \frac{1}{\beta(q_j)\left( \frac{j}{q_j} \right)!} \}, \frac{1}{\beta(q_j)\left( \frac{\lfloor p \rfloor}{p} \right)!} \}\}
$. Since $\omega$ is fixed, there exists a constant $K_1>0$ such that
\[
   \|H^j_{s,t}\|\le K_1\,\frac{\omega(s,t)^{\frac{j}{q_j}}}{\beta(q_j)\cdot(\frac{j}{q_j})!}\le K_1 \,C\, \omega(s,t)^{\frac{j}{q_j}} ,
   \qquad \forall (s,t)\in\triangle_J,\; j=1,\dots ,k',
\]
with \(q_j\in\bigl[j,\;j+\tfrac{j}{p-1}\bigr)\).
In the case where $k=\lfloor p  \rfloor$, at the $\lfloor p\rfloor$-th level, we also have a constant $K_2>0$ such that
\[
   \|H^{k}_{s,t}\|\le K_2\,\frac{\omega(s,t)^{\frac{\lfloor p \rfloor}{p}}}{\beta(q_j) (\frac{\lfloor p \rfloor}{p})!}\le K_2\, C\,\omega(s,t)^{\frac{\lfloor p \rfloor}{p}},
   \qquad (s,t)\in\triangle_J,
\]
since $H\in\Omega^{(k,p)}_\omega(V)$, we have the $p$-variation estimate at level $k$ controlled by $\omega$.

We then define 
$$K=C\max\{K_1,K_2\} .$$
This implies that, recalling that $q_{\lfloor p \rfloor}= p$ when $k=\lfloor p \rfloor$, 
\[
  \|H^j_{s,t}\|\le K\, \omega(s,t)^{\frac{j}{q_j}}   \qquad \forall (s,t)\in\triangle_J,\; j=1,\dots ,k.\tag{$\ast$}
\]

\smallskip\noindent\textbf{\underline{Step 1: Level-wise exponents}}

For each \(j=1,\dots ,k'\) define \(\displaystyle\phi_j:=\frac{j}{q_j}\). If \(k=\lfloor p\rfloor\) set in addition \(\displaystyle\phi_k:=\frac{k}{p}\).

For \(j\le k'\) we have \(q_j<j+\frac{j}{p-1}\), hence
\[
   \phi_j=\frac{j}{q_j}>\frac{j}{\,j+\frac{j}{p-1}\,}
         =\frac{p-1}{p}=1-\frac1p,
   \qquad\text{and}\qquad
   \phi_j\le1.
\]
If \(k=\lfloor p\rfloor\) then \(\displaystyle\phi_k=\frac{k}{p}\in\bigl(1-\tfrac1p,\,1\bigr]\) because 

$$1-\frac1p < \frac{\lfloor p\rfloor}{p } \iff p < \lfloor p \rfloor +1,$$
which is true. Thus, in every case
\[
   \phi_j\in\bigl(1-\tfrac1p,\,1\bigr],\qquad j=1,\dots ,k.
\]

\smallskip\noindent\textbf{\underline{Step 2: Choose a single exponent}}

Set
\[
   \phi:=\min_{1\le j\le k}\phi_j\in(1-\tfrac1p,\,1].
\]

\smallskip\noindent\textbf{\underline{Step 3: Obtain uniform bounds}}

Let \(M:=\omega(S,T)\). 

\smallskip\noindent\emph{Case A (\(M\le1\)).}
For every \(j\le k\) and every \((s,t)\in\triangle_J\),
\begin{equation}
    \label{ineq: step 3}
   \|H^j_{s,t}\|\le K\,\omega(s,t)^{\phi_j}\le K\,\omega(s,t)^{\phi},
\end{equation}

because \(\phi\le\phi_j\) and $\omega(s,t)\le M\le1$.  Take \(K^{*}:=K\).

\smallskip\noindent\emph{Case B (\(M>1\)).}
If \(\omega(s,t)\le1\) the same bound in \eqref{ineq: step 3} holds.  When \(1<\omega(s,t)\le M\),
\[
   \omega(s,t)^{\phi_j}
      =\omega(s,t)^{\phi}\,\omega(s,t)^{\phi_j-\phi}
      \le\omega(s,t)^{\phi}\,M^{\phi_j-\phi},
\]
so
\[
   \|H^j_{s,t}\|\le K\,M^{\phi_j-\phi}\,\omega(s,t)^{\phi}.
\]
Define
\[
   K^{*}:=K\max_{1\le j\le k}M^{\phi_j-\phi}.
\]
\newpage
\smallskip\noindent\textbf{\underline{Step 4: Combined bound}}

Since $\phi_j-\phi\ge0$ for each $j\in\{1,...,k\}$ and $M\ge1$, it follows that $\max_{1\le j\le k}M^{\phi_j-\phi}\ge1$ and thus $K^*\ge K$. Therefore, in both Case A and Case B,
\[
   \|H^j_{s,t}\|\le K^{*}\,\omega(s,t)^{\phi},
   \qquad j=1,\dots ,k,\; (s,t)\in\triangle_J.
\]

\end{proof}

Below, we provide definitions of $\text{Comm}^k(V)$ for $k\ge0$, $\mathfrak{g}(V)$ and $\mathfrak{g}^{(n)}(V)$ for $n\ge1$. Given an $H\in \mathscr{H}^p(V)$, such definitions will help us to make sense of the implications of assuming that $H$ is a weakly-geometric $p$-rough path.

\subsection{Commutators and geometry}
The following commutator and Lie-algebra notation is used to describe the weakly geometric case and the structure of level-wise ‘innovations’.

\begin{definition}[Comm]\label{def:comm}
	Let $k \geqslant 0$ be an integer. For two vector spaces $E_1$ and $E_2$, define the set $[E_1, E_2]: = \operatorname{span}\{ e \otimes f - f \otimes e  | e \in E_1 \   \text{and} \   f \in E_2 \}$. Then, we define
	$$
	\operatorname{Comm}^k(V):= 
	\begin{cases}
		\{0\}, 	& \operatorname{when} k=0 \\
		V, 		& \operatorname{when} k=1 \\
		{\left[V, \; \operatorname{Comm}^{k-1}(V)\right],} & \operatorname{ for } k \geqslant 2 .
	\end{cases}
	$$

\end{definition}


\begin{definition}[$\mathfrak{g}$]\label{def:g}
	Let $n \geqslant 1$ be an integer. Set
	\begin{align*}
		\mathfrak{g}(V) & := \bigoplus_{k=0}^{\infty} \operatorname{Comm}^k(V) \Bigl(\subseteq T(V)\Bigr)  \text{,}\\
		\mathfrak{g}^{(n)}(V) &:= \bigoplus_{k=0}^n \operatorname{Comm}^k(V)\Bigl( \subseteq T^{(n)}(V) \Bigr)  \text{.}
	\end{align*}
	
\end{definition}


The following lemma is an extended version of Lemma 3.4 in \cite{stflour}.
\begin{lemma}
  \label{lem:mult. func.}
  Let \(X:\triangle_J \to T(V)\) and \(Y:\triangle_J\to T(V)\) be multiplicative  functionals of degree \(n\) and assume \(X^i = Y^i\) for each \(i=1,\ldots,n-1\).  Then \(\psi := X^n - Y^n : \triangle_J \longrightarrow V^{\otimes n}\)  is an additive function:

  \EqLine{a}{\psi_{su} + \psi_{ut} = \psi_{st},\quad \forall s \le u \le t.}

  If, in addition, \(X\) and \(Y\) both have finite \(p\)-variation controlled by $\omega$ for some \(p\ge1\), then:

  \EqLine{b}{\|\psi_{st}\| \le 2\,\frac{\omega(s,t)^{\tfrac{n}{p}}}{\beta(p)(\tfrac{n}{p})!},\quad \forall\,(s,t)\in\triangle_J.}

  If, moreover, \(X\) and \(Y\) are weakly geometric $p$-rough paths for some \(p<n+1\), then:

  \EqLine{c}{\psi \in \mathrm{Comm}^n(V).}
\end{lemma}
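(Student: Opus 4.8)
For (a) I would expand the multiplicativity identities $X_{s,t}=X_{s,u}\otimes X_{u,t}$ and $Y_{s,t}=Y_{s,u}\otimes Y_{u,t}$ in level $n$, writing
\[
X^n_{s,t}=X^n_{s,u}+X^n_{u,t}+\sum_{k=1}^{n-1}X^k_{s,u}\otimes X^{n-k}_{u,t}
\]
and similarly for $Y$. Subtracting, every term of the finite sum involves only levels $1,\dots,n-1$, on which $X$ and $Y$ coincide, so the sums cancel and $\psi_{s,t}=\psi_{s,u}+\psi_{u,t}$; continuity of $\psi$ is inherited from $X^n,Y^n$. For (b) I would just apply the triangle inequality together with the level-$n$ $p$-variation bound of Definition~\ref{def:finite p-var} to each of $X^n_{s,t}$ and $Y^n_{s,t}$, which produces the factor $2$.

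\textbf{Part (c): the plan.} The idea is to pass to logarithms. Since $X,Y$ are weakly geometric, $X_{s,t}$ and $Y_{s,t}$ are group-like elements of $T^{(n)}(V)$ — this is the definition of weak geometricity at degree $n$; if instead $n>\lfloor p\rfloor$, so that $X,Y$ are obtained as Lyons extensions (Theorem~\ref{thm:extension-theorem}) of lower-degree weakly geometric rough paths, one uses that such extensions remain group-like (cf. \cite{Friz_Victoir_2010,reutenauer}), the hypothesis $p<n+1$ being precisely the compatibility requirement $\lfloor p\rfloor\le n$ that makes ``degree-$n$ weakly geometric $p$-rough path'' meaningful. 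Then I would set $\xi_{s,t}:=\log X_{s,t}$ and $\eta_{s,t}:=\log Y_{s,t}$; since the logarithm of a group-like element is a Lie element (see \cite{reutenauer}), both lie in $\mathfrak g^{(n)}(V)=\bigoplus_{k=0}^{n}\operatorname{Comm}^k(V)$ (Definitions~\ref{def:comm}--\ref{def:g}).

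\textbf{Part (c): execution.} Next I would observe that $\exp$ on $T^{(n)}(V)$ is a graded polynomial map whose level-$k$ component has the shape $\exp(\zeta)^k=\zeta^k+Q_k(\zeta^1,\dots,\zeta^{k-1})$ for a universal polynomial $Q_k$ involving only the strictly lower levels — because every monomial $\zeta^{j_1}\otimes\cdots\otimes\zeta^{j_m}$ with $m\ge2$ and $j_1+\cdots+j_m=k$ has each $j_i\le k-1$. Feeding in $X=\exp(\xi)$, $Y=\exp(\eta)$ and inducting on $k=1,\dots,n-1$, the agreement $X^k=Y^k$ together with the inductive hypothesis $\xi^i=\eta^i$ for $i<k$ (which makes the $Q_k$-terms identical) forces $\xi^k=\eta^k$. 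At level $n$ these lower-order parts cancel in the difference, leaving
\[
\psi_{s,t}=X^n_{s,t}-Y^n_{s,t}=\xi^n_{s,t}-\eta^n_{s,t}.
\]
Finally, $\xi^n_{s,t}$ and $\eta^n_{s,t}$ are the degree-$n$ homogeneous components of elements of $\mathfrak g^{(n)}(V)$, hence belong to $\operatorname{Comm}^n(V)$, and since $\operatorname{Comm}^n(V)$ is a linear subspace their difference $\psi_{s,t}$ does too.

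\textbf{Main obstacle.} The genuinely non-formal input — and the step I would be most careful about — is the structural algebra: that $\operatorname{Comm}^n(V)$, defined in Definition~\ref{def:comm} as a span of left-nested brackets, coincides with the degree-$n$ homogeneous component of the free Lie algebra, and that $\log$ of a group-like element lands there; both are classical (\cite{reutenauer,stflour}) but should be cited precisely, and in the Banach-space setting one works levelwise in $T^{(n)}(V)$. A log-free alternative I would keep in reserve is to argue via the shuffle characterisation of group-likeness directly: for nonempty words $u,v$ with $|u|+|v|=n$ one has $\langle X^n_{s,t},\,u\shuffle v\rangle=\langle X_{s,t},u\rangle\langle X_{s,t},v\rangle$, the right-hand side being unchanged if $X$ is replaced by $Y$ since $|u|,|v|<n$; hence $\langle\psi_{s,t},\,u\shuffle v\rangle=0$ for all such $u,v$, which by the Friedrichs/Ree criterion is exactly membership in $\operatorname{Comm}^n(V)$.
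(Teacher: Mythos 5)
Your parts (a) and (b) are exactly what the paper does: it cites \cite{stflour} for the additivity statement (the same level-$n$ expansion/cancellation you write out) and disposes of (b) by the triangle inequality, so there is nothing to compare there beyond noting that your expansion is the standard argument behind the citation. The real difference is part (c): the paper explicitly \emph{omits} its proof (it is stated as "more delicate" and unused in Section~\ref{sec:main-results}), whereas you supply one. Your argument is sound in outline, and in fact your "log-free alternative" via shuffles is the route best adapted to this paper's conventions, since group-likeness is defined here precisely through the shuffle-morphism property of $\mathrm{ev}_{\mathbf a}$ on $T(V^*)$: for nonempty words $u,v$ with $|u|+|v|=n$ the identity $\langle X^n_{s,t},u\,\text{\rotatebox[origin=c]{0}{$\shortmid$}}\text{shuffle}\,v\rangle=\langle X_{s,t},u\rangle\langle X_{s,t},v\rangle$ depends only on levels $<n$, where $X$ and $Y$ agree, so $\psi_{s,t}$ annihilates all proper shuffles and the Friedrichs--Ree criterion applies. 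Two points deserve the care you already flag, plus one you half-flag: (i) the identification of $\operatorname{Comm}^n(V)$ (span of left-nested brackets, Definition~\ref{def:comm}) with the degree-$n$ Lie/primitive elements is classical but must be cited, and is needed on \emph{both} of your routes, not only the logarithmic one; (ii) when $\lfloor p\rfloor<n$ you correctly reduce to the fact that the Lyons extension of a weakly geometric rough path stays group-like (Friz--Victoir), which is the only nontrivial analytic input; (iii) in a general Banach space $V$ the dual pairing with $T(V^*)$ need not separate points of $V^{\otimes n}$, so "annihilates all shuffles $\Rightarrow$ lies in $\operatorname{Comm}^n(V)$" (equivalently, "$\log$ of group-like is Lie") requires either finite-dimensionality, a separating-duality hypothesis, or working with a suitable closure of $\operatorname{Comm}^n(V)$; since the paper's own definition of group-like has the same limitation, your proof is at the paper's level of rigor, but this caveat is the one genuine gap to close if (c) were to be used.
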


 
The proof of Lemma \ref{lem:mult. func.}\,(a) is given in \cite{stflour}. Part (b) follows directly from the triangle inequality.
Part (c), though more delicate, is omitted because it is not needed in Section \ref{sec:main-results}.

The next lemma is a partial converse to Lemma \ref{lem:mult. func.}. Part (a) is likewise proved in \cite{stflour}, and Part (b) again follows immediately from the triangle inequality. Part (c) is omitted for the same reason: it is not required for the results that follow.


\begin{lemma}
  \label{lem:weakly geometric p-rough path (518)}
  Suppose that \(Z\) is a degree-\(n\) multiplicative functional and
  \(\eta:\triangle_J \longrightarrow V^{\otimes n}\) is a functional.
  If \(\eta\) is additive, then:

  \EqLine{a}{Z+(0,\ldots,0,\eta)\ \text{is a degree-}n\ \text{multiplicative functional.}}

  In addition, if \(Z\) has finite \(p\)-variation for some \(p\in[1,n+1)\) with control \(\omega\), and  \(\|\eta_{st}\|\le K\,\omega(s,t)^{\frac{n}{p}}\) for all  \((s,t)\in\triangle_J\) and some constant \(K\), then:

  \EqLine{b}{Z+(0,\ldots,0,\eta)\ \text{has finite }p\text{-variation.}}

  Finally, if \(Z\) is a weakly geometric \(p\)-rough path, and \(\eta\)
  is additive, has finite \(p\)-variation, and
  \(\eta_{st}\in\mathrm{Comm}^n(V)\), then:

  \EqLine{c}{Z+(0,\ldots,0,\eta)\ \text{is a weakly geometric }p\text{-rough path.}}
\end{lemma}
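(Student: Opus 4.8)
The plan is to treat the three parts in order, reusing the degree-wise algebra already behind Lemma~\ref{lem:mult. func.}. For part (a), write $W:=Z+(0,\dots,0,\eta)$, so that $W^0\equiv 1$, $W^j=Z^j$ for $1\le j\le n-1$, and $W^n=Z^n+\eta$; I would verify the multiplicativity identity $W_{s,u}\otimes W_{u,t}=W_{s,t}$ one degree at a time. In degree $k\le n-1$ the product only involves $W^0,\dots,W^k$, which agree with $Z^0,\dots,Z^k$, so the identity reduces to that of $Z$. In degree $n$, expanding and using that $1\in V^{\otimes 0}$ is the unit gives $(W_{s,u}\otimes W_{u,t})^n=(Z_{s,u}\otimes Z_{u,t})^n+\eta_{s,u}+\eta_{u,t}=Z^n_{s,t}+\eta_{s,u}+\eta_{u,t}$, whereas $W^n_{s,t}=Z^n_{s,t}+\eta_{s,t}$; these coincide precisely because $\eta$ is additive (continuity of $W$ being inherited from $Z$ and $\eta$). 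This is exactly the identity underlying Lemma~\ref{lem:mult. func.} read in reverse, so the computation of \cite{stflour} transfers verbatim.

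Part (b) is a one-line triangle-inequality estimate: for $j<n$ we have $W^j=Z^j$, so the $p$-variation bound of $W$ at those levels is that of $Z$; at level $n$, $\|W^n_{s,t}\|\le\|Z^n_{s,t}\|+\|\eta_{s,t}\|\le \omega(s,t)^{n/p}/(\beta(p)(n/p)!)+K\,\omega(s,t)^{n/p}$, which is a fixed multiple of $\omega(s,t)^{n/p}/(\beta(p)(n/p)!)$. Taking the larger of that constant and the one controlling $Z$ shows $W$ has finite $p$-variation controlled by $\omega$; together with (a) this gives $W\in\Omega^{(n,p)}(V)$.

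For part (c) I would exploit the group structure of group-like elements. Since $\eta_{s,t}\in\operatorname{Comm}^n(V)\subseteq V^{\otimes n}$, every power $\eta_{s,t}^{\otimes m}$ with $m\ge2$ lies in $V^{\otimes mn}$ and hence vanishes in $T^{(n)}(V)$, so the truncated exponential is $\exp(\eta_{s,t})=\mathbb{1}+\eta_{s,t}$; a short computation---using that $(0,\dots,0,\eta_{s,t})$ carries only a level-$n$ component---shows $Z_{s,t}\otimes\exp(\eta_{s,t})=Z_{s,t}+(0,\dots,0,\eta_{s,t})=W_{s,t}$. Now $\eta_{s,t}$, being a nested commutator, is a Lie element of $\mathfrak{g}^{(n)}(V)$, so $\exp(\eta_{s,t})\in G^{(n)}(V)$; since $Z$ is weakly geometric, $Z_{s,t}\in G^{(n)}(V)$; and $G^{(n)}(V)$ is a group under $\otimes$, being the image of the group $G^{(*)}$ under the homomorphism $\pi_n$. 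Hence $W_{s,t}\in G^{(n)}(V)$ for all $(s,t)\in\triangle_J$, which with (a) and (b) makes $W$ a weakly geometric $p$-rough path. Parts (a) and (b) are routine bookkeeping; the only real content is in (c), and it lies entirely in invoking the structure theory correctly---that $G^{(n)}(V)=\exp(\mathfrak{g}^{(n)}(V))$, that Lie elements exponentiate to group-like elements, and that group-like elements are closed under $\otimes$---facts which are standard (see \cite{reutenauer,Friz_Victoir_2010}) but carry the argument. Since (c) is not needed downstream, one may alternatively just record the statement with these references rather than reproving them.
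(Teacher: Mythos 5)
Your proposal is correct, and it actually supplies more detail than the paper does: the paper does not prove this lemma at all, stating only that part (a) is proved in \cite{stflour}, that part (b) follows immediately from the triangle inequality, and that part (c) is omitted because it is not needed later. Your treatment of (a) and (b) is exactly the route the paper indicates --- the degree-wise cancellation at level $n$ using additivity of $\eta$ (the computation of Lemma~\ref{lem:mult. func.} read in reverse), and the one-line estimate $\|Z^n_{s,t}+\eta_{s,t}\|\le \bigl(1+K\beta(p)(\tfrac{n}{p})!\bigr)\,\omega(s,t)^{n/p}/\bigl(\beta(p)(\tfrac{n}{p})!\bigr)$ absorbed into the constant allowed by Definition~\ref{def:finite p-var}. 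For (c) you give an argument where the paper gives none: writing $Z_{s,t}+(0,\dots,0,\eta_{s,t})=Z_{s,t}\otimes\exp(\eta_{s,t})$, noting that $\exp$ truncates to $\mathbb{1}+\eta_{s,t}$ since $\eta_{s,t}^{\otimes m}$ lives in degree $mn>n$ for $m\ge2$, and invoking the standard facts that exponentials of Lie elements are group-like and that $G^{(n)}(V)$ is closed under $\otimes$. This is the natural proof and it is sound; the only point worth flagging is that the identification $\exp(\mathrm{Comm}^n(V))\subset G^{(n)}(V)$ in the general Banach-space setting rests on the shuffle-character characterisation of group-like elements, which you correctly delegate to \cite{reutenauer} and \cite{Friz_Victoir_2010} rather than reprove. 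In short: (a) and (b) match the paper's intended argument, and (c) is a legitimate completion of a proof the paper deliberately leaves out.
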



\begin{remark}\label{rem:2.9}

Let \(p\ge1\), \(k\in\mathbb{N}\cup\{0\}\), \(\phi\in\bigl(1-\tfrac1p,\,1\bigr]\), and let \(\omega\) be a control.  Given \(H\in\mathscr{H}_{\phi,\omega}^{(k,p)}(V)\), set
\[
  q_j := \frac{j}{\phi}\quad\text{for }j=1,\dots,k.
\]
By Theorem \ref{thm:equiv-H-space} we have \(H(j)\in\Omega^{q_j}(V)\), so \(\operatorname{Ext}_{j}(H(j))\) is well-defined; moreover,
\(\operatorname{Ext}_{j}(H(j))\in\Omega^{(j+1,q_j)}(V)\) by definition of \(\operatorname{Ext}_{j}\).

Part (a) of Lemma \ref{lem:mult. func.} now applies: the two \((j+1)\)-step multiplicative functionals
\(H(j+1) = (1,H^1,\ldots,H^{j+1})\) and \(\operatorname{Ext}_{j}(H(j))\) agree up to level \(j\), so
\[
  H^{j+1}
  \;-\;
  \bigl(\operatorname{Ext}_{j}(1,\ldots,H^j)\bigr)^{j+1}
\]
is an additive functional.

    If, furthermore, we require that  $H\in WG\mathscr{H}^{(k,p)}_{\phi,\omega}(V)$, then Part (c) of Lemma \ref{lem:mult. func.} gives that $H^{j+1} - \operatorname{Ext}_{j} \left(( 1, \ldots, H^j )\right)^{j+1}$ is in $\operatorname{Comm}^{j+1}(V)$.
\end{remark}

The following provides the definition of the development map ``$\text{dev}(\cdot)$''.

\subsection{Development map and \textit{I}-space}
We define the development map $\mathrm{dev}$, which isolates the additive remainder at each level not forced by extension of lower levels. This motivates the definition of $I$-space as the natural additive target of $\mathrm{dev}$.

\begin{definition}[Development map]
    \label{def:dev (886)}\label{def: development map}
	Let $p \geqslant 1$ be a real number,\newline  $k\in\{1,...,\lfloor p\rfloor\}$, and $H \in \mathscr{H}^{(k,p)}(V)$. Take $\phi\in(1-1/p,1]$ such that $H\in\mathscr{H}^{(k,p)}_\phi(V)$. Then for each $j=1, \ldots, k- \! 1$, we set $E^{j+1} := \operatorname{Ext}_{j} ( ( 1, \ldots, H^j ) )^{j+1}$. We then let $\operatorname{dev}: \mathscr{H}^{(k,p)}_{\phi}(V) \to  C_0(\triangle_J, \,T^{(k)}(V) )$ be  defined through, for $\text{and} \ (s,t)$ $ \in \triangle_J$,
\begin{align*}
		\operatorname{dev}(H)_{st} & := \left( \bigl( \operatorname{dev}(H) \bigr)^0, \bigl( \operatorname{dev}(H) \bigr)^1, \bigl( \operatorname{dev}(H) \bigr)^2, \ldots, \bigl( \operatorname{dev}(H) \bigr)^{k} \right)_{st} \\
		& := \ \left(
		0, \; H^1, \; H^2-E^2, \ldots, H^{k} - E^{k}
		\right)_{st} \in T^{(k)}(V).
	\end{align*}
\end{definition}
\begin{remark} The above definition is well-defined since for $q_j := \frac{j}{\phi}$, $H(j)\in\Omega^{q_j}(V)$ by Theorem \ref{thm:equiv-H-space}), so the expression $ \operatorname{Ext}_{j} ( ( 1, \ldots, H^j ) )^{j+1}$ makes sense. Moreover, we know that $\operatorname{Ext}$ is independent of the $q_j<j+1$ we choose, so the definition of $\operatorname{dev}$ is independent of the choice of $\phi$ in $(1-1/p,1]$ used to witness that $H\in \mathscr{H}^p(V).$
    
\end{remark}

The development map $\operatorname{dev}$ isolates the \emph{innovation} of $H$: the additive remainder not forced by extending its lower levels. As a formula, writing $E^{j+1} := \bigl(\operatorname{Ext}_{j}(1,\ldots,H^j)\bigr)^{j+1}$, we have $H^{j+1}=\operatorname{dev}(H)^{j+1}+E^{j+1}$. Motivated by this, we introduce the $\mathfrak{I}$-space below as the natural target of the development map; in particular, we will see that $\operatorname{dev}(H)\in\mathfrak{I}^p(V)$.


\begin{definition}[$\mathfrak{I}$-space]
    \label{def:I space 755}
    Let $p \in \mathbb{R}_{\geqslant 1}$ and $k \in \mathbb{N}\cup\{0\}$. If $k=0$, set $\mathfrak{I}^{(0, p)}(V):= \{(0)\}$. If $k\ge1$, define $\mathfrak{I}^{(k, p)}(V)$ s.t. $I = (0, I^1, \ldots, I^k) \in \mathfrak{I}^{(k, p)}(V)$ if
    \begin{enumerate}[label=(\roman*)]
        \item $I:\triangle_J \to T^{(\lfloor p\rfloor)}(V)$ is additive.
        \item There exists a control $\omega$ and a constant $\phi \in \bigl(1-\frac{1}{p},1\bigr]$ such that
        \begin{equation}
          \left\|I_{s t}^j\right\| \leqslant K\,\omega(s, t)^\phi
          \qquad \forall\, j=1,\ldots,k,\ \forall(s, t) \in \triangle_J,
          \label{eq:inequality-$I$-space}
        \end{equation}
        and, in addition, if $k=\lfloor p\rfloor$,\[
          \|I^k_{s t}\| \le K\,\omega(s,t)^{\,k/p}.
        \]
        Here $K$ depends on $\omega(S,T)$, $p$, and $\phi$.
        
    \end{enumerate}

    \noindent We also write $WG\mathfrak{I}^{(k, p)}(V)$ if we require further that
    \begin{enumerate}[label=(\roman*),resume]
            \item $\operatorname{Im}(I) \subseteq
            \left\{0\right\} \; \oplus \Bigl( \mathop{\oplus}\limits_{k=1}^\infty \operatorname{Comm}^k(V)\Bigr) \; \stackrel{\text{def}}{=} \; \mathfrak{g}^{( k )} (V) \text{.} $
    \end{enumerate}
We set $\mathfrak{I}^p(V) := \mathfrak{I}^{(\lfloor p \rfloor),p}(V)$ (and $WG\mathfrak{I}^p(V) := WG\mathfrak{I}^{( \lfloor p \rfloor,p)}(V)$). We set $\mathfrak{I}(V):=\cup_{p\ge1}\mathfrak{I}^p(V)$ and call this ``$\mathfrak{I}$-space''.
\end{definition}

\begin{remark} \label{rmk: definition of Ikp omega phi}
To specify a constant $\phi \in \bigl(1-\tfrac{1}{p}, 1\bigr]$ or a control $\omega$ such that Inequality \ref{eq:inequality-$I$-space} is fulfilled, we write $\mathfrak{I}^{(k, p)}_\phi(V)$, $\mathfrak{I}^{(k, p)}_\omega(V)$, or $\mathfrak{I}^{(k, p)}_{\phi,\omega}(V)$ to specify $\phi$, $\omega$, or both, respectively.

It is easy to thus see that:
    $$   \mathfrak{I}^{(k,p)}(V)
   =
   \bigcup_{\substack{\phi\in\bigl(1-\tfrac1p,\,1\bigr],\\[2pt]
                       \omega\ \text{control}}}
   \mathfrak{I}^{(k,p)}_{\phi,\omega}(V).$$
\end{remark}

The proof of the following lemma is elementary and is thus omitted.

\begin{lemma}
	\label{lem:additive functional 182}
	Let $X: \triangle_J \longrightarrow V$ be an additive function, where $J=[S,T]$. Then $X_{S, \cdot}:J \longrightarrow V$ is a path. Assume that $X_\cdot:J$ is a path. Then $X_{\cdot, \cdot}: \triangle_J \longrightarrow V$ given by $X_{s, t} = X_t - X_s$ is an additive function.
\end{lemma}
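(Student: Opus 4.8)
The plan is to verify both implications directly from the definitions. Throughout, I read an \emph{additive function} on $\triangle_J$ as a continuous map $X:\triangle_J\to V$ satisfying the cocycle identity $X_{s,u}+X_{u,t}=X_{s,t}$ for all $s\le u\le t$ in $J$ (so in particular $X_{t,t}=0$), and a \emph{path} on $J$ as a continuous map $J\to V$; continuity of additive functions is automatic in all the situations of this paper, since they arise as differences of continuous multiplicative functionals (cf.\ Lemma~\ref{lem:mult. func.}(a)).

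For the first assertion I would fix $S$ and consider the inclusion $\iota_S:J\to\triangle_J$, $t\mapsto(S,t)$. As $t\mapsto(S,t)$ is the restriction of a continuous affine map $\mathbb R\to\mathbb R^2$ and $\triangle_J$ carries the subspace topology from $J^2\subset\mathbb R^2$, the map $\iota_S$ is continuous, hence $X_{S,\cdot}=X\circ\iota_S:J\to V$ is continuous, i.e.\ a path. No use of additivity is needed beyond well-definedness.

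For the converse, suppose $X_\cdot:J\to V$ is a path and set $X_{s,t}:=X_t-X_s$ for $(s,t)\in\triangle_J$. Continuity follows since the two coordinate projections $\triangle_J\to J$ are continuous, so $(s,t)\mapsto(X_t,X_s)$ is continuous into $V\times V$, and the subtraction map $V\times V\to V$ is continuous; composing gives continuity of $(s,t)\mapsto X_t-X_s$. The cocycle identity is the telescoping
\[
X_{s,u}+X_{u,t}=(X_u-X_s)+(X_t-X_u)=X_t-X_s=X_{s,t}
\qquad(s\le u\le t),
\]
and $X_{t,t}=X_t-X_t=0$; hence $X_{\cdot,\cdot}$ is an additive function.

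I do not anticipate any genuine obstacle: the whole content is the continuity of the coordinate maps on $\triangle_J$ (with its subspace topology) together with continuity of the linear operations on $V$ and a telescoping identity. The only point worth flagging is the topological bookkeeping on the simplex; if one wishes to sidestep it one can argue sequentially instead, using that $t_n\to t$ implies $(S,t_n)\to(S,t)$ in $\triangle_J$, and $s_n\to s,\ t_n\to t$ (with $s_n\le t_n$) implies $(s_n,t_n)\to(s,t)$. One may also note, for context, that the two constructions are mutually inverse once one normalizes $X_{S,S}=0$, which is exactly the viewpoint underlying the definition of $\mathfrak I$-space.
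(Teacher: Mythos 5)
Your proof is correct; the paper omits a proof of this lemma precisely because it is the elementary verification you give (continuity of the section $t\mapsto X_{S,t}$ in one direction, and continuity of $(s,t)\mapsto X_t-X_s$ together with the telescoping identity in the other). Nothing further is needed.
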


\begin{remark}
	Using Lemma \ref{lem:additive functional 182}, it is easy to see that functionals $I$ within the spaces $WG\mathfrak{I}^p(V)$ and $\mathfrak{I}^p(V)$ as defined in Definition \ref{def:I space 755} correspond to paths in $\mathfrak{g}^{(\lfloor p \rfloor)} (V)$ or $T^{(\lfloor p \rfloor)} (V)$, respectively, which satisfy a given regularity condition.
\end{remark}

\begin{proposition}\label{prop: dev H is in I}
	Let $p \in \mathbb{R}_{\geqslant 1}, k \in \mathbb{N}\cup\{0\}, \phi \in \bigl(1-\tfrac{1}{p}, 1\bigr]$, and $\omega$ be a control. Given $H \in \mathscr{H}_{\phi,\omega}^{(k,p)}(V),$ $\operatorname{dev}(H) \in \mathfrak{I}_{\phi,\omega}^{(k, p)}(V)$. Furthermore, if $H \in WG\mathscr{H}_{\phi,\omega}^{(k, p)}$, then $\operatorname{dev}(H) \in WG\mathfrak{I}_{\phi,\omega}^{(k,p)}(V)$.
\end{proposition}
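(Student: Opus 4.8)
The plan is to verify, directly for $\operatorname{dev}(H)=(0,H^1,H^2-E^2,\dots,H^k-E^k)$ with $E^{j+1}:=\operatorname{Ext}_j(H(j))^{j+1}$ and $H(j):=(1,H^1,\dots,H^j)$, the two defining properties of $\mathfrak{I}^{(k,p)}_{\phi,\omega}(V)$: additivity, and the level-wise $\phi$-bound together with the extra $k/p$-bound at level $\lfloor p\rfloor$. The cases $k\le 1$ are immediate, so assume $2\le k\le\lfloor p\rfloor$. The backbone is Theorem~\ref{thm:equiv-H-space}: from $H\in\mathscr{H}^{(k,p)}_{\phi,\omega}(V)$ it gives $H(j)\in\Omega^{q_j}_\omega(V)$ with $q_j=j/\phi$ for $j=1,\dots,k-1$, and since $\phi>1-\tfrac1p$ together with $j\le\lfloor p\rfloor\le p$ forces $j\le q_j<j+1$, each lift $\operatorname{Ext}_j(H(j))$ is well-defined; rescaling $\omega$ to absorb the $q_j$-variation constant of $H(j)$ so that Theorem~\ref{thm:extension-theorem} applies verbatim, we get $\|E^{j+1}_{s,t}\|\le C_{j+1}\,\omega(s,t)^{(j+1)/q_j}$ with $C_{j+1}$ depending only on $\omega(S,T),p,\phi$.

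First I would treat additivity. Level $0$ is constantly $0$, and level $1$ is $H^1$, which is additive since $H$ is multiplicative with $H^0\equiv 1$. For $j=1,\dots,k-1$, the functionals $H(j+1)$ and $\operatorname{Ext}_j(H(j))$ are both $(j+1)$-step multiplicative and agree through level $j$, so Lemma~\ref{lem:mult. func.}(a) shows that $\operatorname{dev}(H)^{j+1}=H^{j+1}-E^{j+1}$ is additive; continuity of $\operatorname{dev}(H)$ is automatic from that of $H$ and of the extensions, giving condition (i).

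Next the regularity bound. For every $j=1,\dots,k$ we have $\|H^j_{s,t}\|\le K_H\,\omega(s,t)^{\phi}$ straight from $H\in\mathscr{H}^{(k,p)}_{\phi,\omega}(V)$, while for $j=1,\dots,k-1$ the exponent $(j+1)/q_j=\phi\cdot\tfrac{j+1}{j}$ strictly exceeds $\phi$, so $\omega(s,t)^{(j+1)/q_j}\le\omega(S,T)^{(j+1)/q_j-\phi}\,\omega(s,t)^{\phi}$ and hence $\|E^{j+1}_{s,t}\|\le C'_{j+1}\,\omega(s,t)^{\phi}$; combining and taking the largest constant yields $\|\operatorname{dev}(H)^j_{s,t}\|\le K\,\omega(s,t)^{\phi}$ for all $j=1,\dots,k$. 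When $k=\lfloor p\rfloor$ I additionally need $\|\operatorname{dev}(H)^k_{s,t}\|\le K\,\omega(s,t)^{k/p}$: here $\|H^k_{s,t}\|\le C\,\omega(s,t)^{k/p}$ because $H\in\Omega^{(k,p)}_\omega(V)$, and $\|E^k_{s,t}\|\le C_k\,\omega(s,t)^{k/q_{k-1}}$ with $k/q_{k-1}=\phi\cdot\tfrac{k}{k-1}>k/p$ — the inequality $k/q_{k-1}>k/p$ being exactly $p\phi>p-1\ge k-1$, which holds since $\phi>1-\tfrac1p$ and $k=\lfloor p\rfloor\le p$ — so $\omega(S,T)$ again absorbs the surplus exponent and condition (ii) follows. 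All auxiliary constants depend only on $\omega(S,T),p,\phi$, so $\operatorname{dev}(H)\in\mathfrak{I}^{(k,p)}_{\phi,\omega}(V)$ with the prescribed $\phi,\omega$.

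For the weakly geometric refinement it remains to check $\operatorname{dev}(H)^m_{s,t}\in\operatorname{Comm}^m(V)$ at each level $m$. Levels $0$ and $1$ are trivial ($\operatorname{Comm}^0(V)=\{0\}$, $\operatorname{Comm}^1(V)=V$), and for $m=j+1$ with $j=1,\dots,k-1$, Remark~\ref{rem:2.9} — which applies Lemma~\ref{lem:mult. func.}(c) to the pair $H(j+1)$, $\operatorname{Ext}_j(H(j))$ — yields precisely $\operatorname{dev}(H)^{j+1}=H^{j+1}-E^{j+1}\in\operatorname{Comm}^{j+1}(V)$; hence $\operatorname{Im}(\operatorname{dev}(H))\subseteq\mathfrak{g}^{(k)}(V)$ and $\operatorname{dev}(H)\in WG\mathfrak{I}^{(k,p)}_{\phi,\omega}(V)$. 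The only genuine work here is the exponent bookkeeping in the third paragraph — confirming the strict improvements $(j+1)/q_j>\phi$ and $k/q_{k-1}>k/p$, and checking that every constant depends only on $\omega(S,T),p,\phi$ so that the membership lands in $\mathfrak{I}^{(k,p)}_{\phi,\omega}(V)$ for exactly the prescribed $\phi$ and $\omega$; the structural claims are immediate citations of Lemma~\ref{lem:mult. func.} and Remark~\ref{rem:2.9}.
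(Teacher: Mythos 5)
Your proposal is correct and follows essentially the same route as the paper's proof: additivity via Lemma~\ref{lem:mult. func.}(a) (the first paragraph of Remark~\ref{rem:2.9}), the level-wise bound by combining the $\mathscr H$-space estimate $\|H^j\|\le K\omega^{\phi}$ with the extension-theorem bound $\|E^{j+1}\|\lesssim\omega^{(j+1)/q_j}$ and absorbing the surplus exponent into $\omega(S,T)$, the separate top-level $k/p$ estimate using $k/q_{k-1}>k/p$ when $k=\lfloor p\rfloor$, and the weakly geometric refinement via Remark~\ref{rem:2.9} (Lemma~\ref{lem:mult. func.}(c)). The only cosmetic remark is that the well-definedness of $\operatorname{Ext}_j(H(j))$ needs $q_j<j+1$ only for $j\le k-1\le\lfloor p\rfloor-1\le p-1$, which is exactly the range you use, so your argument matches the paper's.
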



\begin{proof}
	To show that, given $H \in \mathscr{H}_{\phi,\omega}^{(k,p)}(V)$, $\operatorname{dev}(H) \in \mathfrak{I}_{\phi,\omega}^{(k, p)}(V)$, we want to show two conditions:

(i) $\operatorname{dev} (H)$ is additive.

(ii) $\|\operatorname{dev} (H)^j_{st}\|\leqslant \dfrac{K\omega(s,t)^{\frac{j}{q_j}}}{\left(\frac{j}{q_j}\right)!}$, for some constant $K$, $\forall j\in \{1, \ldots ,k\}$, $\forall (s,t)\in \triangle_J$

Condition (i) is guaranteed by the first paragraph of Remark \ref{rem:2.9}. For condition (ii), first observe that, $\operatorname{dev}(H)^1=H^1$ satisfies the following bound for some constant $K\ge0$ by virtue of $H\in  \mathscr{H}_{\phi,\omega}^{(k,p)}(V)$, and using Theorem \ref{thm:equiv-H-space},
$$\|\operatorname{dev} (H)^1_{st}\|\leqslant \dfrac{K\omega(s,t)^{\frac{1}{q_1}}}{\left(\frac{1}{q_1}\right)!}$$
For the higher levels of $\operatorname{dev}$, observe that for $j\in \{1, \ldots, k-1\}$ and $(s,t)\in \triangle_J$,

\begin{align}
\|\operatorname{dev}(H)_{st}^{j+1}\|
&= \| H^{j+1}_{st} - \operatorname{Ext}_{j}(1,H^1,\ldots,H^j)^{\,j+1}_{st} \| \notag\\
&\le K\,\omega(s,t)^{\phi}
  + \frac{K\,\omega(s,t)^{\frac{j+1}{q_j} (= \,j\frac{\phi}{j}+\frac{\phi}{j})}}{\beta(q_j)\!\left(\frac{j+1}{q_j}\right)!}
  \label{ineq: dev(H)} \\
&= K\,\omega(s,t)^{\phi}\!\left(1+\frac{\omega(s,t)^{\frac{\phi}{j}}}{\beta(q_j)\!\left(\frac{j+1}{q_j}\right)!}\right) \notag\\
&\le K\,\omega(S,T)^{\phi}\!
   \max_{l\in\{0,\ldots,\lfloor p \rfloor\}}\left\{1+\frac{1+\omega(S,T)^{\frac{\phi}{l}}}{\beta(q_j)\!\left(\frac{l+1}{q_l}\right)!}\right\} \notag\\
&=: \tilde{K}\,\omega(s,t)^{\phi}.
\end{align} In particular, defining
\[
\widetilde K_k
:= K + \max_{1\le l\le k-1}
   \frac{\omega(S,T)^{\phi/l}}{\beta(q_j)\,\big(\frac{l+1}{q_l}\big)!},
\qquad q_l:=\tfrac{l}{\phi},
\]
we have
\[
\|\operatorname{dev}(H)^{j+1}_{st}\|\;\le\;\widetilde K_k\,\omega(s,t)^{\phi}
\quad\text{for all } j=1,\ldots,k-1.
\]
If $k=\lfloor p\rfloor$, we also have
\begin{align}
    \|\operatorname{dev}(H)^k_{st}\|\
&= \big\| H^k_{st} - \big(\mathrm{Ext}_{q_{k-1}}(1,H^1,\dots,H^{k-1})\big)^k_{st} \big\|\\
&\;\le\; K\,\omega(s,t)^{k/p} + K\,\omega(s,t)^{k/q_{k-1}}
\;\le\; \tilde{\tilde{K}}\,\omega(s,t)^{k/p},
\end{align}

since $q_{k-1}<  k-1 +(k-1)/p $ (Theorem \ref{thm:equiv-H-space}) implies $q_{k-1}< p$ which implies $k/q_{k-1}> k/p$, and where $\tilde{\tilde{K}}:= K(1+\omega(S,T)^{\,k/q_{k-1}-k/p})$). 

Finally, if $H\in WG\mathscr{H}_{\phi,\omega}^{(k,p)}(V)$, then the second paragraph of Remark \ref{rem:2.9} gives that $H^j-E^j\in \text{Comm}^j(V)$ for each $j\in \{1, \ldots , k\}$, as required to show that $\operatorname{dev}(H)\in WG\mathfrak{I}^{(k,p)}_{\phi,\omega}(V)$.
\end{proof}


\begin{proposition} \label{prop: I(k,p)(phi,omega)(V) is a vector space}
	Let $p \geqslant 1$, $\omega$ be a control, and $\phi \in \bigl(1-\tfrac{1}{p}, 1\bigr]$. Let $k\in\{1,..., \lfloor p\rfloor\}$. Then $\mathfrak{I}_{\phi,\omega}^{(k,p)}(V)$ is a vector space under pointwise addition. Moreover, $WG\mathfrak{I}^{(k, p)}_{\phi,\omega}(V)$ is a subspace of $\mathfrak{I}_{\phi,\omega}^{(k,p)}(V)$.
\end{proposition}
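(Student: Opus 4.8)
The plan is to recognise $\mathfrak{I}^{(k,p)}_{\phi,\omega}(V)$ as a subset of the real vector space $\mathcal E:=\{Z\in C_0(\triangle_J,T^{(k)}(V)):Z^0\equiv0\}$, endowed with pointwise addition and scalar multiplication, and then to run the subspace test: show that the zero functional lies in $\mathfrak{I}^{(k,p)}_{\phi,\omega}(V)$ and that the set is closed under forming linear combinations $aI+b\widetilde I$ with $a,b\in\mathbb R$. The one conceptual point worth flagging in advance is that, although $\phi$ and $\omega$ are held fixed in the notation $\mathfrak{I}^{(k,p)}_{\phi,\omega}(V)$, the defining inequality of Definition~\ref{def:I space 755}(ii) only asks for the \emph{existence} of a finite constant $K$; forming a linear combination multiplies the two witnessing constants by $|a|$ and $|b|$ and adds them, which keeps the constant finite, so the bound is preserved.

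First I would dispatch the zero element: $\mathbf 0=(0,\dots,0)$ is continuous and additive, and satisfies every estimate in Definition~\ref{def:I space 755}(ii) (including the sharper level-$k$ bound $\|I^k_{st}\|\le K\omega(s,t)^{k/p}$ when $k=\lfloor p\rfloor$) with $K=0$, so $\mathbf 0\in\mathfrak{I}^{(k,p)}_{\phi,\omega}(V)$. Next, for $I,\widetilde I\in\mathfrak{I}^{(k,p)}_{\phi,\omega}(V)$ with witnessing constants $K_I,K_{\widetilde I}$ and scalars $a,b$, I would set $Z:=aI+b\widetilde I$ and verify the defining conditions in order: $Z$ is continuous with $Z^0\equiv0$ because the operations are pointwise; $Z$ is additive because additivity is a linear condition, namely $Z_{su}+Z_{ut}=a(I_{su}+I_{ut})+b(\widetilde I_{su}+\widetilde I_{ut})=aI_{st}+b\widetilde I_{st}=Z_{st}$; and, using the triangle inequality together with the homogeneity of the admissible norm (Definition~\ref{def: admissible}), $\|Z^j_{st}\|\le|a|\,\|I^j_{st}\|+|b|\,\|\widetilde I^j_{st}\|\le(|a|K_I+|b|K_{\widetilde I})\,\omega(s,t)^{\phi}$ for each $j=1,\dots,k$, with the analogous estimate carrying the exponent $k/p$ at level $k$ when $k=\lfloor p\rfloor$. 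Hence $Z\in\mathfrak{I}^{(k,p)}_{\phi,\omega}(V)$ with constant $K_Z:=|a|K_I+|b|K_{\widetilde I}$, which completes the subspace test and gives the vector space claim.

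For the weakly geometric assertion I would first note that each $\operatorname{Comm}^j(V)$ is, by Definition~\ref{def:comm}, a linear subspace of $V^{\otimes j}$ (it is defined as a span, respectively an iterated bracket span), so that $\mathfrak g^{(k)}(V)=\bigoplus_{j=0}^k\operatorname{Comm}^j(V)$ is a linear subspace of $T^{(k)}(V)$. Given $I,\widetilde I\in WG\mathfrak{I}^{(k,p)}_{\phi,\omega}(V)$ and $a,b\in\mathbb R$, the combination $Z:=aI+b\widetilde I$ already lies in $\mathfrak{I}^{(k,p)}_{\phi,\omega}(V)$ by the previous paragraph, and for every $(s,t)\in\triangle_J$ we have $Z_{st}=aI_{st}+b\widetilde I_{st}\in\mathfrak g^{(k)}(V)$ since $I_{st},\widetilde I_{st}\in\mathfrak g^{(k)}(V)$ and $\mathfrak g^{(k)}(V)$ is closed under linear combinations; together with $\operatorname{Im}(\mathbf 0)=\{0\}\subseteq\mathfrak g^{(k)}(V)$ this shows that $WG\mathfrak{I}^{(k,p)}_{\phi,\omega}(V)$ is a linear subspace of $\mathfrak{I}^{(k,p)}_{\phi,\omega}(V)$.

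I do not expect a genuine obstacle here: additivity, the vanishing of the zeroth level, and membership in $\mathfrak g^{(k)}(V)$ are all linear conditions, and the growth bound is manifestly stable under rescaling and adding the constants. The only things to be mildly careful about are the bookkeeping of the constant $K$ (it may depend on $\omega(S,T)$, $p$ and $\phi$, but need only be finite) and, in the $WG$ case, recording explicitly that $\operatorname{Comm}^j(V)$ is a linear subspace of $V^{\otimes j}$ rather than merely a subset.
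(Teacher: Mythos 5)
Your proof is correct and follows essentially the same route as the paper, which simply notes that additivity is preserved trivially, the growth bound by the triangle inequality, and the weakly geometric case by the subspace test using that $\mathfrak g^{(k)}(V)$ is a vector space; you merely spell out the details (the only superfluous point is invoking admissibility of the norm, since absolute homogeneity of any norm already suffices).
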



\begin{proof}
	Closure under Condition (i) in Definition \ref{def:I space 755} is immediate. Closure under Condition (ii) in Definition \ref{def:I space 755} follows by the triangle inequality. $WG\mathfrak{I}^{(k, p)}_{\phi,\omega}$ being a subspace follows from the subspace test, using the fact that $\mathfrak{g}^{(k)}(V)$ is a vector space in its own right.
\end{proof}

\begin{corollary}[Vector-space property of $\mathfrak{I}^{(k,p)}(V)$]
\label{cor:I-vector-space}
Let $p\ge1$ and $k\in\{1,\dots,\lfloor p\rfloor\}$. We then have:

(i) For any $\phi,\tilde \phi \in(1-1/p,1]$ and any two controls $\omega$ and $\tilde \omega$, there exist both a $\phi^* \in(1-1/p,1]$ and a control $\omega^*$ s.t. $\mathfrak I^{(k,p)}_{\phi,\omega}(V)\subset\mathfrak I^{(k,p)}_{\phi^*,\omega^*}(V)$ and $ \mathfrak I^{(k,p)}_{\tilde\phi,\tilde\omega}(V)\subset\mathfrak I^{(k,p)}_{\phi^*,\omega^*}(V)$.

(ii) The space $\mathfrak{I}^{(k,p)}(V)$ is a vector space under pointwise addition and scalar multiplication.  In particular, $\mathfrak{I}^{p}(V)=\mathfrak{I}^{(\lfloor p\rfloor,p)}(V)$ is a vector space.
\end{corollary}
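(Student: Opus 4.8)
The plan is to establish (i) by an explicit ``common refinement'' of the two parameter pairs, and then deduce (ii) formally from (i) together with the already-proven Proposition~\ref{prop: I(k,p)(phi,omega)(V) is a vector space}.

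For part (i), I would take $\omega^* := \omega + \tilde\omega$ and $\phi^* := \min\{\phi,\tilde\phi\}$, observing that $\omega^*$ is a control by Lemma~\ref{lem: controls are a convex cone} and that $\phi^*\in(1-1/p,1]$. It suffices to show $\mathfrak I^{(k,p)}_{\phi,\omega}(V)\subset\mathfrak I^{(k,p)}_{\phi^*,\omega^*}(V)$ (the inclusion for $(\tilde\phi,\tilde\omega)$ being identical). So let $I=(0,I^1,\dots,I^k)\in\mathfrak I^{(k,p)}_{\phi,\omega}(V)$ with constant $K$. Additivity (Condition~(i) of Definition~\ref{def:I space 755}) does not involve $\phi$ or the control, hence is preserved. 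For the regularity bound, fix $j\in\{1,\dots,k\}$ and $(s,t)\in\triangle_J$; using $\omega\le\omega^*$ pointwise and $\omega(s,t)\le\omega(S,T)\le\omega^*(S,T)$ by super-additivity, together with $\phi-\phi^*\ge0$, I would write
\[
\omega(s,t)^{\phi}=\omega(s,t)^{\phi^*}\,\omega(s,t)^{\phi-\phi^*}\;\le\;\omega^*(s,t)^{\phi^*}\,\max\{1,\ \omega^*(S,T)^{\phi-\phi^*}\},
\]
where the factor $\omega(s,t)^{\phi-\phi^*}$ is bounded by $1$ if $\omega^*(S,T)\le1$ and by $\omega^*(S,T)^{\phi-\phi^*}$ if $\omega^*(S,T)\ge1$. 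Hence $\|I^j_{st}\|\le K^*\,\omega^*(s,t)^{\phi^*}$ with $K^*:=K\max\{1,\omega^*(S,T)^{\phi-\phi^*}\}$; and when $k=\lfloor p\rfloor$ the extra bound $\|I^k_{st}\|\le K\omega(s,t)^{k/p}\le K\omega^*(s,t)^{k/p}$ is immediate from $\omega\le\omega^*$. After enlarging $K^*$ if necessary, all requirements of Definition~\ref{def:I space 755}(ii) hold with $\phi^*,\omega^*$, so $I\in\mathfrak I^{(k,p)}_{\phi^*,\omega^*}(V)$.

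For part (ii), recall from Remark~\ref{rmk: definition of Ikp omega phi} that $\mathfrak I^{(k,p)}(V)=\bigcup_{\phi,\omega}\mathfrak I^{(k,p)}_{\phi,\omega}(V)$. Closure under scalar multiplication is immediate: if $I\in\mathfrak I^{(k,p)}_{\phi,\omega}(V)$ and $\lambda\in\mathbb R$, then $\lambda I$ is additive and $\|\lambda I^j_{st}\|=|\lambda|\,\|I^j_{st}\|$, so $\lambda I\in\mathfrak I^{(k,p)}_{\phi,\omega}(V)\subset\mathfrak I^{(k,p)}(V)$. For closure under addition, given $I\in\mathfrak I^{(k,p)}_{\phi,\omega}(V)$ and $\widetilde I\in\mathfrak I^{(k,p)}_{\tilde\phi,\tilde\omega}(V)$, part (i) places both inside the single space $\mathfrak I^{(k,p)}_{\phi^*,\omega^*}(V)$, which is a vector space by Proposition~\ref{prop: I(k,p)(phi,omega)(V) is a vector space}; hence $I+\widetilde I\in\mathfrak I^{(k,p)}_{\phi^*,\omega^*}(V)\subset\mathfrak I^{(k,p)}(V)$. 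The zero functional $(0,0,\dots,0)$ lies in every $\mathfrak I^{(k,p)}_{\phi,\omega}(V)$, and the remaining axioms (associativity and commutativity of $+$, distributivity, and $1\cdot I=I$) are inherited from the ambient real vector space of $V^{\otimes\bullet}$-valued additive functionals on $\triangle_J$. Thus $\mathfrak I^{(k,p)}(V)$ is a real vector space, and taking $k=\lfloor p\rfloor$ gives the statement for $\mathfrak I^p(V)$.

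The only genuine calculation is the exponent-gap estimate in part (i): replacing $(\phi,\omega)$ by the common pair $(\phi^*,\omega^*)$ needs the small case split on whether $\omega^*(S,T)\le1$ or $\omega^*(S,T)>1$, exactly mirroring Step~3 (Cases~A and~B) of the proof of Theorem~\ref{thm:equiv-H-space}. Everything after that is bookkeeping plus an appeal to Proposition~\ref{prop: I(k,p)(phi,omega)(V) is a vector space}, so this case split is the main (mild) obstacle.
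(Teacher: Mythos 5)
Your proof is correct and follows essentially the same route as the paper's own (sketched) proof: both set $\phi^*:=\min\{\phi,\tilde\phi\}$ and $\omega^*:=\omega+\tilde\omega$, invoke Lemma~\ref{lem: controls are a convex cone} to confirm $\omega^*$ is a control, verify the inclusion $\mathfrak I^{(k,p)}_{\phi,\omega}(V)\subset\mathfrak I^{(k,p)}_{\phi^*,\omega^*}(V)$ by absorbing the exponent gap into a constant, and then appeal to Proposition~\ref{prop: I(k,p)(phi,omega)(V) is a vector space} to conclude. The only cosmetic difference is that you spell out the exponent-gap estimate with a case split on $\omega^*(S,T)\lessgtr 1$, which the paper compresses into ``up to defining a new constant $K^*$''; in fact the split is unnecessary, since $x\mapsto x^{\phi-\phi^*}$ is monotone for $\phi-\phi^*\ge0$ and so $\omega(s,t)^{\phi-\phi^*}\le\omega^*(S,T)^{\phi-\phi^*}$ holds directly, but your version is still a valid upper bound.
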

\begin{proof}[Proof sketch]
Take $I,\tilde I\in \mathfrak I^{(k,p)}(V)$. By Remark \ref{rmk: definition of Ikp omega phi}, we can characterise $\mathfrak{I}^{(k,p)}(V)$ as follows:
\[
   \mathfrak{I}^{(k,p)}(V)
   =
   \bigcup_{\substack{\phi\in\bigl(1-\tfrac1p,\,1\bigr]\\[2pt]
                       \omega\ \text{control}}}
   \mathfrak{I}^{(k,p)}_{\phi,\omega}(V).
\]
Choose $(\phi,\omega,K)$ and $(\tilde\phi,\tilde\omega,\tilde K)$ s.t. $I\in \mathfrak I^{(k,p)}_{\phi,\omega}(V)$ and is degree-wise bounded by $K\omega$, while $\tilde I\in \mathfrak I^{(k,p)}_{\tilde\phi,\tilde\omega}(V)$ and is degree-wise bounded by $K\omega$. Set $\phi^*:=\min\{\phi,\tilde\phi\}$ and $\omega^*:=\omega+\tilde\omega$. Then $I,\tilde I\in \mathfrak I^{(k,p)}_{\phi^*,\omega^*}(V)$: condition (i) (additivity) is unchanged, and condition (ii) follows from the triangle inequality together with the fact that controls form a convex cone (Lemma~\ref{lem: controls are a convex cone}), up to defining a new constant $K^*$ (which will absorb $K$ and $\tilde K$). Since $\mathfrak I^{(k,p)}_{\phi^*,\omega^*}(V)$ is a vector space by Proposition \ref{prop: I(k,p)(phi,omega)(V) is a vector space} any linear combination $\lambda I+\mu\tilde I$ lies in $\mathfrak I^{(k,p)}_{\phi^*,\omega^*}(V)$, proving the vector‑space property of $\mathfrak I^{(k,p)}(V)$. If $k=\lfloor p\rfloor$, the added top‑degree bound with exponent $k/p$ is preserved by the same estimates. 
\end{proof}

\subsection{Lift map and bijection}
We now construct the lift map $1(\cdot)$ by iterated extension and show it is inverse to $\mathrm{dev}$, giving a bijection between $I$-space and $H$-space.

\begin{definition}	\label{def:X pow i (417)}
Let $p \in \mathbb{R}_{\geqslant 1}$ and set $k=\lfloor p \rfloor$. Take $X \in \Omega^{p}(V)$. Assume both that $i \in C_0\left(\triangle_J,  V^{\otimes(k +1)}\right)$ is additive, and that there exist $\phi \in \bigl(1-\tfrac{1}{p}, 1\bigr]$ and a control $\omega$ s.t.
	$$
	\left\|i_{s t}\right\|_{V^{\otimes(k+1)}} \leqslant \omega(s, t)^\phi .
	$$
Then, setting $q_{k+1} = \tfrac{k+1}{\phi} \in [k+1,k+1 + \tfrac{k+1}{p-1}),$ we define
	$$
	X^i:=\operatorname{Ext}_p(X)+(0, \ldots, 0, i) \in \Omega^{q_{k+1}}(V),
	$$
where $(0, \ldots, 0, i)$ is the embedding of $i$ into $T^{(k+1)}(V)$.
\end{definition}


\begin{remark}
	We know that $$\begin{aligned}
        \left\|\operatorname{Ext}_p(X)_{s t}^{k+1}\right\| & \leqslant \frac{\omega(s, t) ^ { \frac{k+1}{p}}}{\beta\left(\frac{k+1}{p}\right)!} \\
        &\le \omega(S,T)^{\varepsilon}  \frac{\omega(s, t) ^ { \frac{k+1}{q_{k+1}}}}{\beta(p)\left(\frac{k+1}{q_{k+1}}\right)!}
    \end{aligned} $$ where $\varepsilon:= \left(\frac{k+1}{p}-\frac{k+1}{q_{k+1}}\right)$ and where we use that $\varepsilon>0$ since $\frac{k+1}{p}> 1$ and $\frac{k+1}{q_{k+1}}\le1$. Coupled with the fact that $\left\|i_{s t}\right\| \leqslant \omega(s, t)^{\theta} = \omega(s, t)^{\tfrac{k+1}{q_{k+1}}}$, it becomes evident that $X^i$ does possess finite $q_{k+1}$-variation. 

According to Lemma {\ref{lem:weakly geometric p-rough path (518)}}, $X+(0, \ldots, 0, i)=X^i$ is furthermore a $(k\!+\!1)$-degree multiplicative functional. Consequently, $X^i \in \Omega^{q_{k+1}}(V)$ holds true, and Definition \ref{def:X pow i (417)} is indeed well-defined.

Moreover, if $i \in C_0\left(\triangle_J, \operatorname{Comm}^{k+1}(V)\right)$, and $X \in WG\Omega^q(V)$, Theorem {\ref{lem:weakly geometric p-rough path (518)}} also shows that $X^i \in WG\Omega^{q_{k+1}}(V)$.
\end{remark}

We will now provide a definition of the lift map, that is $\mathbb{1}^{(\cdot)}.$
\begin{definition}[Lift map]\label{def:1_pow_I}
Let $p \geqslant 1$ be a real number, $k\in\{1,...,\lfloor p \rfloor\}$ an integer, and fix $\phi\in(1-\frac{1}{p},1]$. Consider $I=\left(0, I^1, \ldots, I^k\right) \in \mathfrak{I}^{(k, p)}_\phi(V)$.  We set $q_j:=\tfrac{j}{\phi}\in[j,j+j/(p-1))$ for each $j$ in $\{1,\dots,k\}$ as per Theorem \ref{thm:equiv-H-space}. We define recursively the lift map $\mathbb{1}^{(\cdot)}:\mathfrak{I}^{(k, p)}_\phi(V)\to C_0(\triangle_J,T^{(k)}(V))$ as follows. At the first level, set $\mathbb{1}^{(0, I^1)}:=(1,I^1)\in\Omega^{q_1}(V)$. Given the recursive assumption $\mathbb{1}^{\left(0, I^1, \ldots, I^{k-1}\right)} \in \Omega^{q_{k-1}}(V)$ and Definition \ref{def:X pow i (417)}, we set:
\begin{align*}
\left(\mathbb{1}^I\right)^k &:= \left(\mathbb{1}^{\left(0, I^1, \ldots, I^{k-1}\right)}\right)^{I^k} \\
&\,=\operatorname{Ext}_{k-1}\left(\mathbb{1}^{\left(0, I^1, \ldots, I^{k-1}\right)}\right)+(0,\ldots,0,I^k),
\end{align*}
where $(0,\ldots,0,I^k)$ contains exactly $k-1$ zeros preceding $I^k$.
\end{definition}

The following proposition provides a direct link between the spaces $\mathfrak{I}_{\phi,\omega}^{(k,p)}(V)$ and $\mathscr{H}_{\phi,\omega}^{(k,p)}(V)$, paralleling Proposition \ref{prop: dev H is in I}. Its proof, being analogous, is omitted.

\begin{proposition}
  \label{prop:1 pow I_st j (620)}
  Let \(p \ge 1\) be a real number, \(k\ge 1\) an integer,   \(\phi \in \bigl(1-\tfrac{1}{p}, 1\bigr]\) a real number, and \(\omega\) a control.  Given \(I \in \mathfrak{I}_{\phi,\omega}^{(k,p)}(V)\), then \(\mathbb{1}^I \in \mathscr{H}_{\phi,\omega}^{(k,p)}(V)\), and if \(I\in WG\mathfrak{I}_{\phi,\omega}^{(k,p)}(V)\), then \(\mathbb{1}^I \in WG\mathscr{H}_{\phi,\omega}^{(k,p)}(V)\).

\end{proposition}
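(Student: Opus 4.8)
The plan is to argue by induction on the truncation level $k$, running parallel to the proof of Proposition~\ref{prop: dev H is in I}: for $I\in\mathfrak I^{(k,p)}_{\phi,\omega}(V)$ with level-wise exponents $q_j:=j/\phi$, I will show that the functional $\mathbb 1^{I(j)}$, where $I(j):=(0,I^1,\dots,I^j)$, is a degree-$j$ multiplicative functional of finite $q_j$-variation controlled by $\omega$ (up to a routine rescaling of the control by a constant, needed to turn the $\mathfrak I$-space bound $\|I^{j}_{st}\|\le K\omega(s,t)^{\phi}$ into the constant-$1$ hypothesis of Definition~\ref{def:X pow i (417)}), and then convert this into the three conditions defining membership in $\mathscr H^{(k,p)}_{\phi,\omega}(V)$: multiplicativity, finite $p$-variation controlled by $\omega$, and $\|(\mathbb 1^I)^j_{st}\|\le K_H\,\omega(s,t)^\phi$ for $j=1,\dots,k$. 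The base case $k=1$ is immediate: $\mathbb 1^{(0,I^1)}=(1,I^1)$ is multiplicative because $I^1$ is additive (Definition~\ref{def:I space 755}(i)), and the bound $\|I^1_{st}\|\le K\omega(s,t)^\phi$ together with $\phi>1-1/p\ge1/p$ when $\lfloor p\rfloor\ge2$, or the supplementary top-level bound $\|I^1_{st}\|\le K\omega(s,t)^{1/p}$ when $\lfloor p\rfloor=1$, yields finite $p$-variation controlled by $\omega$.

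For the inductive step I would first record the exponent bookkeeping. Since $\phi\in(1-1/p,1]$ and $j\le k-1\le\lfloor p\rfloor-1\le p-1$, one has $q_j=j/\phi\in[j,\,j+j/(p-1))\subset[j,j+1)$, so $\lfloor q_j\rfloor=j$ and the operators $\operatorname{Ext}_{q_j}$ appearing in Definition~\ref{def:X pow i (417)} coincide with $\operatorname{Ext}_j$ as used in Definition~\ref{def:1_pow_I}; moreover $\phi>(p-1)/p\ge j/(j+1)$, hence $\phi\in(1-1/q_j,1]$, which is exactly the hypothesis allowing the $X^i$-construction of Definition~\ref{def:X pow i (417)} to be applied with $X=\mathbb 1^{I(j)}$ and $i=I^{j+1}$. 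The remark following Definition~\ref{def:X pow i (417)}, which invokes Lemma~\ref{lem:weakly geometric p-rough path (518)}(a)--(b), then gives that $\mathbb 1^{I(j+1)}=\operatorname{Ext}_j(\mathbb 1^{I(j)})+(0,\dots,0,I^{j+1})$ is a degree-$(j+1)$ multiplicative functional of finite $q_{j+1}$-variation controlled by $\omega$; iterating up to $j=k-1$ delivers multiplicativity of $\mathbb 1^I$ and the needed level-wise $q_j$-regularity.

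The quantitative bounds come from the defining identity $(\mathbb 1^I)^{j+1}_{st}=\operatorname{Ext}_j(\mathbb 1^{I(j)})^{j+1}_{st}+I^{j+1}_{st}$. By the Extension Theorem~\ref{thm:extension-theorem} applied to $\mathbb 1^{I(j)}\in\Omega^{q_j}_\omega(V)$ the first term is $\le\tilde C\,\omega(s,t)^{(j+1)/q_j}$, and since $(j+1)/q_j=\phi+\phi/j\ge\phi$ we get $\omega(s,t)^{(j+1)/q_j}\le\omega(S,T)^{\phi/j}\,\omega(s,t)^\phi$; combined with $\|I^{j+1}_{st}\|\le K\omega(s,t)^\phi$ this gives $\|(\mathbb 1^I)^{j+1}_{st}\|\le K_H\,\omega(s,t)^\phi$, and with the base case this is the $\omega^\phi$-estimate at all levels $j=1,\dots,k$. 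For finite $p$-variation controlled by $\omega$: at any level $j\le\lfloor p\rfloor-1$ one has $j/p\le1-1/p<\phi$, so $\|(\mathbb 1^I)^j_{st}\|\le K_H\,\omega(S,T)^{\phi-j/p}\,\omega(s,t)^{j/p}$ absorbs into the $p$-variation bound; at the top level $j=k=\lfloor p\rfloor$ (which occurs only when $k=\lfloor p\rfloor$) one uses the supplementary hypothesis $\|I^k_{st}\|\le K\,\omega(s,t)^{k/p}$ of Definition~\ref{def:I space 755} together with $q_{k-1}=(k-1)/\phi<p$ (since $\phi>(p-1)/p\ge(\lfloor p\rfloor-1)/p$), which makes $k/q_{k-1}>k/p$ and bounds the extension term by $\tilde C\,\omega(S,T)^{k/q_{k-1}-k/p}\,\omega(s,t)^{k/p}$. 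This establishes $\mathbb 1^I\in\Omega^{(k,p)}_\omega(V)$ and hence $\mathbb 1^I\in\mathscr H^{(k,p)}_{\phi,\omega}(V)$.

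For the weakly geometric addendum the same induction applies: if $I\in WG\mathfrak I^{(k,p)}_{\phi,\omega}(V)$ then $I^{j+1}_{st}\in\operatorname{Comm}^{j+1}(V)$ for all $j$, so the ``$WG$'' clause of the remark after Definition~\ref{def:X pow i (417)} (resting on Lemma~\ref{lem:weakly geometric p-rough path (518)}(c)) upgrades each step from $WG\Omega^{q_j}(V)$ to $WG\Omega^{q_{j+1}}(V)$, the base case being $(1,I^1)\in WG\Omega^{q_1}(V)$ since $(1,v)=\pi_1(\exp v)\in G^{(1)}(V)$; hence $\operatorname{Im}(\mathbb 1^I)\subseteq G^{(k)}(V)$ and $\mathbb 1^I\in WG\mathscr H^{(k,p)}_{\phi,\omega}(V)$. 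I expect the only genuinely delicate point to be the bookkeeping around the top level: one must keep straight the two conventions for $q_k$ (namely $k/\phi$, used inside the recursion, versus the ambient $p$ required for membership in $\Omega^{(k,p)}$), and it is precisely the supplementary bound $\|I^k_{st}\|\le K\omega(s,t)^{k/p}$ in the definition of $\mathfrak I^{(k,p)}(V)$ when $k=\lfloor p\rfloor$ that makes the top level work; all remaining estimates are routine applications of admissibility, super-additivity of $\omega$, and the Extension Theorem.
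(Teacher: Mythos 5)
Your proof is correct, and it is exactly the argument the paper has in mind: the paper omits the proof of this proposition as being analogous to Proposition~\ref{prop: dev H is in I}, and your level-by-level induction through the recursion of Definition~\ref{def:1_pow_I}, using the remark after Definition~\ref{def:X pow i (417)} (via Lemma~\ref{lem:weakly geometric p-rough path (518)}) for multiplicativity and weak geometricity, and the Extension Theorem estimate $(j+1)/q_j=\phi+\phi/j\ge\phi$ together with the supplementary top-level bound when $k=\lfloor p\rfloor$, is precisely that analogous argument. The exponent bookkeeping ($q_j<j+1$, $\phi\in(1-1/q_j,1]$, $q_{k-1}<p$) all checks out.
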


The following proposition and its corollary cement the relationship between $\mathfrak{I}_{\phi,\omega}^{(k,p)}(V)$ and $\mathscr{H}_{\phi,\omega}^{(k,p)}(V)$, proving they are bijective to each other.

\begin{proposition}[$\mathbb{1}$  and $\operatorname{dev}$ are mutual inverses]\label{prop:unit-dev}
  Let $p\ge 1$.  For every $H\in\mathscr H^{p}(V)$ one has $\mathbb 1^{\operatorname{dev}(H)}=H$,   and for every $I\in\mathfrak I^{p}(V)$ one has $\operatorname{dev}\!\bigl(\mathbb 1^{I}\bigr)=I$.
\end{proposition}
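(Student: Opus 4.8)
The plan is to prove both identities by induction on the tensor level $j\in\{1,\dots,\lfloor p\rfloor\}$, unwinding the recursive definitions of $\operatorname{dev}$ (Definition~\ref{def: development map}) and of $\mathbb{1}^{(\cdot)}$ (Definition~\ref{def:1_pow_I}) level by level. First I would record the ingredients that make the compositions well-defined: by Proposition~\ref{prop: dev H is in I}, $\operatorname{dev}(H)\in\mathfrak I^p(V)$, so $\mathbb 1^{\operatorname{dev}(H)}$ is defined; by Proposition~\ref{prop:1 pow I_st j (620)}, $\mathbb 1^{I}\in\mathscr H^p(V)$, so $\operatorname{dev}(\mathbb 1^{I})$ is defined. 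I would also note two structural facts used throughout: (a) neither $\operatorname{dev}$ nor $\mathbb 1^{(\cdot)}$ depends on the auxiliary exponent $\phi$ witnessing membership, because the generalized extension maps $\operatorname{Ext}_j$ (Proposition~\ref{prop:generalised-extension-map}) are exponent-independent and $q_j=j/\phi<j+1$ for $j\le\lfloor p\rfloor-1$ by Theorem~\ref{thm:equiv-H-space}; and (b) the lift commutes with truncation, i.e.\ $(\mathbb 1^{I})(k-1)=\mathbb 1^{\,I(k-1)}$, which is immediate from Definition~\ref{def:1_pow_I} since $\operatorname{Ext}_{k-1}$ preserves levels $0,\dots,k-1$ and adding $(0,\dots,0,I^k)$ only changes level $k$.

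For the first identity $\mathbb 1^{\operatorname{dev}(H)}=H$, I would show $(\mathbb 1^{\operatorname{dev}(H)})^j=H^j$ for all $j$ by induction on $j$. At level $1$ both sides equal $H^1$, since $\operatorname{dev}(H)^1=H^1$ and $(\mathbb 1^{(0,H^1)})^1=H^1$. Assuming the claim holds up to level $k-1$, so that $(\mathbb 1^{\operatorname{dev}(H)})(k-1)=H(k-1)$ and hence, by (b), $\mathbb 1^{\operatorname{dev}(H)(k-1)}=H(k-1)$, the recursive formula gives
\[
(\mathbb 1^{\operatorname{dev}(H)})^k=\operatorname{Ext}_{k-1}\!\bigl(\mathbb 1^{\operatorname{dev}(H)(k-1)}\bigr)^k+\operatorname{dev}(H)^k=\operatorname{Ext}_{k-1}\!\bigl(H(k-1)\bigr)^k+\operatorname{dev}(H)^k .
\]
Since $\operatorname{dev}(H)^k=H^k-E^k$ with $E^k=\operatorname{Ext}_{k-1}\!\bigl(H(k-1)\bigr)^k$ by Definition~\ref{def: development map}, the two extension terms cancel and $(\mathbb 1^{\operatorname{dev}(H)})^k=H^k$, closing the induction.

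For the second identity $\operatorname{dev}(\mathbb 1^{I})=I$, I would induct on the level again. At level $0$ both sides vanish, and at level $1$ both equal $I^1$. Assuming $\operatorname{dev}(\mathbb 1^{I})^j=I^j$ for $j\le k-1$, the recursive definition of $\mathbb 1^{I}$ together with (b) gives $(\mathbb 1^{I})^k=\operatorname{Ext}_{k-1}\!\bigl((\mathbb 1^{I})(k-1)\bigr)^k+I^k$, while by definition of $\operatorname{dev}$,
\[
\operatorname{dev}(\mathbb 1^{I})^k=(\mathbb 1^{I})^k-\operatorname{Ext}_{k-1}\!\bigl((\mathbb 1^{I})(k-1)\bigr)^k .
\]
Substituting, the extension terms cancel and $\operatorname{dev}(\mathbb 1^{I})^k=I^k$, completing the induction; taking $k=\lfloor p\rfloor$ in both arguments yields the proposition.

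I do not expect a genuine obstacle: the statement is essentially a formal consequence of the two recursive constructions, each built to undo the other. The only points needing care are bookkeeping ones — confirming that at each inductive step the extension map invoked inside $\operatorname{dev}$ and the one invoked inside $\mathbb 1^{(\cdot)}$ act on the \emph{same} degree-$(k-1)$ multiplicative functional (supplied exactly by the induction hypothesis and fact~(b)), and that the relevant truncations $H(j)$, $\operatorname{dev}(H)(j)$, $\mathbb 1^{I}(j)$ lie in the domain $\bigcup_{1\le q<j+1}\Omega^{(j,q)}(V)$ of the generalized extension map, which follows from $q_j=j/\phi<j+1$ via Theorem~\ref{thm:equiv-H-space}.
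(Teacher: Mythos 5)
Your proposal is correct and follows essentially the same route as the paper's own proof: a level-by-level induction in which the recursive definitions of $\mathbb 1^{(\cdot)}$ and $\operatorname{dev}$ make the $\operatorname{Ext}_{k-1}$ terms cancel, with the paper treating $\mathbb 1^{\operatorname{dev}(H)}=H$ in detail and declaring the converse analogous. The extra bookkeeping you include (well-definedness via Propositions~\ref{prop: dev H is in I} and~\ref{prop:1 pow I_st j (620)}, exponent-independence of $\operatorname{Ext}$, and compatibility of the lift with truncation) is consistent with, and slightly more explicit than, what the paper records.
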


\begin{proof}
  Let $H\in\mathscr H^{p}(V)$. By definition of $\operatorname{dev}$,
  \begin{align}
    \operatorname{dev}(H)^{0} &:= 0, \\
    \operatorname{dev}(H)^{1} &:= H^{1}, \\
    \operatorname{dev}(H)^{k} &:= H^{k}-\operatorname{Ext}_{k-1}\!\bigl(1,\dots,H^{k}\bigr)^{k},
      \qquad k\ge 2. 
  \end{align}

  Hence, by the definition of the unit map $\mathbb 1^{(\,\cdot\,)}$,
  \[
    (\mathbb 1^{\operatorname{dev}(H)})^{0} \stackrel{\text{def.}}{=} 1,
    \qquad
    (\mathbb 1^{\operatorname{dev}(H)})^{1} := \operatorname{dev}(H)^{1}\stackrel{\text{def.}}{=} H^{1}.
  \]

  Assume inductively that $(\mathbb 1^{\operatorname{dev}(H)})^{j}=H^{j}$ for all $j\le k$.
  Then
  \begin{align*}
    (\mathbb 1^{\operatorname{dev}(H)})^{k+1}
      &=\operatorname{Ext}_{k}\!\bigl(1,(\mathbb 1^{\operatorname{dev}(H)})^{1},\dots,(\mathbb 1^{\operatorname{dev}(H)})^{k}\bigr)^{k+1}
        +\operatorname{dev}(H)^{k+1}\\
      &=\operatorname{Ext}_{k}\!\bigl(1,H^{1},\dots,H^{k}\bigr)^{k+1}
        +\Bigl(H^{k+1}-\operatorname{Ext}_{k}(1,\dots,H^{k})^{k+1}\Bigr)\\
      &=H^{k+1},
  \end{align*}
  completing the induction and proving $\mathbb 1^{\operatorname{dev}(H)}=H$.

  The converse identity $\operatorname{dev}(\mathbb 1^{I})=I$ for $I\in\mathfrak I^{p}(V)$ is analogous, starting from
  \[
    (\mathbb 1^{I})^{0}=1,\qquad
    (\mathbb 1^{I})^{1}=I^1,\qquad
    (\mathbb 1^{I})^{k+1}=\operatorname{Ext}_{k}\!\bigl(1,(\mathbb 1^{I})^{1},\dots,(\mathbb 1^{I})^{k}\bigr)^{k+1}+I^{k+1}.
  \]
\end{proof}

\begin{remark}[$\mathrm{WG}\,\mathfrak I^p(V)$ and $\mathfrak I^p(V)$ as paths]\label{rmk: mathfrak Ip(v) as paths}
An additive two–parameter functional $(s,t)\mapsto I_{s,t}$ on $\triangle_J$ determines a path on $J=[S,T]$ (via $I_{\cdot}:=I_{S,\cdot}$) and is determined by a path $t\mapsto I_t$ with $I_0=0$ via $I_{s,t}=I_t-I_s$. 

Take $\phi\in (1-1/p,\,1]$. For each $j=1,...,\lfloor p \rfloor$, consider  $q_j:=j/\phi\in [j, j+j/(p-1))$, set $\tilde q_j:=q_j/j\in[1,2)$ (Young regime); then, each degree $I^j:t\mapsto I^j_t\in V^{\otimes j}$ has finite $\tilde q_j$–variation, i.e.
$$\|I_{st}^j\|=\|I_t^j-I_s^j\|\le \omega(s,t)^{\phi} = \omega(s,t)^{\frac{1}{\tilde{q}_j}}.$$
Thus $\mathrm{WG}\,\mathfrak I^p(V)$ is the set of paths $I:J\to\mathfrak g^{(\lfloor p\rfloor)}(V)$ whose degrees $I^j$ have finite $\tilde q_j$–variation for some $\tilde{q}_j\in[1,2)$, whereas $\mathfrak I^p(V)$ is the set of paths $I:J\to T^{(\lfloor p\rfloor)}(V)$ with $I_0=0$ and the same degree–wise Young property.
\end{remark}

\begin{corollary}[Bijection of $\mathbb 1^{(\,\cdot\,)}$ and $\operatorname{dev}$]\label{cor: 1(.) is bijective}Let \(p\!\ge 1\) and \(k\!\in\!\{0,\dots,\lfloor p\rfloor\}\).
\begin{enumerate}[label=(\alph*)]
\item The unit map
\[
      \mathbb 1^{(\,\cdot\,)}:\;
      \mathfrak I^{(k,p)}(V)\;\longrightarrow\;
      \mathscr H^{(k,p)}(V)
\]
is a bijection whose inverse is \(\operatorname{dev}\).

\item For every control \(\omega\) and every exponent \(\phi\in\bigl(1-\tfrac1p,\,1\bigr]\) the restricted maps
\[
\begin{aligned}
   \mathbb 1^{(\,\cdot\,)} &: 
      \mathfrak I_{\phi,\omega}^{(k,p)}(V)\;\longrightarrow\;
      \mathscr H_{\phi,\omega}^{(k,p)}(V),\\[4pt]
   \mathbb 1^{(\,\cdot\,)} &: 
      \mathfrak I_{\phi}^{(k,p)}(V)\;\longrightarrow\;
      \mathscr H_{\phi}^{(k,p)}(V),\\[4pt]
   \mathbb 1^{(\,\cdot\,)} &: 
      \mathfrak I_{\omega}^{(k,p)}(V)\;\longrightarrow\;
      \mathscr H_{\omega}^{(k,p)}(V)
\end{aligned}
\]
are each bijective with inverse given by the corresponding restriction of \(\operatorname{dev}\).
\end{enumerate}
\end{corollary}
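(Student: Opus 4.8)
The plan is to deduce both parts from three facts already established: first, that the lift map sends $\mathfrak I^{(k,p)}_{\phi,\omega}(V)$ into $\mathscr H^{(k,p)}_{\phi,\omega}(V)$ (Proposition~\ref{prop:1 pow I_st j (620)}); second, that $\operatorname{dev}$ sends $\mathscr H^{(k,p)}_{\phi,\omega}(V)$ into $\mathfrak I^{(k,p)}_{\phi,\omega}(V)$ (Proposition~\ref{prop: dev H is in I}); and third, that $\mathbb 1^{(\cdot)}$ and $\operatorname{dev}$ are mutual inverses. The third fact is Proposition~\ref{prop:unit-dev} in the case $k=\lfloor p\rfloor$; since that proof is an induction carried out one tensor level at a time, I would simply note it goes through verbatim at any truncation level $k\in\{1,\dots,\lfloor p\rfloor\}$, while the case $k=0$ is immediate because $\mathfrak I^{(0,p)}(V)=\{(0)\}$ and $\mathscr H^{(0,p)}(V)=\{(1)\}$. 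Before combining these facts I would record that both $\operatorname{dev}$ and $\mathbb 1^{(\cdot)}$ are independent of the witnessing exponent $\phi$ used to represent an element: their definitions only ever apply $\operatorname{Ext}_j$ at the exponent $q_j=j/\phi$ for $j\le k-1$, and since $q_j<j+1$ for every admissible $\phi$, Corollary~\ref{cor:Ext-q-independence} (equivalently Proposition~\ref{prop:generalised-extension-map}) guarantees the resulting functional is the same; this is what makes $\operatorname{dev}$ and $\mathbb 1^{(\cdot)}$ well-defined on $\mathscr H^{(k,p)}(V)$ and $\mathfrak I^{(k,p)}(V)$, and hence makes the restricted statements in part (b) meaningful.

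For part (a) I would then take the union of the first two facts over all $\phi\in(1-\tfrac1p,1]$ and all controls $\omega$, using Remark~\ref{rmk: definition of Ikp omega phi} and the analogous union description of $\mathscr H^{(k,p)}(V)$ from Definition~\ref{def:h-space}, to conclude that $\mathbb 1^{(\cdot)}\colon\mathfrak I^{(k,p)}(V)\to\mathscr H^{(k,p)}(V)$ and $\operatorname{dev}\colon\mathscr H^{(k,p)}(V)\to\mathfrak I^{(k,p)}(V)$ are well-defined maps between the full spaces. The third fact gives $\operatorname{dev}\circ\mathbb 1^{(\cdot)}=\mathrm{id}$ on $\mathfrak I^{(k,p)}(V)$ and $\mathbb 1^{(\cdot)}\circ\operatorname{dev}=\mathrm{id}$ on $\mathscr H^{(k,p)}(V)$, so $\mathbb 1^{(\cdot)}$ has a two-sided inverse and is therefore a bijection with inverse $\operatorname{dev}$.

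For part (b) I would fix $\phi$ and $\omega$ and observe that, by the first two facts, $\mathbb 1^{(\cdot)}$ restricts to a map $\mathfrak I^{(k,p)}_{\phi,\omega}(V)\to\mathscr H^{(k,p)}_{\phi,\omega}(V)$ and $\operatorname{dev}$ restricts to a map $\mathscr H^{(k,p)}_{\phi,\omega}(V)\to\mathfrak I^{(k,p)}_{\phi,\omega}(V)$; since the two identities of the third fact hold on the ambient spaces, they hold a fortiori on these subsets, so the restricted maps are mutually inverse bijections. For the $\phi$-only and $\omega$-only versions I would write $\mathfrak I^{(k,p)}_\phi(V)=\bigcup_\omega\mathfrak I^{(k,p)}_{\phi,\omega}(V)$ and $\mathscr H^{(k,p)}_\phi(V)=\bigcup_\omega\mathscr H^{(k,p)}_{\phi,\omega}(V)$ (and symmetrically with $\omega$ fixed): because $\mathbb 1^{(\cdot)}$ and $\operatorname{dev}$ respect each $(\phi,\omega)$-layer and are mutually inverse on it, they respect the unions and remain mutually inverse there, which yields the claimed bijections.

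The step I expect to be the main obstacle — or at least the only one requiring genuine care rather than bookkeeping — is the $\phi$-independence of $\operatorname{dev}$ and $\mathbb 1^{(\cdot)}$ invoked above, since a priori both constructions reference $\operatorname{Ext}_j$ at the exponent $q_j=j/\phi$ and one must check via Corollary~\ref{cor:Ext-q-independence}/Proposition~\ref{prop:generalised-extension-map} that this dependence genuinely drops out at every level. Once that is settled, the remainder of the argument is the formal observation that a map admitting a two-sided inverse is a bijection, applied uniformly across the index set of pairs $(\phi,\omega)$.
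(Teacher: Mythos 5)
Your proposal is correct and follows essentially the same route as the paper: both deduce the result from Proposition~\ref{prop:1 pow I_st j (620)}, Proposition~\ref{prop: dev H is in I}, and the mutual-inverse identities of Proposition~\ref{prop:unit-dev}, and then observe that a map with a two-sided inverse is a bijection, layer by layer in $(\phi,\omega)$ and on the unions. The two points you flag as needing care --- extending Proposition~\ref{prop:unit-dev} to truncation levels $k<\lfloor p\rfloor$ and the $\phi$-independence of $\operatorname{dev}$ and $\mathbb 1^{(\cdot)}$ via Corollary~\ref{cor:Ext-q-independence} --- are handled in the paper implicitly (the latter in the remark following Definition~\ref{def: development map}), so your explicit treatment is a harmless elaboration rather than a departure.
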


\begin{proof}
Let  \[
      F := \mathbb 1^{(\,\cdot\,)}, 
      \qquad 
      G := \operatorname{dev}.\]
\noindent\textbf{\underline{Firstly,}} let us recall the domain and codomain of $F$ and $G$,
\[
      F : \mathfrak I^{(k,p)}(V)\longrightarrow\mathscr H^{(k,p)}(V),
      \qquad
      G : \mathscr H^{(k,p)}(V)\longrightarrow\mathfrak I^{(k,p)}(V).
\]
By construction each image lies in the stated codomain. Proposition~\ref{prop:unit-dev} asserts \(F\!\circ G=\mathrm{id}_{\mathscr H^{(k,p)}(V)}\) and \(G\!\circ F=\mathrm{id}_{\mathfrak I^{(k,p)}(V)}\);  hence \(F\) is bijective and \(G\) is its inverse.

\medskip
\noindent\textbf{\underline{Secondly,}} fix a control \(\omega\) and an exponent \(\phi\in(1-\tfrac1p,1]\). Proposition~\ref{prop:1 pow I_st j (620)} gives
\[
     F\bigl(\mathfrak I_{\phi,\omega}^{(k,p)}(V)\bigr)
     \subseteq
     \mathscr H_{\phi,\omega}^{(k,p)}(V),
\]
while Proposition~\ref{prop: dev H is in I} gives
\[
     G\bigl(\mathscr H_{\phi,\omega}^{(k,p)}(V)\bigr)
     \subseteq
     \mathfrak I_{\phi,\omega}^{(k,p)}(V).
\]
Because the identities in Proposition~\ref{prop:unit-dev} hold on the larger spaces, their restrictions satisfy
\[
     F\!\circ G=\mathrm{id}_{\mathscr H_{\phi,\omega}^{(k,p)}(V)},
     \qquad
     G\!\circ F=\mathrm{id}_{\mathfrak I_{\phi,\omega}^{(k,p)}(V)},
\]
so \(F\) is a bijection between the \((\phi,\omega)\)-constrained subspaces, with inverse \(G\).

\medskip
\noindent\textbf{\underline{Thirdly,}} keep the exponent \(\phi\) fixed but allow the control to vary, i.e.
\[
      F : \mathfrak I_{\phi}^{(k,p)}(V)\longrightarrow\mathscr H_{\phi}^{(k,p)}(V),
      \qquad
      G : \mathscr H_{\phi}^{(k,p)}(V)\longrightarrow\mathfrak I_{\phi}^{(k,p)}(V).
\]
The same two references, interpreted with an arbitrary control, yield the required inclusions of images, and the restricted compositions remain the identity maps; thus \(F\) is bijective with inverse \(G\) on these spaces.

\medskip
\noindent\textbf{\underline{Fourthly,}} keep the control \(\omega\) fixed but allow the exponent to vary, i.e.
\[
      F : \mathfrak I_{\omega}^{(k,p)}(V)\longrightarrow\mathscr H_{\omega}^{(k,p)}(V),
      \qquad
      G : \mathscr H_{\omega}^{(k,p)}(V)\longrightarrow\mathfrak I_{\omega}^{(k,p)}(V).
\]
Again, Proposition~\ref{prop:1 pow I_st j (620)} and Proposition~\ref{prop: dev H is in I} apply without change, and the same two–sided inverse property shows bijectivity.

\medskip
Since in every case \(F\) is bijective and \(G\) is its inverse, the corollary is proved.
\end{proof}

These maps let us transport linear structure from $I$-space to $H$-space, a key ingredient in the main results.
	\newpage

\section{Lemmas}

\subsection{Top-level additive insertion}

Before proving the first lemma, we record a simple observation explaining why an additive functional \(\psi\) may be inserted at the highest level without affecting the lower levels of the sewn rough path.

Let \(\psi:\triangle_J\to V^{\otimes(\lfloor p\rfloor+1)}\) be additive, i.e.
\[
\psi_{s,t}=\sum_{i=0}^{m-1}\psi_{t_i,t_{i+1}}
\quad\text{for any partition }D=\{s=t_0<t_1<\cdots<t_{m}=t\}\subset[s,t].
\]
Given a functional \(Y:\triangle_J\to T((V))\) and a partition \(D\), write
\[
Y^D:=Y_{t_0,t_1}\otimes\cdots\otimes Y_{t_{m-1},t_{m}},
\]
and denote by \(Y^{D,j}\) the \(j\)-th tensor level of \(Y^D\). Consider two almost multiplicative functionals that agree up to level \(\lfloor p\rfloor\) and differ only at the top level:
\[
Z:=(1,X^1,\dots,X^{\lfloor p\rfloor},\psi),\qquad 
Z^{(0)}:=(1,X^1,\dots,X^{\lfloor p\rfloor},0).
\]
At level \(\lfloor p\rfloor+1\) the expansions along \(D\) satisfy
\[
Z^{D,\lfloor p\rfloor+1}-\big(Z^{(0)}\big)^{D,\lfloor p\rfloor+1}
=\sum_{i=0}^{m-1}\psi_{t_i,t_{i+1}}
=\psi_{s,t},
\]
because all terms built solely from the levels \(\le \lfloor p\rfloor\) cancel, and the only surviving contribution is the single insertion of \(\psi\) on one subinterval with trivial (level-$0$) factors elsewhere. Thus the two inputs have identical components up to level \(\lfloor p\rfloor\), and at level \(\lfloor p\rfloor+1\) they differ by exactly the additive increment \(\psi_{s,t}\).

Consequently, if we assume that $Z$ and $Z^{(0)}$ are almost rough paths, applying the level-$\lfloor p \rfloor +1$  sewing map, i.e. \(\mathscr S^{\lfloor p \rfloor +1}\),  to either input yields sewn rough paths that agree in all levels \(\le \lfloor p\rfloor\), while at the top level
\[
\mathscr S^{\lfloor p \rfloor +1}\big(1,X^1,\dots,X^{\lfloor p\rfloor},0\big)^{\lfloor p\rfloor+1}_{s,t}
\;+\;\psi_{s,t}
\;=\;
\mathscr S^{\lfloor p \rfloor +1}\big(1,X^1,\dots,X^{\lfloor p\rfloor},\psi\big)^{\lfloor p\rfloor+1}_{s,t}.
\]
No \(p\)-variation bound on \(\psi\) is needed for this telescoping identity; such bounds enter only later to ensure that the inputs of \(\mathscr S^{\lfloor p \rfloor +1}\) are almost rough paths. This is the precise sense in which an additive \(\psi\) can be “slipped in” at the highest level.

\begin{lemma}\label{lem: bring psi inside sewing map}
Let $p\ge 1$ and set $n:=\lfloor p\rfloor$. Let $X=(1,X^1,\dots,X^n)\in\Omega^p(V)$ be a $p$-rough path controlled by $\omega_X$. Let $\psi:\triangle_J\to V^{\otimes(n+1)}$ be additive and of finite $p'$-variation for some $p'\in[n+1,n+2)$, controlled by a (possibly different) control $\omega_\psi$ with some constant $K_\psi$ , i.e.
\[
\|\psi_{st}\|\le K_\psi\,\omega_\psi(s,t)^{(n+1)/p'}.
\]
Define
\[
Z^{(0)}:=(1,X^1,\dots,X^n,0),\qquad Z^{(\psi)}:=(1,X^1,\dots,X^n,\psi).
\]
Then
\[
Z^{(0)}\ \text{is }\tfrac{n+1}{p}\text{-almost }p\text{-rough},\qquad
Z^{(\psi)}\ \text{is }\tfrac{n+1}{p}\text{-almost }p'\text{-rough},
\]
and, writing  $\mathscr S^{n+1}:\Omega^{\mathrm{am},(n+1)}(V)\to\Omega^{(n+1)}(V)$ for the level $n+1$-sewing map (Theorem~\ref{thm:Rough-Sewing-Lemma}),
\[
\mathscr S^{n+1}\!\big(Z^{(0)}\big)\;+\;\psi\;=\;\mathscr S^{n+1}\!\big(Z^{(\psi)}\big).
\]
\end{lemma}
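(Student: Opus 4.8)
The plan is to dispose of the two ``almost rough path'' claims first, since these are quick consequences of results already in the excerpt, and then to obtain the main identity by combining the telescoping observation recorded just before the statement with the uniqueness part of the rough sewing lemma. For the first part: $Z^{(0)}=\iota_n(X)$ is exactly the trivial embedding of Lemma~\ref{lem:ext-is-sewing}, so that lemma (with the control $\omega_X$) gives immediately that $Z^{(0)}\in\Omega^{\mathrm{am},(n+1,p)}_{\theta,\omega_X}(V)$ with $\theta=\tfrac{n+1}{p}>1$, and in fact $\mathscr S^{n+1}(Z^{(0)})=\operatorname{Ext}_{(n,p)}(X)$. For $Z^{(\psi)}$, I would first choose a single control dominating both $\omega_X$ and $\omega_\psi$, say $\omega:=\omega_X+\omega_\psi$ (a control by Lemma~\ref{lem: controls are a convex cone}), and check two things with respect to $\omega$: (i) finite $p'$-variation — levels $1,\dots,n$ inherit finite $p$-variation hence, since $p'>p$ and $\omega(s,t)\le\omega(S,T)$, finite $p'$-variation controlled by $\omega$ up to a constant (the usual exponent-lowering estimate), and level $n+1$ is bounded by $K_\psi\,\omega_\psi(s,t)^{(n+1)/p'}\le K_\psi\,\omega(s,t)^{(n+1)/p'}$ directly from hypothesis; (ii) $\theta$-almost multiplicativity with the \emph{same} $\theta=\tfrac{n+1}{p}$ — the defect of $Z^{(\psi)}$ at levels $\le n$ vanishes because $X$ is multiplicative, and at level $n+1$ the defect equals $\sum_{k=1}^{n}X^k_{s,u}\otimes X^{n+1-k}_{u,t}+\big(\psi_{s,u}+\psi_{u,t}-\psi_{s,t}\big)$; the $\psi$-bracket is zero by additivity, and the remaining sum is bounded by $C_{p,n}\,\omega(s,t)^{(n+1)/p}$ exactly as in Step~1 of the proof of Lemma~\ref{lem:ext-is-sewing} (neo-classical inequality plus superadditivity). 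Hence $Z^{(\psi)}\in\Omega^{\mathrm{am},(n+1,p')}_{\theta,\omega}(V)$ with $\theta=\tfrac{n+1}{p}$, and its sewing is well-defined.

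For the main identity, I would argue by the uniqueness clause of Theorem~\ref{thm:Rough-Sewing-Lemma} rather than by unwinding the limit-of-partition-products construction, although the displayed telescoping computation before the lemma is the conceptual heart. Concretely: let $\bar Z^{(0)}:=\mathscr S^{n+1}(Z^{(0)})\in\Omega^{(n+1)}(V)$ and set $W:=\bar Z^{(0)}+(0,\dots,0,\psi)$. By Remark~\ref{rmk: if X^j =Y^j for all j, then S(X)^j = S(Y)^j for all j} (or directly from the construction), $\bar Z^{(0)}$ agrees with $Z^{(0)}$ in levels $0,\dots,n$, so $W$ agrees with $Z^{(\psi)}$ in levels $0,\dots,n$ and with $\bar Z^{(0)}$ in those levels too. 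Now $\bar Z^{(0)}$ is a multiplicative functional of degree $n+1$ and $\psi$ is additive, so Lemma~\ref{lem:weakly geometric p-rough path (518)}(a) gives that $W$ is again a degree-$(n+1)$ multiplicative functional; and Lemma~\ref{lem:weakly geometric p-rough path (518)}(b), using $\|\psi_{st}\|\le K_\psi\,\omega(s,t)^{(n+1)/p'}$ and that $\bar Z^{(0)}$ has finite $p$- (hence $p'$-) variation, gives that $W$ has finite $p'$-variation. It remains to verify the $\omega^{\theta}$-closeness of $W$ to $Z^{(\psi)}$ with $\theta=\tfrac{n+1}{p}$: in levels $\le n$ the difference is $\bar Z^{(0),j}-X^j$, which is $O(\omega^{\theta})$ by the sewing bound \eqref{bound almost mult. functionals} for $Z^{(0)}$; at level $n+1$ the difference is $\big(\bar Z^{(0),n+1}+\psi\big)-\big(Z^{(\psi),n+1}\big)=\bar Z^{(0),n+1}-0$, which is the level-$(n+1)$ sewing-closeness of $\bar Z^{(0)}$ to $Z^{(0)}$, again $O(\omega^{\theta})$. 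Therefore $W$ is a degree-$(n+1)$ multiplicative functional of finite $p'$-variation satisfying the closeness bound that characterizes $\mathscr S^{n+1}(Z^{(\psi)})$ uniquely, whence $W=\mathscr S^{n+1}(Z^{(\psi)})$, which is precisely the claimed identity $\mathscr S^{n+1}(Z^{(0)})+\psi=\mathscr S^{n+1}(Z^{(\psi)})$.

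I would also remark — perhaps only in passing — that control- and $p'$-independence of $\mathscr S^{n+1}$ (Lemma~\ref{lem:control-independence-sewing}, Proposition~\ref{prop:global-sewing-well-defined}) is what allows the use of the single unified control $\omega$ throughout and the application of $\mathscr S^{n+1}$ to inputs witnessed at different exponents ($p$ for $Z^{(0)}$, $p'$ for $Z^{(\psi)}$) without ambiguity.

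The main obstacle, such as it is, will be bookkeeping rather than anything deep: one must be careful that the exponent $\theta$ used to witness almost-multiplicativity of $Z^{(\psi)}$ is the \emph{same} $\tfrac{n+1}{p}$ as for $Z^{(0)}$ (and not $(n+1)/p'$), so that the closeness estimates line up and the uniqueness clause of the sewing lemma can be invoked against a single pair $(\omega,\theta)$; and one must confirm that the hypotheses of Lemma~\ref{lem:weakly geometric p-rough path (518)}(b) are met, i.e. that $\psi$'s bound is of the form $K\,\omega(s,t)^{(n+1)/p'}$ with the \emph{same} control $\omega$ against which $\bar Z^{(0)}$ has finite $p'$-variation — which is why passing to $\omega=\omega_X+\omega_\psi$ at the outset is essential. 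Everything else is the routine exponent-lowering trick ($\omega(s,t)^a\le\omega(S,T)^{a-\theta}\omega(s,t)^{\theta}$ for $a\ge\theta$) already used repeatedly in the excerpt.
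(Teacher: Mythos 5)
Your proposal is correct, and the first half (the two almost-rough-path claims) matches the paper's Steps 1–2 essentially verbatim: $Z^{(0)}=\iota_n(X)$ handled by Lemma~\ref{lem:ext-is-sewing}, and for $Z^{(\psi)}$ the defect at level $n+1$ reduces to $\sum_{k=1}^n X^k_{s,u}\otimes X^{n+1-k}_{u,t}$ by additivity of $\psi$, bounded via the neo-classical inequality, with the combined control $\omega=\omega_X+\omega_\psi$ absorbing the two hypotheses. Where you genuinely diverge is the main identity. The paper proves it constructively: since $Z^{(0)}$ and $Z^{(\psi)}$ agree up to level $n$, the sewn objects agree there by the level-wise dependence of the construction in Theorem~\ref{thm:Rough-Sewing-Lemma}(ii), and at the top level one uses the telescoping partition identity $(Z^{(\psi)}_D)^{n+1}-(Z^{(0)}_D)^{n+1}=\psi_{s,t}$ for every partition $D$ and passes to the limit $|D|\to0$ inside the sewing formula. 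You instead form $W:=\mathscr S^{n+1}(Z^{(0)})+(0,\dots,0,\psi)$, verify via Lemma~\ref{lem:weakly geometric p-rough path (518)}(a),(b) that $W$ is a degree-$(n+1)$ multiplicative functional of finite $p'$-variation, check $\omega^{\theta}$-closeness of $W$ to $Z^{(\psi)}$ (in levels $\le n$ the difference is in fact exactly zero, since $\mathscr S^{n+1}(Z^{(0)})=\operatorname{Ext}_{(n,p)}(X)$ extends $X$; at level $n+1$ it is the sewing-closeness of $\bar Z^{(0)}$ to $Z^{(0)}$), and invoke the uniqueness clause of Theorem~\ref{thm:Rough-Sewing-Lemma}. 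Both routes are sound; the paper's computation makes transparent that the telescoping identity needs no regularity on $\psi$ beyond additivity (regularity enters only to put $Z^{(\psi)}$ in the sewing lemma's domain), whereas your uniqueness argument avoids manipulating limits of partition products at the cost of a little more hypothesis-checking, and is in fact the same strategy the paper itself uses in Step~3 of Lemma~\ref{lem:ext-is-sewing}. Your care in keeping a single exponent $\theta=\tfrac{n+1}{p}$ and a single control $\omega$ for the comparison is exactly the bookkeeping needed, so there is no gap.
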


\begin{proof}

Set $\omega=\omega_X+\omega_\psi$, i.e. a combined control which controls the $p$-variation of both $X$ and $\psi$.

\medskip\noindent\textbf{\underline{Step 1}:}\textit{ $Z^{(0)}$ is $\dfrac{n+1}{p}$-almost $p$-multiplicative of degree $n+1$ and has finite $p$-variation (and has finite $p'$-variation as well).}

By Lemma~\ref{lem:ext-is-sewing}, $Z^{(0)}=\iota_n(X)$ is $\theta$-almost multiplicative of degree $n+1$ with $\theta=\frac{n+1}{p}>1$ and has finite $p$-variation controlled by $\omega$, hence lies in the domain of $\mathscr S^{n+1}$. Since $p'\ge p$, $Z^{(0)}$ also has finite $p'$-variation.

\medskip\noindent\textbf{\underline{Step 2:} $Z^{(\psi)}$ is $\dfrac{n+1}{p}$-almost $p'$-rough.}
For any $s\le u\le t$,
\begin{align}
&\big(Z^{(\psi)}_{su}\otimes Z^{(\psi)}_{ut}\big)^{n+1}-Z^{(\psi)}_{st}{}^{\,n+1}\notag\\
&\qquad=\sum_{j=1}^{n} X^j_{su}\otimes X^{\,n+1-j}_{ut}
\ +\ \psi_{su}\otimes 1\ +\ 1\otimes\psi_{ut}\ -\ \psi_{st}\notag\\
&\qquad=\sum_{j=1}^{n} X^j_{su}\otimes X^{\,n+1-j}_{ut},
\label{eq:Step2-X-only}
\end{align}
by additivity of $\psi$. A standard argument using the neo-classical inequality (together with the $p$-variation bounds of $X$ and super-additivity of $\omega$) gives
\begin{equation}\label{eq:Step1-bound}
\Big\|\sum_{j=1}^{n} X^j_{su}\otimes X^{\,n+1-j}_{ut}\Big\|
\ \le\ p\,\frac{\omega(s,t)^{(n+1)/p}}{\beta((n+1)/p)!}.
\end{equation}
Therefore $Z^{(\psi)}$ is $\theta$-almost multiplicative with $\theta=\frac{n+1}{p}>1$. Since $p'\ge p$ and using that $X$ has finite $p$-variation, $X$ must have finite $p'$-variation. Given that the first $n$ levels of $Z^{(\psi)}$ equal those of $X$, the first $n$ levels of $Z^{(\psi)}$ also have finite $p'$-variation. As we assumed that the $(n+1)$-st level of $Z^{(\psi)}$, i.e. $\psi$, has finite $p'$-variation, we know that altogether $Z^{(\psi)}$ has also have finite $p'$-variation. Hence $Z^{(\psi)}$ is a $\tfrac{n+1}{p}$-almost $p'$-rough path.

\medskip\noindent\textbf{\underline{Step 3: Sewing at level $n+1$ and the identity}}

Apply the sewing map $\mathscr S^{n+1}$ to both inputs. By Theorem~\ref{thm:Rough-Sewing-Lemma}\,(ii), Equation~\eqref{eqn: construction of associated rough path}, $\mathscr S^{n+1}(\cdot)^j$ depends only on input levels $\le j$; since $Z^{(0)}$ and $Z^{(\psi)}$ agree up to level $n$, we have
\[
\mathscr S^{n+1}\big(Z^{(0)}\big)^j=\mathscr S^{n+1}\big(Z^{(\psi)}\big)^j,\qquad j=1,\dots,n.
\]
For the top level, set $Z_D:=Z_{t_0,t_1}\otimes\cdots\otimes Z_{t_{m-1},t_m}$. A direct degree-$(n+1)$ expansion shows, for \emph{every} partition $D\subset [s,t]$,
\begin{equation}\label{eq:partition-identity}
\big(Z^{(\psi)}_D\big)^{n+1}-\big(Z^{(0)}_D\big)^{n+1}
=\sum_{i=0}^{m-1}\psi_{t_i,t_{i+1}}
=\psi_{s,t}.
\end{equation}
Therefore\begin{align*}
\mathscr S^{n+1}\big(Z^{(\psi)}\big)^{n+1}_{st}
&=\lim_{\substack{|D|\to 0\\ D\subset [s,t]}}\big(Z^{(\psi)}_D\big)^{n+1}
=\lim_{\substack{|D|\to 0\\ D\subset [s,t]}}\Big(\big(Z^{(0)}_D\big)^{n+1}+\psi_{st}\Big)\\
&=\lim_{\substack{|D|\to 0\\ D\subset [s,t]}}\big(Z^{(0)}_D\big)^{n+1}\;+\;\psi_{st}
=\mathscr S^{n+1}\big(Z^{(0)}\big)^{n+1}_{st}\;+\;\psi_{st}.
\end{align*}
Together with the equality in lower levels it gives \( \mathscr S^{n+1}(Z^{(0)})+\psi=\mathscr S^{n+1}(Z^{(\psi)}).\)
\end{proof}

\subsection{Lift truncation invariance}
The next lemma collects reconstruction and truncation invariance properties of the lift map and $\boxplus$, which will streamline level-by-level arguments later.

We will now present a key lemma which will be essential towards proving the main result of this work. Recall from Definition~\ref{def:truncations} that given $p\in\mathbb R_{\ge1}$, for \(I=(0,I^{1},\dots,I^{\lfloor p\rfloor})\in\mathfrak{I}^{p}(V)\) and an integer \(k\le\lfloor p\rfloor\),
\[
  I(k)=(0,I^{1},\dots,I^{k})
  \quad\text{and}\quad
  I[k]=(0,I^{1},\dots,I^{k},0,\dots,0),
\]
the latter having \(\lfloor p\rfloor-k\) trailing zeros.

\begin{lemma}[Lift reconstruction and truncation invariance]
\label{lem:unit-boxplus-triple}
 \label{lem:first one} 
\label{lem:unit-trunc-invariance}
\label{lem:unit-boxplus-trunc-invariance}

Let $p \ge 1$ and let
\[
I = (0,I^1,\dots,I^{\lfloor p\rfloor}), 
\qquad 
\widetilde I = (0,\widetilde I^1,\dots,\widetilde I^{\lfloor p\rfloor})
\in \mathfrak{I}^p(V).
\]
For $k \in \{0,\dots,\lfloor p\rfloor\}$ write
\[
  I(k) := (0,I^1,\dots,I^k),
  \qquad
  I[k] := (0,I^1,\dots,I^k,0,\dots,0),
\]
where $I[k]$ has $\lfloor p\rfloor-k$ trailing zeros (and similarly for $\widetilde I$). Then:

\smallskip
\noindent\textnormal{(i) Reconstruction from the lift.}
Fix $k \in \{1,\dots,\lfloor p\rfloor\}$ and assume
\begin{equation}\label{eq:triple-eq-lifts}
   \bigl(\mathbb{1}^I\bigr)^j
   =
   \bigl(\mathbb{1}^{\widetilde I}\bigr)^j
   \qquad
   \forall j\in\{1,\dots,k\}.
\end{equation}
Then
\[
   I^j = \widetilde I^j
   \qquad
   \forall j\in\{1,\dots,k\}.
\]

\smallskip
\noindent\textnormal{(ii) Unit–truncation invariance.}
Let $k \in \{0,\dots,\lfloor p\rfloor\}$. Then, for every $j\in\{0,\dots,k\}$,
\[
   \bigl(\mathbb{1}^I\bigr)^j
   =
   \bigl(\mathbb{1}^{I[k]}\bigr)^j
   =
   \bigl(\mathbb{1}^{I(k)}\bigr)^j .
\]

\smallskip
\noindent\textnormal{(iii) $\boxplus$–truncation invariance.}
Let $k \in \{1,\dots,\lfloor p\rfloor\}$. Then, for every $j\in\{1,\dots,k\}$,
\[
   \bigl(\mathbb{1}^I \boxplus \mathbb{1}^{\widetilde I}\bigr)^j
   \;=\;
   \bigl(\mathbb{1}^{I[k]} \boxplus \mathbb{1}^{\widetilde I[k]}\bigr)^j
   \;=\;
   \bigl(\mathbb{1}^{I(k)} \boxplus \mathbb{1}^{\widetilde I(k)}\bigr)^j.
\]
\end{lemma}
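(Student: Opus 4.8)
The plan is to prove the three parts in order, each by induction on the level $k$, using the recursive structure of the lift map $\mathbb{1}^{(\cdot)}$ (Definition~\ref{def:1_pow_I}) and the level-locality of the sewing map recorded in Remark~\ref{rmk: if X^j =Y^j for all j, then S(X)^j = S(Y)^j for all j}.

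For part (i), I would argue by induction on $j$. At $j=1$, by definition $(\mathbb{1}^I)^1 = I^1$ and $(\mathbb{1}^{\widetilde I})^1 = \widetilde I^1$, so the hypothesis immediately gives $I^1=\widetilde I^1$. For the inductive step, suppose $I^i=\widetilde I^i$ for all $i\le j$ (equivalently, by the recursive formula, $(\mathbb{1}^I)^i=(\mathbb{1}^{\widetilde I})^i$ for $i\le j$). The defining recursion reads
\[
(\mathbb{1}^I)^{j+1} = \operatorname{Ext}_{j}\bigl(1,(\mathbb{1}^I)^1,\dots,(\mathbb{1}^I)^{j}\bigr)^{j+1} + I^{j+1},
\]
and similarly for $\widetilde I$. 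Since the two lifts agree up to level $j$, the $\operatorname{Ext}_j$ terms coincide, so the assumed equality $(\mathbb{1}^I)^{j+1}=(\mathbb{1}^{\widetilde I})^{j+1}$ forces $I^{j+1}=\widetilde I^{j+1}$. This closes the induction and gives part (i).

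For part (ii), the key observation is that $I$, $I[k]$, and $I(k)$ all have the same first $k$ levels: $I^j = (I[k])^j = (I(k))^j$ for $j\in\{0,\dots,k\}$. Since the lift is built recursively level by level — with $(\mathbb{1}^\bullet)^{j+1}$ depending only on $(\mathbb{1}^\bullet)^1,\dots,(\mathbb{1}^\bullet)^j$ and on the input $(\bullet)^{j+1}$ — a straightforward induction on $j\in\{0,\dots,k\}$ shows $(\mathbb{1}^I)^j=(\mathbb{1}^{I[k]})^j=(\mathbb{1}^{I(k)})^j$. One subtlety is that $I(k)$ lives in $T^{(k)}(V)$ while $I[k]$ and $I$ live in $T^{(\lfloor p\rfloor)}(V)$, so the equality of level-$j$ components is to be read within $V^{\otimes j}$; the recursion is identical in either ambient algebra up to level $k$, so this causes no real difficulty. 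I would also note that $I(k)\in\mathfrak I^{(k,p)}(V)$ and $I[k]\in\mathfrak I^p(V)$ so that all the lifts are well-defined.

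For part (iii), I would combine part (ii) with the level-locality of $\boxplus$. First, $\boxplus$ is defined via $\mathbb{1}^I\boxplus\mathbb{1}^{\widetilde I}=\mathscr S^{\lfloor p\rfloor}\bigl((\mathbb{1}^I)\oplus(\mathbb{1}^{\widetilde I})\bigr)$, where the unit-preserving pointwise sum acts levelwise: $\bigl((\mathbb{1}^I)\oplus(\mathbb{1}^{\widetilde I})\bigr)^j = (\mathbb{1}^I)^j + (\mathbb{1}^{\widetilde I})^j$ for $j\ge1$. By part (ii), for $j\in\{1,\dots,k\}$ the inputs $(\mathbb{1}^I)\oplus(\mathbb{1}^{\widetilde I})$, $(\mathbb{1}^{I[k]})\oplus(\mathbb{1}^{\widetilde I[k]})$ and $(\mathbb{1}^{I(k)})\oplus(\mathbb{1}^{\widetilde I(k)})$ all agree up to level $k$. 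Then Remark~\ref{rmk: if X^j =Y^j for all j, then S(X)^j = S(Y)^j for all j} — which says that almost rough paths agreeing up to level $k$ have sewings agreeing up to level $k$ — gives the desired equality of the level-$j$ components of the three $\boxplus$-outputs for all $j\in\{1,\dots,k\}$. I expect the main obstacle to be bookkeeping rather than conceptual: one must be careful that all inputs in sight genuinely lie in the domain of the relevant sewing map (i.e.\ are almost $p$-rough paths), which follows from Theorem~\ref{thm:X_otimes_H_is_ARP} applied to each of $\mathbb{1}^I\in\mathscr H^p(V)$, $\mathbb{1}^{I[k]}\in\mathscr H^p(V)$, and the degree-$k$ object $\mathbb{1}^{I(k)}\in\mathscr H^{(k,p)}(V)$ (with the analogous statements for $\widetilde I$), together with the control- and $p$-independence of sewing (Lemma~\ref{lem:control-independence-sewing}) to make sense of comparing sewings in $T^{(k)}(V)$ versus $T^{(\lfloor p\rfloor)}(V)$.
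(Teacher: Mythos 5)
Your parts (i) and (ii) follow the paper's own proofs essentially verbatim: the same inductions on the level, driven by the recursion $(\mathbb{1}^I)^{j+1}=\operatorname{Ext}_j\bigl(1,(\mathbb{1}^I)^1,\dots,(\mathbb{1}^I)^j\bigr)^{j+1}+I^{j+1}$ and the fact that $I$, $I[k]$, $I(k)$ share their first $k$ entries. Part (iii) is where you genuinely diverge. The paper proves it by induction on the level, representing $(\mathbb{1}^I\boxplus\mathbb{1}^{\widetilde I})^{\ell+1}_{st}$ as the partition limit of $\bigl(1,(\mathbb{1}^I\boxplus\mathbb{1}^{\widetilde I})^1,\dots,(\mathbb{1}^I\boxplus\mathbb{1}^{\widetilde I})^\ell,(\mathbb{1}^I\otimes\mathbb{1}^{\widetilde I})^{\ell+1}\bigr)^{D,\ell+1}$, then using (ii) to replace the $\otimes$-term and the inductive hypothesis to replace the lower $\boxplus$-levels. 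You instead observe that the unit-preserving sums $\mathbb{1}^I\oplus\mathbb{1}^{\widetilde I}$, $\mathbb{1}^{I[k]}\oplus\mathbb{1}^{\widetilde I[k]}$, $\mathbb{1}^{I(k)}\oplus\mathbb{1}^{\widetilde I(k)}$ agree up to level $k$ (by (ii)) and invoke the level-locality of sewing (Remark~\ref{rmk: if X^j =Y^j for all j, then S(X)^j = S(Y)^j for all j}), after checking via Theorem~\ref{thm:X_otimes_H_is_ARP} that all three sums are genuine almost rough paths. This is shorter and avoids redoing the partition computation; it buys directness at the price of leaning entirely on that remark.

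One point needs to be made explicit before your (iii) is airtight. The comparison with $\mathbb{1}^{I(k)}\boxplus\mathbb{1}^{\widetilde I(k)}$ is a \emph{cross-degree} comparison: that object is $\mathscr S^{k}$ of a $T^{(k)}(V)$-valued input, while the other two are $\mathscr S^{\lfloor p\rfloor}$ of $T^{(\lfloor p\rfloor)}(V)$-valued inputs. The remark you cite compares two inputs of the \emph{same} degree, and Lemma~\ref{lem:control-independence-sewing} fixes the degree and only varies $(p,\theta,\omega)$, so neither literally covers this step (this is the gap your appeal to ``control- and $p$-independence'' does not close). What you need — and what does hold, by the level-by-level construction in Theorem~\ref{thm:Rough-Sewing-Lemma}(ii), where the $j$-th output level is built from input levels $\le j$ only — is that the first $k$ levels of $\mathscr S^{\lfloor p\rfloor}$ applied to a degree-$\lfloor p\rfloor$ almost rough path coincide with $\mathscr S^{k}$ applied to its degree-$k$ truncation. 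Stating and justifying this one observation (or, alternatively, handling the $I(k)$ case as the paper does, through the same partition-limit formula written for the degree-$k$ sewing) completes your argument; for $k=\lfloor p\rfloor$ the whole claim is trivial, so only $k<\lfloor p\rfloor$ is at issue.
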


\begin{proof}
Fix a control $\omega$ and an exponent $\phi\in(1-\tfrac1p,1]$ such that $I,\widetilde I \in \mathfrak I^{p}_{\phi,\omega}(V)$ (possible by Corollary \ref{cor:I-vector-space}). We then write $q_j := j/\phi$ for $j=1,\dots,\lfloor p\rfloor$. We prove (i), (ii) and (iii) in turn.

\medskip
\noindent\textbf{(i) Reconstruction from the lift.}

Trivially,
\[
   I^1 = \bigl(\mathbb{1}^I\bigr)^1
       = \bigl(\mathbb{1}^{\widetilde I}\bigr)^1
       = \widetilde I^1 .
\]
Assume inductively that for some $j\in\{1,\dots,k-1\}$ we have
\begin{equation}\label{eq:triple-I-equal-up-to-j}
   (1,I^1,\dots,I^j)
   =
   (1,\widetilde I^1,\dots,\widetilde I^j).
\end{equation}
By the recursive definition of $\mathbb{1}^I$,
\[
   \bigl(\mathbb{1}^I\bigr)^{j+1}
   = \operatorname{Ext}_{j}((\mathbb{1}^{I})^1,\dots, (\mathbb{1}^{I})^j)^{j+1}+ I^{j+1}.
\]
Hence
\[
   I^{j+1}
   = \bigl(\mathbb{1}^I\bigr)^{j+1}
     -\operatorname{Ext}_{j}((\mathbb{1}^{I})^1,\dots, (\mathbb{1}^{I})^j)^{j+1}
\]
Using \eqref{eq:triple-eq-lifts} and \eqref{eq:triple-I-equal-up-to-j}, we can replace both the lift and the lower levels by their $\widetilde I$–counterparts:
\[
\begin{aligned}
   I^{j+1}
   &= \bigl(\mathbb{1}^{\widetilde I}\bigr)^{j+1}
      - \operatorname{Ext}_{j}((\mathbb{1}^{\widetilde{I}})^1,\dots, (\mathbb{1}^{\widetilde{I}})^j)^{j+1} \\
   &= \widetilde I^{j+1},
\end{aligned}
\]
again by the recursive definition of $\mathbb{1}^{\widetilde I}$. This closes the induction and proves (i).

\noindent\textbf{(ii) Unit–truncation invariance.}
We argue by induction on $j$.

\smallskip
\emph{Base cases}.
For $j=0$ and  $j=1$, we use the recursive definition of $\mathbb{1}^{(\cdot)}$ (Definition \ref{def:1_pow_I})  :
\[
   \bigl(\mathbb{1}^I\bigr)^0
   =
   \bigl(\mathbb{1}^{I[k]}\bigr)^0
   =
   \bigl(\mathbb{1}^{I(k)}\bigr)^0
   = 1; \qquad
   \bigl(\mathbb{1}^I\bigr)^1
   =
   \bigl(\mathbb{1}^{I[k]}\bigr)^1
   =
   \bigl(\mathbb{1}^{I(k)}\bigr)^1
   = I^1.
\]

\smallskip
\emph{Inductive step.}
Fix $j\in\{1,\dots,k-1\}$ and assume that for every $\ell\in\{0,\dots,j\}$ we have
\[
   \bigl(\mathbb{1}^I\bigr)^\ell
   =
   \bigl(\mathbb{1}^{I[k]}\bigr)^\ell
   =
   \bigl(\mathbb{1}^{I(k)}\bigr)^\ell.
\]
Using the recursive definition of $\mathbb{1}^I$ and the induction hypothesis,
\[
\begin{aligned}
   \bigl(\mathbb{1}^I\bigr)^{j+1}
   &=
   \operatorname{Ext}_{j}\!\bigl(
      1,
      (\mathbb{1}^I)^1,
      \dots,
      (\mathbb{1}^I)^j
   \bigr)^{j+1}
   + I^{j+1} \\
   &=
   \operatorname{Ext}_{j}\!\bigl(
      1,
      (\mathbb{1}^{I[k]})^1,
      \dots,
      (\mathbb{1}^{I[k]})^j
   \bigr)^{j+1}
   + I^{j+1} \\
   &= \bigl(\mathbb{1}^{I[k]}\bigr)^{j+1}.
\end{aligned}
\]
Repeating the same computation with $I(k)$ in place of $I[k]$ yields
\[
   \bigl(\mathbb{1}^I\bigr)^{j+1}
   = \bigl(\mathbb{1}^{I(k)}\bigr)^{j+1}.
\]
Thus the claim holds for $j+1$. By induction, (ii) holds for all $j\in\{0,\dots,k\}$.

\medskip
\noindent\textbf{(iii) $\boxplus$–truncation invariance.}
Fix $k\in\{1,\dots,\lfloor p\rfloor\}$. We prove the triple equality by induction on $j$.

\smallskip
\emph{Base case $j=1$.}
By the definition of $\boxplus$ at level~1,
\[
\begin{aligned}
   \bigl(\mathbb{1}^I \boxplus \mathbb{1}^{\widetilde I}\bigr)^1
   &= I^1 + \widetilde I^1 \\
   &= \bigl(\mathbb{1}^{I[k]} \boxplus \mathbb{1}^{\widetilde I[k]}\bigr)^1
    = \bigl(\mathbb{1}^{I(k)} \boxplus \mathbb{1}^{\widetilde I(k)}\bigr)^1.
\end{aligned}
\]

\smallskip
\emph{Inductive step.}
Take $\ell\in\{1,\dots,k-1\}$ and suppose that for all $j\in\{1,\dots,\ell\}$,
\begin{equation}\label{eq:triple-inductive-hyp}
   \bigl(\mathbb{1}^I \boxplus \mathbb{1}^{\widetilde I}\bigr)^j
   =
   \bigl(\mathbb{1}^{I[k]} \boxplus \mathbb{1}^{\widetilde I[k]}\bigr)^j
   =
   \bigl(\mathbb{1}^{I(k)} \boxplus \mathbb{1}^{\widetilde I(k)}\bigr)^j.
\end{equation}
We show that the same holds for $j=\ell+1$.

By definition of $\boxplus$ via sewing, for any $(s,t)\in\triangle_J$,
\[
\bigl(\mathbb{1}^I \boxplus \mathbb{1}^{\widetilde I}\bigr)^{\ell+1}_{st}
=
\lim_{\substack{|D|\to 0 \\ D\subset[s,t]}}
\bigl(1,
  (\mathbb{1}^I \boxplus \mathbb{1}^{\widetilde I})^1,
  \dots,
  (\mathbb{1}^I \boxplus \mathbb{1}^{\widetilde I})^\ell,
  (\mathbb{1}^I \otimes \mathbb{1}^{\widetilde I})^{\ell+1}
\bigr)^{D,\ell+1}.
\]

Now fix
\[
   (\widetilde I^*, I^*) \in \{(\widetilde I[k],I[k]), (\widetilde I(k),I(k))\}.
\]
The following argument will hold for either choice of $(\widetilde I^*,I^*)$, and thus yields the desired triple equality.

For any $(s,t)\in\triangle_J$,
\begin{align}
   \bigl(\mathbb{1}^I \otimes \mathbb{1}^{\widetilde I}\bigr)^{\ell+1}_{st}
   &=
   \sum_{m=0}^{\ell+1}
     \bigl(\mathbb{1}^I\bigr)^m_{st}
     \otimes
     \bigl(\mathbb{1}^{\widetilde I}\bigr)^{\ell+1-m}_{st}
     \nonumber\\
   &=
   \sum_{m=0}^{\ell+1}
     \bigl(\mathbb{1}^{I^*}\bigr)^m_{st}
     \otimes
     \bigl(\mathbb{1}^{\widetilde I^*}\bigr)^{\ell+1-m}_{st}
     \label{eq:triple-223}\\
   &=
   \bigl(\mathbb{1}^{I^*} \otimes \mathbb{1}^{\widetilde I^*}\bigr)^{\ell+1}_{st},
   \label{eq:triple-224}
\end{align}
where \eqref{eq:triple-223} follows from part~(ii): since $m\le\ell+1\le k$ and
$\ell+1-m\le \ell+1\le k$, we have
\[
   \bigl(\mathbb{1}^I\bigr)^m
   = \bigl(\mathbb{1}^{I[k]}\bigr)^m
   = \bigl(\mathbb{1}^{I(k)}\bigr)^m,
   \quad
   \bigl(\mathbb{1}^{\widetilde I}\bigr)^{\ell+1-m}
   = \bigl(\mathbb{1}^{\widetilde I[k]}\bigr)^{\ell+1-m}
   = \bigl(\mathbb{1}^{\widetilde I(k)}\bigr)^{\ell+1-m},
\]
and \eqref{eq:triple-224} is simply the definition of the $(\ell+1)$–st degree of the tensor product.

Using the inductive hypothesis \eqref{eq:triple-inductive-hyp} and
\eqref{eq:triple-224}, we obtain
\begin{align*}
   \bigl(\mathbb{1}^I \boxplus \mathbb{1}^{\widetilde I}\bigr)^{\ell+1}_{st}
   &=
   \lim_{\substack{|D|\to 0 \\ D\subset[s,t]}}
   \bigl(
      1,
      (\mathbb{1}^I \boxplus \mathbb{1}^{\widetilde I})^1,
      \dots,
      (\mathbb{1}^I \boxplus \mathbb{1}^{\widetilde I})^\ell,
      (\mathbb{1}^I \otimes \mathbb{1}^{\widetilde I})^{\ell+1}
   \bigr)^{D,\ell+1}
   \\
   &=
   \lim_{\substack{|D|\to 0 \\ D\subset[s,t]}}
   \bigl(
      1,
      (\mathbb{1}^{I^*} \boxplus \mathbb{1}^{\widetilde I^*})^1,
      \dots,
      (\mathbb{1}^{I^*} \boxplus \mathbb{1}^{\widetilde I^*})^\ell,
      (\mathbb{1}^{I^*} \otimes \mathbb{1}^{\widetilde I^*})^{\ell+1}
   \bigr)^{D,\ell+1}
   \\
   &=
   \bigl(\mathbb{1}^{I^*} \boxplus \mathbb{1}^{\widetilde I^*}\bigr)^{\ell+1}_{st}.
\end{align*}
Since this holds for either choice of $(I^*,\widetilde I^*)$,
\[
   \bigl(\mathbb{1}^I \boxplus \mathbb{1}^{\widetilde I}\bigr)^{\ell+1}
   =
   \bigl(\mathbb{1}^{I[k]} \boxplus \mathbb{1}^{\widetilde I[k]}\bigr)^{\ell+1}
   =
   \bigl(\mathbb{1}^{I(k)} \boxplus \mathbb{1}^{\widetilde I(k)}\bigr)^{\ell+1}.
\]
By induction on $\ell$, (iii) holds for all $j\in\{1,\dots,k\}$.
\end{proof}


The following lemma will be useful in Section \ref{sec:main-results}. 
\subsection{Recursive $\boxplus$ formula}
We now derive an explicit recursive expression for $1_I \boxplus 1_{\tilde I}$ at a fixed level, enabling the induction in the main theorem.

\begin{lemma}\label{lem:1I_plus_1I_tilde_eq_Ext_1I_km1_plus_1I_tilde_km1_plus_Ik_plus_I_tilde_k}
Let $p \ge 1$ and let $\ell \in \mathbb{N}$ with $\ell \le \lfloor p \rfloor$. Let
\[
    I = \bigl(0, I^1,\dots, I^\ell\bigr), \qquad 
    \widetilde{I} = \bigl(0,\widetilde{I}^1,\dots,\widetilde{I}^\ell\bigr)
    \in \mathfrak{I}^{(\ell,p)}(V).
\]
Take $\phi\in(1-1/p,1]$ to be such that $I,\widetilde I\in \mathfrak I^{(\ell, p)}_\phi(V)$ (possible by Corollary \ref{cor:I-vector-space}). Then we have
\[
\mathbb{1}^{I} \boxplus \mathbb{1}^{\widetilde{I}}
=
\begin{cases}
(1,\; I^1+\widetilde{I}^{\,1}), & \ell=1,\\[6pt]
\begin{aligned}[t]
\operatorname{Ext}_{\ell-1}\!\Bigl(
\mathbb{1}^{(0,I^1,\dots,I^{\ell-1})}
\boxplus
\mathbb{1}^{(0,\widetilde{I}^1,\dots,\widetilde{I}^{\,\ell-1})}
\Bigr)
&\\
+\,(0_{\mathbb{R}},0_V,\dots,0_{V^{\otimes(\ell-1)}},\,I^\ell+\widetilde{I}^{\,\ell})
\end{aligned}
& \ell\ge2,
\end{cases}
\]
where $q_{\ell-1}:=\frac{\ell-1}{\phi}$.
\end{lemma}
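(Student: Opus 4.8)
The plan is to verify the formula directly; no induction is needed, because the right-hand side is already a well-defined object. The case $\ell=1$ is immediate, and for $\ell\ge2$ I would show that \emph{both} sides coincide with the single auxiliary functional
\[
  \mathscr{S}^{\ell}\bigl(1,C^{1},\dots,C^{\ell-1},\psi\bigr),
  \qquad
  C:=\mathbb{1}^{(0,I^{1},\dots,I^{\ell-1})}\boxplus\mathbb{1}^{(0,\widetilde I^{1},\dots,\widetilde I^{\ell-1})},
  \quad
  \psi:=I^{\ell}+\widetilde I^{\ell}.
\]
First I would fix, by Corollary~\ref{cor:I-vector-space}, a common control $\omega$ and exponent $\phi\in(1-1/p,1]$ with $I,\widetilde I\in\mathfrak{I}^{(\ell,p)}_{\phi,\omega}(V)$, and set $\theta:=\phi+1/p>1$ and $q_{j}:=j/\phi$, so that $q_{j}\in[j,\,j+j/(p-1))$ and $\lfloor q_{j}\rfloor=j$ by Theorem~\ref{thm:equiv-H-space}. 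When $\ell=1$, $\mathbb{1}^{I}\oplus\mathbb{1}^{\widetilde I}=(1,I^{1}+\widetilde I^{1})$ is already a genuine degree-$1$ multiplicative functional, since $I^{1}+\widetilde I^{1}$ is additive; being an almost $p$-rough path (Theorem~\ref{thm:X_otimes_H_is_ARP}) that is already multiplicative, it equals its own sewing by uniqueness in the rough sewing lemma, so $\mathbb{1}^{I}\boxplus\mathbb{1}^{\widetilde I}=(1,I^{1}+\widetilde I^{1})$.

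For $\ell\ge2$, the truncations $I(\ell-1)=(0,I^{1},\dots,I^{\ell-1})$ and $\widetilde I(\ell-1)$ lie in $\mathfrak{I}^{(\ell-1,p)}_{\phi,\omega}(V)$ (additivity and the $\omega^{\phi}$-bounds are inherited, and there is no top-degree constraint since $\ell-1<\lfloor p\rfloor$), so Proposition~\ref{prop:1 pow I_st j (620)} gives $\mathbb{1}^{I(\ell-1)},\mathbb{1}^{\widetilde I(\ell-1)}\in\mathscr{H}^{(\ell-1,p)}_{\phi,\omega}(V)$ and, after elementary checks on the exponents (namely $q_{\ell-1}<\ell$ and $\phi>1-1/q_{\ell-1}$, both following from $\phi>1-1/p\ge1-1/\ell$), also $\in\mathscr{H}^{(\ell-1,q_{\ell-1})}_{\phi,\omega}(V)$. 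By Theorem~\ref{thm:X_otimes_H_is_ARP} their unit-preserving sum is simultaneously a $\theta$-almost $p$-rough path and a $(\phi+1/q_{\ell-1})$-almost $q_{\ell-1}$-rough path, and Lemma~\ref{lem:control-independence-sewing} identifies the two sewings; hence $C\in\Omega^{(\ell-1,q_{\ell-1})}(V)$ with $q_{\ell-1}<\ell$, so $\operatorname{Ext}_{\ell-1}(C)$ is defined by Proposition~\ref{prop:generalised-extension-map}. Moreover $\psi=I^{\ell}+\widetilde I^{\ell}$ is additive and, from $\|I^{\ell}_{s,t}\|,\|\widetilde I^{\ell}_{s,t}\|\le K\omega(s,t)^{\phi}$ (or the sharper $K\omega(s,t)^{\ell/p}$ when $\ell=\lfloor p\rfloor$), has finite $p''$-variation for some $p''\in[\ell,\ell+1)$ (take $p''=q_{\ell}$, respectively $p''=p$). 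For the right-hand side I would then invoke Lemma~\ref{lem:ext-is-sewing} to write $\operatorname{Ext}_{\ell-1}(C)=\mathscr{S}^{\ell}(1,C^{1},\dots,C^{\ell-1},0)$, and Lemma~\ref{lem: bring psi inside sewing map} (with $n=\ell-1$, base rough path $C$, additive top-level increment $\psi$) to slip $\psi$ into the highest level:
\[
  \mathscr{S}^{\ell}\bigl(1,C^{1},\dots,C^{\ell-1},\psi\bigr)
  =\operatorname{Ext}_{\ell-1}(C)+\bigl(0,\dots,0,I^{\ell}+\widetilde I^{\ell}\bigr),
\]
which is exactly the claimed expression.

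For the left-hand side, $\mathbb{1}^{I}\boxplus\mathbb{1}^{\widetilde I}=\mathscr{S}^{\ell}(\mathbb{1}^{I}\oplus\mathbb{1}^{\widetilde I})$, a $\theta$-almost $p$-rough path by Theorem~\ref{thm:X_otimes_H_is_ARP}, and I would compare the input $\mathbb{1}^{I}\oplus\mathbb{1}^{\widetilde I}$ with $Y:=(1,C^{1},\dots,C^{\ell-1},\psi)$ degree by degree. For $j\le\ell-1$ the recursive definition of the lift forces $(\mathbb{1}^{I})^{j}=(\mathbb{1}^{I(\ell-1)})^{j}$ and $(\mathbb{1}^{\widetilde I})^{j}=(\mathbb{1}^{\widetilde I(\ell-1)})^{j}$, so $(\mathbb{1}^{I}\oplus\mathbb{1}^{\widetilde I})^{j}=(\mathbb{1}^{I(\ell-1)}\oplus\mathbb{1}^{\widetilde I(\ell-1)})^{j}$, whereas $C^{j}$ is the sewing of exactly this functional and hence $\omega^{\theta}$-close to it by the rough sewing lemma. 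At degree $\ell$, the recursion $(\mathbb{1}^{I})^{\ell}=\bigl(\operatorname{Ext}_{\ell-1}(\mathbb{1}^{I(\ell-1)})\bigr)^{\ell}+I^{\ell}$ (and likewise for $\widetilde I$) gives $(\mathbb{1}^{I}\oplus\mathbb{1}^{\widetilde I})^{\ell}-\psi=\bigl(\operatorname{Ext}_{\ell-1}(\mathbb{1}^{I(\ell-1)})\bigr)^{\ell}+\bigl(\operatorname{Ext}_{\ell-1}(\mathbb{1}^{\widetilde I(\ell-1)})\bigr)^{\ell}$, and each summand is bounded by a constant multiple of $\omega^{\ell/q_{\ell-1}}$ by the extension theorem; since $\ell/q_{\ell-1}=\phi+\phi/(\ell-1)>\phi+1/p=\theta$ (because $\phi>(\ell-1)/p$), this is $\omega^{\theta}$-negligible. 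Thus $\mathbb{1}^{I}\oplus\mathbb{1}^{\widetilde I}$ and $Y$ are degree-wise $\omega^{\theta}$-close, and since $\ell/p<\theta$ (again from $\phi>(\ell-1)/p$) no degree $j\le\ell$ has $j/p>\theta$, so hypothesis~\eqref{eq:hi-level-pvar-assumption} in Corollary~\ref{cor:X_Y_close_implies_SX_eq_SY} is vacuous; that corollary then yields $\mathscr{S}^{\ell}(Y)=\mathscr{S}^{\ell}(\mathbb{1}^{I}\oplus\mathbb{1}^{\widetilde I})=\mathbb{1}^{I}\boxplus\mathbb{1}^{\widetilde I}$. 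Comparing the two computations proves the identity.

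The hardest part is the exponent bookkeeping: three exponents are in play at once --- $p$; the lower exponent $q_{\ell-1}$ handed back by the lift through Theorem~\ref{thm:equiv-H-space}; and the Young exponent $p''$ of $\psi$ --- and one must check $q_{\ell-1}<\ell$ (so that $\operatorname{Ext}_{\ell-1}(C)$ is defined), $p''\in[\ell,\ell+1)$ (so that Lemma~\ref{lem: bring psi inside sewing map} applies), and, most delicately, $\ell/q_{\ell-1}>\theta$ (so that the ``extension tails'' produced by unwinding the lift are negligible at scale $\omega^{\theta}$). All three reduce to the single inequality $\phi>(\ell-1)/p$, which in turn follows from $\phi>1-1/p$ together with $\ell\le\lfloor p\rfloor$.
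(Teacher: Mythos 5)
Your proof is correct, and it reaches the identity by a slightly different route than the paper. The treatment of the right-hand side is the same: write $\operatorname{Ext}_{\ell-1}(C)$ as $\mathscr S^{\ell}(1,C^1,\dots,C^{\ell-1},0)$ via Lemma~\ref{lem:ext-is-sewing} and slip the additive top level $\psi=I^\ell+\widetilde I^{\,\ell}$ inside via Lemma~\ref{lem: bring psi inside sewing map}, after checking that $C$ is a genuine $q_{\ell-1}$-rough path (your detour through $\mathscr H^{(\ell-1,q_{\ell-1})}_{\phi,\omega}(V)$ and Lemma~\ref{lem:control-independence-sewing} is exactly the justification needed, and you correctly split the Young exponent of $\psi$ according to whether $\ell=\lfloor p\rfloor$). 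The difference is on the left-hand side: the paper first upgrades the input $\mathbb 1^{I}\oplus\mathbb 1^{\widetilde I}$ using Theorem~\ref{thm:stepwise-invariance-sewing} and Lemma~\ref{lem:unit-boxplus-triple}(iii), so that the functional it compares with $Z_\diamond=(1,C^1,\dots,C^{\ell-1},\psi)$ agrees with it \emph{exactly} on levels $1,\dots,\ell-1$ and the $\omega^\theta$-discrepancy is confined to the top level; you instead compare the raw input $\mathbb 1^{I}\oplus\mathbb 1^{\widetilde I}$ with $Z_\diamond$ directly, absorbing the lower-level discrepancy by the sewing-closeness bound $\|C^j-(\mathbb 1^{I(\ell-1)}\oplus\mathbb 1^{\widetilde I(\ell-1)})^j\|\le K\omega^{\theta}$ and the top-level discrepancy by the extension-theorem bound together with $\ell/q_{\ell-1}>\phi+1/p\iff\phi>(\ell-1)/p$, then invoke Corollary~\ref{cor:X_Y_close_implies_SX_eq_SY} once (with the high-level hypothesis vacuous, as you note, since $\ell/p<\theta$). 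This buys you independence from Theorem~\ref{thm:stepwise-invariance-sewing} and from part (iii) of the truncation lemma, and avoids the zero-padded lifts $I[\ell-1]$ altogether, at the cost of re-deriving the lower-level closeness; the exponent bookkeeping you flag as delicate is the same inequality the paper relies on. The only shared looseness is that both arguments quote the extension-theorem bound $\|\operatorname{Ext}(\cdot)^{\ell}\|\le\omega^{\ell/q_{\ell-1}}/(\beta(q_{\ell-1})(\ell/q_{\ell-1})!)$ as if the $q_{\ell-1}$-variation constant were $1$; with a general constant one absorbs it into the control, which is harmless.
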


\begin{proof}[Proof of Lemma~\ref{lem:1I_plus_1I_tilde_eq_Ext_1I_km1_plus_1I_tilde_km1_plus_Ik_plus_I_tilde_k}]
Let us fix $\omega$ such that $I,\widetilde I\in \mathfrak I^{(\ell, p)}_{\phi,\omega}(V)$, which is possible by Corollary \ref{cor:I-vector-space}).

\textbf{\boldmath\underline{Step 0: Case $\ell=1$; short sewing argument}}

We consider $\ell=1$. By the definition of $\boxplus$ via sewing,
\[
\mathbb{1}^{(0,I^1)}\boxplus\mathbb{1}^{(0,\widetilde I^{\,1})}
=\mathscr S^1\!\big(\mathbb{1}^{(0,I^1)}\oplus\mathbb{1}^{(0,\widetilde I^{\,1})}\big)
=\mathscr S^1\!\big(1,\;I^1+\widetilde I^{\,1}\big),
\]
which follows by the definition of $\mathbb{1}$ (Definition \ref{def:1_pow_I}). Since $I^1$ and $\widetilde I^{\,1}$ are additive and have finite $q_1$-variation for some $q_1\in[1,2)$, the pair $Z:=(1,I^1+\widetilde I^{\,1})$ is already a $q_1$-rough path. Hence its sewing satisfies $\mathscr S(Z)=Z$, and
\[
\mathbb{1}^{(0,I^1)}\boxplus\mathbb{1}^{(0,\widetilde I^{\,1})}
=\mathscr S^1(Z)=(1,I^1+\widetilde I^{\,1}),
\]
which is the $\ell=1$ case.

\noindent\textbf{Standing assumption for the remainder.}
From now on we assume $\ell\ge2$ (and thus necessarily $p\ge2$). Throughout the remainder of the proof, $\mathscr S$ denotes the level-$\ell$ sewing map $\mathscr S^\ell$.

\textbf{\underline{Step 1: Insert the top level inside the sewing input}}

Set
\[
A:=\mathbb{1}^{\,I[\ell-1]}\boxplus \mathbb{1}^{\,\widetilde I[\ell-1]},\qquad 
q_{\ell-1}:=\frac{\ell-1}{\phi},\qquad 
\Psi:=I^\ell+\widetilde I^{\,\ell}.
\]
By Lemma~\ref{lem:ext-is-sewing} we have $\mathrm{Ext}_{q_{\ell-1}}(A(\ell-1))=\mathscr S\!\big(\iota(A(\ell-1)\big)$ with $\iota(A(\ell-1))=(1,A^1,\dots,A^{\ell-1},0)$. Since $A(\ell-1)$ is a $q_{\ell-1}$-rough path, Lemma~\ref{lem: bring psi inside sewing map} (applied to $\iota(A(\ell-1))$ and the additive top‑level increment $\Psi$ of finite $q_\ell$-variation) yields that
$$Z_\diamond:=(1,A^1,\dots,A^{\ell-1},\Psi).$$
is a $\theta$–almost $q_{\ell}$–rough path for $\theta:=\ell/q_{\ell-1}$ controlled by $\omega$, and also
\begin{equation}\label{eq:insert-top-level}
\mathscr S\big(\iota(A(\ell-1))\big)+(0,\dots,0,\Psi)=\mathscr S\bigl(1,A^1,\dots,A^{\ell-1},\Psi\bigr).
\end{equation}
Consequently,
\begin{equation}\label{eq:RHS-as-SZdiamond}
\operatorname{Ext}_{\ell-1}(A(\ell-1))+(0,\dots,0,\Psi)=\mathscr S\!\big(Z_\diamond\big).
\end{equation}

\textbf{\underline{Step 2: Write the LHS as a sewing and record the level‑wise consequence}}

By\newline Theorem~\ref{thm:X_otimes_H_is_ARP}, applied with $X=\mathbb{1}^{I}\in\Omega^{(\ell,p)}_\omega(V)$ and
$H=\mathbb{1}^{\widetilde I}\in \mathscr{H}^{(\ell,p)}_{\phi,\omega}(V)$, we obtain that $\mathbb{1}^{I} \oplus\mathbb{1}^{\widetilde I}$ is a $(\phi+\tfrac{1}{p})$–almost multiplicative functional of degree $\ell$ with finite $p$-variation controlled by $\omega$, and
\[
\mathbb{1}^{I}\boxplus\mathbb{1}^{\widetilde I}
=\mathscr S\!\big(\mathbb{1}^{I}\oplus \mathbb{1}^{\widetilde I}\big).
\]

By Theorem \ref{thm:stepwise-invariance-sewing} we know that 
$$\mathbb{1}^{I}\boxplus\mathbb{1}^{\widetilde I}
=\mathscr S\!\Big((1, (\mathbb{1}^{I}\boxplus\mathbb{1}^{\widetilde I})^1,..., (\mathbb{1}^{I}\boxplus\mathbb{1}^{\widetilde I})^{\ell -1},(\mathbb{1}^{I}+ \mathbb{1}^{\widetilde I})^\ell)\Big)$$
and that 
$$(1, (\mathbb{1}^{I}\boxplus\mathbb{1}^{\widetilde I})^1,..., (\mathbb{1}^{I}\boxplus\mathbb{1}^{\widetilde I})^{\ell -1},(\mathbb{1}^{I}+ \mathbb{1}^{\widetilde I})^\ell)$$

is also a $(\phi+\tfrac{1}{p})$–almost multiplicative functional of degree $\ell$ with finite $p$-variation controlled by $\omega$.

Now, Lemma \ref{lem:unit-boxplus-triple} (iii) tells us that
\[
A^j=\bigl(\mathbb{1}^{\,I[\ell-1]}\boxplus \mathbb{1}^{\,\widetilde I[\ell-1]}\bigr)^{j}=(\mathbb{1}^{I}\boxplus\mathbb{1}^{\widetilde I})^j\ (1\le j\le \ell-1);
\]
so, if we define
$$Z_+:=(1,A^1,\dots,A^{\ell-1},(\mathbb{1}^{I}+\mathbb{1}^{\widetilde I})^\ell),
$$
we have that 
$$\mathbb{1}^{I}\boxplus\mathbb{1}^{\widetilde I}
=\mathscr S(Z_+)$$
and that $Z_+$ is a $(\phi+\tfrac{1}{p})$–almost multiplicative functional of degree $\ell$ with finite $p$-variation controlled by $\omega$.

Finally, we see that $Z_+^j=Z_\diamond^j( = A^{j})$ for $j=1,\dots,\ell-1$. By the level‑wise dependence of $\mathscr S$ (Remark~\ref{rmk: if X^j =Y^j for all j, then S(X)^j = S(Y)^j for all j}), it follows that $\mathscr S(Z_+)^j=\mathscr S(Z_\diamond)^j$ for $j\le \ell-1$. Thus, only the degree-$\ell$ differences between $Z_+^j$ and $Z_\diamond^j$ will need to be controlled.

\textbf{\underline{Step 3: Compute the degree‑$\ell$ difference}}

At degree $\ell$,
\begin{align*}
(Z_+^\ell-Z_\diamond^\ell)_{st}
&=(\mathbb{1}^{I}+\mathbb{1}^{\widetilde I})_{st}^{\ell}-\Psi_{st}\\
&=\operatorname{Ext}_{\ell -1}\!\bigl(
      1,
      (\mathbb{1}^I)^1,
      \dots,
      (\mathbb{1}^I)^{\ell-1}
   \bigr)_{st}^{\ell}
  +\operatorname{Ext}_{\ell -1}\!\bigl(
      1,
      (\mathbb{1}^{\widetilde{I}})^1,
      \dots,
      (\mathbb{1}^{\widetilde{I}})^{\ell-1}
   \bigr)_{st}^{\ell}
\end{align*}

(with $\Psi_{st}=I_{st}^\ell+\widetilde{I}_{st}^\ell$ cancelling out the terms $I_{st}^\ell$ and $\widetilde{I}_{st}^\ell$). 

\textbf{\boldmath\underline{Step 4: Bound the degree-\(\ell\) difference}}

We begin Step 4 with an observation: we know that $ \mathbb{1}^I(\ell-1) $ and $\mathbb{1}^{\widetilde{I}}(\ell-1) $ are in $\Omega^{q_{\ell-1}}_{\omega}(V)$ because of the following reason. On the one hand,  Proposition \ref{prop:1 pow I_st j (620)} tells us that $I\in \mathfrak I^{(\ell, p)}_{\phi,\omega}(V) $ implies that $ \mathbb{1}^I \in \mathscr{H}^{(\ell, p)}_{\phi,\omega}(V) $. On the other hand, Theorem \ref{thm:equiv-H-space} tells us that  $ \mathbb{1}^I \in \mathscr{H}^{(\ell , p)}_{\phi,\omega}(V) $ implies that $ \mathbb{1}^I(\ell-1) \in \Omega^{q_{\ell-1}}_{\omega}(V)$  where $q_{\ell -1}:=(\ell-1)/\phi$.

Thus, defining $\theta:=\ell/q_{\ell-1}$, we have that
\begin{align*}
&\,\|\operatorname{Ext}_{\ell -1}\!\bigl(
      1,
      (\mathbb{1}^I)^1,
      \dots,
      (\mathbb{1}^I)^{\ell-1}
   \bigr)^{\ell}
  +\operatorname{Ext}_{\ell -1}\!\bigl(
      1,
      (\mathbb{1}^{\widetilde{I}})^1,
      \dots,
      (\mathbb{1}^{\widetilde{I}})^{\ell-1}
   \bigr)^{\ell}\|\\
\le&\, K_1 \frac{\omega(s,t)^{\frac{\ell}{q_{\ell-1}}}}{\beta(q_{\ell-1})(\frac{\ell}{q_{\ell -1}})} +K_2 \frac{\omega(s,t)^{\frac{\ell}{q_{\ell-1}}}}{\beta(q_{\ell-1})(\frac{\ell}{q_{\ell -1}})}
\le K \omega(s,t)^{\theta},
\end{align*}
where $K_1$ and $K_2$ are constants such that $ \mathbb{1}^I(\ell-1) = (1, (\mathbb{1}^I)^1,
      \dots,
      (\mathbb{1}^I)^{\ell-1})\in \Omega^{q_{\ell-1}}_{\omega}(V)$ and $ \mathbb{1}^{\widetilde{I}}(\ell-1) = (1, (\mathbb{1}^{\widetilde{I}})^1,
      \dots,
      (\mathbb{1}^{\widetilde{I}})^{\ell-1})\in \Omega^{q_{\ell-1}}_{\omega}(V)$, and where $$K:=  \frac{K_1 + K_2}{\beta(q_{\ell-1})(\frac{\ell}{q_{\ell -1}})}.$$

\textbf{\underline{Step 5: Verify the hypotheses and sew with a $j$‑independent bound}}

Using the uniform level‑wise estimate
\[
\|(Z_{+})_{st}^{\,j}-(Z_{\diamond})_{st}^{j}\|\ \le\ K\,\omega(s,t)^\theta\qquad(1\le j\le \ell)
\]

holds with the $j$–independent constant $K$. We apply Corollary~\ref{cor:X_Y_close_implies_SX_eq_SY} to the pair $(Z_+,Z_\diamond)$ with level $k=\ell$ (and sewing map $\mathscr S = \mathscr S^\ell$), using $q_\ell$ as the witnessing exponent, which allows us to conclude that 
$$\mathscr S(Z_+)=\mathscr S(Z_\diamond).$$
Using \eqref{eq:RHS-as-SZdiamond} and the representation $\mathbb{1}^{I}\boxplus\mathbb{1}^{\widetilde I}=\mathscr S(Z_+)$, we obtain
\[
\mathbb{1}^{I}\boxplus\mathbb{1}^{\widetilde I}
=\mathscr S(Z_+)
=\mathscr S(Z_\diamond)
=\operatorname{Ext}_{\ell-1}(A)+(0,\dots,0,\Psi).
\]
Substituting the definitions of  $A=\mathbb{1}^{\,I[\ell-1]}\boxplus \mathbb{1}^{\,\widetilde I[\ell-1]}$ and $\Psi=I^\ell+\widetilde I^{\,\ell}$ yields
\[
\mathbb{1}^{I}\boxplus\mathbb{1}^{\widetilde I}
=\operatorname{Ext}_{\ell-1}\!\Big(\mathbb{1}^{\,I[\ell-1]}\boxplus \mathbb{1}^{\,\widetilde I[\ell-1]}\Big)
+(0,\dots,0,\,I^\ell+\widetilde I^{\,\ell}),
\]
as claimed.
\end{proof}

    \newpage

\section{Main Results}\label{sec:main-results}

\subsection{Roadmap}

Below, we prove one of the central results in our work. The idea of the proof is as follows: we prove the identity level by level: assuming it holds up to degree $k$, Lemma~\ref{lem:1I_plus_1I_tilde_eq_Ext_1I_km1_plus_1I_tilde_km1_plus_Ik_plus_I_tilde_k} gives an explicit formula for the $(k+1)$-st component of $1_I \,\boxplus\, 1_{\tilde I}$ in terms of the first $k$ components, Lemma~ \ref{lem:unit-boxplus-triple} shows that truncation of $I,\tilde I$ does not affect these lower levels, and comparing with the recursive definition of the lift in Definition~\ref{def:1_pow_I} shows that this $(k+1)$-st component coincides with that of $1_{I+\tilde I}$, so induction on $k$ yields \eqref{eqn:1I_plus_1I_tilde_eq_1IplusI}.

\subsection{Lift addition identity}
We first show that, on lifted elements, $\boxplus$ coincides with pointwise addition in $I$-space via the lift map.

\begin{theorem}
\label{thm:1I_plus_1I_tilde_eq_1IplusI}
Let $p \in [1, \infty)$ and $\phi \in (1-\frac{1}{p}, 1]$. Let $I, \widetilde{I} \in \mathfrak{I}^{p}_{ \phi}(V)$. Then

\begin{equation}\label{eqn:1I_plus_1I_tilde_eq_1IplusI}
    \mathbb{1}^I  \boxplus \mathbb{1}^{\widetilde{I}} = \mathbb{1}^{I+\widetilde{I}} .
\end{equation}
 
\end{theorem}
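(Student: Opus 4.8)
The plan is to prove \eqref{eqn:1I_plus_1I_tilde_eq_1IplusI} by induction on the truncation level. For $m\in\{1,\dots,\lfloor p\rfloor\}$ write $I(m)=(0,I^1,\dots,I^m)$ and similarly for $\widetilde I$, and let $P(m)$ be the assertion
\[
\mathbb{1}^{I(m)}\boxplus\mathbb{1}^{\widetilde I(m)}=\mathbb{1}^{(I+\widetilde I)(m)}
\qquad\text{as }T^{(m)}(V)\text{-valued multiplicative functionals.}
\]
Since $I(\lfloor p\rfloor)=I$ and $(I+\widetilde I)(\lfloor p\rfloor)=I+\widetilde I$, the case $m=\lfloor p\rfloor$ is exactly the theorem. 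Before starting the induction I would record two bookkeeping facts. First, $I+\widetilde I\in\mathfrak{I}^{p}_{\phi}(V)$, so that $\mathbb{1}^{I+\widetilde I}$ is defined: additivity is stable under sums, and if $I\in\mathfrak{I}^{p}_{\phi,\omega}(V)$, $\widetilde I\in\mathfrak{I}^{p}_{\phi,\widetilde\omega}(V)$ with constants $K,\widetilde K$, then $I+\widetilde I\in\mathfrak{I}^{p}_{\phi,\omega+\widetilde\omega}(V)$ with constant $K+\widetilde K$ by the triangle inequality and the convex-cone property of controls (Lemma~\ref{lem: controls are a convex cone}), the top-degree bound (when $m=\lfloor p\rfloor$) being handled the same way. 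Second, every truncation of an element of $\mathfrak{I}^{p}_{\phi}(V)$ lies in $\mathfrak{I}^{(m,p)}_{\phi}(V)$: the level-wise bounds for degrees $1,\dots,m$ are inherited, and when $m=\lfloor p\rfloor$ the extra top-degree estimate is part of the hypothesis. Hence $I(m),\widetilde I(m),(I+\widetilde I)(m)\in\mathfrak{I}^{(m,p)}_{\phi}(V)$ for each $m$, so both Lemma~\ref{lem:1I_plus_1I_tilde_eq_Ext_1I_km1_plus_1I_tilde_km1_plus_Ik_plus_I_tilde_k} and Definition~\ref{def:1_pow_I} apply to them.

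The base case $P(1)$ is immediate: the $\ell=1$ branch of Lemma~\ref{lem:1I_plus_1I_tilde_eq_Ext_1I_km1_plus_1I_tilde_km1_plus_Ik_plus_I_tilde_k} (equivalently, the definition of $\boxplus$ via $\mathscr S^{1}$ together with the first-level clause of Definition~\ref{def:1_pow_I}) gives $\mathbb{1}^{(0,I^1)}\boxplus\mathbb{1}^{(0,\widetilde I^{1})}=(1,\,I^1+\widetilde I^{1})=\mathbb{1}^{(0,I^1+\widetilde I^{1})}=\mathbb{1}^{(I+\widetilde I)(1)}$.

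For the inductive step I would fix $k\in\{1,\dots,\lfloor p\rfloor-1\}$, assume $P(k)$, and apply Lemma~\ref{lem:1I_plus_1I_tilde_eq_Ext_1I_km1_plus_1I_tilde_km1_plus_Ik_plus_I_tilde_k} with $\ell=k+1$ to the pair $I(k+1),\widetilde I(k+1)$, obtaining
\[
\mathbb{1}^{I(k+1)}\boxplus\mathbb{1}^{\widetilde I(k+1)}
=\operatorname{Ext}_{k}\!\bigl(\mathbb{1}^{I(k)}\boxplus\mathbb{1}^{\widetilde I(k)}\bigr)+(0,\dots,0,\,I^{k+1}+\widetilde I^{k+1}).
\]
By $P(k)$ the $T^{(k)}(V)$-valued functional inside $\operatorname{Ext}_{k}$ equals $\mathbb{1}^{(I+\widetilde I)(k)}$; since $\operatorname{Ext}_{k}$ is a function of the $k$-step truncated functional alone, and since $I^{k+1}+\widetilde I^{k+1}=(I+\widetilde I)^{k+1}$, this yields
\[
\mathbb{1}^{I(k+1)}\boxplus\mathbb{1}^{\widetilde I(k+1)}
=\operatorname{Ext}_{k}\!\bigl(\mathbb{1}^{(I+\widetilde I)(k)}\bigr)+(0,\dots,0,\,(I+\widetilde I)^{k+1}),
\]
whose right-hand side is precisely $\mathbb{1}^{(I+\widetilde I)(k+1)}$ by the recursive clause of Definition~\ref{def:1_pow_I} applied to $I+\widetilde I$. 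Hence $P(k+1)$ holds, and taking $m=\lfloor p\rfloor$ completes the proof.

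Essentially all the analysis — almost-multiplicativity, $p$-variation estimates, and the sewing-comparison arguments — is already packaged into Lemma~\ref{lem:1I_plus_1I_tilde_eq_Ext_1I_km1_plus_1I_tilde_km1_plus_Ik_plus_I_tilde_k}, so the proof above is purely algebraic and inductive; I expect the only points needing genuine care to be the two closure facts for $\mathfrak{I}^{p}_{\phi}(V)$ (under addition and under truncation) and the observation that $\operatorname{Ext}_{k}$ depends only on the $k$-step part, so $P(k)$ transfers verbatim through it. If instead one prefers to induct on the level of the \emph{untruncated} functionals $\mathbb{1}^{I},\mathbb{1}^{\widetilde I}$, as in the roadmap, the same scheme works but one must additionally invoke parts~(ii) and~(iii) of Lemma~\ref{lem:unit-boxplus-triple} to identify the lower levels of $\mathbb{1}^{I}\boxplus\mathbb{1}^{\widetilde I}$ with those of the truncated $\boxplus$; that matching step is the only real subtlety on that route.
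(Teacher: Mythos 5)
Your proposal is correct, and it uses the same engine as the paper --- the recursive formula of Lemma~\ref{lem:1I_plus_1I_tilde_eq_Ext_1I_km1_plus_1I_tilde_km1_plus_Ik_plus_I_tilde_k} matched against the recursive clause of Definition~\ref{def:1_pow_I} --- but it organises the induction differently, and the difference is worth noting. The paper fixes the untruncated lifts $\mathbb{1}^I,\mathbb{1}^{\widetilde I}$ and inducts on the tensor level $j$, so its inductive hypothesis is only the level-wise identity $(\mathbb{1}^I\boxplus\mathbb{1}^{\widetilde I})^j=(\mathbb{1}^{I+\widetilde I})^j$ for $j\le k$; to feed this into the recursive lemma it must first pass from the untruncated $\boxplus$ to the truncated one, which is exactly why parts (ii)--(iii) of Lemma~\ref{lem:unit-boxplus-triple} are invoked on both the LHS and the RHS of the inductive step. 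Your induction is instead on the truncated statement $P(m)$, so the hypothesis is a full equality of degree-$k$ rough paths $\mathbb{1}^{I(k)}\boxplus\mathbb{1}^{\widetilde I(k)}=\mathbb{1}^{(I+\widetilde I)(k)}$, and the step reduces to substituting this equality inside $\operatorname{Ext}_k$ and comparing with the definition of $\mathbb{1}^{(I+\widetilde I)(k+1)}$; Lemma~\ref{lem:unit-boxplus-triple} is never needed. The two bookkeeping facts you single out (closure of $\mathfrak I^{p}_{\phi}(V)$ under addition, via the combined control and Lemma~\ref{lem: controls are a convex cone}, and stability of the $\phi$-bounds under truncation) are precisely what replaces the paper's well-posedness step (A), and you handle them correctly; the remaining implicit point --- that $\boxplus$ on degree-$m$ truncations means the level-$m$ sewing of Theorem~\ref{thm:X_otimes_H_is_ARP}, legitimate because $\mathbb{1}^{I(m)}\in\mathscr H^{(m,p)}_{\phi}(V)$ by Proposition~\ref{prop:1 pow I_st j (620)} --- is also covered by the hypotheses of Lemma~\ref{lem:1I_plus_1I_tilde_eq_Ext_1I_km1_plus_1I_tilde_km1_plus_Ik_plus_I_tilde_k} itself. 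In short, your route is slightly leaner (one lemma fewer), while the paper's level-wise route has the side benefit that the truncation-invariance lemma is stated in a reusable form; both deliver the theorem.
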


\begin{proof}

\underline{\textbf{(A) Ensuring Equation \eqref{eqn:1I_plus_1I_tilde_eq_1IplusI} is well-posed}}

We will start by establishing that both sides of the identity \eqref{eqn:1I_plus_1I_tilde_eq_1IplusI} belong to the same space, which is easy to see. We know that since $I$ and $\widetilde{I}$ are in $\mathfrak{I}^p_\phi(V)$, we have that both $\mathbb{1}^I$ and $\mathbb{1}^{\widetilde{I}}$ are in $\mathscr{H}^p_{\phi}(V) \subset \Omega^p(V)$, hence we have that the pair $(\mathbb{1}^I, \mathbb{1}^{\widetilde{I}})$ is in $\Omega^p(V) \times \mathscr{H}^p_{\phi}(V) $. Given the map $\boxplus:\Omega^p(V) \times \mathscr{H}^p_{\phi}(V)  \to \Omega^p(V)$, we then have that $\mathbb{1}^I \boxplus \mathbb{1}^{\widetilde{I}}$ is well-defined and in $\Omega^p(V)$. On the other hand, $I+\widetilde{I} \in \mathfrak{I}^p_\phi(V)$, and thus $\mathbb{1}^{I+\widetilde{I}} \in \mathscr{H}^p_{\phi}(V) \subset \Omega^p(V)$. 

\underline{\textbf{(B) Induction: Base case}}

We will now proceed by induction. First, let us consider the base case:
\begin{equation}
    \left(\mathbb{1}^I \boxplus  \mathbb{1}^{\widetilde{I}}\right)^1 =\left(\mathbb{1}^{I+\widetilde{I}}\right)^1 \label{eqn:main-thm-base-case}.\end{equation}
By definition of $\mathbb{1}^{(\cdot)}$, the right-hand side in Equation \eqref{eqn:main-thm-base-case} equals $I^1+\tilde I^1$. The left-hand side, by definition of the operation $\boxplus$, is equal to the map

$$  (s,t) \mapsto \lim_{\substack{|D|\to0\\D\subset[s,t]}}
   \left(1, (\mathbb{1}^I\otimes\mathbb{1}^{\widetilde{I}})^1\right)^{D,1}.$$
But, we know that, for any pair $(s,t)$ in $\triangle_J$ and any given partition  $D=\{t_0=s,...,t_n=t\}\subset[s,t]$,

$$\begin{aligned}
      \left(1, (\mathbb{1}^I\otimes\mathbb{1}^{\widetilde{I}})^1\right)^{D,1} &=   \left(\left(1, (\mathbb{1}^I\otimes\mathbb{1}^{\widetilde{I}})^1\right)_{t_0,t_1}\otimes\ldots\otimes   \left(1, (\mathbb{1}^I\otimes\mathbb{1}^{\widetilde{I}})^1\right)_{t_{n-1},t_n}\right)^{1}\\
      &=   \sum_{k\in\{1,\ldots,n\}}    \left(1, (\mathbb{1}^I\otimes\mathbb{1}^{\widetilde{I}})^1\right)_{t_{k-1},t_k} ^{1}\\
      &=   \sum_{k\in\{1,\ldots,n\}}    (\mathbb{1}^I\otimes\mathbb{1}^{\widetilde{I}})^1_{t_{k-1},t_k}  \\
      &=   \sum_{k\in\{1,\ldots,n\}}     (\mathbb{1}^I)^1_{t_{k-1},t_k} + (\mathbb{1}^{\widetilde{I}})^1_{t_{k-1},t_k} \\
      &=   \sum_{k\in\{1,\ldots,n\}}   I^1_{t_{k-1},t_k} +  \widetilde{I}^1_{t_{k-1},t_k} \\
      &=   I^1_{st}+\tilde I^1_{st},
\end{aligned}$$
    where the last line follows from the additivity of $I^1$ and $\widetilde{I}^1$. This concludes the base case.
    
\underline{\textbf{(C) Induction: Inductive hypothesis}}

For the inductive step, take $k \in\{1, \ldots, \lfloor p \rfloor$ $
- 1\}$. Now, assume the identity holds $\forall j \leqslant k$, i.e.
\begin{equation} \label{eqn:main-thm-inductive-hyp}
\left(\mathbb{1}^I \boxplus  \mathbb{1}^{\widetilde{I}}\right)^j = \left(\mathbb{1}^{I+\widetilde{I}}\right)^j \quad \forall j=1, \ldots, k \text{.}
\end{equation}
Equation \eqref{eqn:main-thm-inductive-hyp} is the inductive hypothesis. 

We want to show this equation also holds for $j=k+1$. In other words, we would like to prove that for all $(s, t) $ in $ \triangle_J$
\begin{align}
	\operatorname{LHS} &:= \left(\mathbb{1}^I \boxplus \mathbb{1}^{\widetilde{I}}\right)_{s t}^{k+1}=\left(\mathbb{1}^{I+\widetilde{I}}\right)_{s t}^{k+1}=:\text{RHS}. \notag 
\end{align}

\vspace{2cm}
\underline{\textbf{(D.1) Induction: Inductive step, RHS}}

We will now manipulate
\[
\operatorname{RHS} \;=\; (\mathbb{1}^{I+\widetilde I})^{k+1}_{st}.
\]
To this end, we obtain the following chain of equalities: the first follows from the definition of $\mathbb{1}^{(\cdot)}$, the second from the inductive hypothesis \eqref{eqn:main-thm-inductive-hyp}, and the third from Lemma \ref{lem:unit-boxplus-trunc-invariance}, which identifies $(\mathbb{1}^{I}\boxplus\mathbb{1}^{\widetilde I})^{j}$ with $(\mathbb{1}^{I[k]}\boxplus\mathbb{1}^{\widetilde I[k]})^{j}$ for every $j\le k$:
\begin{align*}
\operatorname{RHS}
&=\Bigl[\operatorname{Ext}_{q_k}\bigl(1,(\mathbb{1}^{I+\widetilde I})^{1},\dots,(\mathbb{1}^{I+\widetilde I})^{k}\bigr)\Bigr]^{k+1}_{st}+I^{k+1}_{st}+\widetilde I^{k+1}_{st}\\
&=\Bigl[\operatorname{Ext}_{q_k}\bigl(1,(\mathbb{1}^{I}\boxplus\mathbb{1}^{\widetilde I})^{1},\dots,(\mathbb{1}^{I}\boxplus\mathbb{1}^{\widetilde I})^{k}\bigr)\Bigr]^{k+1}_{st}+I^{k+1}_{st}+\widetilde I^{k+1}_{st}\\
&=\Bigl[\operatorname{Ext}_{q_k}\bigl(1,(\mathbb{1}^{I[k]}\boxplus\mathbb{1}^{\widetilde I[k]})^{1},\dots,(\mathbb{1}^{I[k]}\boxplus\mathbb{1}^{\widetilde I[k]})^{k}\bigr)\Bigr]^{k+1}_{st}+I^{k+1}_{st}+\widetilde I^{k+1}_{st}.
\end{align*}

\underline{\textbf{(D.2) Induction: Inductive step, LHS}}

We now develop $\text{LHS}$. In the below chain of equalities, the first equality is given by the definition of $\text{LHS}$. 

Crucially, the second equality follows by Lemma \ref{lem:1I_plus_1I_tilde_eq_Ext_1I_km1_plus_1I_tilde_km1_plus_Ik_plus_I_tilde_k}.

The third equality uses the fact that  $  \bigl(\mathbb{1}^{I(k)} \boxplus \mathbb{1}^{\widetilde I(k)}\bigr)_{st}\in T^{(\lfloor p \rfloor )}(V)$ can be decomposed degree-wise: $$  \left(\mathbb{1}^{I(k)}\boxplus\mathbb{1}^{\widetilde I(k)}\right)_{st}  
    =\bigl(1,(\mathbb{1}^{I(k)}\boxplus\mathbb{1}^{\widetilde I(k)})^{1},\dots,(\mathbb{1}^{I(k)}\boxplus\mathbb{1}^{\widetilde I(k)})^{k}\bigr)_{st}.$$

The fourth and last equality follows by Lemma~\ref{lem:unit-boxplus-triple}\,(iii), which says that, for every $j \in \{1,\dots,k\}$ we have
\[
   \bigl(\mathbb{1}^{I(k)} \boxplus \mathbb{1}^{\widetilde I(k)}\bigr)^j
   = \bigl(\mathbb{1}^{I[k]} \boxplus \mathbb{1}^{\widetilde I[k]}\bigr)^j; 
\]

in other words, we replace $I(k)$ by $I[k]$, and $\widetilde I(k)$  by $\widetilde I[k]$. Hence the collection of lower levels entering $\operatorname{Ext}_{q_k}$ can be written as
\[
   \bigl(1,\,
   (\mathbb{1}^{I[k]} \boxplus \mathbb{1}^{\widetilde I[k]})^1,\dots,
   (\mathbb{1}^{I[k]} \boxplus \mathbb{1}^{\widetilde I[k]})^k \bigr).
\]
Putting it all together yields
\begin{align*}
    \operatorname{LHS} &= \left(  \mathbb{1}^I \boxplus \mathbb{1}^{\widetilde{I}}\right)^{k+1}_{st} \\
    &= \operatorname{Ext}_{q_k}\left(\mathbb{1}^{I(k)}\boxplus\mathbb{1}^{\widetilde I(k)}\right)^{k+1}_{st} + I^{k+1}_{st} + \widetilde I^{k+1}_{st}\\
    &=\Bigl[\operatorname{Ext}_{q_k}\bigl(1,(\mathbb{1}^{I(k)}\boxplus\mathbb{1}^{\widetilde I(k)})^{1},\dots,(\mathbb{1}^{I(k)}\boxplus\mathbb{1}^{\widetilde I(k)})^{k}\bigr)\Bigr]^{k+1}_{st}+I^{k+1}_{st}+\widetilde I^{k+1}_{st}\\
    &=\Bigl[\operatorname{Ext}_{q_k}\bigl(1,(\mathbb{1}^{I[k]}\boxplus\mathbb{1}^{\widetilde I[k]})^{1},\dots,(\mathbb{1}^{I[k]}\boxplus\mathbb{1}^{\widetilde I[k]})^{k}\bigr)\Bigr]^{k+1}_{st}+I^{k+1}_{st}+\widetilde I^{k+1}_{st}.
\end{align*}
Since $\operatorname{LHS}=\operatorname{RHS}$, the result follows by induction.

\end{proof}

\subsection{Associativity of perturbations}
We next establish associativity of successive $H$-space perturbations for a general base rough path $X$. This shows $\boxplus$ behaves like an action compatible with the internal addition on $H$-space.

The following is our second key result.
\begin{theorem}[Associativity of $\boxplus$] \label{thm:assoc-trivial-kernel}\label{thm:X_plus_H_plus_Htilde_eq_X_plus_H_Htilde}
Let \(p \in \mathbb{R}_{\ge 1}\). Let \(X \in \Omega^p(V)\) and let \(H, \widetilde H \in \mathscr H^{p}(V)\). Then
\begin{equation}\label{eqn:assoc_boxplus}
    (X \boxplus H) \boxplus \widetilde{H}
    = X \boxplus (H \boxplus \widetilde{H}).
\end{equation}
\end{theorem}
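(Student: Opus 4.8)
The plan is to reduce both sides of \eqref{eqn:assoc_boxplus} to the sewing of one and the same explicit almost rough path, namely the threefold unit-preserving sum
\[
X\oplus H\oplus\widetilde H:=\bigl(1,\;X^1+H^1+\widetilde H^1,\;\dots,\;X^{\lfloor p\rfloor}+H^{\lfloor p\rfloor}+\widetilde H^{\lfloor p\rfloor}\bigr),
\]
which is unambiguous because $\oplus$ is pointwise addition in the vector space $\widetilde T^{(\lfloor p\rfloor)}(V)$ (with zero $\mathbb 1$), hence associative. Throughout I write $n:=\lfloor p\rfloor$ and recall that $\boxplus$ is realised via the parameter-free sewing map $\mathscr S^{n}$ of Definition~\ref{def:global-sewing}.

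First I would normalise the data. Using that controls form a convex cone (Lemma~\ref{lem: controls are a convex cone}) and that $\mathscr H^{(n,p)}_{\phi,\omega}(V)$ is stable under replacing $\phi$ by any smaller exponent in $(1-\tfrac1p,1]$ and $\omega$ by any larger control, I fix a single control $\omega$ and a single $\phi\in(1-\tfrac1p,1]$ with $X\in\Omega^{(n,p)}_\omega(V)$ and $H,\widetilde H\in\mathscr H^{(n,p)}_{\phi,\omega}(V)$; by Proposition~\ref{prop:global-sewing-well-defined} this changes none of the $\boxplus$'s. Put $\theta:=\phi+\tfrac1p>1$. By Theorem~\ref{thm:X_otimes_H_is_ARP}, $H\oplus\widetilde H\in\Omega^{\mathrm{am},(n,p)}_{\theta,\omega}(V)$ and $H\boxplus\widetilde H=\mathscr S^{n}(H\oplus\widetilde H)\in\Omega^{(n,p)}_\omega(V)$; since the levels of $H\oplus\widetilde H$ are bounded by a multiple of $\omega(\cdot)^\phi$ and, by the closeness estimate~\eqref{bound almost mult. functionals}, sewing perturbs each level by at most a multiple of $\omega(\cdot)^\theta=\omega(\cdot)^{\phi}\omega(\cdot)^{1/p}$, the levels of $H\boxplus\widetilde H$ are still dominated by a multiple of $\omega(\cdot)^\phi$; hence $H\boxplus\widetilde H\in\mathscr H^{(n,p)}_{\phi,\omega}(V)$, which in particular makes the right-hand side of~\eqref{eqn:assoc_boxplus} well-defined. (Equivalently, one may identify $H\boxplus\widetilde H=\mathbb 1^{\operatorname{dev}H+\operatorname{dev}\widetilde H}$ via Proposition~\ref{prop:unit-dev} and Theorem~\ref{thm:1I_plus_1I_tilde_eq_1IplusI}.)

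Next I would treat the left-hand side. By Theorem~\ref{thm:X_otimes_H_is_ARP}, applied to the base rough path $X\boxplus H\in\Omega^{(n,p)}_\omega(V)$ and the perturbation $\widetilde H\in\mathscr H^{(n,p)}_{\phi,\omega}(V)$, the functional $(X\boxplus H)\oplus\widetilde H$ is $\theta$-almost multiplicative of degree $n$ with finite $p$-variation controlled by $\omega$ and $(X\boxplus H)\boxplus\widetilde H=\mathscr S^{n}\bigl((X\boxplus H)\oplus\widetilde H\bigr)$. For each level $j$ one has
\[
\bigl((X\boxplus H)\oplus\widetilde H\bigr)^j-\bigl(X\oplus H\oplus\widetilde H\bigr)^j=(X\boxplus H)^j-(X\oplus H)^j,
\]
which is bounded by a multiple of $\omega(\cdot)^\theta$ by~\eqref{bound almost mult. functionals} for the $\theta$-almost rough path $X\oplus H$ (again Theorem~\ref{thm:X_otimes_H_is_ARP}); moreover $X\oplus H\oplus\widetilde H$ has finite $p$-variation controlled by $\omega$, so the high-level hypothesis~\eqref{eq:hi-level-pvar-assumption} holds for it. Corollary~\ref{cor:X_Y_close_implies_SX_eq_SY} (with $X\mapsto(X\boxplus H)\oplus\widetilde H$ and $Y\mapsto X\oplus H\oplus\widetilde H$) then shows that $X\oplus H\oplus\widetilde H$ lies in the domain of $\mathscr S^{n}$ and
\[
\mathscr S^{n}\bigl(X\oplus H\oplus\widetilde H\bigr)=\mathscr S^{n}\bigl((X\boxplus H)\oplus\widetilde H\bigr)=(X\boxplus H)\boxplus\widetilde H.
\]
The right-hand side is symmetric: Theorem~\ref{thm:X_otimes_H_is_ARP} with base $X$ and perturbation $H\boxplus\widetilde H\in\mathscr H^{(n,p)}_{\phi,\omega}(V)$ (from the previous step) gives $X\boxplus(H\boxplus\widetilde H)=\mathscr S^{n}\bigl(X\oplus(H\boxplus\widetilde H)\bigr)$, and at level $j$ this input differs from $X\oplus H\oplus\widetilde H$ by $(H\boxplus\widetilde H)^j-(H\oplus\widetilde H)^j$, bounded by a multiple of $\omega(\cdot)^\theta$ by~\eqref{bound almost mult. functionals} applied to $H\oplus\widetilde H$; Corollary~\ref{cor:X_Y_close_implies_SX_eq_SY} once more yields $\mathscr S^{n}\bigl(X\oplus H\oplus\widetilde H\bigr)=X\boxplus(H\boxplus\widetilde H)$. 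Comparing the two displays proves~\eqref{eqn:assoc_boxplus}.

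The step I expect to be the main obstacle is the normalisation: one must check that a single pair $(\phi,\omega)$ simultaneously witnesses membership of $X$, $H$, $\widetilde H$, $X\boxplus H$ and $H\boxplus\widetilde H$ in the relevant spaces, and in particular that the closeness exponent $\theta=\phi+\tfrac1p$ used in all three ``$Y$ is close to $X$'' comparisons coincides with the almost-multiplicativity exponent demanded by Corollary~\ref{cor:X_Y_close_implies_SX_eq_SY}. Once this bookkeeping is in place, the rest is a double application of Theorem~\ref{thm:X_otimes_H_is_ARP} and Corollary~\ref{cor:X_Y_close_implies_SX_eq_SY}, with no estimates needed beyond those two results.
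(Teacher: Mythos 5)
Your proof is correct, and it takes a genuinely different route from the paper's. The paper proves associativity by an explicit $\omega^{\phi+1/p}$-closeness estimate comparing the two mixed tensor-product inputs $\mathscr S(X\otimes H)\otimes\widetilde H$ and $X\otimes\mathscr S(H\otimes\widetilde H)$ directly, splitting via $X\otimes H\otimes\widetilde H$ and applying Corollary~\ref{cor:X_Y_close_implies_SX_eq_SY} once; that estimate requires separately handling the $k=j$ term in the first sum because $\widetilde H^0=1$ carries no $\omega^\phi$ factor. You instead isolate the canonical object $X\oplus H\oplus\widetilde H$ (unambiguous because $\oplus$ is genuinely associative as pointwise addition in $\widetilde T^{(n)}(V)$), show it lies in the domain of $\mathscr S^{n}$, and establish that each side of \eqref{eqn:assoc_boxplus} is $\mathscr S^{n}$ of a functional $\omega^\theta$-close to it, applying the corollary twice. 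What the paper's approach buys is a single invocation of the corollary and no need to produce a third sewable object; what yours buys is transparency (the symmetric middle term is explicitly sewn rather than used only inside a triangle inequality, so the associativity is visibly the associativity of $\oplus$), the avoidance of the $k=j$ edge case, and an explicit verification that $H\boxplus\widetilde H\in\mathscr H^{(n,p)}_{\phi,\omega}(V)$, which is needed for $X\boxplus(H\boxplus\widetilde H)$ to make sense and which the paper's proof leaves implicit (though it is available elsewhere via Theorem~\ref{thm:1I_plus_1I_tilde_eq_1IplusI} and Proposition~\ref{prop:1 pow I_st j (620)}, as you note). The normalisation step and the use of Theorem~\ref{thm:X_otimes_H_is_ARP} plus Corollary~\ref{cor:X_Y_close_implies_SX_eq_SY} as the two workhorses are shared by both arguments.
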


\begin{proof}
\underline{\textbf{Normalising $\phi$ and $\omega$}:}

Fix \(X \in \Omega^p(V)\) and \(H, \widetilde H \in \mathscr H^{p}(V)\). By definition of \(\mathscr H^{p}(V)\), choose exponents \(\phi_H,\phi_{\widetilde H} \in (1-\tfrac{1}{p},1]\) and controls \(\omega_X,\omega_H,\omega_{\widetilde H}\) such that \(X\) has finite \(p\)-variation controlled by \(\omega_X\), \(H \in \mathscr H^{p}_{\phi_H,\omega_H}(V)\), and \(\widetilde H \in \mathscr H^{p}_{\phi_{\widetilde H},\omega_{\widetilde H}}(V)\). Set
\[
\phi := \min\{\phi_H,\phi_{\widetilde H}\}
\qquad\text{and}\qquad
\omega := \omega_X + \omega_H + \omega_{\widetilde H}.
\]
Then \(\omega\) is a control (controls form a convex cone), and \(\omega_X,\omega_H,\omega_{\widetilde H}\le \omega\) pointwise on \(\triangle_J\). To see that \(H,\widetilde H \in \mathscr H^p_{\phi,\omega}(V)\), note that for any \((s,t)\in\triangle_J\) and \(1\le j\le \lfloor p\rfloor\),
\[
\begin{aligned}
\|H^j_{s,t}\|
&\le C_H\,\omega_H(s,t)^{\phi_H}
 \le C_H\,\omega(s,t)^{\phi_H} \\
&= C_H\,\omega(s,t)^{\phi}\,\omega(s,t)^{\phi_H-\phi}
 \le C_H\,\omega(s,t)^{\phi}\,\omega(S,T)^{\phi_H-\phi}.
\end{aligned}
\]

and similarly \(\|\widetilde H^j_{s,t}\|\le C_{\widetilde H}\,\omega(s,t)^{\phi}\,\omega(S,T)^{\phi_{\widetilde H}-\phi}\).

Thus, with
\[
K_H:=\max\Big\{C_H\,\omega(S,T)^{\phi_H-\phi},\; C_{\widetilde H}\,\omega(S,T)^{\phi_{\widetilde H}-\phi}\Big\},
\]
we have \(\|H^j_{s,t}\|,\ \|\widetilde H^j_{s,t}\|\le K_H\,\omega(s,t)^{\phi}\) for all \(1\le j\le \lfloor p\rfloor\), i.e. \(H,\widetilde H \in \mathscr H^p_{\phi,\omega}(V)\).

Moreover, since \(\omega_X\le \omega\), the same reasoning shows that \(X\) is controlled by \(\omega\). Hence, without loss of generality, we may work under a common pair \((\phi,\omega)\) and a single constant \(K_H\) controlling both \(H\) and \(\widetilde H\) in their definitional bounds.

\underline{\textbf{Proving the identity}}

Our proof strategy will consist in showing, for all $(s,t)$ in $\triangle_J$ and all $j$ in $\{1,..., \lfloor p \rfloor\}$,
\begin{equation}
    \label{ineq:key_assoc_X_plus_H_plus_Htilde_eq_X_plus_H_Htilde}
\left\|\bigl [\mathscr{S}(X \otimes H) \otimes \widetilde{H} - X \otimes \mathscr{S}(H \otimes \widetilde{H})\bigr]^j_{st} \right\| \leqslant  K \omega(s,t)^{\phi+\frac{1}{p}}
\end{equation}
for some $K\ge0$. By Corollary \ref{cor:X_Y_close_implies_SX_eq_SY}, Inequality \eqref{ineq:key_assoc_X_plus_H_plus_Htilde_eq_X_plus_H_Htilde} implies 
\begin{equation} \label{NEW: eqn:S_SXotimesH_otimesHtilde_eq_S_Xotimes_SHotimesHtilde}
 \mathscr{S} \big( \mathscr{S}(X \otimes H) \otimes \widetilde{H} \big) = \mathscr{S} \big( X \otimes \mathscr{S}(H \otimes \widetilde{H}) \big).   
\end{equation}
By the fact that $\mathscr{S}(X\otimes H)=\mathscr{S}(X\oplus H)$ given by Theorem \ref{thm:X_otimes_H_is_ARP}, Equation \ref{NEW: eqn:S_SXotimesH_otimesHtilde_eq_S_Xotimes_SHotimesHtilde} implies Equation \ref{eqn:assoc_boxplus} immediately.

Therefore, we will aim to prove Inequality \eqref{ineq:key_assoc_X_plus_H_plus_Htilde_eq_X_plus_H_Htilde}, which will imply the result of Theorem \ref{thm:X_plus_H_plus_Htilde_eq_X_plus_H_Htilde}. 

\begin{small}
\begin{align*}
&\left\| \left[ \mathscr{S}(X \otimes H) \otimes \widetilde{H} - X \otimes \mathscr{S}(H \otimes \widetilde{H}) \right]^j_{st} \right\| \\
&= \left\| \left[ \mathscr{S}(X \otimes H) \otimes \widetilde{H} - X \otimes H \otimes \widetilde{H} + X \otimes H \otimes \widetilde{H} - X \otimes \mathscr{S}(H \otimes \widetilde{H}) \right]^j_{st} \right\| \\
&= \left\| \left[ \left( \mathscr{S}(X \otimes H) - X \otimes H \right) \otimes \widetilde{H} + X \otimes \left( H \otimes \widetilde{H} - \mathscr{S}(H \otimes \widetilde{H}) \right) \right]^j_{st} \right\| \\
&= \left\| \sum_{k=0}^j \left( \mathscr{S}(X \otimes H) - X \otimes H \right)^k_{st} \otimes \widetilde{H}^{j-k}_{st} + \sum_{k=0}^j X^k_{st} \otimes \left( H \otimes \widetilde{H} - \mathscr{S}(H \otimes \widetilde{H}) \right)^{j-k}_{st} \right\| \\
& \leq \sum_{k=0}^j \bigl\| \bigl( \mathscr{S}(X \otimes H) - X \otimes H \bigr)^k_{st} \bigr\| \, \bigl\| \widetilde{H}^{j-k}_{st} \bigr\| + \sum_{k=0}^j \bigl\| X^k_{st} \bigr\| \, \bigl\| \bigl( H \otimes \widetilde{H} - \mathscr{S}(H \otimes \widetilde{H}) \bigr)^{j-k}_{st} \bigr\|.\tag{$\ast$}
\end{align*}
 \end{small}

By Theorem \ref{thm:X_otimes_H_is_ARP}, we know that $X\otimes H$ is a $(\phi+\frac{1}{p})$-almost rough path controlled by $\omega$. Since $X$ itself is a $p$-rough path controlled by $\omega$, we in particular have the level-wise estimate $\|X^{k}_{st}\| \le K_X \omega(s,t)^{k/p} \big/ (\beta(p)\bigl(k/p\bigr)!)$ for all $k \le \lfloor p \rfloor$ for some constant $K_X$. Moreover, since $H\in\Omega^p_\omega(V)$ by the definition of $\mathscr{H}^p_{\phi,\omega}(V)$, i.e. Definition \ref{def:h-space}, we know that, again by Theorem \ref{thm:X_otimes_H_is_ARP}, $H\otimes\widetilde{H}$ is a $(\phi+\frac{1}{p})$-almost rough path.

Equipped with the above understanding, we now apply Theorem \ref{thm:Rough-Sewing-Lemma} twice. Fix $k\in\{0,\dots ,j\}$ and $(s,t)\in\triangle_J$. First, we use the fact that $X\otimes H$ is a $(\phi+\frac{1}{p})$-almost rough path. Thanks to this, Theorem \ref{thm:Rough-Sewing-Lemma} tells us that there exists a constant $K_1$ such that the quantity $\|(\mathscr{S}(X \otimes H) - X \otimes H)^k_{st}\|$ is bounded by $K_1\,\omega(s,t)^{\phi+1/p}$. Secondly, we use the fact that $H\otimes\widetilde{H}$ is a $(\phi+\frac{1}{p})$-almost rough path. Similarly, Theorem \ref{thm:Rough-Sewing-Lemma} tells us that there exists another constant $K_2$ such that the quantity $\|(\mathscr{S}(H \otimes \widetilde H)-H \otimes \widetilde H)^{j-k}_{st}\|$ is bounded by $K_2\,\omega(s,t)^{\phi+1/p}$. We emphasise that both constants $K_1$ and $K_2$ are independent of the level $j$.

Hence, recalling that \(\widetilde H^0_{st}=1\), we split off the term \(k=j\) in the first sum in \((\ast)\) (since then \(j-k=0\) and the \(\omega^\phi\)-bound does not apply). We obtain

\[
\begin{aligned}
(\ast)
&\leqslant \sum_{k=0}^{j-1}\bigl\| \bigl( \mathscr{S}(X \otimes H) - X \otimes H \bigr)^k_{st} \bigr\| \, \bigl\| \widetilde{H}^{j-k}_{st} \bigr\|
      + \bigl\| \bigl( \mathscr{S}(X \otimes H) \\
&\qquad\qquad- X \otimes H \bigr)^j_{st} \bigr\|\,\bigl\|\widetilde H^0_{st}\bigr\| 
      + \sum_{k=0}^{j} \bigl\| X^k_{st} \bigr\| \, \bigl\| \bigl( H \otimes \widetilde{H} - \mathscr{S}(H \otimes \widetilde{H}) \bigr)^{j-k}_{st} \bigr\| \\
&\leqslant \sum_{k=0}^{j-1} K_1\,\omega(s,t)^{\phi+\frac{1}{p}} \, K_H\,\omega(s,t)^{\phi}
      + K_1\,\omega(s,t)^{\phi+\frac{1}{p}}
      \\
&\qquad\qquad+ \sum_{k=0}^{j} K_X\frac{\omega(s,t)^{\frac{k}{p}}}{\beta(p)\,(\frac{k}{p})!}\,K_2\,\omega(s,t)^{\phi+\frac{1}{p}} \\
&\le \omega(s,t)^{\phi +\frac1p} \left(
      K_1
      + \sum_{k=0}^{j-1} K_1K_H\,\omega(S,T)^\phi
      + \sum_{k=0}^{j}K_2K_X\,\frac{\omega(S,T)^{\frac{k}{p}}}{\beta(p)\,(\frac{k}{p})!}
      \right)\\
&\le \omega(s,t)^{\phi +\frac1p} \left(
      K_1
      + \lfloor p\rfloor\,K_1 K_H\,\omega(S,T)^\phi
      + K_2K_X\,\sum_{k=0}^{\lfloor p \rfloor}\frac{\omega(S,T)^{\frac{k}{p}}}{\beta(p)\,(\frac{k}{p})!}
      \right)\\
&=: K \,\omega(s,t)^{\phi +\frac{1}{p}},
\end{aligned}
\]

where the constant
\[
K
:= K_1
+ \lfloor p\rfloor\,K_1 K_H\,\omega(S,T)^\phi
+ K_2K_X\,\sum_{k=0}^{\lfloor p \rfloor}\frac{\omega(S,T)^{\frac{k}{p}}}{\beta(p)\,(\frac{k}{p})!}
\]
does not depend on \(j\) or \((s,t)\), and where we used the fact that \(\omega(s,t)^\alpha\le \omega(S,T)^\alpha\) for any \(\alpha\ge0\) by the super-additivity and non-negativity of the control \(\omega\).

\end{proof}

    \newpage

\section{Perturbing by almost rough paths}

\subsection{Kernel and identity}
We first identify the neutral element for $\boxplus$ and show that perturbation by $H$-space has trivial kernel: $X \boxplus H = X$ forces $H$ to be the identity.

\begin{lemma} \label{lem:identity_element_characterisation}Let $p\geq 1$. Let $X\in \Omega^p(V), H\in \mathscr{H}^p(V)$ and assume that
\begin{equation}
X \boxplus H = X.
\label{eqn:boxplus_identity}
\end{equation}
Then,
\[H=\mathbb{1}(\lfloor p \rfloor).\]
\end{lemma}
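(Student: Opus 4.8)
The plan is to prove that $H=\mathbb{1}(\lfloor p\rfloor)$ by a level-by-level induction on the degree $j\in\{1,\dots,\lfloor p\rfloor\}$, showing $H^j=0$ for each $j$, using the trivial-kernel hypothesis together with the recursive structure of the sewing map in Theorem~\ref{thm:Rough-Sewing-Lemma}(ii). First I would fix a common control $\omega$ and an exponent $\phi\in(1-\tfrac1p,1]$ such that $X$ has finite $p$-variation controlled by $\omega$ and $H\in\mathscr H^p_{\phi,\omega}(V)$, which is harmless after replacing $\omega$ by a sum of controls. By Theorem~\ref{thm:X_otimes_H_is_ARP}, $X\oplus H$ is a $(\phi+\tfrac1p)$-almost $p$-rough path controlled by $\omega$, and $X\boxplus H=\mathscr S^{\lfloor p\rfloor}(X\oplus H)$; the hypothesis \eqref{eqn:boxplus_identity} says this sewn functional equals $X$ itself.

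The base case is the first level: by the construction of the sewing map (Theorem~\ref{thm:Rough-Sewing-Lemma}(ii) with $m=0$), $(X\boxplus H)^1_{st}=\lim_{|D|\to 0}\sum_{i}(X\oplus H)^1_{t_{i-1},t_i}=\lim_{|D|\to 0}\sum_i\bigl(X^1_{t_{i-1},t_i}+H^1_{t_{i-1},t_i}\bigr)$. Since $X^1$ and $H^1$ are both additive (being first levels of multiplicative functionals), the sum telescopes to $X^1_{st}+H^1_{st}$, so the identity $X\boxplus H=X$ forces $H^1_{st}=0$ for all $(s,t)$, i.e. $H^1\equiv 0$. For the inductive step, assume $H^1=\dots=H^k=0$ for some $k<\lfloor p\rfloor$; then on the levels $\le k$ we have $(X\oplus H)^j=X^j$, so by Remark~\ref{rmk: if X^j =Y^j for all j, then S(X)^j = S(Y)^j for all j} (or directly from the level-wise dependence of the construction) the first $k$ levels of $X\boxplus H$ coincide with those of $X$, consistently with \eqref{eqn:boxplus_identity}. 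At level $k+1$, the construction gives $(X\boxplus H)^{k+1}_{st}=\lim_{|D|\to 0}\bigl(1,(X\boxplus H)^1,\dots,(X\boxplus H)^k,(X\oplus H)^{k+1}\bigr)^{D,k+1}$; since the first $k$ levels agree with $X$ and $(X\oplus H)^{k+1}=X^{k+1}+H^{k+1}$, the bracketed expansion splits (by multilinearity of the tensor product) into the expansion producing $X^{k+1}_{st}$ in the limit plus the single-insertion terms involving $H^{k+1}$; because $H^{k+1}$ by itself need not be additive, I would instead invoke that $(X\boxplus H)^{k+1}-X^{k+1}$ is the difference of two degree-$(k+1)$ multiplicative functionals agreeing up to level $k$ (namely $X\boxplus H$ and $X$), hence additive by Lemma~\ref{lem:mult. func.}(a); combined with the fact that the only non-cancelling contribution in $\bigl(1,\dots,(X\oplus H)^{k+1}\bigr)^{D,k+1}-\bigl(1,\dots,X^{k+1}\bigr)^{D,k+1}$ telescopes to $H^{k+1}_{st}$ (as in the telescoping identity preceding Lemma~\ref{lem: bring psi inside sewing map}), we conclude $(X\boxplus H)^{k+1}_{st}-X^{k+1}_{st}=H^{k+1}_{st}$, so \eqref{eqn:boxplus_identity} forces $H^{k+1}\equiv 0$.

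Carrying the induction to $j=\lfloor p\rfloor$ yields $H^j\equiv 0$ for all $j$, hence $H=(1,0,\dots,0)=\mathbb{1}(\lfloor p\rfloor)$, as claimed. The main obstacle is making the telescoping argument at level $k+1$ fully rigorous without assuming $H^{k+1}$ is additive: the clean way is to argue as above that the increment $(X\boxplus H)^{k+1}-X^{k+1}$ is automatically additive (Lemma~\ref{lem:mult. func.}(a)) and simultaneously equals the single-insertion telescoping sum, which is exactly $H^{k+1}$; one must check carefully that in the partition expansion $\bigl(1,(X\boxplus H)^1,\dots,(X\boxplus H)^k,X^{k+1}+H^{k+1}\bigr)^{D,k+1}$, all cross terms built purely from levels $\le k$ (which coincide for $X\boxplus H$ and $X$) cancel against the corresponding expansion for $X$, and the terms that mix a level-$(k+1)$ factor with lower-level factors vanish because the lower factors are level-$0$ (equal to $1$), leaving precisely $\sum_i H^{k+1}_{t_{i-1},t_i}$, whose limit is $H^{k+1}_{st}$ by the additivity just established. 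Everything else is routine bookkeeping about admissible norms and super-additivity of $\omega$.
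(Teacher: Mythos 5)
Your overall strategy is the same as the paper's: induct on the level, use the sewing construction to expand $(X\boxplus H)^{k+1}$ over partitions, and observe that the only surviving contribution beyond $X^{k+1}$ is the telescoping sum of single insertions of $H^{k+1}$. The base case and the partition bookkeeping are fine. However, there is a genuine gap at the decisive step. You correctly note that the partition identity gives, for every partition $D$ of $[s,t]$,
\[
\bigl(1,X^1,\dots,X^k,X^{k+1}+H^{k+1}\bigr)^{D,k+1}-X^{k+1}_{s,t}
=\sum_i H^{k+1}_{t_{i-1},t_i},
\]
so that the hypothesis $X\boxplus H=X$ only yields $\lim_{|D|\to0}\sum_i H^{k+1}_{t_{i-1},t_i}=0$. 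To conclude $H^{k+1}_{s,t}=0$ you must know that these partition sums converge to $H^{k+1}_{s,t}$, i.e.\ that $H^{k+1}$ is additive. Your proposed substitute — that $(X\boxplus H)^{k+1}-X^{k+1}$ is additive by Lemma~\ref{lem:mult. func.}(a) and ``simultaneously equals the single-insertion telescoping sum, which is exactly $H^{k+1}$'' — does not close this: under the hypothesis that difference is identically zero, and its (trivial) additivity says nothing about whether $\lim_{|D|\to0}\sum_i H^{k+1}_{t_{i-1},t_i}$ equals $H^{k+1}_{s,t}$. Without additivity of $H^{k+1}$ itself, the vanishing of refined partition sums is compatible with $H^{k+1}\neq0$ (compare $(s,t)\mapsto(t-s)^2$), so the step as written is circular.

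The fix is exactly the fact you set aside: under the inductive hypothesis $H^{k+1}$ \emph{is} additive, because $H\in\mathscr H^p(V)\subset\Omega^p(V)$ is a genuine multiplicative functional and $H^1=\cdots=H^k=0$, so multiplicativity at level $k+1$ reads $H^{k+1}_{s,t}=H^{k+1}_{s,u}+H^{k+1}_{u,t}$ (the paper phrases this by comparing $H(k+1)=(1,0,\dots,0,H^{k+1})$ with $\mathbb{1}(k+1)$, invoking Theorem~\ref{thm:equiv-H-space} and Lemma~\ref{lem:mult. func.}(a)). Inserting this additivity, your telescoping sum is constant in $D$ and equals $H^{k+1}_{s,t}$, whence $(X\boxplus H)^{k+1}=X^{k+1}+H^{k+1}$ and the hypothesis forces $H^{k+1}=0$, closing the induction as in the paper.
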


\begin{proof}
 
Write
\[
X = (1,X^1,\dots,X^{\lfloor p\rfloor}),\quad
H = (1,H^1,\dots,H^{\lfloor p\rfloor}).
\]
We now proceed by induction on the degree $k$ of the left- and right-hand sides. Consider the base case $k=1$ first, which is straightforward to verify:\begin{equation*}\begin{split}
     \phantom{\iff}\quad&\left(X\boxplus H\right)^{1}=\left(X\right)^{1}\\
     \iff \quad &    X^{1}+H^{1}=X^{1}\\
     \iff \quad&H^{1}=0=\mathbb{1}(\lfloor p \rfloor)^1.
   \end{split}\end{equation*}

We will now state the inductive hypothesis. Take $k\in\{1,\ldots \left\lfloor{p}\right\rfloor -1\}$. Now assume for induction that for all $ j\in \{1,2\ldots ,k\}$\[H^j=\mathbb{1}(\lfloor p \rfloor)^j=0.\]
   
We want to prove the inductive step, i.e. we want to show that $H^{k+1}=\mathbb{1}(\lfloor p \rfloor)^{k+1}=0.$

We begin by proving that $H^{k+1}:\triangle_J\to V^{\otimes (k+1)}$ is an additive functional. To this end, first observe that, since $H\in\mathscr{H}^p(V)$, there exists $\phi\in(1-1/p,1]$ such that $H\in\mathscr{H}^p_\phi(V)$. Then, recall that, since $H\in\mathscr{H}^p_\phi(V)$, Theorem \ref{thm:equiv-H-space} tells us that $H(k+1)=(1, H^1,\ldots\ldots, H^k, H^{k+1})$ is a $q_{k+1}(=(k+1)/\phi)$–rough path. Now, since we are assuming that $H^j=0$ for $j\in\{1,\ldots,k\}$, we can equally state that $H(k+1) = (1, 0_V,\ldots, 0_{V^{\otimes k}}, H^{k+1})$ is a $q_{k+1}$-rough path. Since $(1,0_V,\ldots, 0_{V^{\otimes k}},H^{k+1})\in \Omega^{q_{k+1}}(V)$ coincides with $\mathbb{1}(k+1)=(1,0_V,\ldots, 0_{V^{\otimes k}},0_{V^{\otimes (k+1)}})$ $\in$ $\Omega_1(V)\subset\Omega^{q_{k+1}}(V)$ up to level $k$ (and both are $q_{k+1}$-rough paths),  Part (a) of Lemma \ref{lem:mult. func.} tells us that their difference at level $k+1$, i.e. $H^{k+1}- 0 = H^{k+1}$, must be an additive functional in $V^{\otimes (k+1)}$.

Having established this fact which will later prove useful, we consider the following expression
   \[
   (X\boxplus H)^{k+1}_{st}=\lim_{\substack{|D|\rightarrow 0\\ D\subset[s,t]}}\left(1,\left(X\boxplus H\right)^1,\dots,\left(X\boxplus H\right)^k,\left(X\otimes H\right)^{k+1}\right)^{D,k+1}. \tag{$\ast$}\]
   
We know, by the theorem's main hypothesis, Equation \ref{eqn:boxplus_identity}, that, for all $j=1,...,k+1$\[\left(X\boxplus  H \right)^j=X^j.\]

Observe also that we have that, for any $(u,v)$ in $\triangle_J$,
   \begin{equation*}\begin{split}\left(X\otimes H\right)_{uv}^{k+1}&=\sum_{j=0}^{k+1}X_{uv}^{k+1-j}\otimes H_{uv}^j\\&=X_{uv}^{k+1}+H_{uv}^{k+1}+\sum_{j=1}^{k}X_{uv}^{k+1-j}\otimes H_{uv}^j.\end{split}\end{equation*}
But, since $H^j=\mathbb{1}(\lfloor p \rfloor)^j=0$ for all $j\in\{1,\ldots,k\},$
   \[\left(X\otimes H\right)_{uv}^{k+1}=X^{k+1}_{uv}+H_{uv}^{k+1}.\]
   
Given a partition $D$ of $[s,t]$, we will introduce the function $n(\cdot)$ which recovers the cardinality of $D$ minus one, namely $n(D) = \operatorname{Card}(D) -1$. 

We can now perform some manipulations on $(\ast)$. 
   \begin{align}
     &\left(X\boxplus H\right)_{st}^{k+1} \notag \\ 
& =\lim_{\substack{|D|\rightarrow 0\notag \\ D\subset[s,t]}}
     \left(1,\left(X\boxplus H\right)^1,\ldots,\left(X\boxplus H\right)^k,\left(X\otimes H\right)^{k+1}\right)^{D,k+1}\notag\\
     &
     =\lim_{\substack{|D|\rightarrow 0\notag\\ D\subset[s,t]}}\left(1,X^1,\ldots, X^k,X^{k+1}+H^{k+1}\right)^{D,k+1}\notag\\
     &
     =\lim_{\substack{|D|\rightarrow 0\notag\\ D\subset[s,t]}}\bigg(\sum_{\substack{(i_1,\ldots,i_{n(D)})\in \{0,1,\ldots,k+1\}^{n(D)}\notag\\ s.t.\sum_{l=1}^{n(D)}i_l=k+1}}\left(1,X^1,\ldots,X^k,X^{k+1}+H^{k+1}\right)_{t_0,t_1}^{i_1}\notag\\ &
     \otimes \ldots \otimes \left(1,X^1,\ldots,X^k,X^{k+1}+H^{k+1}\right)_{t_{n(D)-1},t_{n(D)}}^{i_{n(D)}}\bigg)\notag\\
     &=\lim_{\substack{|D|\rightarrow 0\\ D\subset[s,t]}}\bigg(
\sum_{i=0}^{n(D)-1}X_{t_i,t_{i+1}}^{k+1}
+H_{t_i,t_{i+1}}^{k+1}\notag \\[4pt]
&\qquad\qquad \qquad
+\sum_{\substack{(i_1,\ldots,i_{n(D)})\in \{0,1,\ldots,k+1\}^{n(D)}\\
s.t.\sum_{l=1}^{n(D)}i_l=k+1}}
\left(1,X^1,\ldots,X^k,0\right)_{t_0,t_{1}}^{i_1} \notag\\ &
      \otimes\ldots\otimes \left(1,X^1,\ldots,X^k,0\right)_{t_{n(D)-1},t_{n(D)}}^{i_{n(D)}}\bigg)\notag\\
      &
       =\lim_{\substack{|D|\rightarrow 0\notag\\ D\subset[s,t]}}\bigg(\sum_{i=0}^{n(D)-1}H_{t_i,t_{i+1}}^{k+1}+\sum_{\substack{(i_1,\ldots,i_n)\in \{0,1,\ldots,k+1\}^{n(D)}\notag\\ s.t.\sum_{l=1}^{n(D)}i_l=k+1}} \left(1,X^1,\ldots,X^k,X^{k+1}\right)_{t_0,t_{1}}^{i_1} \notag\\ &
       \otimes\ldots\otimes \left(1,X^1,\ldots,X^k,X^{k+1}\right)_{t_{n(D)-1},t_{n(D)}}^{i_{n(D)}}\bigg)\notag\\
      &
      =\lim_{\substack{|D|\rightarrow 0\notag\\ D\subset[s,t]}}\bigg(\sum_{i=0}^{n(D)-1}H_{t_i,t_{i+1}}^{k+1}+\left((X)^D\right)_{st}^{k+1}\bigg)\quad \downarrow\quad \text{definition of $X^D$}\notag\\
      &
      =\lim_{\substack{|D|\rightarrow 0\notag\\ D\subset[s,t]}}\bigg(\sum_{i=0}^{n(D)-1}H_{t_i,t_{i+1}}^{k+1}+\left(X\right)_{st}^{k+1}\bigg)\quad \downarrow\quad \text{multiplicativity}\notag\\
      &
      =\lim_{\substack{|D|\rightarrow 0\notag\\ D\subset[s,t]}}\bigg(\sum_{i=0}^{n(D)-1}H_{t_i,t_{i+1}}^{k+1}\bigg)+X_{st}^{k+1}.\notag\\
      &
      \text{Since $H^{k+1}$ is an additive functional,}\notag\\&
      =\lim_{\substack{|D|\rightarrow 0\notag\\ D\subset[s,t]}}\bigg(H_{st}^{k+1}\bigg)+ X_{st}^{k+1}\notag\\&
      =H_{st}^{k+1}+X_{st}^{k+1}.   \hspace{5em} \label{eq13}  
   \end{align} 
But, since by this theorem's hypothesis we have that
   \begin{equation}
     \label{eqn:X_boxplus_H}
     \left(X\boxplus H\right)^{k+1}_{st}=X^{k+1}_{st},
   \end{equation}

we then have that, by \eqref{eq13} and (\ref{eqn:X_boxplus_H})
    \begin{equation*}\begin{split}
     &X^{k+1}=X^{k+1}+H^{k+1}\\
     &\Leftrightarrow H^{k+1}=0.
     \end{split}\end{equation*}
\end{proof}


We now prove a simple result which, when combined with Lemma \ref{lem:identity_element_characterisation}, has strong consequences, yielding Corollary \ref{cor: X + H = X iff X = 1(p)}.

\begin{lemma}\label{lem:X_boxplus_1_equals_X}
    Let $p\geq 1$ and $X\in\Omega^p(V).$ Then $X\boxplus\mathbb{1}(\lfloor p \rfloor)=X$
\end{lemma}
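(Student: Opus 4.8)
The plan is to unwind the definition of $X \boxplus \mathbb{1}(\lfloor p \rfloor)$ through the sewing map and show that the unit-preserving pointwise sum $X \oplus \mathbb{1}(\lfloor p \rfloor)$ is simply $X$ itself, so that no genuine sewing is required. First I would recall from Definition~\ref{def: X otimes H} that $(X \oplus \mathbb{1}(\lfloor p \rfloor))_{s,t} = X_{s,t} + \mathbb{1}(\lfloor p \rfloor) - \mathbb{1}(\lfloor p \rfloor) = X_{s,t}$ for all $(s,t) \in \triangle_J$, using that $\mathbb{1}(\lfloor p \rfloor)$ is the additive identity of $\widetilde T^{(\lfloor p \rfloor)}(V)$ with respect to $\oplus$. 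Equivalently, at level $0$ we have $1 + 1 - 1 = 1$, and at each level $j \ge 1$ we have $X^j_{s,t} + 0 = X^j_{s,t}$. Hence $X \oplus \mathbb{1}(\lfloor p \rfloor) = X$ as functionals $\triangle_J \to T^{(\lfloor p \rfloor)}(V)$.

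Next I would invoke Theorem~\ref{thm:X_otimes_H_is_ARP} with $H = \mathbb{1}(\lfloor p \rfloor)$ to make sense of $X \boxplus \mathbb{1}(\lfloor p \rfloor) := \mathscr S^{\lfloor p \rfloor}(X \oplus \mathbb{1}(\lfloor p \rfloor))$. One should first check that $\mathbb{1}(\lfloor p \rfloor) \in \mathscr H^p(V)$: indeed it lies in $\Omega^p(V)$ (it is trivially multiplicative with all positive levels zero, hence has finite $p$-variation controlled by any control), and the bound $\|\mathbb{1}(\lfloor p \rfloor)^j_{s,t}\| = 0 \le K\,\omega(s,t)^\phi$ holds for any control $\omega$, any $\phi \in (1-1/p,1]$ and $K \ge 0$, so $\mathbb{1}(\lfloor p \rfloor) \in \mathscr H^{(\lfloor p \rfloor,p)}_{\phi,\omega}(V)$. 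Therefore $X \boxplus \mathbb{1}(\lfloor p \rfloor)$ is well-defined.

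Finally, since $X \oplus \mathbb{1}(\lfloor p \rfloor) = X$ and $X \in \Omega^p(V)$ is already a genuine multiplicative functional of finite $p$-variation, it is in particular its own sewing: $X$ is a multiplicative functional satisfying the closeness bound in \eqref{bound almost mult. functionals} to itself (the supremum is $0 < \infty$), and by the uniqueness clause of Theorem~\ref{thm:Rough-Sewing-Lemma} it equals $\mathscr S^{\lfloor p \rfloor}(X)$. Hence
\[
X \boxplus \mathbb{1}(\lfloor p \rfloor) = \mathscr S^{\lfloor p \rfloor}\bigl(X \oplus \mathbb{1}(\lfloor p \rfloor)\bigr) = \mathscr S^{\lfloor p \rfloor}(X) = X,
\]
as claimed. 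There is no real obstacle here; the only point requiring a word of care is confirming that $\mathbb{1}(\lfloor p \rfloor)$ genuinely belongs to $\mathscr H^p(V)$ so that the notation $X \boxplus \mathbb{1}(\lfloor p \rfloor)$ is legitimate, and that the sewing map fixes genuine rough paths — both of which follow immediately from the definitions and the uniqueness part of the rough sewing lemma.
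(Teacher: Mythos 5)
Your proof is correct, but it takes a different route from the paper. You observe that $X\oplus\mathbb{1}(\lfloor p\rfloor)=X$ as functionals (since $\mathbb{1}(\lfloor p\rfloor)$ is the neutral element for the unit-preserving sum), check that $\mathbb{1}(\lfloor p\rfloor)\in\mathscr H^p(V)$ so that $X\boxplus\mathbb{1}(\lfloor p\rfloor)=\mathscr S^{\lfloor p\rfloor}(X\oplus\mathbb{1}(\lfloor p\rfloor))$ is legitimate, and then conclude via the uniqueness clause of the rough sewing lemma that $\mathscr S^{\lfloor p\rfloor}(X)=X$, since $X$ is itself multiplicative, of finite $p$-variation, and trivially satisfies the $\omega^\theta$-closeness bound to itself. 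The paper instead argues level by level: it uses the explicit partition-limit construction of the sewn path from Theorem~\ref{thm:Rough-Sewing-Lemma}(ii), computes $\bigl(X\boxplus\mathbb{1}(\lfloor p\rfloor)\bigr)^{k+1}_{st}$ as a limit of $\bigl(1,X^1,\dots,X^k,X^{k+1}\bigr)^{D,k+1}$ under the inductive hypothesis, and invokes multiplicativity of $X$ to identify the limit with $X^{k+1}_{st}$. Your argument is shorter and isolates the real content (sewing fixes genuine rough paths, and the perturbation input is literally unchanged); the paper's induction has the merit of running entirely inside the constructive description of $\mathscr S$, mirroring the structure of the proof of Lemma~\ref{lem:identity_element_characterisation} next to it, and of not requiring the reader to re-verify that $X$ lies in the domain of the sewing map with an appropriate $(\theta,\omega)$ — a point you do handle correctly by routing through Theorem~\ref{thm:X_otimes_H_is_ARP} with $H=\mathbb{1}(\lfloor p\rfloor)$.
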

\begin{proof}
   $ \left(X\boxplus \mathbb{1}(\lfloor p \rfloor)\right)^1=X^1$. Take some $k\in\{1,\ldots, \left\lfloor{p}\right\rfloor-1\}$. Now, assume  \newline $ \left(X\boxplus \mathbb{1}(\lfloor p \rfloor)\right)^j=X^j$ for all $ j=1,\ldots,k$. Then  
   \begin{equation*}\begin{split}       
   \left(X\boxplus \mathbb{1}(\lfloor p \rfloor)\right)_{st}^{k+1}
&=\lim_{\substack{|D|\rightarrow 0\\ D\subset[s,t]}}
\Bigl(
1,\left(X\boxplus \mathbb{1}(\lfloor p \rfloor)\right)^1,\ldots,
\left(X\boxplus \mathbb{1}(\lfloor p \rfloor)\right)^k, \\[4pt]
&\qquad
\left(X\otimes \mathbb{1}(\lfloor p \rfloor)\right)^{k+1}
\Bigr)^{D,k+1}\\&
   = \lim_{\substack{|D|\rightarrow 0\\ D\subset[s,t]}}\left(1,X^1,X^k,X^{k+1}\right)^{D,k+1}\quad \text{(inductive hypothesis)}\\&
   = \lim_{\substack{|D|\rightarrow 0\\ D\subset[s,t]}}\left(X_{st}^{k+1}\right)=X_{st}^{k+1}\quad \text{(multiplicativity)}
   \end{split}\end{equation*}
   
   Therefore, the Lemma is proved by induction.
\end{proof}

\begin{corollary} \label{cor: X + H = X iff X = 1(p)}
Let $X\in\Omega^p(V)$ and $H\in\mathscr{H}^{p}(V)$. Then 

$$X\boxplus H = X \iff H=\mathbb{1}(\lfloor p \rfloor)$$
\end{corollary}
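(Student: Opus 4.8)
\textbf{Proof plan for Corollary \ref{cor: X + H = X iff X = 1(p)}.}

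The statement is a biconditional, and both directions follow immediately from the two preceding lemmas, so the plan is simply to assemble them. For the forward implication, suppose $X \boxplus H = X$ for some $X \in \Omega^p(V)$ and $H \in \mathscr{H}^p(V)$. This is exactly the hypothesis \eqref{eqn:boxplus_identity} of Lemma~\ref{lem:identity_element_characterisation}, which therefore gives $H = \mathbb{1}(\lfloor p \rfloor)$. For the reverse implication, suppose $H = \mathbb{1}(\lfloor p \rfloor)$; then $X \boxplus H = X \boxplus \mathbb{1}(\lfloor p \rfloor) = X$ by Lemma~\ref{lem:X_boxplus_1_equals_X}. Combining the two directions yields the equivalence.

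There is no real obstacle here: the corollary is a packaging of Lemmas~\ref{lem:identity_element_characterisation} and~\ref{lem:X_boxplus_1_equals_X}, whose proofs contain all the substantive work (the additivity argument for $H^{k+1}$ via Lemma~\ref{lem:mult. func.}(a) and the telescoping-of-partitions computation in the former; the straightforward induction using multiplicativity in the latter). The only point worth a sentence of care is to note that the hypotheses match up: $\mathscr{H}^p(V) \subset \Omega^p(V)$, so the pair $(X,H)$ lies in the domain $\Omega^p(V) \times \mathscr{H}^p(V)$ of $\boxplus$, and $\mathbb{1}(\lfloor p \rfloor) \in \mathscr{H}^p(V)$ (it satisfies the bound in Definition~\ref{def:h-space} trivially with $H^j \equiv 0$), so the statement "$H = \mathbb{1}(\lfloor p \rfloor)$" is meaningful as an assertion about an element of $\mathscr{H}^p(V)$. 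A clean write-up is therefore: state the equivalence, prove "$\Rightarrow$" by citing Lemma~\ref{lem:identity_element_characterisation}, prove "$\Leftarrow$" by citing Lemma~\ref{lem:X_boxplus_1_equals_X}, and conclude.
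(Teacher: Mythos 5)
Your proposal is correct and matches the paper's own proof exactly: the forward direction is Lemma~\ref{lem:identity_element_characterisation} and the reverse is Lemma~\ref{lem:X_boxplus_1_equals_X}, with the corollary merely assembling the two. Your extra remark that $\mathbb{1}(\lfloor p\rfloor)\in\mathscr{H}^p(V)$ (so the statement is well-posed) is a harmless, sensible addition not spelled out in the paper.
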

\begin{proof}
Lemma \ref{lem:identity_element_characterisation} establishes the forward direction, while
Lemma \ref{lem:X_boxplus_1_equals_X} yields the reverse direction, completing the proof.
\end{proof}

\subsection{Extended lift notation}
We introduce shorthand notation for perturbing a base rough path by a lifted additive multi-index, and record basic consistency properties of this notation.

Thanks to Lemma \ref{lem:X_boxplus_1_equals_X} we can now broaden Definition \ref{def:X pow i (417)} to cover full multi-indices.
Given a multi-index

$$
I=(0,I^{1},\dots,I^{\lfloor p\rfloor}),
$$

whose components are additive functionals with values in the appropriate tensor powers of $V$, the lemma ensures that the construction of $X^{I}$ is compatible with our earlier notation for $\mathbb{1}^{I}$.

\begin{definition}\label{def:X_boxplus_lift}
Let $p\in\mathbb{R}_{\ge 1}$ and $k\in\{0,\dots,\lfloor p\rfloor\}$.
For $X\in\Omega^p(V)$ and $I\in\mathfrak{I}^{p}(V)$ we set

$$
\qquad X^{I}\coloneqq X\boxplus\mathbb{1}(\lfloor p\rfloor)^{I}.
$$

\end{definition}

\begin{remark}
Henceforth every occurrence of $\mathbb{1}^{I}$ is to be understood as the shorthand $\mathbb{1}(\lfloor p\rfloor)^{I}$.
Because $\mathbb{1}(\lfloor p\rfloor)$ is the neutral element for $\boxplus$,

$$
\mathbb{1}(\lfloor p\rfloor)^{I}
  =\mathbb{1}(\lfloor p\rfloor)\boxplus\mathbb{1}(\lfloor p\rfloor)^{I},
$$

independently of Definition \ref{def:X_boxplus_lift}.

Consequently, that definition genuinely extends Definition \ref{def:1_pow_I}: the two coincide when $X=\mathbb{1}(\lfloor p\rfloor)$.
\end{remark}

In what follows, we continue to establish properties of the key objects we have defined in this work. In particular, we now prove a property of the lift map.

\begin{lemma}\label{lem:unit_lift_of_zero_map}
    Let $p\geq 1.$ Let $\mathbf{0}(\lfloor p\rfloor)\in \mathfrak{I}^p(V)$ be the zero map, then 
    \[\mathbb{1}^{\mathbf{0}(\lfloor p\rfloor)}=\mathbb{1}(\lfloor p\rfloor)\in\Omega^{(\lfloor p \rfloor,1)}(V)\]
\end{lemma}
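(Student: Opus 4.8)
The plan is to prove the identity by induction on the degree, using the recursive definition of the lift map $\mathbb{1}^{(\cdot)}$ (Definition~\ref{def:1_pow_I}) together with the fact that $\operatorname{Ext}$ of a trivial (i.e. all-zero-above-level-$0$) multiplicative functional stays trivial.

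\textbf{Setup.} Write $\mathbf 0(\lfloor p\rfloor)=(0,0,\dots,0)\in\mathfrak I^p(V)$; this lies in $\mathfrak I^{p}_{\phi}(V)$ for any $\phi\in(1-1/p,1]$ (the regularity bound $\|\mathbf 0^j_{st}\|\le\omega(s,t)^\phi$ holds trivially with any control, and additivity is clear). Fix such a $\phi$ and set $q_j:=j/\phi$ as in Definition~\ref{def:1_pow_I}. We must show $(\mathbb{1}^{\mathbf 0(\lfloor p\rfloor)})^j=\mathbb{1}(\lfloor p\rfloor)^j=0$ for all $j\in\{1,\dots,\lfloor p\rfloor\}$, and additionally that the resulting functional is (trivially) a $1$-rough path, i.e. lies in $\Omega^{(\lfloor p\rfloor,1)}(V)$, which is immediate once all levels vanish since the constant functional $(1,0,\dots,0)$ is multiplicative with finite $1$-variation controlled by the zero control.

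\textbf{Induction.} The base case: by Definition~\ref{def:1_pow_I}, $(\mathbb{1}^{\mathbf 0})^1=I^1=0$. For the inductive step, suppose $(\mathbb{1}^{\mathbf 0})^j=0$ for all $j\le k$, so that $\mathbb{1}^{(0,0,\dots,0)}\big(k\big)=(1,0,\dots,0)\in T^{(k)}(V)$ is the constant unit functional, which is a (trivial) multiplicative functional of finite $q_k$-variation controlled by the zero control. By Definition~\ref{def:1_pow_I},
\[
(\mathbb{1}^{\mathbf 0})^{k+1}=\operatorname{Ext}_{k}\bigl(1,0,\dots,0\bigr)^{k+1}+0.
\]
Now $\operatorname{Ext}_{k}(1,0,\dots,0)$ is the unique extension, given by Theorem~\ref{thm:extension-theorem}, of the constant unit functional $(1,0,\dots,0)\in T^{(k)}(V)$ to a multiplicative functional of finite $p$-variation in $T^{(k+1)}(V)$; since the zero-padded functional $(1,0,\dots,0,0)\in T^{(k+1)}(V)$ is already multiplicative (the constant unit is trivially multiplicative in any truncation) and has finite $q_k$-variation controlled by the zero control, uniqueness in the extension theorem forces $\operatorname{Ext}_{k}(1,0,\dots,0)=(1,0,\dots,0,0)$, hence $(\mathbb{1}^{\mathbf 0})^{k+1}=0$. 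This closes the induction, and we conclude $\mathbb{1}^{\mathbf 0(\lfloor p\rfloor)}=(1,0,\dots,0)=\mathbb{1}(\lfloor p\rfloor)$, which visibly lies in $\Omega^{(\lfloor p\rfloor,1)}(V)$.

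\textbf{Main obstacle.} There is no serious obstacle here; the only point requiring a word of care is the claim that $\operatorname{Ext}_{k}$ applied to the trivial functional returns the trivial functional. Rather than invoking Theorem~\ref{thm:extension-theorem} directly with uniqueness, one may alternatively cite Lemma~\ref{lem:ext-is-sewing}, which identifies $\operatorname{Ext}_{k}$ with the sewing of the trivial embedding $\iota_k$: since $\iota_k(1,0,\dots,0)=(1,0,\dots,0,0)$ is already multiplicative and the sewing map fixes genuine multiplicative functionals of finite $p$-variation, $\mathscr S^{k+1}(\iota_k(1,0,\dots,0))=(1,0,\dots,0,0)$. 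Either route is a one-line verification, so the lemma follows routinely.
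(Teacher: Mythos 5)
Your proof is correct and takes essentially the same approach as the paper: induction on the degree, using the recursive definition of $\mathbb{1}^{(\cdot)}$ to reduce the $(k+1)$-st level to $\operatorname{Ext}_k$ applied to the constant unit functional plus the zero increment. The only difference is that the paper simply asserts $\operatorname{Ext}_k(1,0,\dots,0)^{k+1}=0$ without elaboration, whereas you spell out the justification — by uniqueness in the extension theorem, or equivalently by Lemma~\ref{lem:ext-is-sewing} and the fact that sewing fixes a genuine multiplicative functional. That extra step is a welcome clarification but not a change in strategy.
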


\begin{proof}
By Definition \ref{def:1_pow_I} of $\mathbb{1}^{(\cdot)}$ for $I\in \mathfrak{I}^P(V)$,\[\left(\mathbb{1}^{\mathbf{0}(\lfloor p\rfloor)}\right)^0=1\quad \text{and} \quad \left(\mathbb{1}^{\mathbf{0}(\lfloor p\rfloor)}\right)^1=\left(\mathbf{0}(\lfloor p\rfloor)\right)^1=0 \]
    Take $k\geq 1$ and assume $ \left(\mathbb{1}^{\mathbf{0}(\lfloor p\rfloor)}=\mathbb{1}(\lfloor p\rfloor)\right)^j$ for all $j=1,\ldots, k$. Then
    \begin{equation*}
         \begin{split}
             \left(\mathbb{1}^{\mathbf{0}(\lfloor p\rfloor)}\right)^{k+1}&=\operatorname{Ext}_k\left((1,(\mathbb{1}^{\mathbf{0}(\lfloor p\rfloor)})^1, \ldots,(\mathbb{1}^{\mathbf{0}(\lfloor p\rfloor)})^k)\right)^{k+1}+\mathbf{0}(\lfloor p\rfloor)^{k+1}\\&
             =\operatorname{Ext}_k\left((1,0_V,\ldots,0_{V^{\otimes \lfloor p \rfloor}})\right)^{k+1}+0\\&
             =0. \end{split}
     \end{equation*}
     The result follows by induction.
\end{proof}

\vspace{1em}
The definition below lays the groundwork for the ensuing theorem, which is a key result in our work.

\subsection{Vector space structure}
Using the bijection with $I$-space, we define scalar multiplication on $H$-space and deduce a full vector-space structure with addition given by $\boxplus$.

\begin{definition} \label{def:scalar_multiplication_on_H}
    Let $ a\in \mathbb{R}$, $p\in \mathbb{R}_{\geq 1}$, and $H\in \mathscr{H}^{p}(V). $ Then define $\odot :\mathbb{R}\times \mathscr{H}^p(V)\rightarrow \mathscr{H}^p(V)$ through:
    \[a\odot H \left(=a\odot \mathbb{1}^{\operatorname{dev}(H)}\right):=\mathbb{1}^{a \operatorname{dev}(H)}
    \]
    \end{definition}

\begin{theorem}\label{thm: Hp(V) is a vector space} Let $p \geq 1$. The space $\left( \mathscr{H}^p(V), \boxplus, \odot, \mathbb{R} \right)$ is a vector space.\end{theorem}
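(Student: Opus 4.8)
The plan is to transport the vector space structure of $\mathfrak{I}^p(V)$ — which is a genuine real vector space under pointwise operations by Corollary~\ref{cor:I-vector-space} — across the bijection $\mathbb{1}^{(\cdot)}\colon\mathfrak{I}^p(V)\to\mathscr H^p(V)$, whose two-sided inverse is $\operatorname{dev}$ (Proposition~\ref{prop:unit-dev} and Corollary~\ref{cor: 1(.) is bijective}). Any set in bijection with a vector space carries a unique vector space structure making the bijection linear, so the real content of the theorem is the compatibility statement that this induced structure has addition equal to $\boxplus$ and scalar multiplication equal to $\odot$; once that is checked the axioms are inherited for free.

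First I would record the pushed-forward operations: for $H,\widetilde H\in\mathscr H^p(V)$ put $H\boxplus'\widetilde H:=\mathbb{1}^{\operatorname{dev}(H)+\operatorname{dev}(\widetilde H)}$ and, for $a\in\mathbb R$, $a\odot'H:=\mathbb{1}^{a\,\operatorname{dev}(H)}$, where the operations on $\operatorname{dev}(H)$ are those of the vector space $\mathfrak{I}^p(V)$. By construction $(\mathscr H^p(V),\boxplus',\odot')$ is a real vector space: its zero element is $\mathbb{1}^{\mathbf 0(\lfloor p\rfloor)}$, which equals $\mathbb{1}(\lfloor p\rfloor)$ by Lemma~\ref{lem:unit_lift_of_zero_map}; the additive inverse of $H$ is $\mathbb{1}^{-\operatorname{dev}(H)}$; and $\mathbb{1}^{(\cdot)}$ is a linear isomorphism onto it. The scalar multiplication $\odot'$ coincides with $\odot$ of Definition~\ref{def:scalar_multiplication_on_H} verbatim, since $\operatorname{dev}(H)=(\mathbb{1}^{(\cdot)})^{-1}(H)$.

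The only point needing an argument is that $\boxplus'$ agrees with the operation $\boxplus$ of Theorem~\ref{thm:X_otimes_H_is_ARP} on $\mathscr H^p(V)\times\mathscr H^p(V)$. Given $H,\widetilde H\in\mathscr H^p(V)$, write $I:=\operatorname{dev}(H)$ and $\widetilde I:=\operatorname{dev}(\widetilde H)$, so that $H=\mathbb{1}^I$ and $\widetilde H=\mathbb{1}^{\widetilde I}$ by Proposition~\ref{prop:unit-dev}. Using Corollary~\ref{cor:I-vector-space}(i) I would first choose a common exponent $\phi\in(1-1/p,1]$ with $I,\widetilde I\in\mathfrak{I}^p_{\phi}(V)$ (letting the controls vary), and then invoke Theorem~\ref{thm:1I_plus_1I_tilde_eq_1IplusI} to obtain $H\boxplus\widetilde H=\mathbb{1}^I\boxplus\mathbb{1}^{\widetilde I}=\mathbb{1}^{I+\widetilde I}=\mathbb{1}^{\operatorname{dev}(H)+\operatorname{dev}(\widetilde H)}=H\boxplus'\widetilde H$. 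In particular $\boxplus$ maps $\mathscr H^p(V)\times\mathscr H^p(V)$ back into $\mathscr H^p(V)$, so it is a well-defined internal operation there. Substituting $\boxplus'=\boxplus$ and $\odot'=\odot$ into the vector space axioms already satisfied by $(\mathscr H^p(V),\boxplus',\odot')$ yields the theorem.

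For completeness I would spell out a couple of representative axioms to exhibit the mechanism: associativity reads $(\mathbb{1}^I\boxplus\mathbb{1}^{\widetilde I})\boxplus\mathbb{1}^{\widehat I}=\mathbb{1}^{(I+\widetilde I)+\widehat I}=\mathbb{1}^{I+(\widetilde I+\widehat I)}=\mathbb{1}^I\boxplus(\mathbb{1}^{\widetilde I}\boxplus\mathbb{1}^{\widehat I})$, using associativity in $\mathfrak{I}^p(V)$ after bringing $I,\widetilde I,\widehat I$ into a common $\mathfrak{I}^p_{\phi}(V)$ by iterating Corollary~\ref{cor:I-vector-space}(i); distributivity $a\odot(H\boxplus\widetilde H)=(a\odot H)\boxplus(a\odot\widetilde H)$ follows from $a(I+\widetilde I)=aI+a\widetilde I$ in $\mathfrak{I}^p(V)$ together with Theorem~\ref{thm:1I_plus_1I_tilde_eq_1IplusI} applied twice; and $1\odot H=\mathbb{1}^{1\cdot\operatorname{dev}(H)}=\mathbb{1}^{\operatorname{dev}(H)}=H$. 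The only genuine obstacle is the bookkeeping of this common-$\phi$ normalisation inside every multi-term identity, which is dispatched uniformly by repeated use of Corollary~\ref{cor:I-vector-space}(i); everything else is a formal transport of structure along $\mathbb{1}^{(\cdot)}$.
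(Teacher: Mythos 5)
Your proposal is correct and follows essentially the same route as the paper: transport the vector space structure of $\mathfrak{I}^p(V)$ along the bijection $\mathbb{1}^{(\cdot)}$ with inverse $\operatorname{dev}$, identify the induced addition with $\boxplus$ via Theorem~\ref{thm:1I_plus_1I_tilde_eq_1IplusI} and the induced scalar multiplication with $\odot$ by Definition~\ref{def:scalar_multiplication_on_H}. Your explicit common-$\phi$ normalisation via Corollary~\ref{cor:I-vector-space}(i) before applying Theorem~\ref{thm:1I_plus_1I_tilde_eq_1IplusI} is a small point the paper's proof leaves implicit, but it is the same argument.
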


\begin{proof} 
Given that the bijection $\mathbb{1} : \mathfrak{I}^p(V) \to \mathscr{H}^p(V)$ is defined with inverse $\mathrm{dev} : \mathscr{H}^p(V) \to \mathfrak{I}^p(V)$, and that $\mathfrak{I}^p(V)$ is a vector space over $\mathbb{R}$, we aim to show that $\mathscr{H}^p(V)$ is also a vector space under the operations $\boxplus$ and $\odot$.

By a well-known theorem, if there exists a bijection between a vector space and another set, the latter can be endowed with a vector space structure by defining operations induced by the bijection. We denote these induced operations by $\boxplus_{\mathrm{ind}}$ and $\odot_{\mathrm{ind}}$.

For all $H_1, H_2 \in \mathscr{H}^p(V)$ and $\lambda \in \mathbb{R}$, the induced operations are defined as:
\begin{itemize}
    \item \textbf{Addition $\boxplus_{\mathrm{ind}}$:}
    \[
    H_1 \boxplus_{\mathrm{ind}} H_2 = \mathbb{1} ^{  \mathrm{dev}(H_1) + \mathrm{dev}(H_2)  }.
    \]
    
    \item \textbf{Scalar Multiplication $\odot_{\mathrm{ind}}$:}
    \[
    \lambda \odot_{\mathrm{ind}} H = \mathbb{1} ^{ \lambda \cdot \mathrm{dev}(H)}.
    \]
\end{itemize}

We know by Theorem \ref{thm:1I_plus_1I_tilde_eq_1IplusI} that $\boxplus$ coincides with $\boxplus_{\mathrm{ind}}$. Additionally, by Definition \ref{def:scalar_multiplication_on_H}, $\odot$ coincides with $\odot_{\mathrm{ind}}$.

Since $\mathfrak{I}^p(V)$ satisfies the vector space axioms and $\boxplus$ and $\odot$ are defined to coincide with the induced operations $\boxplus_{\mathrm{ind}}$ and $\odot_{\mathrm{ind}}$, it follows that $\mathscr{H}^p(V)$ satisfies these axioms under $\boxplus$ and $\odot$.

Therefore, $\left( \mathscr{H}^p(V), \boxplus, \odot, \mathbb{R} \right)$ is a vector space.

\end{proof}

\subsection{Almost \textit{H}-space}
We now broaden admissible perturbations by allowing $H$ to be almost multiplicative while retaining the same level-wise regularity bound.

Now, we task ourselves with the question of whether extending $H$-space to allow elements $H$ to be only almost rough paths, rather than rough paths, may expand the space of displacements $X\boxplus H$ of an arbitrary rough path $X$, the answer to which will ultimately be that it does not. Before answering this question, some preliminaries are in order:

\begin{definition}
Let $p\ge1$, $\phi \in(1-1/p,1]$, and $\theta>1$ be real numbers, and let $\omega$ be a control. Then define
$$\begin{aligned}\mathscr{H}_{\phi,\theta,\omega}^{\text{am},p}(V)=\{H\in\Omega^{\text{am},p}_{\omega,\theta}(V)\,| &\, \exists K>0 \text{ s.t. }\forall j \in \{1,...,\lfloor p \rfloor\}\text{ and } \forall(s,t)\in\triangle_J, \\
&\|H^j_{st}\|\le K\,\omega(s,t)^{\phi}\}.
\end{aligned}$$

\medskip
Moreover, we define all possible partial and full unions:
\[
\begin{aligned}
\mathscr{H}_{\omega,\theta}^{\mathrm{am},p}(V)
&:=\bigcup_{\phi\in(1-1/p,\,1]}\mathscr{H}_{\phi,\theta,\omega}^{\mathrm{am},p}(V), 
&\quad
\mathscr{H}_{\phi,\theta}^{\mathrm{am},p}(V)
&:=\;\;\bigcup_{\omega\,\text{control}}\:\:\:\mathscr{H}_{\phi,\theta,\omega}^{\mathrm{am},p}(V),\\[6pt]
\mathscr{H}_{\phi,\omega}^{\mathrm{am},p}(V)
&:=\quad\,\,\,\bigcup_{\theta>1}\quad\:\mathscr{H}_{\phi,\theta,\omega}^{\mathrm{am},p}(V), 
&\quad
\mathscr{H}_{\theta}^{\mathrm{am},p}(V)
&:=\bigcup_{\substack{\phi\in(1-1/p,\,1]\\\omega\,\text{control}}}
   \mathscr{H}_{\phi,\theta,\omega}^{\mathrm{am},p}(V),\\[6pt]
\mathscr{H}_{\omega}^{\mathrm{am},p}(V)
&:=\,\bigcup_{\substack{\phi\in(1-1/p,\,1]\\\theta>1}}
   \mathscr{H}_{\phi,\theta,\omega}^{\mathrm{am},p}(V), 
&\quad
\mathscr{H}_{\phi}^{\mathrm{am},p}(V)
&:=\;\;\,\bigcup_{\substack{\omega\,\text{control}\\\theta>1}}
   \:\:\,\mathscr{H}_{\phi,\theta,\omega}^{\mathrm{am},p}(V),\\[6pt]
\mathscr{H}^{\mathrm{am},p}(V)
&:=\,\bigcup_{\substack{\phi\in(1-1/p,\,1]\\\omega\,\text{control}\\\theta>1}}
   \mathscr{H}_{\phi,\theta,\omega}^{\mathrm{am},p}(V).
\end{aligned}
\]
We refer by ``almost $H$-space'' to the largest union, namely $\mathscr{H}^{\mathrm{am},p}(V)$.
\end{definition}

The theorem below, like Theorem \ref{thm:X_otimes_H_is_ARP}, will be stated without proof.  It is merely a straightforward generalisation of that result, and its proof is virtually identical –  the almost-multiplicativity of $H$ does not materially affect the mechanics of the proof.

\begin{theorem}
\label{thm:X_otimes_H_is_ARP_when_H_is_an_ARP}Let $p\ge1$, $\phi\in(1-\tfrac1p,1]$, and let $\omega$ be a control. For $X\in\Omega^{(k,p)}(V)$ and $H\in\mathscr{H}_{\phi,\omega}^{\mathrm{am},(k,p)}(V)$ the map \((X\oplus H):\triangle_J \to T^{(k)}(V) \) is a $(\phi+1/p)$–almost $p$–rough path. We then set
\[
  X\boxplus H:=\mathscr{S}^k(X\oplus H).
\]
\end{theorem}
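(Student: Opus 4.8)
The plan is to verify the two claims of Theorem~\ref{thm:X_otimes_H_is_ARP_when_H_is_an_ARP} essentially by replaying the proof of Theorem~\ref{thm:X_otimes_H_is_ARP}, tracking only the one structural change: $H$ is now merely $\theta_H$-almost multiplicative rather than multiplicative, so the defect $\Delta(H)^m_{s,u,t}$ no longer vanishes but is bounded by $K_H\,\omega(s,t)^{\theta_H}$ with $\theta_H>1$. First I would fix the constants: choose a single control $\omega$ controlling $X$ (as an element of $\Omega^{(k,p)}(V)$) and witnessing $H\in\mathscr H^{\mathrm{am},(k,p)}_{\phi,\theta_H,\omega}(V)$, and set $\theta:=\min\{\phi+\tfrac1p,\;\theta_H\}>1$. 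Then I would expand the level-$m$ defect of $X\oplus H$ exactly as in Step~1 of the proof of Theorem~\ref{thm:X_otimes_H_is_ARP}; the algebra is identical except that the term $\sum_{i=0}^m H^i_{s,u}\otimes H^{m-i}_{u,t}-H^m_{s,t}=\Delta(H)^m_{s,u,t}$ survives. So one gets
\[
\Delta^m_{s,u,t}(X\oplus H)=\Delta(H)^m_{s,u,t}+\sum_{i=1}^{m-1}\bigl(X^i_{s,u}\otimes H^{m-i}_{u,t}+H^i_{s,u}\otimes X^{m-i}_{u,t}\bigr),
\]
and the cross terms are bounded by a constant times $\omega(s,t)^{\phi+1/p}$ by the same admissibility, $p$-variation, and super-additivity estimates, while $\|\Delta(H)^m_{s,u,t}\|\le K_H\,\omega(s,t)^{\theta_H}\le K_H\,\omega(S,T)^{\theta_H-\theta}\,\omega(s,t)^{\theta}$. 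Combining and absorbing $\omega(S,T)$-powers (all exponents are nonnegative since $\phi+1/p\ge\theta$ and $\theta_H\ge\theta$) yields $\|\Delta^m_{s,u,t}(X\oplus H)\|\le C\,\omega(s,t)^{\theta}$, so $X\oplus H$ is $\theta$-almost multiplicative of degree $k$.

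Next I would handle finite $p$-variation of $X\oplus H$: since $H\in\mathscr H^{\mathrm{am},(k,p)}_{\phi,\theta_H,\omega}(V)\subset\Omega^{\mathrm{am},(k,p)}_{\theta_H,\omega}(V)$, $H$ has finite $p$-variation controlled by $\omega$ with some constant, and then $\|(X\oplus H)^j_{s,t}\|\le\|X^j_{s,t}\|+\|H^j_{s,t}\|$ gives finite $p$-variation with constant $K_X+K_{H,\mathrm{var}}$, exactly as in Step~3(a) of the earlier proof. Combining with the previous paragraph, $X\oplus H\in\Omega^{\mathrm{am},(k,p)}_{\theta,\omega}(V)$, which lies in the domain of the global sewing map $\mathscr S^k$ of Definition~\ref{def:global-sewing}; hence $X\boxplus H:=\mathscr S^k(X\oplus H)$ is well-defined, proving the statement. (If one also wishes to record, in parallel with Theorem~\ref{thm:X_otimes_H_is_ARP}, that $\mathscr S^k(X\otimes H)=X\boxplus H$, the identical Step~2 closeness estimate $\|((X\oplus H)-(X\otimes H))^m_{s,t}\|\le K^{\mathrm{cl}}\omega(s,t)^{\phi+1/p}\le K^{\mathrm{cl}}\omega(S,T)^{\phi+1/p-\theta}\omega(s,t)^\theta$ together with the finite $p$-variation of $X\otimes H$ from Step~3(b)—whose proof uses only admissibility, the neo-classical inequality, and the $p$-variation bounds on $X$ and $H$, none of which involve multiplicativity of $H$—lets one invoke Corollary~\ref{cor:X_Y_close_implies_SX_eq_SY} to conclude.)

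I do not expect a genuine obstacle here; the only point requiring care is bookkeeping of the exponent: one must use $\theta=\min\{\phi+1/p,\theta_H\}$ rather than $\phi+1/p$, because the defect of $X\oplus H$ is now only guaranteed to be bounded supra-linearly with exponent $\theta_H$, which may be smaller than $\phi+1/p$. Everything else—admissibility of norms, super-additivity of $\omega$, the bound $\omega(s,t)\le\omega(S,T)$, and the structure of the level-$m$ expansion of a point-wise sum—is exactly as in the proof of Theorem~\ref{thm:X_otimes_H_is_ARP}, so the argument is a routine adaptation, which is why the authors state it without proof.
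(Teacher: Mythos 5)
Your proposal is correct and follows exactly the route the paper intends: the paper states Theorem~\ref{thm:X_otimes_H_is_ARP_when_H_is_an_ARP} without proof precisely because it is the routine adaptation of Theorem~\ref{thm:X_otimes_H_is_ARP} that you carry out, with the sole new term $\Delta(H)^m_{s,u,t}$ absorbed into the defect estimate and well-definedness of $X\boxplus H$ then guaranteed by the exponent- and control-independence of $\mathscr S^k$ (Lemma~\ref{lem:control-independence-sewing}, Proposition~\ref{prop:global-sewing-well-defined}). Your exponent bookkeeping $\theta=\min\{\phi+\tfrac1p,\theta_H\}$ is in fact the correct refinement: the literal claim that $X\oplus H$ is $(\phi+\tfrac1p)$-almost multiplicative only holds when $\theta_H\ge\phi+\tfrac1p$ (take $X=\mathbb{1}$ to see this), which is consistent with how the paper later uses the result via $\theta^\ast=\min\{\theta,\phi+\tfrac1p\}$ in Theorem~\ref{thm:X_boxplus_almost_H_independent}, so nothing downstream is affected.
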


The below proposition shows that the map $\mathscr{S}^{\lfloor p \rfloor}:\mathscr{H}_{\phi,\omega}^{\text{am},p}(V)\subset\Omega^{\text{am},p}_\omega(V)\to\Omega^p_\omega(V)$ takes values, in fact, in $\mathscr{H}_{\phi,\omega}^{p}(V)$. In other words, the sewing of an \textit{almost} multiplicative $H$ in \textit{almost} $H$-space is a \textit{fully} multiplicative $H$ in $H$-space.

\begin{proposition}\label{prop:S_preserves_H_space}

Let $p \geq 1$. Let $\omega$ be a control. Consider $\widetilde{H} \in \mathscr{H}_{\phi,\omega}^{\text{am},p}(V)$. Then:
\[
\mathscr{S}^{\lfloor p \rfloor}(\widetilde{H}) \in \mathscr{H}_{\phi,\omega}^p(V).
\]
\end{proposition}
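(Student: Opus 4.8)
The plan is to show two things about $\bar H := \mathscr{S}^{\lfloor p\rfloor}(\widetilde H)$: that it lies in $\Omega^p_\omega(V)$ (which is already guaranteed by the rough sewing lemma, Theorem~\ref{thm:Rough-Sewing-Lemma}), and that it satisfies the level-wise bound $\|\bar H^j_{st}\|\le K'\,\omega(s,t)^{\phi}$ for all $j\in\{1,\dots,\lfloor p\rfloor\}$ and some constant $K'\ge0$. The first point is immediate: since $\widetilde H\in\mathscr H^{\mathrm{am},p}_{\phi,\omega}(V)\subset\Omega^{\mathrm{am},p}_{\omega,\theta}(V)$ for some $\theta>1$, Theorem~\ref{thm:Rough-Sewing-Lemma}(i) gives $\bar H\in\Omega^p_\omega(V)$ together with a constant $K_{\mathscr S}$ (depending only on $p,\theta,\omega(S,T)$) such that
\[
\|\bar H^j_{st}-\widetilde H^j_{st}\|\le K_{\mathscr S}\,\omega(s,t)^{\theta},\qquad j=1,\dots,\lfloor p\rfloor,\ (s,t)\in\triangle_J.
\]
So the real content is the second point, the uniform $\omega^\phi$-bound, which is what places $\bar H$ in $\mathscr H^p_{\phi,\omega}(V)$ rather than merely in $\Omega^p_\omega(V)$.

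The key step is to combine the sewing-closeness bound above with the hypothesis bound on $\widetilde H$. By definition of $\mathscr H^{\mathrm{am},p}_{\phi,\omega}(V)$ there is a constant $K_{\widetilde H}\ge0$ with $\|\widetilde H^j_{st}\|\le K_{\widetilde H}\,\omega(s,t)^{\phi}$ for all $j$ and all $(s,t)$. Then by the triangle inequality,
\[
\|\bar H^j_{st}\|\le \|\widetilde H^j_{st}\|+\|\bar H^j_{st}-\widetilde H^j_{st}\|\le K_{\widetilde H}\,\omega(s,t)^{\phi}+K_{\mathscr S}\,\omega(s,t)^{\theta}.
\]
Now $\theta=\phi+\text{(something positive)}$ is not automatic here — $\theta$ is only known to be $>1$ — but since $\phi\in(1-1/p,1]$ we have $\phi\le 1<\theta$ is also not automatic. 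The clean move is instead to note $\phi\le 1$ and handle the comparison of the exponents $\phi$ and $\theta$ directly via $\omega(s,t)\le\omega(S,T)$: writing $\omega(s,t)^{\theta}=\omega(s,t)^{\phi}\,\omega(s,t)^{\theta-\phi}$, if $\theta\ge\phi$ then $\omega(s,t)^{\theta-\phi}\le\max\{1,\omega(S,T)^{\theta-\phi}\}$, while if $\theta<\phi$ one instead bounds $\omega(s,t)^{\theta}\le\max\{1,\omega(S,T)\}\cdot\omega(s,t)^{\phi}\cdot\omega(s,t)^{\theta-\phi}$ — more simply, in all cases $\omega(s,t)^{\theta}=\omega(s,t)^{\phi}\cdot\omega(s,t)^{\theta-\phi}\le \bigl(1+\omega(S,T)\bigr)^{|\theta-\phi|}\,\omega(s,t)^{\phi}$. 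Hence there is a constant $C\ge0$ (depending on $\omega(S,T)$, $\theta$, $\phi$) with $\omega(s,t)^{\theta}\le C\,\omega(s,t)^{\phi}$ on $\triangle_J$, and therefore
\[
\|\bar H^j_{st}\|\le \bigl(K_{\widetilde H}+C\,K_{\mathscr S}\bigr)\,\omega(s,t)^{\phi},\qquad j=1,\dots,\lfloor p\rfloor,\ (s,t)\in\triangle_J.
\]
Setting $K':=K_{\widetilde H}+C\,K_{\mathscr S}$ gives exactly the defining inequality of $\mathscr H^p_{\phi,\omega}(V)$, so $\bar H\in\mathscr H^p_{\phi,\omega}(V)$, as required.

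I do not expect a genuine obstacle here; the only mild subtlety is that the relation between $\theta$ and $\phi$ is not fixed by the hypotheses, so one must compare the two exponents using boundedness of $\omega$ on $\triangle_J$ rather than assuming $\theta\ge\phi$ outright. (If one prefers, one can invoke Theorem~\ref{thm:X_otimes_H_is_ARP_when_H_is_an_ARP} with $X=\mathbb 1(\lfloor p\rfloor)$ to see that one may always take $\theta=\phi+1/p$, in which case $\theta>\phi$ and the comparison is trivial — but the self-contained argument above avoids even that.) Everything else is the triangle inequality plus the uniform control $\omega(s,t)\le\omega(S,T)$, both of which are standard in this setting.
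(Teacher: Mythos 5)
Your proof is correct and follows essentially the same route as the paper's: the sewing-closeness bound from Theorem~\ref{thm:Rough-Sewing-Lemma}, the hypothesis bound $\|\widetilde H^j_{st}\|\le K_{\widetilde H}\,\omega(s,t)^{\phi}$, a triangle inequality, and absorption of the extra factor $\omega(s,t)^{\theta-\phi}\le\omega(S,T)^{\theta-\phi}$. One small cleanup: the relation $\theta>\phi$ \emph{is} automatic here, since $\theta>1$ by the definition of $\theta$-almost multiplicativity and $\phi\le 1$ because $\phi\in(1-\tfrac1p,1]$; this is exactly the observation the paper makes, and it renders your case distinction unnecessary. That is fortunate, because your catch-all estimate $\omega(s,t)^{\theta}\le\bigl(1+\omega(S,T)\bigr)^{|\theta-\phi|}\omega(s,t)^{\phi}$ would actually be false in the (vacuous) case $\theta<\phi$ when $\omega(s,t)$ is small, as $\omega$ vanishes on the diagonal and cannot be bounded below. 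Deleting that case, your argument coincides with the paper's proof.
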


\begin{proof}
Since $\widetilde{H} \in \mathscr{H}_{\phi,\omega}^{\text{am},p}(V)$, there exist $\phi \in (1 - \frac{1}{p}, 1]$, a control $\omega$ and a constant $K_1\ge0$ such that, for all $ j = 1, \dots, \lfloor p \rfloor$, and for all $ (s,t) \in \triangle_J$:
\[
\|\widetilde H_{st}^j\| \leq K_1\omega(s,t)^\phi.
\]

Setting $\mathscr{S}^{\lfloor p \rfloor}(\widetilde{H}) =: H$, we know that there exists a $ K_2 > 0$ and $\theta > 1$ such that for all $ j = 1, \dots, \lfloor p \rfloor$ and for all $ (s,t) \in \triangle_J$:
\[
\|H_{st}^j - \widetilde{H}_{st}^j\| \leq K_2\, \omega(s,t)^\theta = K_2\,\omega(s,t)^{\theta-\phi}\,\omega(s,t)^{\phi}.
\]

Let $\varepsilon := \theta - \phi $. Because $\theta>1$ and $\phi\le 1$, we have $\theta>\phi$, and thus $\varepsilon>0$. Then, by the super-additivity and positivity of $\omega$, 
\[
\|H_{st}^j - \widetilde{H}_{st}^j\| \leq K_2\, \omega(S,T)^\varepsilon \, \omega(s,t)^\phi =: K_3 \, \omega(s,t)^\phi.
\]

Hence, we know that for all $ j = 1, \dots, \lfloor p \rfloor$ and for all $ (s,t) \in \triangle_J$:
\[
\|H_{st}^j\| \leq \|\widetilde{H}_{st}^j\| + \|\widetilde{H}_{st}^j - {H}_{st}^j\| \leq K_1 \omega(s,t)^\phi + K_3 \, \omega(s,t)^\phi =: K \, \omega(s,t)^\phi.
\]

Thus, $\mathscr{S}^{\lfloor p \rfloor}(\widetilde{H}) = H \in \mathscr{H}_{\phi,\omega}^p(V)$. 
\end{proof}

Proposition~\ref{prop:S_preserves_H_space} is important to ensure that the statement of Theorem~\ref{thm:X_boxplus_almost_H_independent} below is well-defined, in particular the second equality in Equation~\eqref{eq132}. This theorem below shows that if two almost multiplicative functionals $\widetilde{H}$ and $\widehat{H}$ in almost $H$-space have the same associated rough path $H$, then the displacements of $X$ they induce, $X \boxplus \widetilde{H} $ and $ X \boxplus \widehat{H}$ must be equal.

\subsection{Displacements depend on sewing}
We show that, for almost $H$-space perturbations, the displacement $X \boxplus \tilde H$ depends only on the sewn rough path $S(\tilde H)$, and we prove the corresponding converse.

\begin{theorem}\label{thm:X_boxplus_almost_H_independent}Let $p \in [1, \infty)$, $\phi \in (1 - \frac{1}{p}, 1]$, and $\omega$ be a control. Let $\theta > 1$. Let $\widetilde{H}, \widehat{H} \in \mathscr{H}_{\phi, \theta, \omega}^{\text{am},p}(V)$. Assume that:
\[ \mathscr{S}^{\lfloor p \rfloor}(\widetilde H) = \mathscr{S}^{\lfloor p \rfloor}(\widehat{H}) =: H. \]
Then, for any $X \in \Omega_{\omega}^p(V)$:
\begin{equation}\label{eq132}
X \boxplus \widetilde{H} = X \boxplus \widehat{H} = X \boxplus H. 
\end{equation}
\end{theorem}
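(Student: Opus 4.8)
The plan is to show that each of $X\boxplus\widetilde H$ and $X\boxplus\widehat H$ equals $X\boxplus H$; since the hypothesis is symmetric in $\widetilde H$ and $\widehat H$, it suffices to prove $X\boxplus\widetilde H = X\boxplus H$ and then apply the same argument to $\widehat H$. Recall from Theorem~\ref{thm:X_otimes_H_is_ARP_when_H_is_an_ARP} that $X\oplus\widetilde H$ is a $\theta'$-almost $p$-rough path with $\theta':=\phi+\tfrac1p>1$ controlled by $\omega$ (after enlarging the control to absorb that of $X$), so $X\boxplus\widetilde H := \mathscr S^{\lfloor p\rfloor}(X\oplus\widetilde H)$ is well-defined; likewise $X\oplus H$ is a $\theta'$-almost $p$-rough path since $H\in\mathscr H^p_{\phi,\omega}(V)$ by Proposition~\ref{prop:S_preserves_H_space}, giving $X\boxplus H := \mathscr S^{\lfloor p\rfloor}(X\oplus H)$.

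\textbf{Key steps.} First I would fix a common control: replace $\omega$ by a control dominating both the $p$-variation of $X$ and the bounds witnessing $\widetilde H\in\mathscr H^{\mathrm{am},p}_{\phi,\theta,\omega}(V)$ and $H\in\mathscr H^p_{\phi,\omega}(V)$ (controls form a convex cone, Lemma~\ref{lem: controls are a convex cone}), and set $\theta^\ast:=\min\{\theta,\phi+\tfrac1p\}>1$. Second, by the rough sewing lemma (Theorem~\ref{thm:Rough-Sewing-Lemma}) applied to $\widetilde H$, there is a constant $C_1$ with $\|H^j_{s,t}-\widetilde H^j_{s,t}\|\le C_1\,\omega(s,t)^{\theta}$ for all $j$ and $(s,t)\in\triangle_J$, since $H=\mathscr S^{\lfloor p\rfloor}(\widetilde H)$. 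Third, I compare $X\oplus\widetilde H$ and $X\oplus H$ level by level: for each $j$,
\[
\bigl((X\oplus H)-(X\oplus\widetilde H)\bigr)^j_{s,t}
= H^j_{s,t}-\widetilde H^j_{s,t},
\]
so $\|\bigl((X\oplus H)-(X\oplus\widetilde H)\bigr)^j_{s,t}\|\le C_1\,\omega(s,t)^{\theta}\le C_1\,\omega(S,T)^{\theta-\theta^\ast}\,\omega(s,t)^{\theta^\ast}$, i.e.\ the two functionals are $\omega^{\theta^\ast}$-close in the sense of \eqref{ineq: ineq in X and Y close => S(X)=S(Y) corollary}. Fourth, I verify the hypotheses of Corollary~\ref{cor:X_Y_close_implies_SX_eq_SY} with the pair $(X\oplus\widetilde H, X\oplus H)$ and exponent $\theta^\ast$: $X\oplus\widetilde H$ is $\theta^\ast$-almost multiplicative of finite $p$-variation controlled by $\omega$ (it is $\theta'$-almost multiplicative and $\theta'\ge\theta^\ast$, using the same rescaling of $\omega(s,t)^{\theta'}$ as in Step~1 of Lemma~\ref{lem:control-independence-sewing}), and the high-level $p$-variation hypothesis \eqref{eq:hi-level-pvar-assumption} for $Y=X\oplus H$ holds because $X\oplus H$ has finite $p$-variation controlled by $\omega$ (as $X\in\Omega^p_\omega(V)$ and $H\in\mathscr H^p_{\phi,\omega}(V)\subset\Omega^p_\omega(V)$). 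The corollary then yields
\[
\mathscr S^{\lfloor p\rfloor}(X\oplus H)=\mathscr S^{\lfloor p\rfloor}(X\oplus\widetilde H),
\]
that is, $X\boxplus H = X\boxplus\widetilde H$. Running the identical argument with $\widehat H$ in place of $\widetilde H$ gives $X\boxplus H = X\boxplus\widehat H$, and chaining the two equalities completes the proof.

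\textbf{Main obstacle.} The only delicate point is bookkeeping the exponents and controls so that Corollary~\ref{cor:X_Y_close_implies_SX_eq_SY} applies cleanly: one must pick a single $\theta^\ast>1$ below both $\theta$ and $\phi+\tfrac1p$, check that the ``$X$''-slot functional $X\oplus\widetilde H$ is genuinely $\theta^\ast$-almost multiplicative with finite $p$-variation (via the standard $\omega(s,t)^{\theta'}\le\omega(S,T)^{\theta'-\theta^\ast}\omega(s,t)^{\theta^\ast}$ trick and super-additivity), and confirm the ``high-level $p$-variation'' condition \eqref{eq:hi-level-pvar-assumption} for the ``$Y$''-slot functional $X\oplus H$ — which is immediate here since $X\oplus H$ lies in $\Omega^p_\omega(V)$ up to a multiplicative constant. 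None of this is substantive; the conceptual content is entirely contained in the already-proved Theorem~\ref{thm:X_otimes_H_is_ARP_when_H_is_an_ARP}, Proposition~\ref{prop:S_preserves_H_space}, and Corollary~\ref{cor:X_Y_close_implies_SX_eq_SY}.
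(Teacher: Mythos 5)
Your proof is correct, and it follows the same overall strategy as the paper's: obtain the $\omega^\theta$-closeness of $\widetilde H$ (resp.\ $\widehat H$) to $H=\mathscr S^{\lfloor p\rfloor}(\widetilde H)$ from the rough sewing lemma, fix the exponent $\theta^\ast=\min\{\theta,\phi+\tfrac1p\}$, and conclude via Corollary~\ref{cor:X_Y_close_implies_SX_eq_SY}, using Proposition~\ref{prop:S_preserves_H_space} to make sense of $X\boxplus H$. The one genuine difference is the choice of decomposition: the paper works with the tensor-product functionals and compares $X\otimes\widetilde H$, $X\otimes\widehat H$ and $X\otimes H$, which forces a level-wise expansion $\sum_k \|X^k\|\,\|(\widetilde H-H)^{j-k}\|$ controlled through admissible norms and the $p$-variation bounds of $X$; you instead compare $X\oplus\widetilde H$ with $X\oplus H$, where the level-$j$ difference collapses identically to $H^j-\widetilde H^j$, so the closeness estimate is immediate and no tensor estimate is needed. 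Since $\boxplus$ is by definition $\mathscr S^{\lfloor p\rfloor}$ of the $\oplus$-functional (Theorems~\ref{thm:X_otimes_H_is_ARP} and~\ref{thm:X_otimes_H_is_ARP_when_H_is_an_ARP}), your route is, if anything, slightly more direct than the paper's (which implicitly passes through the $\otimes$/$\oplus$ identification); the paper's $\otimes$-based computation buys nothing extra here beyond consistency with its earlier Remark-B-style formulation. Your symmetric chaining through $X\boxplus H$ reproduces the full three-way equality exactly as in the paper.
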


\begin{proof}
Set $\mathscr{S}:=\mathscr{S}^{\lfloor p \rfloor}$. To begin with, we observe that, by Proposition \ref{prop:S_preserves_H_space} we know that $H=\mathscr{S}(\widetilde H)$ is in $\mathscr{H}^{p}_{\phi,\omega}(V)$ since $\widetilde{H}$ is in $\mathscr{H}^{\text{am},p}_{\phi,\omega}(V)$. By Theorem \ref{thm:X_otimes_H_is_ARP}.,  we thus know that $X \otimes H $ is a $(\phi+1/p)$–almost $p$-rough path controlled by $\omega$.

By Theorem \ref{thm:Rough-Sewing-Lemma}, since $\mathscr{S}(\widetilde{H}) = H$, and $\widetilde{H}$ is a $\theta$-almost $p$-rough path controlled by $\omega$, we have that there exists a $ K_1 \geq 0$ such that , for all $ j = 1, \dots, \lfloor p \rfloor$ and for all $ (s,t) \in \triangle_J$:
\begin{equation}\label{eq31}
\|\widetilde{H}_{st}^j - H_{st}^j\| \leq K_1 \, \omega(s,t)^\theta. 
\end{equation}

Likewise, since $\mathscr{S}(\widehat{H}) = H$, and $\widehat{H}$ is also a $\theta$-almost $p$-rough path controlled by $\omega$, we have that there exists a $ K_2 \geq 0$ such that , for all $ j = 1, \dots, \lfloor p \rfloor$ and for all $ (s,t) \in \triangle_J$:
\begin{equation}\label{eq32}
\|\widehat{H}_{st}^j - H_{st}^j\| \leq K_2 \, \omega(s,t)^\theta. 
\end{equation}

Set $\theta^{\ast} := \min\{\theta,\phi+1/p\}>1$. From $\omega(s,t)^\theta \le \omega(S,T)^{\theta-\theta^{\ast}}\omega(s,t)^{\theta^{\ast}}$ and Theorem \ref{thm:X_otimes_H_is_ARP_when_H_is_an_ARP},  $X\otimes\widetilde H$ and $X\otimes\widetilde{\widetilde H}$, being $(\phi+1/p)$–almost, are also $\theta^{\ast}$–almost (up to constants).

Now, using \eqref{eq31} and \eqref{eq32}, we can compute the following estimate:
\begin{align*}
&\phantom{=}\:\,\,\bigg\|\Big( (X \otimes \widetilde{H})_{st} - (X \otimes \widehat{H})_{st} \Big)^j\bigg\|\\
&=\bigg\|\Big( X_{st} \otimes \widetilde{H}_{st} - X_{st} \otimes \widehat{H}_{st} \Big)^j\bigg\|\\
&= \bigg\|\Big( X_{st} \otimes \widetilde{H}_{st} - X_{st} \otimes H_{st} + X_{st} \otimes H_{st} - X_{st} \otimes \widehat{H}_{st} \Big)^j\bigg\| \\[4pt]
&\le \bigg\|\Big( X_{st} \otimes (\widetilde{H} - H)_{st} \Big)^j\bigg\| + \bigg\|\Big( X_{st} \otimes (H - \widehat{H})_{st} \Big)^j\bigg\| \\[4pt]
&= \sum_{k=0}^j \Bigl( \|X_{st}^k\|\,\|(\widetilde{H}_{st}-H_{st})^{j-k}\| + \|X_{st}^k\|\,\|(H_{st}-\widehat{H}_{st})^{\,j-k}\| \Bigr) \\[4pt]
&\le \sum_{k=0}^j \frac{\omega(s,t)^{k/p}}{\beta(p)(\frac{k}p)!}\,K_1\,\omega(s,t)^\theta + \frac{\omega(s,t)^{k/p}}{\beta(p)(\frac{k}p)!}\,K_2\,\omega(s,t)^\theta \\[4pt]
&\le \Bigl( \,(K_1 + K_2)\sum_{k=0}^j \frac{\omega(s,t)^{k/p}}{\beta(p)(\frac{k}p)!} \Bigr)\, \omega(S,T)^{\theta-\theta^{\ast}}\omega(s,t)^{\theta^*} \\[4pt]
&=: K_3\,\omega(s,t)^{\theta^\ast}.
\end{align*}

Hence, by Corollary \ref{cor:X_Y_close_implies_SX_eq_SY} applied with exponent $\theta^{\ast}$, we obtain
\[
\mathscr{S}(X \otimes \widetilde{H}) = \mathscr{S}(X \otimes \widehat{H}).
\]

By the same reasoning with exponent $\theta^{\ast}$ (using \eqref{eq31}), we also have
\[
\mathscr{S}(X \otimes H) = \mathscr{S}(X \otimes \widetilde{H}).
\]

Finally, by definition of the operation $\boxplus$ in Theorem \ref{thm:X_otimes_H_is_ARP_when_H_is_an_ARP}, we know that:
\[
X \boxplus H = \mathscr{S}(X \otimes H), \quad X \boxplus \widetilde{H} = \mathscr{S}(X \otimes \widetilde{H}), \quad \text{and} \quad X \boxplus \widehat{H} = \mathscr{S}(X \otimes \widehat{H}).
\]
Therefore, we have that:
\[
X \boxplus H = X \boxplus \widetilde{H} = X \boxplus \widehat{H}.
\]
\end{proof}

The following theorem provides a converse result to the one of Theorem \ref{thm:X_boxplus_almost_H_independent}, that is to say, two functionals $H$ and $\widetilde{H}$ in almost $H$-space inducing the same displacements of an $X$, i.e. $X \boxplus \widetilde{H} = X \boxplus \widetilde{G}$, means the functionals are equal up to sewing.

\begin{theorem}\label{thm: X+ tilde H = X+ tilde G => S(tilde H)=S(tilde G)}
  Let \( p \geq 1 \). Take \( X \in \Omega^p(V) \) and \( \widetilde{H}, \widetilde{G} \in \mathscr{H}^{\text{am},p}(V) \). Assume that:
\begin{equation}\label{eq917}
X \boxplus \widetilde{H} = X \boxplus \widetilde{G}, 
\end{equation}
then:
\[
\mathscr{S}^{\lfloor p \rfloor}(\widetilde{H}) = \mathscr{S}^{\lfloor p \rfloor}(\widetilde{G}).
\]

\end{theorem}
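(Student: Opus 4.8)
The plan is to reduce the statement to the level of almost rough paths and the corresponding ``genuine'' $H$-space element, and then invoke the trivial-kernel result (Corollary~\ref{cor: X + H = X iff X = 1(p)}). First I would set $H:=\mathscr S^{\lfloor p\rfloor}(\widetilde H)$ and $G:=\mathscr S^{\lfloor p\rfloor}(\widetilde G)$; by Proposition~\ref{prop:S_preserves_H_space} both $H$ and $G$ lie in $\mathscr H^{p}(V)$ (after normalising the witnessing exponent and control, exactly as in the opening paragraph of the proof of Theorem~\ref{thm:assoc-trivial-kernel}, so that a common pair $(\phi,\omega)$ and a common $\theta$ control $\widetilde H$, $\widetilde G$, $H$, $G$ and $X$ simultaneously; this uses that controls form a convex cone, Lemma~\ref{lem: controls are a convex cone}). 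By Theorem~\ref{thm:X_boxplus_almost_H_independent} applied to $\widetilde H$ (with $\widehat H=\widetilde H$) we get $X\boxplus\widetilde H = X\boxplus H$, and likewise $X\boxplus\widetilde G = X\boxplus G$. Combining with the hypothesis \eqref{eq917} yields
\[
X\boxplus H \;=\; X\boxplus G.
\]

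The next step is to turn $X\boxplus H = X\boxplus G$ into $H=G$, which will then give $\mathscr S^{\lfloor p\rfloor}(\widetilde H)=H=G=\mathscr S^{\lfloor p\rfloor}(\widetilde G)$, as required. The cleanest route is to use the vector-space structure on $\mathscr H^p(V)$ (Theorem~\ref{thm: Hp(V) is a vector space}) together with associativity of $\boxplus$ (Theorem~\ref{thm:assoc-trivial-kernel}). Concretely, let $H^{-1}$ denote the additive inverse of $H$ in $(\mathscr H^p(V),\boxplus,\odot)$, i.e. $H^{-1}:=(-1)\odot H=\mathbb 1^{-\operatorname{dev}(H)}$, which lies in $\mathscr H^p(V)$. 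Applying $\,\cdot\,\boxplus H^{-1}$ to both sides of $X\boxplus H = X\boxplus G$ and using associativity,
\[
X\boxplus\bigl(H\boxplus H^{-1}\bigr)\;=\;X\boxplus\bigl(G\boxplus H^{-1}\bigr).
\]
Since $H\boxplus H^{-1}=\mathbb 1(\lfloor p\rfloor)$ (the additive zero of $\mathscr H^p(V)$, which by Lemma~\ref{lem:unit_lift_of_zero_map} is $\mathbb 1^{\mathbf 0(\lfloor p\rfloor)}$), the left side equals $X\boxplus\mathbb 1(\lfloor p\rfloor)=X$ by Lemma~\ref{lem:X_boxplus_1_equals_X}. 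Hence $X\boxplus(G\boxplus H^{-1})=X$ with $G\boxplus H^{-1}\in\mathscr H^p(V)$, so by the trivial-kernel statement Corollary~\ref{cor: X + H = X iff X = 1(p)} we conclude $G\boxplus H^{-1}=\mathbb 1(\lfloor p\rfloor)$, i.e. $G=H$ in the group $(\mathscr H^p(V),\boxplus)$; unwinding via the bijection $\mathbb 1^{(\cdot)}$ (Corollary~\ref{cor: 1(.) is bijective}) gives $\operatorname{dev}(H)=\operatorname{dev}(G)$ and therefore $H=G$.

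\textbf{Main obstacle.} The only delicate point is the bookkeeping in the normalisation step: Theorem~\ref{thm:X_boxplus_almost_H_independent} requires $\widetilde H$ and $\widehat H$ to share a common triple $(\phi,\theta,\omega)$, and $X$ to be controlled by the same $\omega$, whereas the hypotheses here only give $\widetilde H,\widetilde G\in\mathscr H^{\mathrm{am},p}(V)$ with a priori different witnessing data. I would handle this exactly as in the first part of the proof of Theorem~\ref{thm:assoc-trivial-kernel}: take $\phi:=\min$ of the two exponents, $\theta:=\min$ of the two almost-multiplicativity exponents, and $\omega:=\omega_X+\omega_{\widetilde H}+\omega_{\widetilde G}$ (a control by Lemma~\ref{lem: controls are a convex cone}), absorbing the loss of exponent into a new constant via $\omega(s,t)^{\alpha}\le\omega(S,T)^{\alpha-\alpha'}\omega(s,t)^{\alpha'}$ for $\alpha\ge\alpha'\ge0$; one also invokes the control-independence of sewing (Lemma~\ref{lem:control-independence-sewing}) to ensure $H$ and $G$ are unchanged by this re-choice of data. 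Once the normalisation is in place, the argument is essentially a two-line group-theoretic cancellation, so no substantial new work beyond what the earlier sections already provide.
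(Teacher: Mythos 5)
Your proposal is correct and follows essentially the same route as the paper's own proof: pass from $\widetilde H,\widetilde G$ to $H=\mathscr S^{\lfloor p\rfloor}(\widetilde H)$, $G=\mathscr S^{\lfloor p\rfloor}(\widetilde G)$ via Theorem~\ref{thm:X_boxplus_almost_H_independent} and Proposition~\ref{prop:S_preserves_H_space}, then cancel $H$ on both sides of $X\boxplus H=X\boxplus G$ using the vector-space structure of $\mathscr H^p(V)$, associativity (Theorem~\ref{thm:assoc-trivial-kernel}), and the trivial-kernel statement (Lemma~\ref{lem:identity_element_characterisation}/Corollary~\ref{cor: X + H = X iff X = 1(p)}). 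Your extra care with the normalisation of $(\phi,\theta,\omega)$ is a useful explicitness that the paper's proof leaves implicit, but it does not constitute a genuinely different argument.
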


\begin{proof}
Set $\mathscr{S}:=\mathscr{S}^{\lfloor p \rfloor} $. Set \( G := \mathscr{S}(\widetilde{G}) \) and \( H := \mathscr{S}(\widetilde{H}) \).

Because the sewing map $\mathscr{S}$ fixes every genuine rough path, we have

$$
H=\mathscr{S}(H)=\mathscr{S}(\widetilde H)
\qquad\text{and}\qquad 
G=\mathscr{S}(\widetilde G).
$$

Applying Theorem \ref{thm:X_boxplus_almost_H_independent} with the pairs $(\widetilde H,H)$ and $(\widetilde G,G)$ therefore allows us to replace the almost-rough perturbations by their sewn rough paths inside the $\boxplus$ operation:
$$
X\boxplus\widetilde H \;=\; X\boxplus H,
\qquad
X\boxplus\widetilde G \;=\; X\boxplus G.
$$Thus, by Equation~\eqref{eq917}:
\begin{equation}\label{eq918}
X \boxplus H = X \boxplus G. 
\end{equation}

Since $\mathscr{H}^p(V)$ is a vector space by Theorem \ref{thm: Hp(V) is a vector space}, $H$ has an inverse. Denote it by $-(H)=-1 \odot H$. By Equation \eqref{eq918}, we know that:
\begin{equation}\label{eq137}
(X \boxplus H) \boxplus (-(H)) = (X \boxplus G) \boxplus -(H). 
\end{equation}

But since:
\[
(X \boxplus H) \boxplus (-(H)) = X \boxplus (H \boxplus (-(H))) = X \boxplus \mathbb{1} = X,
\]
Equation \eqref{eq137} becomes:
\begin{equation}\label{eq138}
X = X \boxplus (G \boxplus (-(H))). 
\end{equation}

By Lemma \ref{lem:identity_element_characterisation}, we know that \eqref{eq138} implies:
\begin{equation}\label{eq139}
G \boxplus (-(H)) = \mathbb{1}(\lfloor p\rfloor). 
\end{equation}

Let \( I, J \in \mathfrak{I}^p(V) \) be such that \( H = \mathbb{1}^I \) and \( G = \mathbb{1}^J \). Then
\[
G \boxplus (-(H))
= \mathbb{1}^J \boxplus \mathbb{1}^{-I}
= \mathbb{1}^{J-I}.
\]
By assumption this equals the identity rough path, hence
\[
\mathbb{1}^{J-I} = \mathbb{1}(\lfloor p\rfloor).
\]
Applying \(\operatorname{dev}\) to both sides and using that \(\operatorname{dev}\circ \mathbb{1}^{(\cdot)}=\mathrm{id}\) on \(\mathfrak{I}^p(V)\) \newline(and \(\operatorname{dev}(\mathbb{1}(\lfloor p\rfloor))=\mathbf{0}(p)\)), we obtain
\[
J-I = \mathbf{0}(p),
\]
and therefore \(J=I\).

Hence, we have that,
\[
\mathscr{S}(\widetilde{G}) = G = \mathbb{1}^J = \mathbb{1}^I = H = \mathscr{S}(\widetilde{H}),
\]

which is what we wanted to show.
\end{proof}

    \newpage
	\printbibliography
\end{document}